\documentclass[12pt]{amsart}
 
\usepackage{geometry}
\geometry{verbose,tmargin=2.5cm,bmargin=2.5cm,lmargin=2.5cm,rmargin=2.5cm}

\usepackage[utf8]{inputenc}
\usepackage[english]{babel}
\usepackage{amssymb}
\usepackage{amsmath}
\usepackage{amsthm}
\usepackage{amscd}
\usepackage{enumitem}
\usepackage{graphicx}
\usepackage{tikz}

\usepackage{relsize}

\usepackage{subcaption}

\newcommand{\eps}{\varepsilon}
\newcommand{\vect}{\text{span}}
\newcommand{\gen}{\text{Gen}}

\DeclareMathOperator{\per}{Per} 
\DeclareMathOperator{\prt}{par} 
\DeclareMathOperator{\Chi}{Chi}
\DeclareMathOperator{\capac}{cap}
\newcommand{\rt}{{\tt{root}}} 

\def\RR{\mathbb R}
\def\NN{\mathbb N}
\def\ZZ{\mathbb Z}
\def\CC{\mathbb C}
\def\KK{\mathbb K}

\theoremstyle{plain}
\newtheorem{theorem}{Theorem}[section]
\newtheorem{lemma}[theorem]{Lemma}
\newtheorem{corollary}[theorem]{Corollary}
\newtheorem{proposition}[theorem]{Proposition}

\theoremstyle{definition}
\newtheorem{definition}[theorem]{Definition}
\newtheorem{example}[theorem]{Example}

\theoremstyle{remark}
\newtheorem{remark}[theorem]{Remark}

\numberwithin{equation}{section}
\numberwithin{figure}{section}

\title{Chaotic weighted shifts on directed trees}
\author{Karl-G. Grosse-Erdmann and Dimitris Papathanasiou}
\address{Karl-G. Grosse-Erdmann, 
D\'epartement de Math\'ematique, Universit\'e de Mons, 20 Place du Parc, 7000 Mons, Belgium}
\email{kg.grosse-erdmann@umons.ac.be}
\address{Dimitris Papathanasiou, Sabanc{\i} University Tuzla Campus, Orta Mahalle, \"Universite Caddesi No:27 Tuzla, 34956 \.{I}stanbul, Turkey}
\email{d.papathanasiou@sabanciuniv.edu}

\thanks{The first author was supported by the Fonds de la Recherche Scientifique - FNRS under Grant n\textsuperscript{o} CDR J.0078.21.}

\keywords{Weighted shift operator, directed tree, chaotic operator, hypercyclic operator, mixing operator, fixed point, continued fraction, boundary of a tree, capacity}
\subjclass[2010]{Primary 47A16, 47B37; Secondary 05C05, 11A55, 31C20, 46A45}


\begin{document}

\begin{abstract}
We study the dynamical behaviour of weighted backward shift operators defined on sequence spaces over a directed tree. We provide a characterization of chaos on very general Fr\'echet sequence spaces in terms of the existence of a large supply of periodic points, or of fixed points. In the special case of the space $\ell^p$, $1\leq p<\infty$, or the space $c_0$ over the tree, we provide a characterization directly in terms of the weights of the shift operators. It has turned out that these characterizations involve certain generalized continued fractions that are introduced in this paper. Special attention is given to weighted backward shifts with symmetric weights, in particular to Rolewicz operators. In an appendix, we complement our previous work by characterizing hypercyclic and mixing weighted backward shifts on very general Fr\'echet sequence spaces over a tree. Also, some of our results have a close link with potential theory on flows over trees; the link is provided by the notion of capacity, as we explain in an epilogue.
\end{abstract}

\maketitle

\section{Introduction}\label{s-intro}

Shift operators play a fundamental r\^ole in operator theory. This is particularly true in linear dynamics, where they have been studied for a long time, see \cite{Rol}, \cite{Salas}, \cite{Gro00}, \cite{MaPe02}, and where any new notion is usually first tested on (weighted) shift operators. These operators have also been instrumental in distinguishing between various dynamical properties, see \cite{BaGr07}, \cite{BaRu15}, \cite{BMPP16}, and \cite{BoGE18}. Now, for any theory, a large supply of easy and flexible examples is important. Recently, shift-like composition operators have been investigated, see \cite{BDP18}, \cite{DDM22}, and \cite{DaPi21}. As have been shifts on trees.

Analysis on trees seems to have been initiated by the work of Cartier \cite{Car72}, \cite{Car73}, when he introduced harmonic functions on them. In recent years there has been an increased interest in the analysis on discrete structures like graphs, networks, and trees, see for example \cite{Ana11}, \cite{LyPe16}, \cite{PoGl16}, \cite{KLW21}, and \cite{JoPe23}.

Traditionally, shift operators act as unilateral shifts on the rooted tree $\NN_0$ of natural numbers, or as bilateral shifts on the unrooted tree $\ZZ$ of integers. It is therefore quite natural to study the action of shift operators on directed graphs, and in particular on trees. A first systematic study of shifts on trees from an operator theoretic point of view has been undertaken by Jab{\l}o\'nski, Jung and Stochel \cite{JJS12}.

This was then taken up by Mart\'{i}nez-Avenda\~{n}o \cite{Mar17}, who initiated the study of dynamical properties of shift operators on trees, see also \cite{MaRi20}. The present authors \cite{GrPa21} have continued his study: we have obtained a complete characterization of hypercyclic weighted backward shift operators on arbitrary trees if the underlying sequence space is of type $\ell^p$, $1\leq p<\infty$, or $c_0$. Recall that an operator is hypercyclic if it admits a dense orbit. As for the corresponding weighted forward shift operators, Mart\'{i}nez-Avenda\~{n}o \cite{Mar17} had already shown that they can only be hypercyclic if they are defined on one of the trees $\ZZ$ or $-\NN_0$, in which case they can be regarded as weighted backward shift operators. The hypercyclicity and universality of other (sequences of) operators on trees has been studied in \cite{Pav92}, and more recently in \cite{CoMa17}, \cite{ANP17}, \cite{BNS20}, \cite{BNN21}, \cite{ANP21}, and \cite{CPP22}.

The next step then is to study the property of chaos. As first proposed by Devaney \cite{Dev89}, and then adopted by Godefroy and Shapiro \cite{GoSh91} in linear dynamics, chaos demands hypercyclicity of the operator together with a dense set of periodic points. Chaos for weighted backward shift operators on the classical trees $\NN_0$ and $\ZZ$ was characterized by the first author \cite{Gro00}. The main aim of this paper is to characterize chaotic weighted backward shift operators on arbitrary (rooted or unrooted) trees, when the underlying sequence space is of type $\ell^p$, $1\leq p<\infty$, or $c_0$. 

Typically, the difficult part in the characterization of chaos for a class of operators is the discussion of hypercyclicity. Detecting, in addition a dense set of periodic points is rather a linear algebraic task that is often easily solved. This is particularly true for weighted backward shift operators, even when they are defined on trees, because it is quite obvious which sequences are periodic points for such an operator. We refer also to the notion of a backward invariant sequence in Definition \ref{d-backwinv}.

However, the problem comes to life again when one tries to determine which of the periodic points belong to the space under consideration. This problem turned out to be much more demanding than we initially expected. 

The paper is organized as follows. Section \ref{s-notation} presents the basic definitions and fixes notation. In the following two sections we study weighted backward shifts on Fr\'echet sequence spaces $X$ over an arbitrary tree $V$; our only restriction is that the canonical unit sequences should form an unconditional basis in $X$. We show in Section \ref{s-periodic} that such a shift is already chaotic if it has a dense set of periodic points; for rooted trees and for unrooted trees with a free left end it is already enough that every vertex $v\in V$ supports a fixed point $f$ with $f(v)=1$. In the main result of Section \ref{s-unrooted} we characterize chaos on arbitrary unrooted trees via the existence of certain fixed points (or, rather, backward invariant points) on some related rooted trees. 

The next five sections are devoted to weighted backward shifts on sequence spaces $\ell^p$, $1\leq p<\infty$, and $c_0$ over an arbitrary tree. It has turned out that certain constants $c_p$ and $r_p$, $1\leq p\leq \infty$, play a crucial r\^ole in the characterization of chaos on such spaces; these constants are defined on weighted rooted trees. The two types of constants are related, and their form resembles that of continued fractions. In Section \ref{s-constcprp} we introduce these constants and discuss some of their properties. In Section \ref{s-backwinv} we characterize the existence of backward invariant sequences on a weighted rooted tree in terms of the constants $c_p$ and $r_p$.
 This then allows us, in Section \ref{s-chaosrooted}, to characterize chaos for weighted backward shifts on spaces $\ell^p$, $1\leq p<\infty$, and $c_0$ over rooted trees. The case of unrooted trees is studied in Section \ref{s-chaosunrootedlc}. We end the main part of the paper with Section \ref{s-special_trees}, where we apply the previous results to some special weighted backward shift operators such as symmetric shifts on symmetric trees, symmetric shifts on arbitrary trees, and the so-called Rolewicz operators of type $\lambda B$.

We add two appendices. Section \ref{s-appendix} is a kind of afterthought to our previous paper \cite{GrPa21} that was suggested by our present work. It had been our intention in \cite{GrPa21} to characterize hypercyclic and mixing weighted backward shift operators  directly in terms of the weights, which was possible in spaces of type $\ell^p$, $1\leq p<\infty$, and $c_0$ thanks to the reverse H\"older inequalities. In this section we present characterizations for very general Fr\'echet sequence spaces on the tree. The conditions are less explicit, but will still be useful; one application is mentioned in Remark \ref{rem-GMM}. In Section \ref{s-chaosPoisson} we discuss how to interpret chaos for the backward shift operator in the framework of harmonic functions on trees. 

\subsection*{Acknowledgements}
After having finished a first version of this paper, we were kindly informed by Nikolaos Chalmoukis that, in the case of rooted trees, there is a close link between some of our results in Section \ref{s-backwinv} on backward invariant sequences and results from potential theory on flows over trees; the link is provided by the notion of capacity. In an epilogue to our paper we explain this link in Subsection \ref{subs-rootI}, and we show in Subsections \ref{subs-boundaryunrooted} - \ref{subs-rootII} that our results lead to some possibly new results in potential theory. We are very grateful to Nikos for many very useful discussions.

\section{Definitions and notation}\label{s-notation}

We will introduce the basic definitions and the notation used in this paper. Some more details may also be found in \cite{GrPa21} or \cite{JJS12}.

\subsection{Trees} A \textit{directed tree} $T=(V,E)$, or simply a \textit{tree} $V$, is a connected directed graph consisting of a countable set $V$ of \textit{vertices} and a set $E\subset (V\times V)\setminus \{(v,v):v\in V\}$ of \textit{directed edges} that has no cycles and for which each vertex $v\in V$ has at most one \textit{parent}, that is, a vertex $w\in V$ such that $(w,v)\in E$; the parent is denoted by $\prt(v)$. There is at most one vertex with no parent, called the \textit{root}; it is sometimes conveniently denoted as $v_0$. 

The set $\Chi(v)$ is the set of all \textit{children} of $v\in V$, that is, the vertices with $v$ as parent. More generally, for $n\geq 1$, 
\[
\Chi^n(v) = \Chi(\Chi(\ldots(\Chi(v)))),\quad \text{($n$ times)},
\]
with $\Chi^0(v)=\{v\}$. A vertex with no children is called a \textit{leaf}. A tree is \textit{locally finite} if each vertex only has a finite number of children.

In a rooted tree with root $v_0$, the set 
\[
\gen_n=\Chi^n(v_0),\ n\in\NN_0,
\]
is called the $n$-th \textit{generation}. In an unrooted tree, one first needs to fix some vertex, call it $v_0$. Then the $n$-th \textit{generation} $\gen_n$, $n\in\ZZ$, (with respect to $v_0$) consists of all vertices that belong to $\Chi^{k+n}(\prt^{k}(v_0))$ for some $k\geq \max(-n,0)$. It will sometimes be convenient to write
\[
\gen(v)\subset V
\]
for the generation that contains the vertex $v\in V$.

In this paper, \textit{subtrees} play a crucial r\^ole, that is subsets of a tree that are trees in their own right (without changing the parent-child relationship). The following subtrees are of importance. In a rooted tree we write
\[
V_m = \bigcup_{n=0}^m \gen_n.
\]
And for an arbitrary tree, rooted or not, we write for $v\in V$
\[
V(v)=\bigcup_{n\geq 0} \Chi^n(v);
\]
the vertices in $V(v)$ are called the \textit{descendants} of $v$ (of \textit{degree} $n$ when they are in $\Chi^n(v)$). This set is denoted by some authors as $\text{Des}(v)$. 

For a vertex $v$ in an unrooted tree $V$, we will also introduce $V_-(v)$ and $V_-^N(v)$, $N\geq 1$, in Section \ref{s-unrooted}; they are subsets but no subtrees of $V$.

The vertices of the form $\prt^n(v)$, $n\geq 0$, are the \textit{ancestors} of $v$ (of \textit{degree} $n$). We say that an unrooted tree has a \textit{free left end} if there is a vertex for which all ancestors only have one child.

A tree $V$ will be called \textit{symmetric} if $|\Chi(u)|=|\Chi(v)|$ whenever $u,v\in V$ belong to the same generation, where we use $|\cdot|$ for the cardinality of a set. We then define
\[
\gamma_n=|\Chi(v)|
\]
when $v\in \gen_n$, where the generations have been enumerated according to a fixed vertex $v_0$.

A \textit{path} is a sequence $(v_1,\ldots,v_n)$ in $V$ with $v_{k-1}=\prt(v_{k})$ for $k=2,\ldots,n$. A \textit{branch} starting from a given vertex $v_1$ is a sequence $(v_1,v_2,\ldots)$ of maximal length in $V$, where $v_{k-1}=\prt(v_{k})$ for $k\geq 2$. The \textit{length} of a tree, possibly infinite, is the supremum of the lengths of branches in $V$. Trees of finite length will play a special r\^ole in this paper. 

We often endow a tree $V$ with a \textit{weight} $\mu$, that is, a family $\mu=(\mu_v)_{v\in V}$ non-zero scalars. We then call $(V,\mu)$ a \textit{weighted tree}. We keep the notation $\mu$ even when we restrict it to a subset of $V$.

\subsection{Sequence spaces} Now let $V$ be an arbitrary finite or countable set. The space $\KK^V$ of all sequences $f=(f(v))_{v\in V}$ over $V$, where $\KK=\RR$ or $\CC$, is endowed with the product topology. The canonical unit sequences are denoted by $e_v=\chi_{\{v\}}$, $v\in V$. A subspace $X$ of $\KK^V$ is called a \textit{Fr\'echet (or Banach) sequence space over} $V$ if it is endowed with a Fr\'echet (resp. Banach) space topology for which the canonical embedding into $\KK^V$ is continuous. The topology of $X$ may be induced by an F-norm, which we will denote by $\|\cdot\|$; see \cite{KPR84}, \cite[Definition 2.9]{GrPe11}.

For us, the most important Banach sequence spaces are the following. Let $\mu=(\mu_v)_{v\in V}$ be a weight. Then we consider
\[
\ell^p(V,\mu) = \Big\{ f\in \KK^V : \|f\|^p:= \sum_{v\in V} |f(v)\mu_v|^p <\infty\Big\},\ 1\leq p< \infty,
\]
with the usual modification for $\ell^\infty(V,\mu)$, and 
\[
c_0(V,\mu)=\{ f\in \KK^V: \forall \eps >0, \exists F\subset V \mbox{ finite}, \forall v\in V\setminus F,  |f(v)\mu_v|<\eps\}, 
\]
which is endowed with the canonical norm of $\ell^\infty(V,\mu)$. If $\mu=1$ for all $v\in V$, the corresponding unweighted spaces are denoted by
\[
\ell^p(V), \, 1\leq p\leq \infty,\ \text{and}\  c_0(V).
\]

In this context, just like in \cite{GrPa21}, the reverse H\"older inequalities will turn out to be crucial, see \cite[Theorem 13]{HLP34}, \cite[Lemma 4.2]{GrPa21}. Let $J$ be a finite or countable set. Let $\mu=(\mu_j)_j \in (\KK\setminus \{0\})^J$. Then
     \begin{align*}   
		\inf_{\|x\|_1=1} \sum_{j\in J} |x_j \mu_j| &=  \inf_{j\in J} |\mu_j|,\\
		\inf_{\|x\|_1=1} \Big(\sum_{j\in J} |x_j \mu_j|^p\Big)^{1/p} &=  \Big(\sum_{j\in J} \frac{1}{|\mu_j|^{p^*}}\Big)^{-1/p^*},\, 1<p<\infty,\\
  	\inf_{\|x\|_1=1} \sup_{j\in J} |x_j \mu_j| &=  \Big(\sum_{j\in J} \frac{1}{|\mu_j|}\Big)^{-1},
     \end{align*}
where $x\in \KK^J$, $\|x\|_1=\sum_{j\in J}|x_j|$, $p^\ast$ is the conjugate exponent, and $\infty^{-1}=0$.

\subsection{Weighted backward shift operators} Let $V$ denote again a tree. Let $\lambda=(\lambda_v)_{v\in V}$ be a family of non-zero scalars, that is, a \textit{weight}. Then the \textit{weighted backward shift} $B_\lambda$ is defined formally on $\KK^V$ by
\[
(B_{\lambda}f)(v)=\sum_{u\in \Chi(v)}\lambda_uf(u), \quad v\in V,
\]
where, as usual, an empty sum is zero; see \cite{Mar17}, \cite{GrPa21}. If $\lambda=1$ for all $v\in V$, the corresponding unweighted backward shift is denoted by $B$. As mentioned in the Introduction, their is no point in studying the corresponding weighted forward shifts, see \cite{JJS12}, in this paper.

A weight $\lambda=(\lambda_v)_{v\in V}$ is called \textit{symmetric} if 
\[
\lambda_u=\lambda_v
\]
whenever $u, v\in V$ belong to the same generation. We then define
\[
\lambda_n=\lambda_v
\]
when $v\in \gen_n$, where the generations have been enumerated according to a fixed vertex $v_0$. A symmetric weight will therefore be indexed as $\lambda=(\lambda_n)_{n}$. By extension we then also call the weighted backward shift $B_\lambda$ \textit{symmetric}. Particular symmetric shifts are the \textit{Rolewicz operators} $\lambda B$, $\lambda\in\KK\setminus\{0\}$; see Subsection \ref{subs-rol}.

In the course of our work it has turned out that we need to distinguish two hypotheses on a weighted backward shift $B_\lambda$. Given a Fr\'echet sequence space $X$ over $V$, we say that $B_\lambda$ \textit{is defined on} $X$ if all the series defining $B_\lambda f$ converge unconditionally for any $f\in X$. By the Banach-Steinhaus theorem, $B_\lambda$ is then a (continuous linear) operator from $X$ to $\KK^V$. On the other hand, we say that $B_\lambda$ \textit{is an operator on} $X$ if, moreover, $B_\lambda f\in X$ for all $f\in X$. In that case, by the closed graph theorem, $B_\lambda$ is a (continuous linear) operator from $X$ to $X$. We also refer here to the notion of a backward invariant sequence as will be introduced in Definition \ref{d-backwinv} and the preceding discussion.

In connection with the iterates of weighted backward shifts the following notation is useful, see \cite{GrPa21}. If $\lambda=(\lambda_v)_{v\in V}$ is a weight and $(v_1,\ldots,v_n)$ is a path in $V$, then we set
\[
\lambda(v_1\to v_n) = \lambda_{v_2}\lambda_{v_3}\cdots\lambda_{v_n}.
\]
Indeed, we have for any $v\in V$ and $n\geq 1$,
\[
(B_\lambda^nf)(v) = \sum_{u\in\Chi^n(v)} \lambda(v\to u) f(u).
\]
Note also that
\begin{equation}\label{eq-lambda}
\lambda(v\to u)=\lambda(v\to w)\lambda(w\to u)
\end{equation}
if there is a path from $v$ to $u$ via $w$.

We recall here the well-known and very useful fact that every weighted backward shift is conjugate to an unweighted one if one modifies the underlying space, see \cite{GrPa21}. More precisely, let $X$ be a Fr\'echet sequence space over $V$ and $\mu=(\mu_v)_{v\in V}$ and $\lambda=(\lambda_v)_{v\in V}$ be weights. Define $X_\mu =\{ f\in \KK^V : (f(v)\mu_v)_{v\in V}\in X\}$, which turns canonically into a Fr\'echet sequence space. Then $\phi_\mu:X_\mu \to X$, $f\to (f(v)\mu_v)_{v\in V}$ is an isometric isomorphism. Now, the diagram
\[
\begin{CD}
X_\mu    @>B>>    X_\mu\\
@V{\phi_\mu}VV @VV{\phi_\mu}V \\
X    @>B_\lambda>> X
\end{CD}
\]
commutes if the weights $\mu$ and $\lambda$ are related via
\[
\lambda_v = \frac{\mu_{\prt(v)}}{\mu_v}
\]
for all $v\in V$ (unless $v$ is the root in a rooted tree). Equivalently, after fixing a vertex $v_0\in V$, conveniently taken as the root if the tree has one, we have 
\begin{equation}\label{eq-conjB2}
\mu_v = \frac{\prod_{k=0}^{m-1}\lambda_{\prt^k(v_0)}}{\prod_{k=0}^{n-1}\lambda_{\prt^k(v)}}\mu_{v_0} = \frac{\lambda(w\to v_0)}{\lambda(w\to v)}\mu_{v_0}, \, v\in V,
\end{equation}
where $w$ is a common ancestor of $v$ and $v_0$, that is, $w=\prt^n(v) = \prt^m(v_0)$ for some $n,m\geq 0$; note that there is always a minimal choice of $n$ and $m$.

Clearly, conjugate operators have the same dynamical properties.

\subsection{Linear dynamics} We finally recall the basic notions from linear dynamics; see the monographs \cite{BM09} and \cite{GrPe11} for introductions to that theory. 

An operator $T:X\to X$ on a separable Fr\'echet space $X$ is called \textit{hypercyclic} if there exists a vector $x\in X$, called \textit{hypercyclic} for $T$, whose orbit $\{T^nx : n\geq 0\}$ is dense in $X$. By the Birkhoff transitivity theorem, $T$ is hypercyclic if and only if it is \textit{topologically transitive}, that is, if for any non-empty open sets $U_1$ and $U_2$ in $X$ there is some $n\geq 0$ such that $T^n(U_1)\cap U_2\neq\varnothing$. If the latter holds for all sufficiently large $n$ (depending on $U_1$ and $U_2$), then $T$ is called \textit{mixing}. An intermediate notion is that of \textit{weak mixing}, which demands that the direct sum $T\oplus T$ is hypercyclic on $X\times X$.

Now, a vector $x\in X$ is called a \textit{periodic point} for $T$ if $T^Nx=x$ for some $N\geq 1$, and $N$ is called a \textit{period} of $x$. In the case of $N=1$ this is, of course, a \textit{fixed point} of $T$. The set of periodic points for $T$ is denoted by $\per(T)$. And the operator $T:X\to X$ is called \textit{chaotic} if it is hypercyclic and has a dense set of periodic points.

We will also introduce the notions of a \textit{fixed point over a subtree} and that of a ($\lambda$-)\textit{backward invariant sequence over a tree or a subtree}, see Definitions \ref{d-fpover} and  \ref{d-backwinv}.

\section{The importance of periodic points}\label{s-periodic}
Let $V$ be a tree, $X$ a separable Fr\'echet sequence space over $V$, and $B_\lambda$ a weighted backward shift that is an operator on $X$. In the classical situation of $V=\NN_0$ or $\ZZ$ it is well known that a single non-trivial periodic point for $B_\lambda$ makes this operator chaotic, provided that $(e_v)_{v\in V}$ is an unconditional basis in $X$, see \cite{Gro00}, \cite[Theorems 4.8, 4.12]{GrPe11}.  

Before attacking the problem of characterizing chaotic weighted backward shifts on arbitrary trees, let us first show that a single non-trivial periodic point no longer implies chaos. In fact, both parts of the definition of chaos may fail.

\begin{example}\label{ex-twobranched}
There is a rooted tree $V$ and a weighted backward shift $B_\lambda$ on $\ell^2(V)$ that has a non-trivial fixed point so that $B_\lambda$ is not hypercyclic and does not have a dense set of periodic points.   

Let $V$ be the rooted tree consisting of exactly two infinite branches starting from the root $v_0$, that is, the root has exactly two children, $v_1$ and $v_2$ say, and any other vertex has exactly one child. Let $\lambda$ be a weight so that $v_1$ and each of its descendants has weight 2, while $v_2$ and each of its descendants has weight 1. By \cite{GrPa21}, $B_\lambda$ is an operator on $\ell^2(V)$, and we have that
\[
f(v)=\begin{cases}
1, & \text{if }  v = v_0,\\
\frac{1}{2^{n+1}}, & \text{if }  \prt^n(v)=v_1, n\geq 0,\\
0, & \text{otherwise}
\end{cases}    
\]
is a non-trivial fixed point of $B_\lambda$. 

On the other hand, it follows from \cite[Theorem 4.4]{GrPa21} that $B_\lambda$ is not hypercyclic; alternatively just observe that, on the branch defined by $v_2$, $B_\lambda$ behaves like the unweighted backward shift on $\ell^2(\NN_0)$, which is not hypercyclic.

In addition, if the set of periodic points for $B_\lambda$ were dense then there would exist a periodic point $f$, of period $N$ say, such that $f(v_2)\neq 0$. But this implies that $f(v)=f(v_2)$ whenever $v\in \Chi^{nN}(v_2)$, $n\geq 1$, which prevents $f$ from being in $\ell^2(V)$, a contradiction.
\end{example}

The next example shows that even hypercyclicity together with the existence of a non-trivial periodic point does not imply chaos.

\begin{example}
Let $V$ be the two-branched rooted tree of the previous example with root $v_0$ and $\Chi(v_0)=\{v_1,v_2\}$. Let again $\lambda_{v}=2$ when $v$ is $v_1$ or one of its descendants, but now we put on the other branch a hypercyclic, non-chaotic unilateral weighted backward shift, for example by setting
\[
\lambda_{v}=\Big( \frac{n+2}{n+1}\Big)^{1/2}
\]
if $\prt^n(v)=v_2$, $n\geq 0$. One sees with the same arguments as in the previous example that $B_\lambda$ is an operator on $\ell^2(V)$ with a non-trivial fixed point (in fact, the same one as before) that is now hypercyclic by \cite[Theorem 4.4]{GrPa21} but still has no dense set of periodic points.
\end{example}

So, why does the presence of a non-trivial periodic point in the classical case of $\NN_0$ imply chaos? The point is that if a sequence $x=(x_n)_{n\geq 0}$ in $\ell^2(\NN_0)$, say, satisfies $B_\lambda^N x=x$, $N\geq 1$, and $x_j\neq 0$ for some $j\geq 0$ then $y=\sum_{k\in\ZZ, j+kN\geq 0} x_{j+kN} e_{j+kN}$ is also a periodic point, and $z=y+B_\lambda y +\ldots+B_\lambda^{N-1} y$ becomes a fixed point for $B_\lambda$ with all entries non-zero. From this fixed point one can easily construct a dense set of periodic points, so $z$ acts like a ``universal'' periodic point. Also, the presence of $z$ in $\ell^2(\NN_0)$ turns out to make $B_\lambda$ hypercyclic. See the proof of \cite[Theorem 4.6]{GrPe11}.

This motivates the following.

\begin{definition}
Let $V$ be a tree, $X$ a Fr\'echet sequence space over $V$, and $B_\lambda$ a weighted backward shift operator on $X$. A fixed point $f\in X$ for $B_\lambda$ is called \textit{universal} if $f(v)\neq 0$ for all $v\in V$.
\end{definition}

What distinguishes $V=\NN_0$ from more general trees, even from $\ZZ$, is also the fact that, for any vertex $v$, the set $V(v)$ of descendants of $v$ is cofinite in $\NN_0$, and thus creates no new behaviour. For arbitrary trees we need to consider $V(v)$ for all vertices $v$. Thus, let $X$ be a Fr\'echet sequence space over $V$ in which $(e_v)_{v\in V}$ is an unconditional basis. For a subtree $W$ of $V$, we write $X(W)$ for the space of restrictions of sequences $f\in X$ to $W$, endowed with the topology induced by $X$. Then $B_\lambda$ naturally defines an operator $B_\lambda : X(W)\to X(W)$. Note, however, that if $W$ has a root $w_0$ then $e_{w_0}$ is sent to 0.

\begin{definition}\label{d-fpover}
Let $V$ be a tree, $X$ a Fr\'echet sequence space over $V$ in which $(e_v)_{v\in V}$ is an unconditional basis,  $B_\lambda$ a weighted backward shift operator on $X$, and $W$ a subtree of $V$. Then a sequence $f\in X(W)$ that is a fixed point for $B_\lambda:X(W)\to X(W)$ is called \textit{a fixed point for $B_\lambda$ in $X$ over $W$}.
\end{definition}

The examples above show why the arguments for weighted backward shifts on $\NN_0$ break down on more general trees: one may encounter a periodic point that is zero along a whole branch, which then prevents the construction of a universal fixed point. In the case of rooted trees, the existence of a universal fixed point implies chaos. More generally, we have the following, which is the main result of this section.

\begin{theorem}\label{chaos-fixedpoint}
Let $V$ be a  tree and $\lambda=(\lambda_v)_{v\in V}$ a weight. Let $X$ be a Fr\'echet sequence space over $V$ in which $(e_v)_{v\in V}$ is an unconditional basis, and suppose that the weighted backward shift $B_\lambda$ is an operator on $X$. Consider the following assertions:
\begin{enumerate}
    \item[\rm (i)] for any $v\in V$ there is a fixed point $f$ for $B_\lambda$ in $X$ over $V(v)$ with $f(v)=1$, and 
    \begin{equation}\label{eq-fpleft}
    \sum_{n\geq 1} \lambda(\prt^n(v)\to v)e_{\prt^{n}(v)}\in X,
    \end{equation}
		where the sum extends over the $n\geq 1$ for which $\prt^n(v)$ is defined;
    \item[\rm (ii\textsubscript{a})] $B_{\lambda}$ is chaotic;
    \item[\rm (ii\textsubscript{b})] $\overline{\per(B_{\lambda})}=X$;
		\item[\rm (iii\textsubscript{a})] for any $v\in V$ there is some $f\in \per(B_\lambda)$ such that $f(v)=1$;
		\item[\rm (iii\textsubscript{b})] for any $v\in V$ there is a fixed point $f\in X$ for $B_\lambda$ such that $f(v)=1$;
		\item[\rm (iii\textsubscript{c})] $B_{\lambda}$ has a universal fixed point in $X$.
\end{enumerate}
Then
\[
(\emph{\text{i}}) \Longrightarrow
\big[(\emph{\text{ii\textsubscript{a}}}) \Longleftrightarrow
(\emph{\text{ii\textsubscript{b}}})\big] \Longrightarrow
\big[(\emph{\text{iii\textsubscript{a}}}) \Longleftrightarrow
(\emph{\text{iii\textsubscript{b}}}) \Longleftrightarrow
(\emph{\text{iii\textsubscript{c}}})\big].
\] 
If the tree is rooted or has a free left end, then all the assertions are equivalent.

Moreover, if $B_{\lambda}$ is chaotic then it is mixing.
\end{theorem}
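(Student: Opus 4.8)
The plan is to verify the Mixing Criterion (the full-sequence form of Kitai's criterion, see \cite[Chapter 3]{GrPe11}) on the dense subspace $E=\vect\{e_v:v\in V\}$, which is dense precisely because $(e_v)_{v\in V}$ is an unconditional basis. Concretely, I want to produce maps $S_n\colon E\to X$, $n\geq 1$, such that for every $x\in E$,
\begin{equation}\label{eq-mixplan}
B_\lambda^n x\to 0,\qquad S_nx\to 0,\qquad B_\lambda^n S_n x=x.
\end{equation}
Once \eqref{eq-mixplan} holds, mixing follows by the usual assembly: given non-empty open sets $U_1,U_2$, pick $x_1\in U_1\cap E$ and $x_2\in U_2\cap E$, and observe that $z:=x_1+S_nx_2$ lies in $U_1$ while $B_\lambda^nz=B_\lambda^nx_1+x_2$ lies in $U_2$ for all large $n$. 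Since $B_\lambda$ is assumed chaotic, the equivalences already established in the theorem provide a universal fixed point $f\in X$, that is, $B_\lambda f=f$ with $f(v)\neq 0$ for all $v\in V$; this single object is what I will use to manufacture the $S_n$.

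The maps $S_n$ come directly from $f$. Iterating the fixed-point equation yields, for every $v$ and $n$, the identity $\sum_{u\in\Chi^n(v)}\lambda(v\to u)f(u)=f(v)$. I therefore set
\[
S_ne_v:=\frac{1}{f(v)}\sum_{u\in\Chi^n(v)}f(u)\,e_u
\]
and extend linearly. Since $B_\lambda^ne_u=\lambda(v\to u)e_v$ whenever $u\in\Chi^n(v)$, one obtains $B_\lambda^nS_ne_v=e_v$ at once, which is the third relation in \eqref{eq-mixplan}. Moreover $S_ne_v=f(v)^{-1}Q_nf$, where $Q_n$ is the basis projection onto the coordinates in $\Chi^n(v)$, bounded by unconditionality; as the sets $\Chi^n(v)$ are pairwise disjoint and eventually avoid any prescribed finite set, $Q_nf\to 0$ as a tail of the unconditionally convergent expansion $f=\sum_u f(u)e_u$. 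Hence $S_ne_v\to 0$, giving the second relation. This step is clean and valid for every tree.

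It remains to secure the first relation $B_\lambda^n x\to 0$, for which it suffices to treat $x=e_v$, i.e.\ to show $B_\lambda^ne_v=\lambda(\prt^n(v)\to v)\,e_{\prt^n(v)}\to 0$. For a rooted tree this is immediate, as $B_\lambda^ne_v=0$ once $n$ exceeds the generation of $v$. When the tree has a free left end at $v$, every ancestor has a single child, so the fixed-point equation along the ancestor chain collapses to $f(\prt^n(v))=\lambda(\prt^n(v)\to v)f(v)$, whence $B_\lambda^ne_v=f(v)^{-1}f(\prt^n(v))e_{\prt^n(v)}\to 0$, again as a tail of $f$. The genuine difficulty, and the step I expect to be the main obstacle, is the general unrooted tree: when the ancestors $\prt^n(v)$ branch, the fixed-point equation picks up contributions from the other descendant subtrees, so the backward product $\lambda(\prt^n(v)\to v)$ is no longer governed by $f$ alone. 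In fact one can exhibit operators that possess a universal fixed point yet satisfy $\|B_\lambda^ne_v\|\not\to 0$ (such operators turn out not to be hypercyclic), so the universal fixed point by itself cannot deliver \eqref{eq-mixplan}. Here I would instead deploy the full strength of chaos: hypercyclicity already forces $\|B_\lambda^{n_k}e_v\|\to 0$ along some subsequence, and I would upgrade this to the entire sequence by exploiting the \emph{density of periodic points}, whose membership in $X$ encodes enough summability along each backward branch to force $\lambda(\prt^n(v)\to v)e_{\prt^n(v)}\to 0$ for all $n$. Proving this final decay rigorously is where the real work lies; the remaining passage to mixing is then routine through \eqref{eq-mixplan}.
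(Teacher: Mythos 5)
The core step of your plan is not merely ``the real work that remains''; it is false. You claim that chaos forces $B_\lambda^n e_v=\lambda(\prt^n(v)\to v)\,e_{\prt^n(v)}\to 0$ for every $v$, after first asserting that hypercyclicity gives this along a subsequence. Example \ref{ex-comb2} refutes both assertions: on the comb tree $V=\ZZ\times\NN_0$ with $X=\ell^1(V,\mu)$, $\mu_{(n,k)}=2^{-k}$, the unweighted shift $B$ is chaotic, and yet $B^ne_{(0,0)}=e_{(-n,0)}$ has norm $1$ for \emph{every} $n$, so no subsequence of $(B^ne_{(0,0)})_n$ tends to $0$. (This same example also contradicts your parenthetical claim that operators with a universal fixed point but $\|B_\lambda^ne_v\|\not\to 0$ cannot be hypercyclic.) Equivalently, chaos does not imply condition (i)/\eqref{eq-fpleft} of the theorem --- this is exactly the point of Remark \ref{rem-sym}(c). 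Consequently Kitai's criterion with the dense set $\vect\{e_v:v\in V\}$ simply cannot be verified for every chaotic shift on an unrooted tree, and no refinement of your final step can close this gap: the approach itself must change. The paper avoids the problem by using a criterion that places no demand on forward orbits of the $e_v$: by \cite[Corollary 5.4]{GMM21}, density of periodic points together with the existence, for each $v\in V$ and $\eps>0$, of some $m$ such that for all $n\geq m$ there is $g_n$ with $\|g_n\|<\eps$ and $\|B_\lambda^n g_n-e_v\|<\eps$, already yields mixing. Your $S_n$-construction is essentially the paper's construction of these $g_n$ (the paper uses a periodic point restricted to $\Chi^{lN}(v)$, since a fixed point through $v$ need not exist under (ii\textsubscript{b}) alone), so that half of your argument is sound; the failure is confined to the forward half of Kitai's criterion. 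Alternatively, as in Remark \ref{rem-GMM}(a), one can use Theorem \ref{t-charHCunroot}, whose forward condition involves $B_\lambda^n(e_v+g)$ for suitable small perturbations $g$, not $B_\lambda^ne_v$ itself.

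Beyond this, your proposal covers only the final ``Moreover'' assertion. Everything else in the statement is taken as ``the equivalences already established'': (i) $\Rightarrow$ (ii\textsubscript{b}); the equivalence (ii\textsubscript{a}) $\Leftrightarrow$ (ii\textsubscript{b}), whose substance is that dense periodic points alone force hypercyclicity --- something your argument, which assumes chaos from the outset, could never deliver, whereas the paper's route via \cite{GMM21} proves (ii\textsubscript{b}) $\Rightarrow$ mixing and hence this equivalence and the ``Moreover'' simultaneously; the chain (iii\textsubscript{a}) $\Leftrightarrow$ (iii\textsubscript{b}) $\Leftrightarrow$ (iii\textsubscript{c}), where in particular the universal fixed point you rely on has to be built from fixed points that are merely nonzero at single prescribed vertices by a careful inductive perturbation (this is (iii\textsubscript{b}) $\Rightarrow$ (iii\textsubscript{c}) in the paper); and (iii\textsubscript{b}) $\Rightarrow$ (i) for rooted trees and trees with a free left end. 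As written, the proposal is therefore incomplete in scope, and the one part it does address rests on a step that a counterexample in the paper rules out.
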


\begin{proof}
(i) $\Longrightarrow$ (ii\textsubscript{b}). In view of the fact that $\per(B_{\lambda})$ is a linear subspace, it suffices by the basis assumption to approximate each $e_v$, $v\in V$, by a periodic point. Thus let $v\in V$ and $\varepsilon >0$. By (i) there is a fixed point $f$ for $B_\lambda$ in $X$ over $V(v)$ with $f(v)=1$. We can extend $f$ to all of $V$ by defining
$f(\prt^n(v))=\lambda(\prt^n(v)\to v)$ for any $n\geq 1$ for which $\prt^n(v)$ is defined, and $f(u)=0$ for all other vertices $u$ outside $V(v)$. By the hypothesis, $f$ belongs to $X$, and $f$ is clearly a fixed point for $B_\lambda$ (over $V$) with $f(v)=1$.

Let us enumerate the generations in the tree in such a way that $v\in \gen_0$. Let $\|\cdot\|$ denote an F-norm defining the topology of $X$. Then, by unconditionality of the basis, we can choose $N\geq 1$ such that
\[
\Big\| \sum_{n\in \ZZ\setminus\{0\}} f\chi_{\gen_{nN}}\Big\|<\varepsilon,
\]
where $\gen_n$ will be empty for large negative $n$ if the tree is rooted. Then
\[
g:=\sum_{n\in \ZZ} f\chi_{\gen_{nN}}
\]
is a periodic point for $B_\lambda$ with $\|g-e_v\|<\varepsilon$, as desired.

(ii\textsubscript{a}) $\Longleftrightarrow$ (ii\textsubscript{b}). It suffices to show that, under (ii\textsubscript{b}), $B_\lambda$ is hypercyclic. Using a recent result of Grivaux, Matheron and Menet we will even show that it is mixing, thereby also proving the final assertion of the theorem. In fact, by \cite[Corollary 5.4]{GMM21}, which also holds for separable Fr\'echet spaces, it suffices in view of (ii\textsubscript{b}) to show that, for every $v\in V$ and $\varepsilon>0$, there is some $m\geq 1$ such that, for all $n\geq m$, there is some $g\in X$ such that $\|g\|<\varepsilon$ and $\|B_\lambda^ng-e_v\|<\varepsilon$. Thus, let $v\in V$, and let $f$ be a periodic point for $B_\lambda$ with $f(v)=1$, which exists by (ii\textsubscript{b}); let $N\geq 1$ be a period of $f$. For $n\geq 1$ there are $l\geq 1$ and
$0\leq k\leq N-1$ with $n=lN-k$, and we define
\[
g_{n}:= B_\lambda^k (f\chi_{\Chi^{lN}(v)}).
\]
Then, by the basis assumption and since $f\in X$, we have that $g_n\in X$ and
\[
B_\lambda^n g_n = B_\lambda^{lN-k}B_\lambda^k \big(f\chi_{\Chi^{lN}(v)}\big)=e_v.
\]
Moreover, since $B_\lambda$ is continuous and $k$ ranges over a finite set, we have that $g_n\to 0$ as $n\to\infty$, which gives us what was needed.

The implication (ii\textsubscript{b}) $\Longrightarrow$ (iii\textsubscript{a}) is obvious. 

(iii\textsubscript{a}) $\Longrightarrow$ (iii\textsubscript{b}). Let $v\in V$, and let $f\in X$ be a periodic point of $B_\lambda$, of period $N\geq 1$ say, such that $f(v)=1$. Having enumerated the generations of the tree in such a way that $v\in \gen_0$, we consider
\[
g:=\sum_{n\in \ZZ} f\chi_{\gen_{nN}},
\]
where many generations will be empty if the tree is rooted. Then $g\in X$ by the basis assumption, and $g$ is periodic for $B_\lambda$ of period $N$, which implies that
\[
h:=g+B_\lambda g +\ldots + B_\lambda^{N-1} g \in X
\]
is a fixed point for $B_\lambda$. In addition,
\[
h(v)=g(v)=f(v)=1.
\]

(iii\textsubscript{b}) $\Longrightarrow$ (iii\textsubscript{c}). We first enumerate the elements of $V$ in some way, $V=\{v_0, v_1,\dots \}$. By (iii\textsubscript{b}) we can find a fixed point $f_0\in X$ for $B_{\lambda}$ such that $f_0(v_0)=1$. By rescaling we may assume that $f_0(v_0)\neq 0$ and 
\[
\|f_0\| <1.
\]
Proceeding inductively, for each $n\geq 0$, we can find a fixed point $f_n\in X$ for $B_{\lambda}$ such that $f_n(v_n)\neq 0$,
\[
\|f_n\| < \frac{1}{2^n},
\] 
\[
|f_n(v_k)| <  \frac{1}{2^{n-k+1}}|(f_0+\dots +f_k)(v_k)| \ \text{ for } k=0, \dots ,n-1,
\]
and
\[
f_n(v_n)\neq -(f_0+\dots +f_{n-1})(v_n);
\]
this last inequality also ensures that the next step in the induction process works.

We then set
\[
f=\sum_{n=0}^{\infty}f_n,
\]
which converges in $X$, and we observe that $f$ is a fixed point for $B_{\lambda}$. Now, let $v\in V$. Then $v=v_k$ for some $k\geq 0$, and  we have that
\begin{align*}
|f(v)|&=|f(v_k)|=\Big|\sum_{n=0}^{\infty}f_n(v_k)\Big| \geq  \Big|\sum_{n=0}^{k}f_n(v_k)\Big|-\sum_{n=k+1}^{\infty}|f_n(v_k)| \\
&\geq \Big|\sum_{n=0}^{k}f_n(v_k)\Big|-\Big(\sum_{n=k+1}^{\infty}\frac{1}{2^{n-k+1}}\Big)\Big|\sum_{n=0}^{k}f_n(v_k)\Big|=\frac{1}{2}\Big|\sum_{n=0}^{k}f_n(v_k)\Big|>0,
\end{align*}
proving the claim.

(iii\textsubscript{c}) $\Longrightarrow$ (iii\textsubscript{a}) is trivial.

To finish the proof we have to show that (iii\textsubscript{b}) $\Longrightarrow$ (i) if the tree is rooted or has a free left end. This is trivial for the first assertion in (i). Moreover, if the tree is rooted then the series in \eqref{eq-fpleft} is a finite sum and therefore belongs to $X$. On the other hand, if the tree has a free left end, then choose $v\in V$ such that $\gen(v)$ is a singleton. By (iii\textsubscript{b}), there is a fixed point $f\in X$ for $B_\lambda$ such that $f(v)=1$. But then $f(\prt^n(v))=\lambda(\prt^n(v)\to v)$ for all $n\geq 1$, so that \eqref{eq-fpleft} holds by the basis assumption since $f\in X$.
\end{proof}

\begin{remark}\label{rem-GMM}
(a) For the proof that chaotic weighted backward shifts are mixing we have used a very powerful and elegant general result of Grivaux, Matheron and Menet \cite{GMM21}. However, we could have just as well deduced the mixing property from Theorems \ref{t-charHCroot} and \ref{t-charHCunroot} in Appendix 1, which are specific to weighted backward shifts. Indeed, let $v\in V$. There is then a fixed point $f\in X$ for $B_\lambda$ with $f(v)=1$. Consider, for $n\geq 1$, 
\[
f_{v,n}=f\chi_{\Chi^{n}(v)}\in X.
\]
Then the hypothesis of (b)(iii) in Theorem \ref{t-charHCroot} and one half of the hypothesis of (b)(iii) in Theorem \ref{t-charHCunroot} are satisfied. In the unrooted case, let also $\varepsilon>0$, and choose a sequence $g\in X$ with $\|g\|<\varepsilon$ such that $e_v+g$ is a periodic point for $B_\lambda$. Since the sequence $(B^n_\lambda (e_v+g))_n$ only runs through a finite number of elements of $X$, we have that
\[
(B_\lambda^n (e_v+g))\chi_{\{\prt^n(v)\}} \to 0 \text{ in } X.
\]
A diagonal process produces a sequence $(g_{v,n})_n$ such that $g_{v,n}\to 0$ in $X$ and $(B_\lambda^n (e_v+g_{v,n})\chi_{\{\prt^n(v)\}}\to 0$ in $X$. It remains to restrict $g_{v,n}$ to $\Chi^n(\prt^n(v))$ to see that also the second half of the hypothesis of (b)(iii) in Theorem \ref{t-charHCunroot} is satisfied.

In both cases we therefore have that $B_\lambda$ is mixing.

(b) We note that, in (i), it suffices to have condition \eqref{eq-fpleft} only for some $v\in V$. This is due to the fact that, for any two vertices, the series in \eqref{eq-fpleft} have a common tail up to some multiplicative constant, see \eqref{eq-lambda}.

(c) The theorem recovers the known characterizations in the case of the classical trees $\NN_0$ and $\ZZ$, see \cite[Theorems 4.8, 4.13]{GrPe11}.

(d) Let us mention for clarity that none of the conditions in the theorem can hold if the tree has a leaf. Indeed, by definition of $B_\lambda$, if $f$ is a periodic point for $B_\lambda$ then $f(v)=0$ for any leaf $v$.
\end{remark}

We next show that, in general, one cannot improve Theorem \ref{chaos-fixedpoint} in that there are trees in which the conditions (iii) do not imply the conditions (ii), see Example \ref{ex-comb}, and there are trees in which the conditions (ii) do not imply (i), see Example \ref{ex-comb2}. In fact, the tree is the same, and it will serve us at various places in the paper as a counter-example.

\begin{example}[Comb tree]\label{ex-comb}
There is a tree $V$, a Fr\'echet sequence space $X$ over $V$  in which $(e_v)_{v\in V}$ is an unconditional basis, and a weighted backward shift $B_\lambda$ on $X$ for which every vertex $v\in V$ admits a fixed point $f\in X$ with $f(v)=1$ but so that $B_\lambda$ is not chaotic. Note that $B_\lambda$ admits a universal fixed point by Theorem \ref{chaos-fixedpoint}.

To see this, we consider the set $\ZZ$ of integers, from every point of which emanates an infinite branch. In other words, $V=\ZZ\times\NN_0$ so that every vertex $(n,0)$ has two children, $(n+1,0)$ and $(n+1,1)$, while any $(n,k)\in V$, $k\geq 1$, has only $(n+1,k+1)$ as a child. This tree looks like an infinite comb (with inclined teeth in order to align the generations). As for the space, we choose $\mu_{(n,k)}= 1$ if $n\leq 0$, $k\geq 0$, and $\mu_{(n,k)}= \frac{1}{2^n}$ if $n> 0$, $k\geq 0$, and we consider $X=\ell^1(V,\mu)=\{ f\in \KK^V : \sum_{v\in V}|f(v)|\mu_v<\infty\}$. Then the unweighted backward shift $B$ is an operator on $X$, see \cite[Proposition 2.3]{GrPa21}. 

It is easy to find a rich supply of fixed points. Just fix $m\in\ZZ$, and consider $f_m\in X$ defined by
\[
f_m(n,k)= \begin{cases} 
1,&\text{if $n\geq m$, $k=0$,}\\
-1,&\text{if $n\geq m$, $k=n-m+1$,}\\
0,&\text{otherwise.}
\end{cases}
\]
Then for every $v\in V$ there is some $m$ such that $f_m(v)\neq 0$. It remains to normalize $f_m$.

On the other hand, let $f\in X$ be a periodic point for $B$, of period $N\geq 1$, say. Then
\[
f(-N,0) = (B^N f)(-N,0) = \sum_{k=0}^N f(0,k),
\]                                                                                                           
and therefore
\begin{align*}
\|f-e_{(0,0)}\| &\geq |f(-N,0)| + |f(0,0)-1| + \sum_{k=1}^\infty |f(0,k)|\\
&\geq |f(0,0)| - \Big|\sum_{k=1}^N f(0,k)\Big| + 1 - |f(0,0)| + \sum_{k=1}^N |f(0,k)|\geq 1,
\end{align*}
so that the periodic points for $B$ cannot be dense in $X$, and $B$ is not chaotic.
\end{example}

\begin{example}\label{ex-comb2}
There is a tree $V$, a Fr\'echet sequence space $X$ over $V$  in which $(e_v)_{v\in V}$ is an unconditional basis, and a chaotic weighted backward shift $B_\lambda$ on $X$ for which \eqref{eq-fpleft} does not hold.

To see this we consider again the comb tree of the previous example, but this time we take $X=\ell^1(V,\mu)$ with $\mu_{(n,k)}= \frac{1}{2^k}$ if $n\in \ZZ$, $k\geq 0$. The unweighted backward shift $B$ is an operator on $X$. To show that it is chaotic, let $v=(n_0,k_0)\in V$, $N> k_0$, and define
\[
f_N(n,k)= \begin{cases} 
1,&\text{if $n=n_0+lN$, $k=k_0+lN$, $l\geq 0$,}\\
-1,&\text{if $n=n_0+lN$, $k=N+lN$, $l\geq 0$,}\\
0,&\text{otherwise.}
\end{cases}
\]
Then $f_N\in X$ is a periodic point of $B$ of period $N$ with $\|f_N-e_v\|\to 0$ as $N\to\infty$, so that $B$ is chaotic. On the other hand, $\sum_{n=-\infty}^{-1} e_{(n,0)} \notin X$, so that \eqref{eq-fpleft} does not hold for $v=(0,0)$.

Let us note for the sequel that, while $B$ has a universal fixed point since it is chaotic, it cannot possess a positive universal fixed point $f$; otherwise we would have that $f(-n,0)\geq f(0,0)>0$ for all $n\geq 1$, which contradicts the fact that $f\in X$. 
\end{example}

Nonetheless, for certain weighted backward shifts on certain unrooted trees without a free left end, the conditions (iii) do imply the conditions (ii), see Theorem \ref{t-chunrootedsimplified}. The equivalence of the conditions (ii) with (i) seems to be an even more delicate matter, see Remark \ref{rem-sym}(c).

When the weight of a chaotic weighted backward shift is positive one might wonder whether a universal fixed point can also be positive. This is indeed the case when the tree is rooted or has a free left end, but not in general, as we have seen in the previous example. Conversely, while by Example \ref{ex-comb} the existence of a universal fixed point for the unweighted backward shift does not necessarily imply chaos, the existence of a positive universal fixed point does. In fact we have the following.

\begin{theorem}\label{chaos-positive}
Let $V$ be a  tree and $\lambda=(\lambda_v)_{v\in V}$ a positive weight. Let $X$ be a Fr\'echet sequence space over $V$ in which $(e_v)_{v\in V}$ is an unconditional basis, and suppose that the weighted backward shift $B_\lambda$ be an operator on $X$. Consider the following assertions:
\begin{enumerate}
    \item[\rm (i\textsubscript{a})] $B_\lambda$ has a positive universal fixed point in $X$;
		\item[\rm (i\textsubscript{b})] for any $v\in V$ there is a positive universal fixed point for $B_\lambda$ in $X$ over $V(v)$ and
          \begin{equation}\label{eq-univpos}
          \sum_{n\geq 1} \lambda(\prt^n(v)\to v)e_{\prt^{n}(v)}\in X,
          \end{equation}
		      where the sum extends over the $n\geq 1$ for which $\prt^n(v)$ is defined;
    \item[\rm (ii)] $B_\lambda$ is chaotic;
    \item[\rm (iii)] $B_\lambda$ has a universal fixed point in $X$.
\end{enumerate}
Then
\[
[\emph{\text{(i\textsubscript{a})}} \Longleftrightarrow \emph{\text{(i\textsubscript{b})}}] \Longrightarrow \emph{\text{(ii)}} \Longrightarrow \emph{\text{(iii)}}.
\] 
If the tree is rooted or has a free left end, then the three assertions are equivalent.
\end{theorem}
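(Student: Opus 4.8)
The plan is to establish $(\text{i}\textsubscript{a})\Leftrightarrow(\text{i}\textsubscript{b})\Rightarrow(\text{ii})\Rightarrow(\text{iii})$ in general and then, under the rooted / free-left-end hypothesis, to close the cycle by manufacturing a \emph{positive} universal fixed point out of an arbitrary one. For $(\text{i}\textsubscript{a})\Rightarrow(\text{i}\textsubscript{b})$ I would restrict a positive universal fixed point $f$ to each $V(v)$; since every child of a vertex of $V(v)$ again lies in $V(v)$, the restriction is a fixed point over $V(v)$ in the sense of Definition \ref{d-fpover}, and it is still positive. The tail condition \eqref{eq-univpos} is where positivity of $\lambda$ enters: all summands being positive, $f(\prt^n(v))\ge\lambda_{\prt^{n-1}(v)}f(\prt^{n-1}(v))$, and iterating gives $\lambda(\prt^n(v)\to v)\le f(\prt^n(v))/f(v)$ termwise; the dominating series $\tfrac{1}{f(v)}\sum_{n\ge1}f(\prt^n(v))e_{\prt^n(v)}$ is a subseries of $f$ and so lies in $X$, whence the domination property of the unconditional basis delivers \eqref{eq-univpos}.

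For $(\text{i}\textsubscript{b})\Rightarrow(\text{i}\textsubscript{a})$ I would enumerate $V=\{v_0,v_1,\dots\}$ and, for each $n$, take a positive universal fixed point over $V(v_n)$ normalized to value $1$ at $v_n$, extended to all of $V$ by $\lambda(\prt^m(v_n)\to v_n)$ on the ancestors of $v_n$ and by $0$ elsewhere. Exactly as in the proof of $(\text{i})\Rightarrow(\text{ii}\textsubscript{b})$ of Theorem \ref{chaos-fixedpoint}, this extension is a fixed point on $V$, it lies in $X$ by \eqref{eq-univpos}, and it is now \emph{nonnegative} everywhere. Rescaling the $n$-th function to norm $<2^{-n}$ and summing yields $f=\sum_n g_n\in X$, a nonnegative fixed point with $f(v_k)\ge g_k(v_k)>0$ for every $k$; here positivity removes the delicate non-cancellation bookkeeping that was needed in $(\text{iii}\textsubscript{b})\Rightarrow(\text{iii}\textsubscript{c})$ of Theorem \ref{chaos-fixedpoint}. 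The implication $(\text{i}\textsubscript{b})\Rightarrow(\text{ii})$ is then immediate: a positive universal fixed point over $V(v)$, normalized at $v$, is in particular a fixed point over $V(v)$ with value $1$ at $v$, so $(\text{i}\textsubscript{b})$ entails condition $(\text{i})$ of Theorem \ref{chaos-fixedpoint} and $B_\lambda$ is chaotic; and $(\text{ii})\Rightarrow(\text{iii})$ follows since chaos produces a universal fixed point through $(\text{ii}\textsubscript{a})\Rightarrow(\text{iii}\textsubscript{c})$ of that theorem.

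It remains to prove $(\text{iii})\Rightarrow(\text{i}\textsubscript{a})$ when $V$ is rooted or has a free left end. I would first pass, via the conjugacy \eqref{eq-conjB2}, to the unweighted shift $B$ on a space $X_\mu$ with $\mu>0$; positivity of sequences and membership are both preserved, so it suffices to positivise a nowhere-vanishing fixed point of $B$. The tail condition is free here: for a rooted tree the series in \eqref{eq-univpos} is finite, while for a free left end one uses a vertex $v^\ast$ with $\gen(v^\ast)$ a singleton, along whose single-child ancestor chain $f(\prt^n(v^\ast))=\lambda(\prt^n(v^\ast)\to v^\ast)f(v^\ast)$ holds exactly, so $f\in X$ gives \eqref{eq-univpos} at $v^\ast$ and hence, by \eqref{eq-lambda} and Remark \ref{rem-GMM}(b), at every $v$. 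By $(\text{i}\textsubscript{b})$ it then only remains, for each $v$, to build a positive universal fixed point over the rooted subtree $V(v)$ out of the nowhere-zero restriction $g=f|_{V(v)}$.

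The heart of the matter, and the step I expect to be the main obstacle, is this positivisation. Since $B^ng=g$, the function $G(u):=\sup_n\sum_{w\in\Chi^n(u)}|g(w)|$ satisfies, by monotone convergence, $G(u)=\sum_{w\in\Chi(u)}G(w)$, so whenever it is finite it is a genuine fixed point of $B$ over $V(v)$ with $G\ge|g|>0$; it is the least harmonic majorant of the subharmonic function $|g|$ (note $|g|\le B|g|$). The difficulty is that $G$ is obtained by a monotone limit \emph{from above}, so, unlike a harmonic minorant, it is not automatically dominated by $|g|\in X$, and one must prove both that $G$ is finite and that $G\in X(V(v))$. I expect this to be precisely where the absence of an uncontrolled ancestor direction is indispensable: Example \ref{ex-comb2} exhibits a chaotic shift on an unrooted tree without a free left end that has a universal but no positive universal fixed point, the obstruction being exactly that the majorant fails to lie in $X$ on the ancestor side. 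I would therefore concentrate on establishing $G\in X(V(v))$ from the rooted structure of $V(v)$ together with the unconditionality of the basis, after which the summation argument of $(\text{i}\textsubscript{b})\Rightarrow(\text{i}\textsubscript{a})$ reassembles a global positive universal fixed point and closes the equivalence.
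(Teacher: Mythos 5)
Your chain $[(\text{i\textsubscript{a}})\Leftrightarrow(\text{i\textsubscript{b}})]\Rightarrow(\text{ii})\Rightarrow(\text{iii})$ is correct and is essentially the paper's own argument: termwise domination $\lambda(\prt^n(v)\to v)f(v)\le f(\prt^n(v))$ plus unconditionality gives \eqref{eq-univpos}, extension and positive summation of the local fixed points gives (i\textsubscript{a}), and Theorem \ref{chaos-fixedpoint} supplies the rest. The genuine gap is the converse $(\text{iii})\Rightarrow(\text{i\textsubscript{a}})$ in the rooted / free-left-end case, which you rightly call the heart of the matter but leave open; moreover, the route you sketch cannot be repaired. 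Your candidate $G(u)=\sup_n\sum_{w\in\Chi^n(u)}|g(w)|$, the least harmonic majorant of $|g|$, can be identically infinite even on a rooted tree where the theorem holds. Take the binary rooted tree, $X=\ell^1(V,\mu)$ with $\mu_w=4^{-n}$ for $w\in\gen_n$ (so the unweighted shift $B$ is an operator on $X$), and define $f$ by $f(v_0)=1$ and by sending the value $c$ at any vertex to the values $2c$ and $-c$ at its two children. Then $f$ is a universal fixed point, $f\in X$ since $\sum_{w\in\gen_n}|f(w)|\mu_w=(3/4)^n$, but $\sum_{w\in\gen_n}|f(w)|=3^n\to\infty$, so $G\equiv\infty$. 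Thus finiteness of the majorant is not a technicality that "the rooted structure plus unconditionality" will deliver; it is false in general, and no harmonic function $\ge|f|$ need exist in $X$.

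The missing idea is to positivise \emph{from below} rather than from above. Given a universal fixed point $f$ on a rooted tree, set
\[
\alpha_v=\frac{|f(v)|}{\sum_{u\in\Chi(v)}\lambda_u|f(u)|}\in(0,1],
\qquad
g(v)=\Big(\prod_{n=1}^{m}\alpha_{\prt^n(v)}\Big)|f(v)| \ \text{ for } v\in\gen_m;
\]
the denominator dominates $|f(v)|$ precisely because $f$ is a fixed point, so $\alpha_v\le 1$. A one-line computation shows $\sum_{u\in\Chi(v)}\lambda_u g(u)=g(v)$, so $g$ is a strictly positive fixed point, and since $0<g\le|f|$ pointwise, unconditionality of the basis gives $g\in X$ for free --- exactly the domination your majorant lacks (in the example above, $\alpha_v=1/3$ at every vertex and $g$ works perfectly). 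With this construction in place of your $G$, the rest of your outline (the series \eqref{eq-univpos} being finite for rooted trees, the singleton-generation argument at a free left end, and reassembly via $(\text{i\textsubscript{b}})\Rightarrow(\text{i\textsubscript{a}})$) goes through as you describe.
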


\begin{proof} 
(i\textsubscript{a}) $\Longrightarrow$ (i\textsubscript{b}). Let $f\in X$ be a positive universal fixed point for $B_\lambda$, and let $v\in V$. It suffices to show 
\eqref{eq-univpos}. But since
\[
f(\prt^n(v)) = \sum_{u\in\Chi^n(\prt^n(v))} \lambda(\prt^n(v)\to u) f(u) \geq  \lambda(\prt^n(v)\to v) f(v)
\]
whenever $\prt^n(v)$ is defined, this follows from the unconditionality of the basis and the fact that $f\in X$ is strictly positive.

(i\textsubscript{b}) $\Longrightarrow$ (i\textsubscript{a}). For any $v\in V$, let $f_{v,+}$ be a positive universal fixed point for $B_\lambda$ in $X$ over $V(v)$. We may assume that $f_{v,+}(v)=1$, and we extend $f_{v,+}$ to $V$ by setting it zero outside $V(v)$. Then, by \eqref{eq-univpos},
\[
f_v:=f_{v,+}+ \sum_{n\geq 1} \lambda(\prt^n(v)\to v) e_{\prt^{n}(v)}\in X,
\]
and $f_v$ is a positive fixed point of $B_\lambda$ with $f_v(v)=1$. Choosing now numbers $\alpha_v>0$, $v\in V$, such that $f:=\sum_{v\in V}\alpha_vf_v$ converges in $X$, we have that $f$ is the desired positive universal fixed point for $B_\lambda$.

The implications (i\textsubscript{b}) $\Longrightarrow$ (ii) and (ii) $\Longrightarrow$ (iii) are given by Theorem \ref{chaos-fixedpoint}.

It remains to show that (iii) implies (i\textsubscript{a}) or (i\textsubscript{b}) when $V$ is a rooted tree or a tree with a free left end. We first consider the rooted case. Let $f\in X$ be a universal fixed point for $B_\lambda$. We define for $v\in V$,
\[
\alpha_v = \frac{|f(v)|}{\sum_{u\in \Chi(v)}\lambda_u|f(u)|},
\]
which is well-defined and strictly positive by the definition of the operator $B_\lambda$ and by the assumption. Also, since $f$ is a fixed point for $B_\lambda$, $|\alpha_v|\leq 1$. Now define for any $v\in V$
\[
g(v)= \Big(\prod_{n=1}^m \alpha_{\prt^n(v)}\Big)|f(v)|,
\]
where $m\geq 0$ is such that $v\in \gen_m$. It follows that, for all $v\in V$,
\[
0<g(v)\leq |f(v)|,
\]
which implies that $g\in X$, and
\begin{align*}
\sum_{u\in\Chi(v)} \lambda_u g(u) &= \Big(\prod_{n=0}^m \alpha_{\prt^n(v)}\Big)\sum_{u\in\Chi(v)} \lambda_u |f(u)|\\
 &= \Big(\prod_{n=0}^m \alpha_{\prt^n(v)}\Big)\frac{|f(v)|}{\alpha_v}= g(v),
\end{align*}
so that $g$ is a strictly positive fixed point of $B_\lambda$, giving us (i\textsubscript{a}).

In the unrooted case we have a free left end. Then the previous argument gives the first assertion in (i\textsubscript{b}), while the second one follows from Theorem \ref{chaos-fixedpoint}.
\end{proof}

As already announced, the converses of the two simple implications in the theorem both fail for arbitrary trees, as follows from Examples \ref{ex-comb} and \ref{ex-comb2}.

Incidentally, as noted in Remark \ref{rem-GMM}(b), it suffices to have \eqref{eq-univpos} only for some $v\in V$. But it is not enough to have all the condition (i\textsubscript{b}) only for some $v\in V$, as witnessed by Example \ref{ex-twobranched}.

We finish this section with a natural sufficient condition for chaos in terms of the weights. In the classical case of $V=\NN_0$, there is up to a multiplicative constant only one universal fixed point for $B_\lambda$, namely
\[
\sum_{n=1}^\infty \frac{1}{\prod_{k=1}^n \lambda_k} e_n.
\]
Thus, $B_\lambda$ is chaotic if and only if this element belongs to $X$. A similar statement holds on the tree $V=\ZZ$. See also \cite[Theorems 4.8 and 4.13]{GrPe11}. This motivates the following.

\begin{corollary}
Let $V$ be a leafless tree and $\lambda=(\lambda_v)_{v\in V}$ a weight. Let $X$ be a Fr\'echet sequence space over $V$ in which $(e_v)_{v\in V}$ is an unconditional basis, and suppose that the weighted backward shift $B_\lambda$ is an operator on $X$. If, for any $v\in V$, there is a branch $(v_0, v_1,v_2,\ldots)$ starting from $v_0:=v$ such that
\[
 \sum_{n\geq 1} \lambda(\prt^n(v)\to v)e_{\prt^{n}(v)} + \sum_{n=0}^\infty \frac{1}{\lambda(v\to v_n)} e_{v_n} \in X,
\]
where the first sum extends over the $n\geq 1$ for which $\prt^n(v)$ is defined, then $B_\lambda$ is chaotic.
\end{corollary}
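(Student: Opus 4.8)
The plan is to reduce everything to condition (i) of Theorem \ref{chaos-fixedpoint}, which already delivers chaos through the implication (i) $\Longrightarrow$ (ii\textsubscript{a}). So the whole task is to extract, from the single summability hypothesis on the displayed series, the two ingredients demanded by (i): a fixed point $f$ for $B_\lambda$ in $X$ over $V(v)$ with $f(v)=1$, and the membership \eqref{eq-fpleft}. Fix $v\in V$ and let $(v_0,v_1,v_2,\ldots)$ with $v_0:=v$ be the branch furnished by the hypothesis.

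First I would exploit unconditionality of the basis. The two series in the hypothesis are supported on disjoint sets of vertices: the strict ancestors $\prt^n(v)$, $n\geq 1$, on the one hand, and $v=v_0$ together with its branch descendants $v_n$, $n\geq 1$, on the other. Since their sum lies in $X$ and $(e_v)_{v\in V}$ is an unconditional basis, each of the two partial series lies in $X$ separately. The ancestor part is precisely \eqref{eq-fpleft}, so the second requirement of (i) is immediate.

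Second, for the fixed-point requirement, I would set $f:=\sum_{n=0}^\infty \frac{1}{\lambda(v\to v_n)}e_{v_n}$, which is supported on the branch (hence on $V(v)$) and lies in $X$ by the previous step. Then I would verify that $f$ is a fixed point for $B_\lambda$ over $V(v)$ with $f(v)=1$. Since $\lambda(v\to v_0)$ is an empty product equal to $1$, we get $f(v)=1$. To check $B_\lambda f=f$ on $V(v)$: at a branch vertex $v_n$ only the child $v_{n+1}$ contributes to $(B_\lambda f)(v_n)$, and the multiplicativity \eqref{eq-lambda} applied to the path from $v$ to $v_{n+1}$ via $v_n$ gives $\lambda_{v_{n+1}}/\lambda(v\to v_{n+1})=1/\lambda(v\to v_n)=f(v_n)$; at any other vertex $u\in V(v)$ none of its children lie on the branch, since each branch vertex has its unique parent on the branch, so both $(B_\lambda f)(u)$ and $f(u)$ vanish. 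Thus $f$ is a fixed point for $B_\lambda$ in $X$ over $V(v)$ with $f(v)=1$, completing the verification of (i).

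I do not expect a genuine obstacle here; the work is bookkeeping rather than difficulty. The points to watch are that the construction respects the convention that $B_\lambda$ on the subtree space $X(V(v))$ sends the root unit vector to $0$ (which is consistent with the computation above at $n=0$), and that leaflessness guarantees the maximal branch is genuinely infinite, so that the descendant series is not a vacuous object. Since $v\in V$ was arbitrary, condition (i) of Theorem \ref{chaos-fixedpoint} holds for every $v$, and chaos of $B_\lambda$ follows at once.
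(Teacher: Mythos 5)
Your proposal is correct and follows essentially the same route as the paper: the paper's proof likewise observes that $\sum_{n=0}^\infty \frac{1}{\lambda(v\to v_n)}e_{v_n}$ is a fixed point for $B_\lambda$ in $X$ over $V(v)$ with value $1$ at $v$, so that together with the ancestor series (which is \eqref{eq-fpleft}) condition (i) of Theorem \ref{chaos-fixedpoint} holds. Your write-up merely makes explicit the two bookkeeping steps the paper leaves implicit, namely splitting the hypothesis via unconditionality and verifying the fixed-point equations along and off the branch.
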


This follows from the fact that $\sum_{n=0}^\infty \frac{1}{\lambda(v\to v_n)} e_{v_n}$ is a fixed point for $B_\lambda$ in $X$ over $V(v)$ with value 1 at $v$. Thus condition (i) in Theorem \ref{chaos-fixedpoint} is satisfies.

It is not surprising that the condition in the corollary is in general not necessary.

\begin{example}\label{non-uniform}
Let $V$ be the binary rooted tree, that is, the rooted tree in which every vertex has exactly two children. Then the unweighted backward shift $B$ is an operator on $\ell^2(V)$ which has a universal fixed point $f$ in the space and is therefore chaotic; indeed, it suffices to let $f(v)=\frac{1}{2^n}$ if $v\in \gen_n$, $n\geq 0$. But there is no branch $(v_0, v_1,v_2,\ldots)$ for which $\sum_{n=0}^\infty e_{v_n}$ belongs to $X$.
\end{example}

\section{Chaos on unrooted trees}\label{s-unrooted}
It is a well-known phenomenon that the dynamical behaviour of bilateral weighted backward shifts on the tree $\mathbb{Z}$ is, in general, considerably more complicated than that for unilateral weighted backward shifts on the tree $\mathbb{N}_0$. 

On the other hand, chaotic bilateral weighted backward shifts are as easy to treat as unilateral ones, see \cite[Theorems 4.6, 4.12]{GrPe11}. This is due to the fact that the tree $\ZZ$ has a free left end. However, when we do not have a free left end, Theorem \ref{chaos-fixedpoint} and the subsequent examples of the comb tree show that chaos on unrooted trees will be a more difficult notion than chaos on rooted trees. It may therefore be surprising that, even then, the unrooted case can be reduced to chaos on rooted trees. This reduction, which is nonetheless rather technical, comes form a simple but very powerful observation.

We begin with the unweighted backward shift $B$, and we illustrate the basic idea in the special case of fixed points. Suppose that we search for a fixed point $f$ that is non-zero at a given vertex $v$. This involves conditions ``to the left'' and ``to the right'' of $v$. The latter simply means that the restriction $f_+$ of $f$ to the subtree
\[
V_+(v)=V(v) = \bigcup_{n\geq 0}\Chi^n(v)
\]
of descendants of $v$ is a fixed point on it, and $V(v)$ is a rooted tree with root $v$. 

For the other part, let us write
\[
v_{-n}=\prt^n(v), \quad \text{$n\geq 0$},
\]
in particular $v_0=v$. Then the remaining defining conditions for the fixed point $f$ are
\begin{equation}\label{eq-fp}
f(v_{-n})=f(v_{-n+1}) + \sum_{\substack{u\in\Chi(v_{-n})\\u\neq v_{-n+1}}} f(u),\quad n\geq 1,
\end{equation}
and, for any vertex $w$ that is neither one of the $v_{-n}$, $n\geq 0$, nor a descendant of $v$,
\begin{equation}\label{eq-inv1}
f(w)=\sum_{u\in\Chi(w)} f(u).
\end{equation}
Now, all we have to do is to write \eqref{eq-fp} as 
\begin{equation}\label{eq-inv2}
f(v_{-n+1})= f(v_{-n})+ \sum_{\substack{u\in\Chi(v_{-n})\\u\neq v_{-n+1}}} (-f(u)),\quad n\geq 1.
\end{equation}
Then \eqref{eq-inv1} and \eqref{eq-inv2} say exactly that if we consider
\[
V_-(v) = V\setminus \bigcup_{n\geq 1}\Chi^n(v)= \big(V\setminus V(v)\big) \cup\{v\},
\]
then the sequence $f_-$ on $V_-(v)$ given by
\begin{equation}\label{f-}
f_-(w) =
\begin{cases} 
f(w),&\text{if } w=v_{-n} \text{ for some $n\geq 0$},\\
-f(w),&\text{otherwise},
\end{cases}
\end{equation}
is a fixed point for the backward shift on the rooted tree $V_-(v)$, when we need to define a new parent-child relationship on $V_-(v)$. Indeed, for $n\geq 0$, the children of $v_{-n}$ are $v_{-n-1}$ and all the children of $v_{-n-1}$, \textit{except} $v_{-n}$, while all other relationships remain unchanged. That is, for any $w\in V_-(v)$,
\[
\Chi(w) =\begin{cases} 
\{v_{-n-1}\}\cup \Chi_V(v_{-n-1})\setminus \{v_{-n}\},&\text{if } w=v_{-n} \text{ for some $n\geq 0$},\\
\Chi_V(w),&\text{otherwise},
\end{cases}
\]
where $\Chi_V$ denotes the child mapping in $V$. See Figures \ref{fig-v_1a} and \ref{fig-v_1b}, where $W_{-n} = V(v_{-n})\setminus V(v_{-n+1})$ for $n\geq 1$.

\begin{figure}
\centering
\begin{minipage}[t]{.5\textwidth}
\centering
\begin{tikzpicture}[scale=1.5]
\draw[fill] (0,0) circle (.5pt);

\draw[->,>=latex] (-.5,0.25) -- (0,0);

\draw[->,>=latex] (0,0) -- (1,-.5);\draw[fill] (1,-.5) circle (.5pt);

\draw[->,>=latex] (0,0) -- (1,.6);\draw[fill] (1,.6) circle (.5pt);
\draw[->,>=latex] (0,0) -- (1,.8);\draw[fill] (1,.8) circle (.5pt);

\draw[->,>=latex] (1,-.5) -- (2,.1);\draw[fill] (2,.1) circle (.5pt);
\draw[->,>=latex] (1,-.5) -- (2,-.05);\draw[fill] (2,-.05) circle (.5pt);
\draw[->,>=latex] (1,-.5) -- (2,-.2);\draw[fill] (2,-.2) circle (.5pt);

\draw[->,>=latex] (1,.8) -- (2,1.2);\draw[fill] (2,1.2) circle (.5pt);
\draw[->,>=latex] (1,.6) -- (2,1);\draw[fill] (2,1) circle (.5pt);
\draw[->,>=latex] (1,.6) -- (2,0.8);\draw[fill] (2,0.8) circle (.5pt);

\draw[->,>=latex] (2,.8) -- (3,0.9);\draw[fill] (3,0.9) circle (.5pt);
\draw[->,>=latex] (2,1) -- (3,1.1);\draw[fill] (3,1.1) circle (.5pt);

\draw[->,>=latex] (2,1.2) -- (3,1.3);\draw[fill] (3,1.3) circle (.5pt);
\draw[->,>=latex] (2,1.2) -- (3,1.5);\draw[fill] (3,1.5) circle (.5pt);

\draw[->,>=latex] (2,.1) -- (3,.5);\draw[fill] (3,.5) circle (.5pt);
\draw[->,>=latex] (2,.1) -- (3,.35);\draw[fill] (3,.35) circle (.5pt);

\draw[->,>=latex] (2,-.05) -- (3,.2);\draw[fill] (3,.2) circle (.5pt);

\draw[->,>=latex] (2,-.2) -- (3,.05);\draw[fill] (3,.05) circle (.5pt);
\draw[->,>=latex] (2,-.2) -- (3,-.1);\draw[fill] (3,-.1) circle (.5pt);

\draw[->,>=latex] (1,-.5) -- (2,-1);\draw[fill] (2,-1) circle (.5pt);

\draw[->,>=latex] (2,-1) -- (3,-1.5);\draw[fill] (3,-1.5) circle (.5pt);

\draw[->,>=latex] (2,-1) -- (3,-1.3);\draw[fill] (3,-1.3) circle (.5pt);
\draw[->,>=latex] (2,-1) -- (3,-1.1);\draw[fill] (3,-1.1) circle (.5pt);
\draw[->,>=latex] (2,-1) -- (3,-.9);\draw[fill] (3,-.9) circle (.5pt);
\draw[->,>=latex] (2,-1) -- (3,-.7);\draw[fill] (3,-.7) circle (.5pt);
\draw[->,>=latex] (2,-1) -- (3,-.5);\draw[fill] (3,-.5) circle (.5pt);

\node at (3.5,1.3) {\footnotesize{$W_{-2}$}};
\node at (3.5,0.3) {\footnotesize{$W_{-1}$}};
\node at (3.5,-1.0) {\footnotesize{$V(v)$}};

\node at (.0,-.2) {\footnotesize{$v_{-2}$}};
\node at (1,-.7) {\footnotesize{$v_{-1}$}};
\node at (1.7,-1.2) {\footnotesize{$v_{0}=v$}};

\end{tikzpicture}
\caption{The tree $V$ and the subtree $V(v)$ (detail)}%
\label{fig-v_1a}
\end{minipage}%
\begin{minipage}[t]{.5\textwidth}
\begin{tikzpicture}[scale=1.5]
\draw[fill] (0,0) circle (.5pt);

\draw[->,>=latex] (0,0) -- (1,-.5);\draw[fill] (1,-.5) circle (.5pt);
\draw[->,>=latex] (0,0) -- (1,.6);\draw[fill] (1,.6) circle (.5pt);
\draw[->,>=latex] (0,0) -- (1,.45);\draw[fill] (1,.45) circle (.5pt);
\draw[->,>=latex] (0,0) -- (1,.3);\draw[fill] (1,.3) circle (.5pt);

\draw[->,>=latex] (1,-.5) -- (2,-1);\draw[fill] (2,-1) circle (.5pt);

\draw[->,>=latex] (1,-.5) -- (2,-.4);\draw[fill] (2,-.4) circle (.5pt);
\draw[->,>=latex] (1,-.5) -- (2,-.2);\draw[fill] (2,-.2) circle (.5pt);

\draw[->,>=latex] (1,.6) -- (2,1.0);\draw[fill] (2,1.0) circle (.5pt);
\draw[->,>=latex] (1,.6) -- (2,.85);\draw[fill] (2,.85) circle (.5pt);
\draw[->,>=latex] (1,.45) -- (2,.7);\draw[fill] (2,.7) circle (.5pt);
\draw[->,>=latex] (1,.3) -- (2,.55);\draw[fill] (2,.55) circle (.5pt);
\draw[->,>=latex] (1,.3) -- (2,.4);\draw[fill] (2,.4) circle (.5pt);

\draw[->,>=latex] (2,-1) -- (3,-1.5);\draw[fill] (3,-1.5) circle (.5pt);

\draw[->,>=latex] (2,-1) -- (3,-1.1);\draw[fill] (3,-1.1) circle (.5pt);
\draw[->,>=latex] (2,-1) -- (3,-1);\draw[fill] (3,-1) circle (.5pt);
\draw[->,>=latex] (2,-1) -- (3,-.9);\draw[fill] (3,-.9) circle (.5pt);

\draw[->,>=latex] (2,-.2) -- (3,-.1);\draw[fill] (3,-.1) circle (.5pt);
\draw[->,>=latex] (2,-.4) -- (3,-.3);\draw[fill] (3,-.3) circle (.5pt);
\draw[->,>=latex] (2,-.4) -- (3,-.5);\draw[fill] (3,-.5) circle (.5pt);

\draw[->,>=latex] (2,1.0) -- (3,1.3);\draw[fill] (3,1.3) circle (.5pt);
\draw[->,>=latex] (2,1.0) -- (3,1.2);\draw[fill] (3,1.2) circle (.5pt);
\draw[->,>=latex] (2,1.0) -- (3,1.1);\draw[fill] (3,1.1) circle (.5pt);

\draw[->,>=latex] (2,.85) -- (3,.95);\draw[fill] (3,.95) circle (.5pt);

\draw[->,>=latex] (2,.7) -- (3,.8);\draw[fill] (3,.8) circle (.5pt);

\draw[->,>=latex] (2,.55) -- (3,.65);\draw[fill] (3,.65) circle (.5pt);

\draw[->,>=latex] (2,.4) -- (3,.3);\draw[fill] (3,.3) circle (.5pt);
\draw[->,>=latex] (2,.4) -- (3,.4);\draw[fill] (3,.4) circle (.5pt);
\draw[->,>=latex] (2,.4) -- (3,.5);\draw[fill] (3,.5) circle (.5pt);

\node at (3.5,.8) {\footnotesize{$W_{-1}$}};
\node at (3.5,-.3) {\footnotesize{$W_{-2}$}};
\node at (3.5,-1) {\footnotesize{$W_{-3}$}};

\node at (-.4,0) {\footnotesize{$v_0=v$}};
\node at (1,-.7) {\footnotesize{$v_{-1}$}};
\node at (2,-1.2) {\footnotesize{$v_{-2}$}};
\node at (3,-1.65) {\footnotesize{$v_{-3}$}};

\end{tikzpicture}
\caption{The tree $V_-(v)$ (detail)}%
\label{fig-v_1b}
\end{minipage}
\end{figure}

However, a problem arises when we add a sequence space $X$ over $V$. We assume that the backward shift $B$ is an operator on $X$. Its restriction to $V(v)$ will remain continuous under the usual assumptions on $X$. On the other hand, the new parent-child relationship in $V_-(v)$ may destroy the continuity of $B$ on $V_-(v)$. Just think of the case when $V=\ZZ$ with weights $\mu_n=1/2^n$ for $n\geq 0$ and $\mu_n=1/|n|!$ if $n<0$. Then $B$ is a chaotic operator on $\ell^1(V,\mu)$, but $V_-(0)$ can be identified with $\NN_0$ and $B$ is not continuous on $\ell^1(\NN_0,(1/n!)_n)$. We therefore introduce the notion of a backward invariant sequence that requires no continuity of $B$. We take this opportunity to define additional notions that will be useful in the sequel.

\begin{definition}\label{d-backwinv}
Let $V$ be a tree and $\lambda=(\lambda_v)_{v\in V}$ a weight. Let $X$ be a Fr\'echet sequence space over V. Then a sequence $f\in \mathbb{K}^V$ is called \textit{$\lambda$-backward invariant} if, for any $v\in V$ that is not a leaf,
\begin{equation}\label{eq-ser}
f(v) = \sum_{u\in \Chi(v)} \lambda_u f(u),
\end{equation}
where the series are supposed to be unconditionally convergent. If $f\in X$ we call it \textit{$\lambda$-backward invariant in $X$ (over $V$)}. 
In the special case where $\lambda_v=1$ for all $v\in V$, we call $f$ \textit{backward invariant}.
\end{definition}

\begin{remark}
We stress two important aspects of this definition. First, we do not assume anything on the corresponding weighted backward shift $B_\lambda$. It need not be an operator on $X$, and in fact the series \eqref{eq-ser} might not even be defined for other sequences $f$. Secondly, unlike for fixed points, equation \eqref{eq-ser} need not apply to leaves $v$, which adds some flexibility that we will need later, see Subsection \ref{s-backinv}. In addition, as in Definition \ref{d-fpover}, the sequence space $X$ might initially be defined over a larger set than $V$.
\end{remark}

Returning to our discussion, we thus see that the sequences $f_+$ and $f_-$ are backward invariant on $V_+(v)=V(v)$ and $V_-(v)$, respectively. 

A gratifying, and in view of the examples on the comb tree also necessary, feature of this construction is that when we repeat it for periodic points of an arbitrary period $N\geq 1$ then the new rooted trees can no longer be obtained from the trees for fixed points.

Indeed, let again $v\in V$ be an arbitrary vertex. We enumerate the generations of $V$ in such a way that $v_0:=v\in\gen_0$. Then we define
\[
V^N_-(v)= \bigcup_{n\in\ZZ} \gen_{nN} \setminus \bigcup_{n\geq 1}\Chi^{nN}(v);
\]
in other words, we keep all vertices whose generation is a multiple of $N$ after deleting all descendants of $v$ (but keeping $v$). In addition, we have to define a new parent-child relationship on $V^N_-(v)$. More specifically, for any $w\in V^N_-(v)$, we define
\begin{equation}\label{eq-childN}
\Chi(w) =\begin{cases} 
\{v_{-nN-N}\}\cup \Chi^N_V(v_{-nN-N})\setminus \{v_{-nN}\},&\text{if } w=v_{-nN} \text{ for some $n\geq 0$},\\
\Chi_V^N(w),&\text{otherwise},
\end{cases}
\end{equation}
where $\Chi_V$ denotes again the child mapping in $V$. See Figure \ref{fig-v_N}, where 
\begin{equation}\label{eq-WnNN}
W_{-nN}^N = \big(V(v_{-nN})\setminus V(v_{-nN+N})\big) \cap \bigcup_{k\in\ZZ} \gen_{kN}, \ n\geq 1. 
\end{equation}

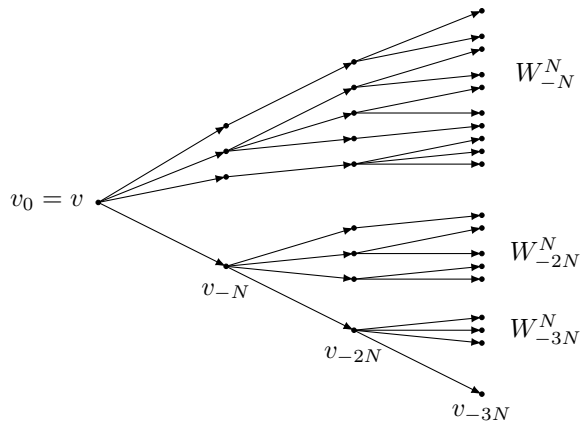
\begin{figure}
\begin{tikzpicture}[scale=1.7]
\draw[fill] (0,0) circle (.5pt);

\draw[->,>=latex] (0,0) -- (1,-.5);\draw[fill] (1,-.5) circle (.5pt);
\draw[->,>=latex] (0,0) -- (1,.6);\draw[fill] (1,.6) circle (.5pt);
\draw[->,>=latex] (0,0) -- (1,.4);\draw[fill] (1,.4) circle (.5pt);
\draw[->,>=latex] (0,0) -- (1,.2);\draw[fill] (1,.2) circle (.5pt);

\draw[->,>=latex] (1,-.5) -- (2,-1);\draw[fill] (2,-1) circle (.5pt);

\draw[->,>=latex] (1,-.5) -- (2,-.6);\draw[fill] (2,-.6) circle (.5pt);
\draw[->,>=latex] (1,-.5) -- (2,-.4);\draw[fill] (2,-.4) circle (.5pt);
\draw[->,>=latex] (1,-.5) -- (2,-.2);\draw[fill] (2,-.2) circle (.5pt);

\draw[->,>=latex] (1,.6) -- (2,1.1);\draw[fill] (2,1.1) circle (.5pt);

\draw[->,>=latex] (1,.4) -- (2,.9);\draw[fill] (2,.9) circle (.5pt);
\draw[->,>=latex] (1,.4) -- (2,.7);\draw[fill] (2,.7) circle (.5pt);
\draw[->,>=latex] (1,.4) -- (2,.5);\draw[fill] (2,.5) circle (.5pt);

\draw[->,>=latex] (1,.2) -- (2,.3);\draw[fill] (2,.3) circle (.5pt);

\draw[->,>=latex] (2,-1) -- (3,-1.5);\draw[fill] (3,-1.5) circle (.5pt);

\draw[->,>=latex] (2,-1) -- (3,-1.1);\draw[fill] (3,-1.1) circle (.5pt);
\draw[->,>=latex] (2,-1) -- (3,-1);\draw[fill] (3,-1) circle (.5pt);
\draw[->,>=latex] (2,-1) -- (3,-.9);\draw[fill] (3,-.9) circle (.5pt);

\draw[->,>=latex] (2,-.2) -- (3,-.1);\draw[fill] (3,-.1) circle (.5pt);
\draw[->,>=latex] (2,-.4) -- (3,-.2);\draw[fill] (3,-.2) circle (.5pt);
\draw[->,>=latex] (2,-.4) -- (3,-.4);\draw[fill] (3,-.4) circle (.5pt);
\draw[->,>=latex] (2,-.6) -- (3,-.5);\draw[fill] (3,-.5) circle (.5pt);
\draw[->,>=latex] (2,-.6) -- (3,-.6);\draw[fill] (3,-.6) circle (.5pt);

\draw[->,>=latex] (2,1.1) -- (3,1.5);\draw[fill] (3,1.5) circle (.5pt);
\draw[->,>=latex] (2,1.1) -- (3,1.3);\draw[fill] (3,1.3) circle (.5pt);

\draw[->,>=latex] (2,.9) -- (3,1.2);\draw[fill] (3,1.2) circle (.5pt);
\draw[->,>=latex] (2,.9) -- (3,1);\draw[fill] (3,1) circle (.5pt);

\draw[->,>=latex] (2,.7) -- (3,.9);\draw[fill] (3,.9) circle (.5pt);
\draw[->,>=latex] (2,.7) -- (3,.7);\draw[fill] (3,.7) circle (.5pt);

\draw[->,>=latex] (2,.5) -- (3,.6);\draw[fill] (3,.6) circle (.5pt);

\draw[->,>=latex] (2,.3) -- (3,.3);\draw[fill] (3,.3) circle (.5pt);
\draw[->,>=latex] (2,.3) -- (3,.4);\draw[fill] (3,.4) circle (.5pt);
\draw[->,>=latex] (2,.3) -- (3,.5);\draw[fill] (3,.5) circle (.5pt);

\node at (3.5,1) {\footnotesize{$W^N_{-N}$}};
\node at (3.5,-.4) {\footnotesize{$W^N_{-2N}$}};
\node at (3.5,-1) {\footnotesize{$W^N_{-3N}$}};

\node at (-.4,0) {\footnotesize{$v_0=v$}};
\node at (1,-.7) {\footnotesize{$v_{-N}$}};
\node at (2,-1.2) {\footnotesize{$v_{-2N}$}};
\node at (3,-1.65) {\footnotesize{$v_{-3N}$}};
\end{tikzpicture}
\caption{The tree $V_-^N(v)$, $N\geq 1$ (detail)}%
\label{fig-v_N}
\end{figure}

In the proof of the following theorem, the corresponding tree $V^N_+(v)$ will also appear, but not in its statement because, just like in the proof of Theorem \ref{chaos-fixedpoint}, any fixed point on $V(v)$  will provide us with suitable periodic points. 

Recall that in our setting, by \cite[Remark 4.1]{GrPa21}, no weighted backward shift can by hypercyclic if the underlying tree has a leaf. We may therefore assume that the tree is leafless. 

In the sequel, $\|\cdot\|$ will denote an F-norm defining the topology of the Fr\'echet sequence space $X$. 

\begin{theorem}\label{t-chunrooted}
Let $V$ be a leafless unrooted tree. Let $X$ be a Fr\'echet sequence space over $V$ in which $(e_v)_{v\in V}$ is an unconditional basis, and suppose that the backward shift $B$ is an operator on $X$. Then $B$ is chaotic if and only if, for any $v\in V$,
\begin{enumerate}
\item[\rm (i)] there is a fixed point $f$ for $B$ in $X$ over the rooted tree $V(v)$ with $f(v)=1$;
\item[\rm (ii)] for any $\eps>0$ there is some $N\geq 1$ and a backward invariant sequence $f$ in $X$ over the rooted tree $V^N_-(v)$ with $f(v)=1$ such that
\[
\|f-e_v\|<\eps.
\]
\end{enumerate}
\end{theorem}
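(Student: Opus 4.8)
The plan is to run both implications through a single \emph{dictionary} between the period-$N$ points of $B$ on $V$ and the pairs consisting of a $B^N$-fixed point on the ``rightward'' rooted tree $V^N_+(v):=\bigcup_{n\ge0}\Chi^{nN}(v)$ and a backward invariant sequence on the ``leftward'' rooted tree $V^N_-(v)$. Fixing $v$ and enumerating generations so that $v\in\gen_0$, the observation is this: if $p$ is a period-$N$ point, then its restriction to $V^N_+(v)$ is a $B^N$-fixed point, while the sign-modified restriction
\[
f_-(w)=\begin{cases}p(w),& w=\prt^{nN}(v)\ \text{for some }n\ge0,\\[2pt] -p(w),&\text{otherwise,}\end{cases}\qquad w\in V^N_-(v),
\]
is backward invariant over $V^N_-(v)$ in the sense of Definition~\ref{d-backwinv}; conversely, gluing any such pair (undoing the signs) and extending by $0$ on the generations that are not multiples of $N$ reassembles a period-$N$ point of $B$ on all of $V$. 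This is precisely the computation \eqref{eq-fp}--\eqref{f-} read off against the $N$-coarsened child map \eqref{eq-childN} and the sets \eqref{eq-WnNN}, so establishing it is a routine case check. Throughout I would use that restricting a sequence to a subset of $V$ and flipping signs of its entries are continuous on $X$, with an $F$-norm bound governed by a single constant $C$, because $(e_v)_{v\in V}$ is an unconditional basis.

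\textbf{Necessity.} Assuming $B$ chaotic, condition (i) comes from Theorem~\ref{chaos-fixedpoint}: chaos yields a universal fixed point $f\in X$, and its restriction to the rooted tree $V(v)$ is a fixed point for $B$ over $V(v)$ with value $f(v)\ne0$ at $v$, so rescaling gives (i). For (ii), fix $\eps>0$; density of $\per(B)$ yields a periodic point $p$ with $\|p-e_v\|$ arbitrarily small, and since the coordinate functional at $v$ is continuous I may assume $p(v)\ne0$ and, after rescaling, $p(v)=1$ with $\|p-e_v\|$ still small. Taking $N$ to be a period of $p$, the sequence $f_-$ above is backward invariant over $V^N_-(v)$ with $f_-(v)=1$; since $f_--e_v$ is the sign-modified restriction of $p-e_v$, unconditionality gives $\|f_--e_v\|\le C\|p-e_v\|<\eps$, which is (ii).

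\textbf{Sufficiency.} Here I would prove only that $\overline{\per(B)}=X$, because Theorem~\ref{chaos-fixedpoint} then upgrades this to chaos, and even to mixing, with no further hypothesis on the tree. As $\per(B)$ is a subspace and $(e_v)$ is a basis, it suffices to approximate each $e_v$ by a periodic point. Fixing $\eps>0$, let $f_+\in X$ be the fixed point over $V(v)$ from (i), and take $f_-$ over $V^N_-(v)$ from (ii). Gluing $f_-$ (signs undone) on $V^N_-(v)$ with the $B^N$-fixed point $f_+\chi_{V^N_+(v)}$ and extending by $0$ produces, by the dictionary, a period-$N$ point $G\in X$ with $G(v)=1$.

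The hard part is that the two hypotheses are calibrated to \emph{different} periods: (ii) supplies a leftward part only for some unspecified $N$, whereas the rightward error $\|f_+\chi_{\bigcup_{n\ge1}\Chi^{nM}(v)}\|$ is small only when the period $M$ is large (it tends to $0$ as $M\to\infty$ since $f_+\in X$ and the basis is unconditional). The device that reconciles them will be to \emph{re-coarsen} $G$: for any multiple $M=kN$ the thinned sequence $g:=G\,\chi_{\bigcup_{m}\gen_{mM}}$ is again periodic, now of period $M$, because $B^NG=G$ forces $B^MG=G$ and deleting the generations that are not multiples of $M$ preserves the defining equations of $B^M$. Then $g(v)=1$; the rightward error equals $\|f_+\chi_{\bigcup_{n\ge1}\Chi^{nM}(v)}\|$, which I make $\le\eps/2$ by taking $k$ large, while the leftward error $\|g\,\chi_{V^M_-(v)\setminus\{v\}}\|=\|G\,\chi_{V^M_-(v)\setminus\{v\}}\|$ stays controlled because $V^M_-(v)\setminus\{v\}\subseteq V^N_-(v)\setminus\{v\}$ carries no descendant of $v$, so there $G$ agrees with the sign-modified $f_-$, and restriction keeps this $\le C\|f_--e_v\|\le\eps/2$. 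Hence $g\in\per(B)$ with $\|g-e_v\|<\eps$. The genuinely delicate ingredients are the bookkeeping behind the dictionary (matching \eqref{eq-childN}) and the fact that these $F$-norm estimates hold uniformly in $k$; both rest on the unconditionality of $(e_v)_{v\in V}$.
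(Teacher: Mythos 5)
Your argument is correct, and its overall skeleton (the dictionary between period-$N$ points of $B$ and pairs consisting of a $B^N$-fixed point on $V^N_+(v)$ and a backward invariant sequence on $V^N_-(v)$, plus the reduction via Theorem \ref{chaos-fixedpoint} to approximating each $e_v$ by periodic points) coincides with the paper's; where you genuinely diverge is in how sufficiency reconciles the two incompatible scales. The paper forces the $N$ supplied by (ii) to be large: it introduces the closed sets $P_M$ of sequences satisfying the single equation $f(v)=f(\prt^M(v))+\sum_{u\in\Chi^M(\prt^M(v)),\,u\neq v}f(u)$, notes that $P=\bigcup_{M=1}^{N_0}P_M$ is closed and misses $e_v$, and applies (ii) with tolerance $\eta=\min(\mathrm{dist}(P,e_v),\delta_\eps)$; since the resulting backward invariant sequence lies in $P_N$, necessarily $N>N_0$, and the glued sequence itself is already the desired periodic point. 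You instead accept a possibly small $N$, glue to obtain a period-$N$ point $G$, and re-coarsen: $g=G\chi_{\bigcup_m\gen_{mM}}$ with $M=kN$ is again periodic, its rightward part $f_+\chi_{\bigcup_{n\geq1}\Chi^{nM}(v)}$ becomes small for $k$ large, and its leftward part is a coordinatewise $\{0,\pm 1\}$-multiple of $f_--e_v$, because $V^M_-(v)\setminus\{v\}$ is contained in $V^N_-(v)\setminus\{v\}$ and contains no descendant of $v$ (a vertex of $V^M_-(v)$ lying in $\Chi^{jN}(v)$ would lie in some generation $mM=mkN$, forcing $j=mk$ and hence its exclusion from $V^M_-(v)$). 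This trades the paper's metric/topological step for the elementary thinning observation already used in the proof of Theorem \ref{chaos-fixedpoint}, and is arguably more constructive; the paper's variant, in return, needs no re-coarsening and keeps the period delivered by (ii). One small repair: an F-norm is not homogeneous, so there is no universal constant $C$ with $\|(\alpha_uf(u))_u\|\leq C\|f\|$ for $\sup_u|\alpha_u|\leq1$; replace this by the $\delta_\eps$-device the paper sets up at the start of its proof (for every $\eps>0$ there is $\delta_\eps>0$ such that $\|f\|<\delta_\eps$ implies $\|(\alpha_uf(u))_u\|<\eps$ for all such $(\alpha_u)$), and invoke (ii) with $\delta_{\eps/2}$ in place of $\eps/2$ --- harmless, since (ii) is assumed for every positive tolerance.
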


\begin{proof}
Throughout this proof, if $v_0:=v\in V$ we let $v_{-n}=\prt^n(v)$, $n\geq 1$. And we note that, by unconditionality of the basis, there exists, for any $\eps>0$, some $\delta_\eps>0$ such that $\|f\|<\delta_\eps$ implies that $\|(\alpha_uf(u))_u\|<\eps$ for any sequence $(\alpha_u)_{u\in V}$ on $V$ with $\sup_{u\in V}|\alpha_u|\leq 1$.

We first prove necessity. Thus suppose that $B$ is chaotic on $X$, and let $v\in V$.  

(i) By Theorem \ref{chaos-fixedpoint}, $B$ admits a fixed point $f\in X$ for $B$ with $f(v)=1$. It then suffices to consider its restriction to $V(v)$.

(ii) Now, let $\eps>0$. Since $B$ is chaotic, there is a periodic point $f\in X$ for $B$ with $\|f-e_v\|<\delta_\eps$, and we can assume that $f(v)=1$. Let $N$ be its period. Then $f$ satisfies
\begin{equation}\label{eq-backw1}
f(v_{-nN}) = f(v_{-nN+N}) + \sum_{\substack{u\in\Chi^N(v_{-nN})\\ u\neq v_{-nN+N}}} f(u),\quad n\geq 1,
\end{equation}
as well as
\begin{equation}\label{eq-backw2}
f(w) = \sum_{u\in \Chi^N(w)} f(u)\quad \text{if } w\in V^N_-(v)\setminus \{v_{-nN}: n\geq 0\}.
\end{equation}
This shows that the sequence $f_-^N$ on $V_-^N(v)$ given by
\[
f^N_-(w) =
\begin{cases} 
f(w),&\text{if } w=v_{-nN} \text{ for some $n\geq 0$},\\
-f(w),&\text{otherwise},
\end{cases}
\]
is backward invariant on $V^N_-(v)$ with $f^N_-(v)=1$. Moreover, $\|f^N_- - e_v\|<\eps$.

We turn to the proof of sufficiency. We need to show that, under the assumptions (i) and (ii), $B$ is chaotic on $X$. Since the set of periodic points is a vector space and since $(e_v)_v$ is a basis in $X$ it suffices, by Theorem \ref{chaos-fixedpoint}, to show that, for any $v\in V$ and $\eps>0$ there exists a periodic point $f$ for $B$ such that
\[
\|f-e_v\|<2\eps.
\]

Thus, let $v_0:=v\in V$ and $\eps>0$. First, by (i), there exists a fixed point $f_+$ for $B$ in $X$ over $V_+(v)=V(v)$ with $f_+(v)=1$. Then, for any $N\geq 1$, the restriction $f_{1,N}$ to $V^N_+(v):=\bigcup_{n\geq 0}\Chi^{nN}(v)$ satisfies
\begin{equation}\label{ex-backw0}
f_{1,N}(w) = \sum_{u\in \Chi^N(w)} f(u), \quad w\in V^N_+(v).
\end{equation}
Note that $f_{1,N}(v)=1$. It follows from the unconditionality of the basis that there is some $N_0\geq 1$ such that, for any $N> N_0$,
\[
\|f_{1,N}-e_v\|<\eps.
\]
We extend $f_{1,N}$ to $X$ by setting it zero outside $V^N_+(v)$.

Now, for any $N\geq 1$, let $P_N$ be the set of all sequences $f$ on $X$ that satisfy
\begin{equation}\label{ex-backw}
f(v_{0}) = f(v_{-N}) + \sum_{\substack{u\in\Chi^N(v_{-N})\\ u\neq v_{0}}} f(u).
\end{equation}
By continuity of $B$ these are closed subsets of $X$. Thus $P:=\bigcup_{N=1}^{N_0}P_N$ is a closed set in $X$ that does not contain $e_v$. Let
\[
\eta=\min(\text{dist}(P,e_v),\delta_\eps),
\]
where $\delta_\eps$ is as given above. By (ii) there is some $N\geq 1$ and a backward invariant sequence $f_-$ in $X$ over $V_-^N(v)$ with $f_-(v)=1$ such that 
\[
\|f_--e_v\|<\eta.
\]
It follows by backward invariance on $V_-^N(v)$ that $f_-$ satisfies \eqref{ex-backw}, hence belongs to $P_N$. On the other hand, since $\|f_--e_v\|<\text{dist}(P,e_v)$ it follows that $N>N_0$. Now define $f_2$ on $V_-^N(v)$ by
\[
f_2(w) =
\begin{cases} 
f_-(w),&\text{if } w=v_{-nN} \text{ for some $n\geq 0$},\\
-f_-(w),&\text{otherwise}.
\end{cases}
\]
Note that $f_2(v)=1$. Since $\|f_--e_v\|<\delta_\eps$, it follows that
\[
\|f_2-e_v\|<\eps.
\]
We extend $f_2$ to $X$ by setting it zero outside $V_-^N(v)$.

Finally, we define the sequence $f\in X$ by
\[
f=f_{1,N}+f_2-e_v
\]
with $N$ as fixed in the construction of $f_2$. Then $f(v)=1$, and we have that
\[
\|f-e_v\|<2\eps.
\]
Moreover, since $f_-$ is backward invariant on $V^N_-(v)$ it follows that $f_2$ and therefore $f$ satisfies \eqref{eq-backw1} and \eqref{eq-backw2}; we use here that $V$ has no leaves. And $f_{1,N}$ and therefore $f$ satisfies \eqref{ex-backw0}. These three equations tell us that $f$ is a periodic point of $B$ of order $N$. This had to be shown.
\end{proof}

Now it is easy but rather tedious to transfer the theorem to arbitrary weighted backward shifts. One possibility is to repeat the arguments above for $B_\lambda$. Let us just indicate the crucial difference. For a fixed point $f$ for $B_\lambda$, equations \eqref{eq-fp} turn into
\[
f(v_{-n})=\lambda_{v_{-n+1}}f(v_{-n+1}) + \sum_{\substack{u\in\Chi(v_{-n})\\ u\neq v_{-n+1}}} \lambda_u f(u),\quad n\geq 1,
\]
which is then rewritten as  
\begin{equation}\label{eq-fp3}
f(v_{-n+1})= \frac{1}{\lambda_{v_{-n+1}}}f(v_{-n})+ \sum_{\substack{u\in\Chi(v_{-n})\\ u\neq v_{-n+1}}} \frac{\lambda_u}{\lambda_{v_{-n+1}}} (-f(u)),\quad n\geq 1.
\end{equation}
More generally, for a periodic point $f$ for $B_\lambda$ of period $N\geq 1$, the equation \eqref{eq-backw1} now becomes
\begin{equation}\label{eq-fp4}
f(v_{-nN}) = \lambda(v_{-nN}\to v_{-nN+N})f(v_{-nN+N}) + \sum_{\substack{u\in\Chi^N(v_{-nN})\\u\neq v_{-nN+N}}}\lambda(v_{-nN}\to u) f(u),\quad n\geq 1,
\end{equation}
which turns into
\begin{equation}\label{eq-fp5}
\begin{split}
f(v_{-nN+N}) = \frac{1}{\lambda(v_{-nN}\to v_{-nN+N})}&f(v_{-nN})\, + \\
+&\sum_{\substack{u\in\Chi^N(v_{-nN})\\u\neq v_{-nN+N}}}\frac{\lambda(v_{-nN}\to u)}{\lambda(v_{-nN}\to v_{-nN+N})} (-f(u)),\quad n\geq 1.
\end{split}
\end{equation}
The equations \eqref{eq-inv1} and \eqref{eq-backw2} have obvious modifications. 

It is thus natural to define a new weight $\lambda_-^N$ on $V_-^N(v)$ as follows:
\begin{equation}\label{eq-lambdan-}
\begin{split}
\lambda^N_{-,w}=
\begin{cases}
\displaystyle\frac{1}{\lambda(v_{-nN}\to v_{-nN+N})},& \text{if } w=v_{-nN}, n\geq 1,\\[1em]
\displaystyle\frac{\lambda(v_{-nN}\to w)}{\lambda(v_{-nN}\to v_{-nN+N})},&\text{if } w\in\Chi(v_{-nN+N})\setminus \{v_{-nN}\}, n\geq 1,\\[1em]
\lambda(\prt^N(w)\to w),& \text{otherwise}.
\end{cases}
\end{split}
\end{equation}
It is important to note that in \eqref{eq-fp4} and \eqref{eq-fp5} the sets $\Chi$ refer to the tree $V$, while in \eqref{eq-lambdan-} the set $\Chi$ refers to the tree $V_-^N(v)$; in particular, the set $\Chi^N(v_{-nN})\setminus\{v_{-nN+N}\}$ in $V$ is equal to the set $\Chi(v_{-nN+N})\setminus \{v_{-nN}\}$ in $V_-^N(v)$.

Let us also spell out the weight $\lambda_-=\lambda^1_-$ on $V_-(v)=V^1_-(v)$: we have that
\begin{equation}\label{eq-lambda-}
\begin{split}
\lambda_{-,w}=
\begin{cases}
\displaystyle\frac{1}{\lambda_{v_{-n+1}}},& \text{if } w=v_{-n}, n\geq 1,\\[1em]
\displaystyle\frac{\lambda_w}{\lambda_{v_{-n+1}}},& \text{if } w\in\Chi(v_{-n+1})\setminus \{v_{-n}\}, n\geq 1,\\[1em]
\lambda_w,& \text{otherwise}.
\end{cases}
\end{split}
\end{equation}

We remark in passing that for the weight $\lambda=(1)_{v\in V}$ the weights $\lambda_-^N$ are again of this form.

A second way to generalize Theorem \ref{t-chunrooted} is to use a conjugacy to replace $B_\lambda$ on $X$ by the unweighted backward shift $B$ on some weighted space $X_\mu$. However, for the definition of the weight $\mu$, a distinguished vertex was fixed in $V$. One has to show that it can be replaced by the vertex $v$ appearing in the conditions (i) and (ii). And one finally has to use two more conjugacies, separately on $V(v)$ and $V_-^N(v)$. 

In both ways one arrives at the main result of this section.

\begin{theorem}\label{t-chunrootedgen}
Let $V$ be a leafless unrooted tree and $\lambda=(\lambda_v)_{v\in V}$ a weight. Let $X$ be a Fr\'echet sequence space over $V$ in which $(e_v)_{v\in V}$ is an unconditional basis, and suppose that the weighted backward shift $B_\lambda$ is an operator on $X$. Then $B_\lambda$ is chaotic if and only if, for any $v\in V$,
\begin{enumerate}
\item[\rm (i)] there is a fixed point $f$ for $B_\lambda$ in $X$ over the rooted tree $V(v)$ with $f(v)=1$;
\item[\rm (ii)] for any $\eps>0$ there is some $N\geq 1$ and a $\lambda^N_-$-backward invariant sequence $f$ in $X$ over the rooted tree $V^N_-(v)$ with $f(v)=1$ such that
\[
\|f-e_v\|<\eps.
\]
\end{enumerate}
\end{theorem}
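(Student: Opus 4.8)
The plan is to carry over, essentially verbatim, the proof of Theorem \ref{t-chunrooted} with the unweighted backward shift $B$ replaced by $B_\lambda$; the only structural change is that the defining equations of a periodic point acquire the weight factors recorded in \eqref{eq-fp4}--\eqref{eq-fp5}. The decisive observation is that, after the same sign change as in the unweighted case, the ``left'' part of a period-$N$ point for $B_\lambda$ is exactly a $\lambda^N_-$-backward invariant sequence on the rooted tree $V^N_-(v)$, where $\lambda^N_-$ is the weight \eqref{eq-lambdan-} read off from \eqref{eq-fp5}. This is what lets conditions (i) and (ii) encode, respectively, the behaviour of a periodic point to the right and to the left of $v$.

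For necessity I would argue as before: if $B_\lambda$ is chaotic then Theorem \ref{chaos-fixedpoint} supplies a fixed point $f\in X$ with $f(v)=1$, whose restriction to $V(v)$ gives (i); and for (ii), given $\eps>0$, a periodic point approximating $e_v$ (of some period $N$) satisfies the weighted relation \eqref{eq-fp4} and the obvious analogue of \eqref{eq-backw2}, so that its sign-flipped restriction $f^N_-$ is $\lambda^N_-$-backward invariant on $V^N_-(v)$, normalized and $\eps$-close to $e_v$. For sufficiency I would again reduce, via Theorem \ref{chaos-fixedpoint}, to approximating each $e_v$ by a periodic point: condition (i) yields an $f_{1,N}$ on $V^N_+(v)$ that is $\eps$-close to $e_v$ once $N>N_0$, condition (ii) yields a $\lambda^N_-$-backward invariant $f_-$ close to $e_v$, and the same distance argument as in the proof of Theorem \ref{t-chunrooted} forces the period to exceed $N_0$ so that the two pieces glue along $v$ into a genuine period-$N$ point $f=f_{1,N}+f_2-e_v$ (using here that $V$ is leafless). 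Every one of these steps uses only the unconditional basis and the continuity of $B_\lambda$, both available by hypothesis, so the transcription is routine once the weighted equations are in place.

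The cleaner but more delicate alternative is to invoke the conjugacy of Section \ref{s-notation}: choosing the weight $\mu$ of \eqref{eq-conjB2} makes $B_\lambda$ on $X$ isometrically conjugate to $B$ on $X_\mu$, reducing the statement to Theorem \ref{t-chunrooted} for $B$. The obstacle here---and the one genuinely new point in either approach---is that the weight $\mu$ in \eqref{eq-conjB2} is built from a single fixed reference vertex, whereas (i) and (ii) must hold at every $v\in V$. I would resolve this by noting, via \eqref{eq-lambda}, that moving the reference vertex rescales $\mu$ by a global nonzero constant on each relevant subtree, so it changes neither membership in $X_\mu$ nor the backward invariance relations; one may therefore freely anchor the conjugacy at the vertex $v$ under consideration, and on the left use the conjugacy adapted to $V^N_-(v)$ with its new parent--child relation, whereupon the translated weight is precisely $\lambda^N_-$. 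Checking that these two local conjugacies agree along $v$, so that the right and left halves still assemble into one periodic point, is the only part that requires care.
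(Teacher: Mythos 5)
Your proposal is correct and matches the paper's own treatment: the paper likewise only sketches the proof of Theorem \ref{t-chunrootedgen}, offering exactly your two routes --- repeating the proof of Theorem \ref{t-chunrooted} with the weighted relations \eqref{eq-fp4}--\eqref{eq-fp5} giving rise to the weight $\lambda^N_-$ of \eqref{eq-lambdan-}, or alternatively conjugating $B_\lambda$ to $B$ on $X_\mu$, where one must justify moving the reference vertex of \eqref{eq-conjB2} to the vertex $v$ at hand and then use two further conjugacies on $V(v)$ and $V^N_-(v)$. Your observation that changing the reference vertex only rescales $\mu$ by a global nonzero constant (via \eqref{eq-lambda}) is precisely the point the paper leaves implicit.
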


Using the same ideas we can also give a characterization of the existence of universal fixed points, complementing our results in Theorems \ref{chaos-fixedpoint} and \ref{chaos-positive}. Again, the tree has to be leafless because $(B_\lambda f)(v)=0$ for any leaf $v$. Recall the definition of the weight $\lambda_-$, see \eqref{eq-lambda-}. And we will say that a backward invariant sequence is \textit{universal} if it is non-zero on the whole tree.

\begin{theorem}\label{chaos-positive3}
Let $V$ be a leafless unrooted tree and $\lambda=(\lambda_v)_{v\in V}$ a weight. Let $X$ be a Fr\'echet sequence space over $V$ in which $(e_v)_{v\in V}$ is an unconditional basis, and suppose that the weighted backward shift $B_\lambda$ is an operator on $X$. 

Then $B_\lambda$ has a universal fixed point in $X$ if and only if, for some $v\in V$ and then for all $v\in V$, there is a universal fixed point for $B_\lambda$ in $X$ over the rooted tree $V(v)$ and a universal $\lambda_-$-backward invariant sequence in $X$ over the rooted tree $V_-(v)$.
\end{theorem}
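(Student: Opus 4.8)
The plan is to exploit the decomposition of $V$ at the vertex $v$ into the two rooted trees $V(v)$ and $V_-(v)$, together with the fixed-point correspondence developed in this section. Recall that the discussion surrounding \eqref{eq-fp3} and \eqref{eq-lambda-} shows that a sequence $f\in\KK^V$ is a fixed point for $B_\lambda$ on $V$ if and only if its restriction $f_+=f|_{V(v)}$ is backward invariant (that is, a fixed point) for $B_\lambda$ on the rooted tree $V(v)$, and the sign-modified restriction $f_-$ to $V_-(v)$, given by $f_-(w)=f(w)$ if $w=\prt^n(v)$ for some $n\geq 0$ and $f_-(w)=-f(w)$ otherwise, is $\lambda_-$-backward invariant on the rooted tree $V_-(v)$. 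This is a bijection between fixed points $f$ on $V$ and pairs $(f_+,f_-)$ that agree at $v$, since $f_+(v)=f_-(v)=f(v)$; and it preserves absolute values entrywise, because $|f_\pm(w)|=|f(w)|$. In particular $f$ is non-zero throughout $V$ if and only if $f_+$ is non-zero throughout $V(v)$ and $f_-$ is non-zero throughout $V_-(v)$. Since $(e_v)_{v\in V}$ is an unconditional basis, membership in $X$ is likewise preserved: changing signs of entries, and passing to a coordinate restriction or a zero-extension, all map $X$ into $X$. We also use that, as $V$ is leafless, both $V(v)$ and $V_-(v)$ are leafless, so backward invariance there coincides with the fixed-point equation at every vertex.

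For necessity, suppose $B_\lambda$ has a universal fixed point $f\in X$, and fix any $v\in V$. Then $f_+=f|_{V(v)}$ lies in $X(V(v))$, is a fixed point for $B_\lambda$ over $V(v)$, and is non-zero throughout $V(v)$, hence is a universal fixed point over $V(v)$. Likewise $f_-$, obtained from $f|_{V_-(v)}$ by the sign change above, lies in $X(V_-(v))$ by unconditionality, is $\lambda_-$-backward invariant over $V_-(v)$, and is non-zero throughout $V_-(v)$. Thus the required pair exists, and this holds for every $v\in V$.

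For sufficiency, suppose that for some $v\in V$ there are a universal fixed point $g_+$ for $B_\lambda$ in $X$ over $V(v)$ and a universal $\lambda_-$-backward invariant sequence $g_-$ in $X$ over $V_-(v)$. Rescaling, which preserves both properties by linearity, we may assume $g_+(v)=g_-(v)=1$, so the two sequences agree at the common vertex $v$. We glue them into a sequence $f$ on $V$ by setting $f=g_+$ on $V(v)$ and, on $V_-(v)$, recovering $f$ from $g_-$ through the inverse sign rule, $f(w)=g_-(w)$ if $w=\prt^n(v)$ and $f(w)=-g_-(w)$ otherwise; the two prescriptions agree at $v$. By the correspondence, $f$ is a fixed point for $B_\lambda$ on $V$, and it is universal since $g_+$ and $g_-$ are. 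Finally $f\in X$: writing $f=g_+\chi_{V(v)}+h\,\chi_{V_-(v)\setminus\{v\}}$, where $h$ denotes the sign-modified $g_-$, each summand is the zero-extension of an element of $X(V(v))$, respectively of $X(V_-(v))$, and therefore lies in $X$ by unconditionality. Hence $B_\lambda$ has a universal fixed point in $X$.

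The two implications together also yield the stated equivalence between ``for some $v$'' and ``for all $v$'': if the pair exists for one vertex, sufficiency produces a global universal fixed point, and necessity then returns the pair for every vertex. I expect the only delicate points to be essentially bookkeeping: checking that the weight $\lambda_-$ of \eqref{eq-lambda-} is precisely what converts the left-hand fixed-point equations \eqref{eq-fp3} into $\lambda_-$-backward invariance (already carried out in the text preceding the theorem), and tracking $X$-membership under sign changes and coordinate restrictions, which is immediate from the unconditionality of the basis.
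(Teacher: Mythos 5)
Your proposal is correct and follows exactly the paper's own argument: the paper proves this theorem by appealing to the decomposition/reassembly of fixed points over $V(v)$ and $V_-(v)$ described at the start of Section \ref{s-unrooted} (equations \eqref{eq-fp}--\eqref{f-}), with \eqref{eq-fp3} and \eqref{eq-lambda-} handling the weighted case, which is precisely the entrywise sign-modified correspondence you set up and verify. Your additional bookkeeping (rescaling to agree at $v$, universality and $X$-membership via unconditionality) is the same routine verification the paper leaves implicit.
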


Indeed, for the unweighted backward shift $B$, this is immediate from the discussion at the beginning of this section, which showed how to decompose and to reassemble fixed points over the trees $V(v)$ and $V_-(v)$ for any $v\in V$. In the general case one has to note \eqref{eq-fp3}.

As for Theorem \ref{t-chunrootedgen}, one might wonder whether one can drop condition (ii) on the trees $V_-^N(v)$ that is rather difficult to control. We know from Example \ref{ex-comb} that this is not always the case. But under an additional condition on the weight and the space, this very useful simplification is possible. For an application of this result, see Theorem \ref{t-charlambdabil_simple} and then Theorem \ref{t-symsym}. Note that the condition necessarily implies that the tree has no free left end.

We recall that $\gen(v)$ denotes the generation that contains a vertex $v$.

\begin{theorem}\label{t-chunrootedsimplified}
Let $V$ be an unrooted tree and $\lambda=(\lambda_v)_{v\in V}$ a weight. Let $X$ be a Fr\'echet sequence space over $V$ in which $(e_v)_{v\in V}$ is an unconditional basis, and suppose that the weighted backward shift $B_\lambda$ is an operator on $X$. Suppose that, for every 
$v\in V$, there is some $f\in X$ such that
\begin{equation}\label{eq-simpcond}
\sum_{u\in\emph{\gen}(v)} \Big|\frac{\lambda(w\to u)}{\lambda(w\to v)}f(u)\Big|=\infty,
\end{equation}
where $w$ is a common ancestor of $u$ and $v$. Then $B_\lambda$ is chaotic if and only if, for any $v\in V$, there is a fixed point $f$ for $B_\lambda$ in $X$ over the rooted tree $V(v)$ with $f(v)=1$.
\end{theorem}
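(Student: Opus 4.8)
My plan is to treat the two implications separately, since the content lies entirely in the sufficiency direction. Necessity is immediate: if $B_\lambda$ is chaotic then, by the implication to (iii\textsubscript{b}) in Theorem \ref{chaos-fixedpoint}, for every $v\in V$ there is a fixed point $f\in X$ with $f(v)=1$, and its restriction to $V(v)$ is the desired fixed point over the rooted tree $V(v)$. For sufficiency I would invoke Theorem \ref{t-chunrootedgen}: since its condition (i) is exactly the hypothesis now assumed, it remains only to verify its condition (ii), namely that for every $v$ and every $\eps>0$ there are $N\geq 1$ and a $\lambda^N_-$-backward invariant sequence $f$ over $V^N_-(v)$ with $f(v)=1$ and $\|f-e_v\|<\eps$. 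This is where \eqref{eq-simpcond} must do its work, and it is the main obstacle.

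To reduce bookkeeping I would first pass to the unweighted shift via the conjugacy \eqref{eq-conjB2}: taking $v_0=v$ one has $\lambda(w\to u)/\lambda(w\to v)=\mu_v/\mu_u$, so \eqref{eq-simpcond} becomes the statement that the conjugated space $X_\mu$ contains a sequence $g$ with $\sum_{u\in\gen(v)}|g(u)|=\infty$, while condition (i), chaos, and (ii) are all conjugacy invariant. So it suffices to build, for the unweighted $B$ on $X_\mu$, a backward invariant sequence over $V^N_-(v)$ close to $e_v$. The decisive observation is that $\Chi^N(\prt^N(v))=\{u:\prt^N(u)=\prt^N(v)\}$ increases with $N$ and exhausts $\gen(v)$, since any $u$ in the same generation as $v$ has a common ancestor at some finite distance. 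Hence, after replacing $g$ by $|g|$ (legitimate by unconditionality), I can first fix a finite set $F\subset\gen(v)\setminus\{v\}$ lying in a tail of $g$ on which $\|g\chi_{\,\cdot\,}\|<\delta_{\eps/2}$, yet with $M:=\sum_{u\in F}g(u)$ as large as I please, and then choose $N$ so large that $F\subseteq\Chi^N(\prt^N(v))$, i.e.\ the vertices of $F$ are direct children of $v$ in $V^N_-(v)$.

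The sequence $f$ is then assembled as follows. I set $f(v)=1$; on each subtree $V^N_+(u)$ hanging below a vertex $u\in F$ I place $c_u$ times the $N$-sampling $H_u:=h_u\chi_{V^N_+(u)}$ of the fixed point $h_u$ over $V(u)$ furnished by (i), where $c_u=g(u)/M$ so that $\sum_{u\in F}c_u=1$; and I put $f=0$ on the remaining child $\prt^N(v)$ of the root together with everything beyond it, and on the subtrees below the siblings outside $F$. Backward invariance then holds at every vertex: trivially along the zeroed branch and its hanging subtrees, on each $V^N_+(u)$ because the iterated relation $B^Nh_u=h_u$ makes the sampling backward invariant, and at the root because the sum over its children collapses to $\sum_{u\in F}c_u=1=f(v)$. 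For the estimate $\|f-e_v\|=\|\sum_{u\in F}c_uH_u\|$ I would separate the root-level part $\sum_{u\in F}c_ue_u=\frac1M\sum_{u\in F}g(u)e_u$ from the tails $\sum_{u\in F}c_u(H_u-e_u)$: the choice of $F$ inside a tail of $g$ lets the unconditionality property quoted in the proof of Theorem \ref{t-chunrooted} bound the root-level part by $\eps/2$ uniformly in the large parameter $M$, while, $F$ being finite, enlarging $N$ sends each $\|h_u\chi_{V^N_+(u)\setminus\{u\}}\|$ to $0$ and hence the tail sum below $\eps/2$ as well. The delicate point, and the reason \eqref{eq-simpcond} is exactly the right hypothesis, is precisely this balance: the comb-tree examples (Example \ref{ex-comb}) show that the backward-invariance constraint at the root cannot in general be met by small coordinates, and it is only the failure of $\ell^1$-summability of $g$ on $\gen(v)$ that lets the unit mass be spread so thinly---over direct children reached by taking $N$ large---that the entire left branch can be set to zero while keeping $f$ within $\eps$ of $e_v$.
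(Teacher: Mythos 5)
Your proof is correct, and at its core it is the same construction as the paper's, only packaged through a different theorem. The paper never goes through Theorem \ref{t-chunrootedgen}: after the same conjugacy reduction to the unweighted shift, it invokes Theorem \ref{chaos-fixedpoint} and directly builds, for each $v$ and $\eps>0$, a periodic point close to $e_v$, namely
$f=\bigl(f_0-\sum_{k=1}^n a_kf_k\bigr)\chi_{\bigcup_{j\geq 0}\gen_{jN}}$,
where $f_0$ is the fixed point over $V(v)$, the $f_k$ are fixed points over $V(v_k)$ for finitely many $v_k\in\gen(v)$, and the convex weights $a_k$ are extracted from the divergence \eqref{eq-simpcond} exactly as your $c_u$ are (the paper normalizes bounded partial sums by the divergent $\ell^1$-mass, you take a tail set of small $X$-norm carrying mass at least $1$; both are the same use of unconditionality). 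The cancellation $1-\sum_k a_k=0$ seen from the common ancestor of degree $N$ is what makes the paper's $f$ periodic; in your version the identical cancellation appears as backward invariance of $e_v+\sum_{u\in F}c_uH_u$ at the root of $V^N_-(v)$, where the spine child $\prt^N(v)$ carries the value $0$. Indeed, feeding your backward-invariant sequence into the sufficiency proof of Theorem \ref{t-chunrooted} (sign flip off the spine, then reassembly with the sampled fixed point $f_{1,N}$) reproduces exactly the paper's periodic point. As for what each route buys: the paper's is lighter, since it never touches the modified trees $V^N_-(v)$, their altered parent--child structure, or the weights $\lambda^N_-$; your route makes explicit the perhaps more illuminating fact that under \eqref{eq-simpcond} the troublesome ``left'' condition (ii) can always be met with a completely zeroed spine. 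One remark on your reduction: you assert that condition (ii) of Theorem \ref{t-chunrootedgen} is conjugacy invariant, which is precisely the ``tedious'' verification the paper alludes to after Theorem \ref{t-chunrooted}; but this assertion is not actually needed, since your construction verifies the unweighted condition (ii) of Theorem \ref{t-chunrooted} for $B$ on $X_\mu$ directly, condition (i) transfers by restricting the conjugacy to $V(v)$, and chaos itself is conjugacy invariant --- so the logical chain closes without it.
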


\begin{proof}
By conjugacy, it suffices to prove the result for the unweighted backward shift $B$; note that the conjugacy \eqref{eq-conjB2} would give the factor $\tfrac{\lambda(w\to u)}{\lambda(w\to v_0)}\frac{1}{\mu_{v_0}}$ in \eqref{eq-simpcond}, where $v_0$ is a fixed vertex in $V$ and $w$ is a common ancestor of $v_0$, $u$ and $v$. However, since
\begin{equation}\label{eq-conjug}
\frac{\lambda(w\to u)}{\lambda(w\to v_0)} = \frac{\lambda(w\to v)}{\lambda(w\to v_0)}\frac{\lambda(w\to u)}{\lambda(w\to v)}, 
\end{equation}
we may replace the factor by $\tfrac{\lambda(w\to u)}{\lambda(w\to v)}$.

By Theorem \ref{chaos-fixedpoint}, it suffices to prove sufficiency. Thus let $v\in V$ and $\eps>0$. Then there is a fixed point $f_0$ for $B$ in $X$ over the rooted tree $V(v)$ with $f_0(v)=1$. 

Also, by assumption, there is some $f\in X$ such that
\[
\sum_{u\in \gen(v)} |f(u)|=\infty.
\]
We fix an enumeration $\{v_1,v_2,\ldots\}$ of $\gen(v)$. By unconditionality of the basis, there is some $n\geq 1$ such that
\[
\Big\|\frac{1}{\sum_{j=1}^n |f(v_j)|}\sum_{k=1}^n |f(v_k)| e_{v_k} \Big\| <\eps.
\]
Setting, for $1\leq k\leq n$,
\[
a_k = \frac{|f(v_k)|}{\sum_{j=1}^n |f(v_j)|}
\]
we have that $\sum_{k=1}^n a_k=1$ and
\begin{equation}\label{eq-aka}
\Big\|\sum_{k=1}^n a_ke_{v_k}\Big\|<\eps.
\end{equation}
Then, again by the hypothesis, there are fixed points $f_k$ for $B$ in $X$ over the rooted tree $V(v_k)$ with $f_k(v_k)=1$, $k=1,\ldots,n$. 

Let $N_0\geq 1$ be such that $v,v_1,\ldots,v_n$ have a common ancestor of degree $N_0$. Then, by unconditionality of the basis, we can choose $N\geq N_0$ such that
\begin{equation}\label{eq-aka2}
\Big\|\Big(f_0-\sum_{k=1}^n f_k\Big)\chi_{\bigcup_{j=1}^{\infty}\gen_{jN}}\Big\|<\varepsilon,
\end{equation}
where we have assumed without loss of generality that $v\in \gen_0$. We set
\[
f=\Big(f_0-\sum_{k=1}^n a_kf_k\Big)\chi_{\bigcup_{j=0}^{\infty}\gen_{jN}}.
\]
Since $\sum_{k=1}^n a_k = 1$, $f_k(v_k)=1$ for $k=0,\ldots,n$, and $v,v_1,\ldots,v_n$ have a common ancestor of degree $N$, we have that $f$ is a periodic point for $B$ of period $N$. Moreover, 
\[
f-e_v = -\sum_{k=1}^n a_k e_{v_k} + \Big(f_0-\sum_{k=1}^n a_kf_k\Big)\chi_{\bigcup_{j=1}^{\infty}\gen_{jN}}
\]
and thus, by \eqref{eq-aka} and \eqref{eq-aka2},
\[
\|f-e_v\| < 2\eps.
\]
Altogether we have shown that the periodic points for $B$ are dense in $X$, which implies by Theorem \ref{chaos-fixedpoint} that $B$ is chaotic.
\end{proof}

\begin{remark}\label{rem-sym}
(a) The existence of $f\in X$ that satisfies \eqref{eq-simpcond} can be expressed as saying that the sequence $\big(\tfrac{\lambda(w\to u)}{\lambda(w\to v)}\big)_{u\in\gen(v)}$ does not belong to the $\alpha$-dual (also called the K\"othe-dual) of the sequence space $X$ over $\gen(v)$; see \cite[Section 7.1]{Boo00}. By H\"older's inequality and its converse, the $\alpha$-dual of the classical sequence spaces $\ell^p$, $1\leq p<\infty$, and $c_0$ is $\ell^{p^\ast}$, with $p^\ast=1$ for the space $c_0$.

(b) We note that if the weight $\lambda$ is symmetric, see Section \ref{s-notation}, then equation \eqref{eq-simpcond} reduces to
\[
\sum_{u\in\gen(v)}|f(u)|=\infty.
\]

(c) The theorem shows that, under \eqref{eq-simpcond}, the conditions (iii) in Theorem \ref{chaos-fixedpoint} imply the conditions (ii). We remark, however, that even under this additional assumption, the conditions (ii) in Theorem \ref{chaos-fixedpoint} do not imply (i). Indeed, consider the situation of Example \ref{ex-comb2}. Then, for any $v=(n,k)\in V=\ZZ\times\NN_0$, the sequence $f=\chi_{\gen(v)}$ belongs to $X=\ell^1(V,\mu)$, while $\sum_{u\in \gen(v)} |f(u)|=\infty$, so that \eqref{eq-simpcond} holds for the unweighted backward shift $B$. But it was shown in that example that $B$ is a chaotic operator that does not satisfy condition (i) of Theorem \ref{chaos-fixedpoint}.
\end{remark}

\section{Two constants associated with a weighted rooted tree}\label{s-constcprp} 

We will now turn to the special case in which the weighted backward shift acts on a space $\ell^p(V)$ or $c_0(V)$ over a rooted tree $V$. As already discussed, chaos of such an operator is equivalent to chaos for the unweighted backward shift $B$ on a weighted space $\ell^p(V,\mu)$ or $c_0(V,\mu)$. 

By Theorem \ref{chaos-fixedpoint} we then have to find, for any $v\in V$, a fixed point $f$ of $B$ with $f(v)=1$, that is, we want that
\[
f(v) = \sum_{u\in\Chi(v)} f(u),
\]
and similarly for all descendants of $v$. The initial strategy is clear: starting from $v$ we need to successively distribute the mass  1 over the following generations in such a way that the norm of the resulting vector stays bounded in order for the limit vector to belong to the underlying space. But this is a daunting task. At first it seems a good idea to distribute the mass in an optimal way according to the weights $\mu_u$ of the children; yet it could be that along one branch the generations first have few children, and all with big weights, so that one is inclined to pass on only a small part of the mass along this branch, while much later on this branch there may appear generations with many children of small weights, which could have allowed a bigger share along this branch right from the beginning. We are therefore led to perform a kind of backward-forward induction: we first fix $m$, find an optimal fixed point for the first $m$ generations by starting form the last generation backwards to the root, and then let $m$ tend to infinity.

While we know from \cite{GrPa21} that hypercyclic weighted backward shifts can only live on trees without leaves, the strategy that we have just described forces us to look also at trees that terminate with generation $n$.

\vspace{\baselineskip}
\textit{We will consider in this and the following section arbitrary rooted trees, that is, rooted trees that may or may not have leaves.}
\vspace{\baselineskip} 

It turns out that, in the characterization of chaotic weighted backward shifts on spaces of type $\ell^p$ and $c_0$, two closely related constants appear crucially. We ask the reader to first accept the expressions for these constants at face value. Their proper interpretations and certain subtleties will be explained in the subsequent remarks.

In the sequel we will be working in $[0,\infty]$, and all the constants possibly take the value infinity. We usually interpret $\infty^{-1}=0$ and $0^{-1}=\infty$, but we will mention exceptions to this rule, see for example Remark \ref{rem-contfr}(a).

\begin{definition}\label{d-const}
Let $V$ be a tree with root $v_0$ and $\mu=(\mu_v)_{v\in V}$ a weight on $V$. 

(a) For $1\leq p\leq \infty$, the \textit{continued fraction $c_p(V,\mu)$ of order $p$} associated to $V$ and $\mu$ is defined as follows:

$-$ for $p=1$,
\[
c_1(V,\mu)=\inf_{(v_0,v_1,v_2,\ldots)}\sum_{n\geq 1} |\mu_{v_n}|,
\]
where $(v_0,v_1,v_2,\ldots)$ represents an arbitrary branch in $V$ that starts from $v_0$;

$-$ for $1<p<\infty$,
\[
c_p(V,\mu)=\frac{1}{\Bigg( \sum\limits_{v_1\in \Chi(v_0)}\frac{1}{\Bigg( |\mu_{v_1}|^p+\frac{1}{\bigg( \mathlarger{\mathlarger{\sum\limits}_{v_2\in \Chi(v_1)}}\frac{1}{\Big( |\mu_{v_2}|^p+\frac{1}{\big( \mathlarger{\sum\limits}_{v_3\in \Chi(v_2)} \frac{1}{\big(|\mu_{v_{3}}|^p+\ldots \big)^{p^{\ast}/p}}\big)^{p/p^{\ast}}} \Big)^{p^{\ast}/p}}\bigg)^{p/p^{\ast}}} \Bigg)^{p^{\ast}/p}}\Bigg)^{1/p^{\ast}}};
\]

$-$ for $p=\infty$,
\[
c_\infty(V,\mu)=\frac{1}{ \sum\limits_{v_1\in \Chi(v_0)}\frac{1}{\max\Big( |\mu_{v_1}|,\frac{1}{\big( \mathlarger{\sum\limits}_{v_2\in \Chi(v_1)}\frac{1}{\mathlarger{\max}\big( |\mu_{v_2}|,\frac{1}{\big( \mathlarger{\sum\limits}_{v_3\in \Chi(v_2)} \frac{1}{\max\big(|\mu_{v_{3}}|,\ldots\big)}\big)} \big)}\big)} \Big)}}.
\]

(b) For $1\leq p\leq \infty$, the \textit{resistance $r_p(V,\mu)$ of order $p$} of $V$ and $\mu$ is defined as follows:

$-$ for $p=1$,
\begin{align*}
r_1(V,\mu) &= |\mu_{v_0}|+c_1(V,\mu)\\
&=\inf_{(v_0,v_1,v_2,\ldots)}\sum_{n\geq 0} |\mu_{v_n}|,
\end{align*}
where $(v_0,v_1,v_2,\ldots)$ represents an arbitrary branch in $V$ that starts from $v_0$;

$-$ for $1<p<\infty$,
\begin{align*}
r_p(V,\mu) &= (|\mu_{v_0}|^p + c_p(V,\mu)^p)^{1/p}\\
&=\Bigg(|\mu_{v_0}|^p+\frac{1}{\Bigg( \sum\limits_{v_1\in \Chi(v_0)}\frac{1}{\Bigg( |\mu_{v_1}|^p+\frac{1}{\big( \mathlarger{\sum\limits}_{v_2\in \Chi(v_1)}\frac{1}{\big( |\mu_{v_2}|^p+\ldots \big)^{p^{\ast}/p}}\big)^{p/p^{\ast}}} \Bigg)^{p^{\ast}/p}}\Bigg)^{p/p^{\ast}}}\Bigg)^{1/p};
\end{align*}

$-$ for $p=\infty$,
\begin{align*}
r_\infty(V,\mu) &= \max(|\mu_{v_0}|, c_\infty(V,\mu))\\
&= \max\Bigg(|\mu_{v_0}|,\frac{1}{ \sum\limits_{v_1\in \Chi(v_0)}\frac{1}{\max\Big( |\mu_{v_1}|,\frac{1}{\big( \mathlarger{\sum\limits}_{v_2\in \Chi(v_1)}\frac{1}{\mathlarger{\max}\big( |\mu_{v_2}|,\ldots \big)}\big)} \Big)}}
\Bigg).
\end{align*}
\end{definition}

The name of the constants $c_p(V,\mu)$ derives from their resemblance with the classical continued fractions. And we call the constants $r_p(V,\mu)$ the resistance of the weighted tree by analogy with the resistance in electrical circuits. We will discuss both aspects at the end of this section.

As already said, the definition of the constants $c_p$ and $r_p$ contains some subtleties (unless for $p=1$). The following remark and the theorem after it are therefore particularly important for a proper understanding of the constants.

\begin{remark}\label{rem-contfr}
Let $1<p\leq \infty$.

(a) For any $v\in V$, the expressions
\begin{equation}\label{eq-series}
\frac{1}{\sum\limits_{u\in \Chi(v)} \frac{1}{\big(|\mu_u|^p+\ldots\big)^{p^\ast/p}}},\quad 1<p<\infty, \quad\text{and}\quad\frac{1}{\sum\limits_{u\in \Chi(v)} \frac{1}{\max(|\mu_u|,\ldots)}}
\end{equation}
appear in $c_p(V,\mu)$. If $v$ is a leaf of $V$, these expressions have to be interpreted as 0 (and not as $\infty$ as might be expected). In particular
\[
c_p(\{v_0\},\mu) = 0.
\]
And if every branch starting from the root $v_0$ has length $m\geq 1$, then, for $1<p<\infty$, 
\begin{align*}
&c_p(V,\mu)=\\
& \frac{1}{\Bigg( \sum\limits_{v_1\in \Chi(v_0)}\frac{1}{\Bigg( |\mu_{v_1}|^p+\frac{1}{\bigg( \mathlarger{\sum\limits}_{v_2\in \Chi(v_1)}\frac{1}{\Big( |\mu_{v_2}|^p+\frac{1}{\big( \ddots \big(|\mu_{v_{m-1}}|^p+\frac{1}{\big( \mathlarger{\sum\limits}_{v_m\in \Chi(v_{m-1})}\frac{1}{|\mu_{v_m}|^{p^{\ast}}}\big)^{p/p^{\ast}}} \big)^{p^{\ast}/p}\big)^{p/p^{\ast}}} \Big)^{p^{\ast}/p}}\bigg)^{p/p^{\ast}}} \Bigg)^{p^{\ast}/p}}\Bigg)^{1/p^{\ast}}}
\end{align*}
and
\[
c_\infty(V,\mu)=\frac{1}{ \sum\limits_{v_1\in \Chi(v_0)}\frac{1}{ \max\Big(|\mu_{v_1}|,\frac{1}{ \mathlarger{\sum\limits}_{v_2\in \Chi(v_1)}\frac{1}{\mathlarger{\max}\big(|\mu_{v_2}|,\frac{1}{\big( \ddots \max\big(|\mu_{v_{m-1}}|,\frac{1}{ \sum\limits_{v_m\in \Chi(v_{m-1})}\frac{1}{|\mu_{v_m}|}} \big)\big)} \big)}}\Big)}}.
\]

(b) In addition, it may well be that the series in \eqref{eq-series} are infinite, in which case the quotient is to be interpreted as 0. This will, however, not arise if the backward shift $B$ is defined on $\ell^p(V,\mu)$ or $c_0(V,\mu)$ (equivalently, on $\ell^\infty(V,\mu)$) because then, for all $v\in V$, $\sum_{u\in\Chi(v)} |\mu_u|^{-p^\ast}<\infty$ and $\sum_{u\in\Chi(v)} |\mu_u|^{-1}<\infty$, respectively.

(c) Now, under the conventions of (a) and (b), we have the following recursive definition of the values $c_p(V,\mu)$. Let us consider the subtree $V_m = \bigcup_{n=0}^m \gen_n$, $m\geq 0$. If $1<p<\infty$, then $c_p(V_0,\mu)=0$,
\[
c_p(V_1,\mu)= \frac{1}{\big(\sum\limits_{v_1\in \Chi(v_0)}\frac{1}{|\mu_{v_1}|^{p^{\ast}}}\big)^{1/p^{\ast}}},
\]
and, for any $m\geq 1$,
\begin{equation}\label{eq-formulam}
c_p(V_{m+1},\mu)= c_p(V_m,\mu'),
\end{equation}
where $\mu'$ coincides with $\mu$ on $V_{m-1}$, while for any $v\in \gen_m$, $\mu'_v$ is such that
\begin{equation}\label{eq-formula}
|\mu'_{v}|=\Big(|\mu_{v}|^p+\frac{1}{\big(\sum\limits_{u\in \Chi(v)}\frac{1}{|\mu_{u}|^{p^{\ast}}}\big)^{p/p^{\ast}}}\Big)^{1/p};
\end{equation}
note that if $v\in\gen_m$ is a leaf in $V$ then, by (a), $|\mu_v'|=|\mu_v|$. Incidentally, \eqref{eq-formulam} does not hold for $m=0$.
 
It follows that the sequence $(c_p(V_m,\mu))_{m\geq 0}$ is increasing, which, by the way, distinguishes our continued fractions from the classical ones. Then $c_p(V,\mu)$ is defined as
\[
c_p(V,\mu) = \lim_{m\to\infty} c_p(V_m,\mu) =  \sup_{m\geq 0} c_p(V_m,\mu).
\]

The definition of $c_\infty(V,\mu)$ is similar, where now 
\[
c_\infty(V_1,\mu)= \frac{1}{\sum\limits_{v_1\in \Chi(v_0)}\frac{1}{|\mu_{v_1}|}}
\]
and $\mu'$ is given for $v\in \gen_m$ by
\[
|\mu'_{v}|=\max\Big(|\mu_{v}|,\frac{1}{\sum\limits_{u\in \Chi(v)}\frac{1}{|\mu_{u}|}}\Big).
\]

This definition of the constants $c_p$, which uses a kind of backward-forward recurrence, reflects their use in the characterization of the existence of fixed points; see our remarks at the beginning of the section and the proofs in Subsection \ref{s-backinvarb}.

(d) As for the values $r_p(V,\mu)$ we have accordingly, for $1<p\leq \infty$, $r_p(V_0,\mu)=|\mu_{v_0}|$,
\[
r_p(V_{m+1},\mu)= r_p(V_m,\mu'),
\]
which holds here for all $m\geq 0$, and 
\[
r_p(V,\mu)= \lim_{m\to\infty} r_p(V_m,\mu) =  \sup_{m\geq 0} r_p(V_m,\mu).
\]

(e) We note that all constants $c_p(V,\mu)$ and $r_p(V,\mu)$, $1\leq p\leq \infty$, are finite if the tree has finite length.
\end{remark} 

\begin{example}\label{ex-unbranched}
In the special case of a rooted tree $V$ with root $v_0$ in which each vertex has at most one child, which therefore has a single finite or infinite branch $(v_0,v_1,v_2,\ldots)$, we have that
\[
c_p(V,\mu)= \Big(\sum_{n\geq 1}|\mu_{v_n}|^p\Big)^{1/p}\quad \text{and}\quad r_p(V,\mu)= \Big(\sum_{n\geq 0}|\mu_{v_n}|^p\Big)^{1/p}, \ 1\leq p \leq \infty,
\]
where for $p=\infty$ the $p$-norm has to be replaced by the sup-norm. 
\end{example}

An obvious feature of the constants is their positive homogeneity with respect to the weight $\mu$. On closer inspection one also perceives some recursive structure, see also Figure \ref{fig-rec1}. Recall that $V(v)$ denotes the subtree of descendants of $v\in V$.

\begin{theorem}\label{t-rec1}
Let $V$ be a tree with root $v_0$ and $\mu=(\mu_v)_{v\in V}$ a weight on $V$.

\emph{(a)} For $1\leq p\leq \infty$, $c_p(V,\mu)$ and $r_p(V,\mu)$ are positively homogeneous in the weight.

\emph{(b)} Let $v\in\Chi(v_0)$. Then,

\noindent $-$ for $1\leq p<\infty$,
\[
c_p(V,\mu)\leq \big(|\mu_{v}|^p+ c_p(V(v),\mu)^p\big)^{1/p};
\]
$-$ for $p=\infty$,
\[
c_\infty(V,\mu) \leq \max(|\mu_{v}|,c_\infty(V(v),\mu));
\]
$-$ for $1\leq p<\infty$,
\[
r_p(V,\mu)\leq \big(|\mu_{v_0}|^p+ r_p(V(v),\mu)^p\big)^{1/p};
\]
$-$ for $p=\infty$,
\[
r_\infty(V,\mu) \leq \max(|\mu_{v_0}|,r_\infty(V(v),\mu)).
\]

\emph{(c)} Suppose that $V$ has finite length, or that $f\to \sum_{v\in \Chi(v_0)} f(v)$ is continuous on $\ell^p(\Chi(v_0),\mu)$, $1<p\leq\infty$. Then,

\noindent $-$  for $1<p<\infty$,
\begin{equation}\label{eq-cp}
c_p(V,\mu)=\frac{1}{\Big( \sum\limits_{v\in \Chi(v_0)}\frac{1}{\big( |\mu_{v}|^p+ c_p(V(v),\mu)^p\big)^{p^{\ast}/p}}\Big)^{1/p^{\ast}}};
\end{equation}
$-$ for $p=\infty$,
\[
c_\infty(V,\mu)=\frac{1}{ \sum\limits_{v\in \Chi(v_0)}\frac{1}{\max\big( |\mu_{v}|, c_\infty(V(v),\mu)\big)}};
\]
$-$ for $1<p<\infty$,
\[
r_p(V,\mu)=\Big(|\mu_{v_0}|^p + \frac{1}{\Big( \sum\limits_{v\in \Chi(v_0)}\frac{1}{r_p(V(v),\mu)^{p^{\ast}}}\Big)^{p/p^{\ast}}}\Big)^{1/p};
\]
$-$ for $p=\infty$,
\[
r_\infty(V,\mu)=\max\Big(|\mu_{v_0}|,\frac{1}{ \sum\limits_{v\in \Chi(v_0)}\frac{1}{r_\infty(V(v),\mu)}}\Big).
\]
\end{theorem}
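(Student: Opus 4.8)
The plan is to treat the three parts in the order (a), (c), (b), since the exact recursion (c) for finite-length trees will feed both the homogeneity and, by truncation, the inequalities (b). Throughout I would work with the backward-forward recursion recorded in Remark \ref{rem-contfr}(c),(d) and the representation $c_p(V,\mu)=\sup_m c_p(V_m,\mu)$.

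\emph{Part (a).} For positive homogeneity I would argue directly from that recursion. Each elementary operation occurring in it---forming the reverse-H\"older sum $(\sum_u |\mu_u|^{-p^*})^{-1/p^*}$, the combination $v\mapsto(|\mu_v|^p+(\cdot))^{1/p}$ in \eqref{eq-formula}, and for $p=\infty$ the corresponding $\max$---is positively homogeneous of degree one in the weight. An immediate induction on $m$ via \eqref{eq-formulam} then gives $c_p(V_m,t\mu)=t\,c_p(V_m,\mu)$ and $r_p(V_m,t\mu)=t\,r_p(V_m,\mu)$ for every $t>0$; passing to the supremum over $m$ yields (a). For $p=1$ this is transparent from the infimum-over-branches formula.

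\emph{Part (c), finite-length case.} Here the point is that the nested expression defining $c_p(V,\mu)$ literally contains the expression defining $c_p(V(v),\mu)$ for each child $v\in\Chi(v_0)$. Writing $c_p(V,\mu)=\big(\sum_{v\in\Chi(v_0)}A_v^{-p^*/p}\big)^{-1/p^*}$, the quantity attached to $v$ is $A_v=|\mu_v|^p+\big(\sum_{u\in\Chi(v)}B_u^{-p^*/p}\big)^{-p/p^*}$, and by the very definition of the continued fraction rooted at $v$ one recognises $\big(\sum_{u\in\Chi(v)}B_u^{-p^*/p}\big)^{-1/p^*}=c_p(V(v),\mu)$, hence $A_v=|\mu_v|^p+c_p(V(v),\mu)^p$. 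When $V$ has finite length all these sub-expressions are finite and built unambiguously from the leaves upward (Remark \ref{rem-contfr}(e)), so this identification is rigorous under the $[0,\infty]$ conventions of Remark \ref{rem-contfr}(a),(b); I would make it precise by induction on the length, the base case being the $V_1$-formula. The case $p=\infty$ is identical with $\max$ in place of the $\ell^p$-combination, and the $r_p$-identities then follow from $r_p(V,\mu)^p=|\mu_{v_0}|^p+c_p(V,\mu)^p$ together with the definition of $r_p(V(v),\mu)$.

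\emph{Part (c), continuity case, and Part (b).} For the continuity hypothesis (equivalently $\sum_{v\in\Chi(v_0)}|\mu_v|^{-p^*}<\infty$) with infinite length, I would apply the finite-length formula to each truncation $V_m$ and let $m\to\infty$. By Remark \ref{rem-contfr}(c) the inner constants increase, $c_p((V(v))_{m-1},\mu)\uparrow c_p(V(v),\mu)$, so each summand $(|\mu_v|^p+c_p((V(v))_{m-1},\mu)^p)^{-p^*/p}$ decreases and is dominated by the summable term $|\mu_v|^{-p^*}$; dominated convergence lets me pass the limit through the sum, and since $c_p(V,\mu)=\sup_m c_p(V_m,\mu)$ this produces \eqref{eq-cp}. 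For the inequalities (b), which carry no extra hypothesis, I would again apply the finite-length formula to $V_m$ and discard every child other than the fixed $v$: as all summands are nonnegative the harmonic sum is at least its $v$-th term, and inverting reverses the inequality, giving $c_p(V_m,\mu)\leq(|\mu_v|^p+c_p((V(v))_{m-1},\mu)^p)^{1/p}\leq(|\mu_v|^p+c_p(V(v),\mu)^p)^{1/p}$; taking the supremum over $m$ yields (b), with $p=1$ handled directly from the branch formula (restricting to branches through $v$) and $p=\infty$ with $\max$. The main obstacle is the limiting analysis in (c): making the nested reading-off rigorous when a vertex has infinitely many children and branches are infinite, and pinpointing where continuity is used---it is precisely what makes the outer harmonic sum behave well under the monotone limit and what rules out a degenerate empty sum. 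That the plain inequality (b) survives without this hypothesis, whereas the equality (c) can fail, is exactly why the two are stated separately, and keeping track of the $[0,\infty]$ conventions (notably the leaf convention that the fraction in \eqref{eq-series} is $0$) throughout the induction is the delicate bookkeeping.
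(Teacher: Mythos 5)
Your proposal is correct and follows essentially the same route as the paper: both rest on the finite-level identity expressing $c_p(V_m,\mu)$ through the constants $c_p(V(v)_{m-1},\mu)$ of the children, from which (b) follows by keeping only the term for the fixed child $v$ and letting $m\to\infty$, and (c) follows directly for finite length or via dominated convergence (with dominating sequence $(|\mu_v|^{-p^\ast})_v$, summable by the continuity hypothesis) for infinite length, with (a) and the $r_p$-statements reduced to the relation $r_p^p=|\mu_{v_0}|^p+c_p^p$ and the homogeneity of the elementary operations in the recursion. The only difference is presentational (your order (a), (c), (b) and the explicit induction on length), not mathematical.
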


\begin{proof} For $1<p<\infty$, we find that, for $m\geq 1$,
\[
c_p(V_m,\mu)=\frac{1}{\Big( \sum\limits_{v\in \Chi(v_0)}\frac{1}{\big( |\mu_{v}|^p+ c_p(V(v)_{m-1},\mu)^p\big)^{p^{\ast}/p}}\Big)^{1/p^{\ast}}},
\]
where $V_m$ cuts off the tree $V$ at generation $m$. From this, the assertion in (b) is obvious after letting $m\to\infty$. In the same way, the assertion in (c) follows if the tree is of finite length. But if the tree has infinite length we need to interchange a limit with a summation. If $f\to \sum_{v\in \Chi(v_0)} f(v)$ is continuous on $\ell^p(\Chi(v_0),\mu)$, then $\sum_{v\in \Chi(v_0)} |\mu_v|^{-p^\ast}<\infty$, so that the dominated convergence theorem allows to conclude.

The remaining assertions are either proved similarly or are obvious.
\end{proof}

When one looks at the expression for the constants $c_p$, Assertion (c) might at first seem obvious and require no assumptions. The following example shows, however, that this is not the case. The second assumption in (c) will turn out to be natural in our context, see the discussion before Theorem \ref{t-charfinpmin}. 

\begin{example}\label{ex-cuttree}
Consider the rooted tree $V$ where the root $v_0$ has infinitely many children and any other descendant of $v_0$ has exactly one child. Let $\mu(v)=1$ for all $v\in V$, and let $1<p<\infty$. Then, for $m\geq 1$, $c_p(V_m,\mu)=0$ because $c_p(V_{m}(v),\mu)=(m-1)^{1/p}$ for all $v\in \Chi(v_0)$, see Example \ref{ex-unbranched}. Hence $c_p(V,\mu)=0$. On the other hand, $c_p(V(v),\mu)=\infty$ for all $v\in \Chi(v_0)$, so that the right-hand side in \eqref{eq-cp} is infinite.
\end{example}

The finiteness of $c_p$ will play a decisive r\^ole for chaos, see Section \ref{s-chaosrooted}. Thus the following is of interest.

\begin{corollary}\label{c-subtrees}
Let $V$ be a tree with root $v_0$ that has at least one child, and let $\mu=(\mu_v)_{v\in V}$ be a weight on $V$. Let $1\leq p\leq\infty$.

\emph{(a)} If there is a child $v$ of $v_0$ such that $c_p(V(v),\mu)<\infty$, then $c_p(V,\mu)<\infty$.

\emph{(b)} Suppose that $p=1$, or else that $1<p\leq \infty$ and either $V$ is of finite length or $f\to \sum_{v\in \Chi(v_0)} f(v)$ is continuous on $\ell^p(\Chi(v_0),\mu)$. Then $c_p(V,\mu)<\infty$ if and only if there is a child $v$ of $v_0$ such that $c_p(V(v),\mu)<\infty$.
\end{corollary}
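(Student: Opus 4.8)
The plan is to derive part (a) immediately from the recursive inequalities in Theorem \ref{t-rec1}(b), to obtain the nontrivial direction of part (b) from the recursive identities in Theorem \ref{t-rec1}(c), and to treat the case $p=1$ separately since those identities are stated only for $1<p\le\infty$.

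For part (a), I would fix a child $v$ of $v_0$ with $c_p(V(v),\mu)<\infty$. Since $\mu_v$ is a non-zero scalar, $|\mu_v|$ is finite, so the right-hand side of the inequality in Theorem \ref{t-rec1}(b) is finite: for $1\le p<\infty$ one gets $c_p(V,\mu)\le\bigl(|\mu_v|^p+c_p(V(v),\mu)^p\bigr)^{1/p}<\infty$, and for $p=\infty$ one gets $c_\infty(V,\mu)\le\max(|\mu_v|,c_\infty(V(v),\mu))<\infty$. This settles (a), and thereby the ``if'' direction of the equivalence in (b).

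For the ``only if'' direction of (b) with $1<p\le\infty$, I would argue by contraposition: assuming $c_p(V(v),\mu)=\infty$ for \emph{every} child $v$ of $v_0$, I show $c_p(V,\mu)=\infty$. The stated hypotheses (finite length, or continuity of $f\mapsto\sum_{v\in\Chi(v_0)}f(v)$ on $\ell^p(\Chi(v_0),\mu)$) are precisely those under which Theorem \ref{t-rec1}(c) provides the closed identity for $c_p(V,\mu)$ in terms of the values $c_p(V(v),\mu)$. For $1<p<\infty$, each summand $\bigl(|\mu_v|^p+c_p(V(v),\mu)^p\bigr)^{-p^{\ast}/p}$ equals $0$ under the convention $\infty^{-1}=0$, so the inner sum vanishes; since $0^{-1/p^{\ast}}=\infty$, formula \eqref{eq-cp} then yields $c_p(V,\mu)=\infty$. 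The case $p=\infty$ is identical, with $\max(|\mu_v|,c_\infty(V(v),\mu))=\infty$ forcing each reciprocal in the sum to vanish.

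Finally, the case $p=1$ I would handle directly from the defining infimum $c_1(V,\mu)=\inf_{(v_0,v_1,v_2,\ldots)}\sum_{n\ge1}|\mu_{v_n}|$: if $c_1(V,\mu)<\infty$, some branch $(v_0,v_1,v_2,\ldots)$ satisfies $\sum_{n\ge1}|\mu_{v_n}|<\infty$, and taking $v=v_1$ its tail is a branch in $V(v)$ witnessing $c_1(V(v),\mu)\le\sum_{n\ge2}|\mu_{v_n}|<\infty$. I expect no genuine obstacle here; the only points requiring care are the bookkeeping of the extended-real conventions $\infty^{-1}=0$ and $0^{-1}=\infty$ in the $1<p\le\infty$ step, and the observation that the hypotheses in (b) are truly indispensable — Example \ref{ex-cuttree} exhibits a tree with $c_p(V,\mu)=0$ yet $c_p(V(v),\mu)=\infty$ for every child $v$, so the converse fails once the continuity (or finite-length) assumption is dropped.
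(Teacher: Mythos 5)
Your proof is correct and follows exactly the route the paper intends: part (a) and the "if" direction come from the inequalities in Theorem \ref{t-rec1}(b), the "only if" direction for $1<p\le\infty$ comes from the identities in Theorem \ref{t-rec1}(c) via contraposition with the conventions $\infty^{-1}=0$, $0^{-1}=\infty$, and the $p=1$ case is read off directly from the defining infimum over branches. Your closing remarks (that the hypotheses in (b) match exactly those of Theorem \ref{t-rec1}(c), and that Example \ref{ex-cuttree} shows they cannot be dropped) are also precisely the paper's point.
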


The previous example shows again that we cannot drop the additional assumptions in (b).

We have seen in Remark \ref{rem-contfr}(c) that the proper definition of the constants $c_p$ is by some backward-forward recurrence. Theorem \ref{t-rec1}(c) revealed a kind of recursive structure of the $c_p$. We will now discuss a second type of recursiveness.

For this, let $V$ be a tree with root $v_0$. Let us consider a branch $(v_0,v_1,v_2,\ldots)$ starting from the root. We will then regroup all descendants of $v_n$ except those that are also descendants of $v_{n+1}$ into a new tree
\[
W_n = V(v_n)\setminus V(v_{n+1}), n\geq 0,
\]
see Figure \ref{fig-rec2}. If $v_m$ is a leaf then the list ends with $W_m=\{v_m\}$.

For the following formulas we need to modify slightly the constants $c_p$. We set
\begin{equation}\label{eq-cp'}
\frac{1}{c_p'(V,\mu)}=\begin{cases}
0, &\text{if $V$ is a singleton},\\
\frac{1}{c_p(V,\mu)}, & \text{otherwise.}
\end{cases}
\end{equation}

Interestingly, the formulas that we obtain are essentially classical continued fractions (apart from some additional exponents). The proof shows that these continued fractions have to be taken as the limit of the \textit{even-numbered} convergents.

\begin{figure}
\begin{minipage}[t]{.5\textwidth}
\centering
\begin{tikzpicture}
\draw[fill] (0,0) circle (.5pt);

\draw[->,>=latex] (0,0) -- (1,.6);\draw[fill] (1,.6) circle (.5pt);
\draw[->,>=latex] (0,0) -- (1,.2);\draw[fill] (1,.2) circle (.5pt);
\draw[->,>=latex] (0,0) -- (1,-.2);\draw[fill] (1,-.2) circle (.5pt);
\draw[->,>=latex] (0,0) -- (1,-.6);\draw[fill] (1,-.6) circle (.5pt);

\draw[->,>=latex] (1,-.6) -- (2,-1.1);\draw[fill] (2,-1.1) circle (.5pt);
\draw[->,>=latex] (1,-.6) -- (2,-.9);\draw[fill] (2,-.9) circle (.5pt);
\draw[->,>=latex] (1,-.6) -- (2,-.7);\draw[fill] (2,-.7) circle (.5pt);
\draw[->,>=latex] (1,-.6) -- (2,-.5);\draw[fill] (2,-.5) circle (.5pt);

\draw[->,>=latex] (1,.6) -- (2,.9);\draw[fill] (2,.9) circle (.5pt);

\draw[->,>=latex] (1,.2) -- (2,.6);\draw[fill] (2,.6) circle (.5pt);
\draw[->,>=latex] (1,.2) -- (2,.4);\draw[fill] (2,.4) circle (.5pt);
\draw[->,>=latex] (1,.2) -- (2,.2);\draw[fill] (2,.2) circle (.5pt);

\draw[->,>=latex] (1,-.2) -- (2,-.2);\draw[fill] (2,-.2) circle (.5pt);

\draw[->,>=latex] (2,-1.1) -- (3,-1.5);\draw[fill] (3,-1.5) circle (.5pt);
\draw[->,>=latex] (2,-1.1) -- (3,-1.4);\draw[fill] (3,-1.4) circle (.5pt);
\draw[->,>=latex] (2,-1.1) -- (3,-1.3);\draw[fill] (3,-1.3) circle (.5pt);
\draw[->,>=latex] (2,-1.1) -- (3,-1.2);\draw[fill] (3,-1.2) circle (.5pt);

\draw[->,>=latex] (2,-.5) -- (3,-.45);\draw[fill] (3,-.45) circle (.5pt);
\draw[->,>=latex] (2,-.7) -- (3,-.6);\draw[fill] (3,-.6) circle (.5pt);
\draw[->,>=latex] (2,-.7) -- (3,-.7);\draw[fill] (3,-.7) circle (.5pt);
\draw[->,>=latex] (2,-.9) -- (3,-.9);\draw[fill] (3,-.9) circle (.5pt);
\draw[->,>=latex] (2,-.9) -- (3,-1);\draw[fill] (3,-1) circle (.5pt);

\draw[->,>=latex] (2,.9) -- (3,1);\draw[fill] (3,1) circle (.5pt);
\draw[->,>=latex] (2,.9) -- (3,1.1);\draw[fill] (3,1.1) circle (.5pt);

\draw[->,>=latex] (2,.6) -- (3,.7);\draw[fill] (3,.7) circle (.5pt);
\draw[->,>=latex] (2,.6) -- (3,.6);\draw[fill] (3,.6) circle (.5pt);

\draw[->,>=latex] (2,.4) -- (3,.4);\draw[fill] (3,.4) circle (.5pt);
\draw[->,>=latex] (2,.4) -- (3,.3);\draw[fill] (3,.3) circle (.5pt);

\draw[->,>=latex] (2,.2) -- (3,.15);\draw[fill] (3,.15) circle (.5pt);

\draw[->,>=latex] (2,-.2) -- (3,-.3);\draw[fill] (3,-.3) circle (.5pt);
\draw[->,>=latex] (2,-.2) -- (3,-.2);\draw[fill] (3,-.2) circle (.5pt);
\draw[->,>=latex] (2,-.2) -- (3,-.1);\draw[fill] (3,-.1) circle (.5pt);

\node at (3.5,1) {\footnotesize{$V(v_1)$}};
\node at (3.5,.4) {\footnotesize{$V(v_2)$}};
\node at (3.5,-0.23) {\footnotesize{$V(v_3)$}};
\node at (3.5,-.95) {\footnotesize{$V(v_4)$}};

\node at (-.3,0) {\footnotesize{$v_0$}};
\node at (1,.8) {\footnotesize{$v_1$}};
\node at (1,0.37) {\scriptsize{$v_2$}};
\node at (1,-.38) {\scriptsize{$v_3$}};
\node at (1,-.8) {\footnotesize{$v_4$}};
\end{tikzpicture}
\caption{Recursive structure for Theorem \ref{t-rec1}(c)}%
\label{fig-rec1}
\end{minipage}%
\begin{minipage}[t]{.5\textwidth}
\centering
\begin{tikzpicture}
\draw[fill] (0,0) circle (.5pt);

\draw[->,>=latex] (0,0) -- (1,.2);\draw[fill] (1,.2) circle (.5pt);
\draw[->,>=latex] (0,0) -- (1,-.5);\draw[fill] (1,-.5) circle (.5pt);
\draw[->,>=latex] (0,0) -- (1,.6);\draw[fill] (1,.6) circle (.5pt);
\draw[->,>=latex] (0,0) -- (1,.4);\draw[fill] (1,.4) circle (.5pt);

\draw[->,>=latex] (1,-.5) -- (2,-1);\draw[fill] (2,-1) circle (.5pt);

\draw[->,>=latex] (1,-.5) -- (2,-.6);\draw[fill] (2,-.6) circle (.5pt);
\draw[->,>=latex] (1,-.5) -- (2,-.4);\draw[fill] (2,-.4) circle (.5pt);
\draw[->,>=latex] (1,-.5) -- (2,-.2);\draw[fill] (2,-.2) circle (.5pt);

\draw[->,>=latex] (1,.6) -- (2,1.1);\draw[fill] (2,1.1) circle (.5pt);

\draw[->,>=latex] (1,.4) -- (2,.9);\draw[fill] (2,.9) circle (.5pt);
\draw[->,>=latex] (1,.4) -- (2,.7);\draw[fill] (2,.7) circle (.5pt);
\draw[->,>=latex] (1,.4) -- (2,.5);\draw[fill] (2,.5) circle (.5pt);

\draw[->,>=latex] (1,.2) -- (2,.3);\draw[fill] (2,.3) circle (.5pt);

\draw[->,>=latex] (2,-1) -- (3,-1.5);\draw[fill] (3,-1.5) circle (.5pt);

\draw[->,>=latex] (2,-1) -- (3,-1.1);\draw[fill] (3,-1.1) circle (.5pt);
\draw[->,>=latex] (2,-1) -- (3,-1);\draw[fill] (3,-1) circle (.5pt);
\draw[->,>=latex] (2,-1) -- (3,-.9);\draw[fill] (3,-.9) circle (.5pt);

\draw[->,>=latex] (2,-.2) -- (3,-.1);\draw[fill] (3,-.1) circle (.5pt);
\draw[->,>=latex] (2,-.4) -- (3,-.25);\draw[fill] (3,-.25) circle (.5pt);
\draw[->,>=latex] (2,-.4) -- (3,-.35);\draw[fill] (3,-.35) circle (.5pt);
\draw[->,>=latex] (2,-.6) -- (3,-.5);\draw[fill] (3,-.5) circle (.5pt);
\draw[->,>=latex] (2,-.6) -- (3,-.6);\draw[fill] (3,-.6) circle (.5pt);

\draw[->,>=latex] (2,1.1) -- (3,1.4);\draw[fill] (3,1.4) circle (.5pt);
\draw[->,>=latex] (2,1.1) -- (3,1.3);\draw[fill] (3,1.3) circle (.5pt);

\draw[->,>=latex] (2,.9) -- (3,1.15);\draw[fill] (3,1.15) circle (.5pt);
\draw[->,>=latex] (2,.9) -- (3,1.05);\draw[fill] (3,1.05) circle (.5pt);

\draw[->,>=latex] (2,.7) -- (3,.9);\draw[fill] (3,.9) circle (.5pt);
\draw[->,>=latex] (2,.7) -- (3,.8);\draw[fill] (3,.8) circle (.5pt);

\draw[->,>=latex] (2,.5) -- (3,.65);\draw[fill] (3,.65) circle (.5pt);

\draw[->,>=latex] (2,.3) -- (3,.3);\draw[fill] (3,.3) circle (.5pt);
\draw[->,>=latex] (2,.3) -- (3,.4);\draw[fill] (3,.4) circle (.5pt);
\draw[->,>=latex] (2,.3) -- (3,.5);\draw[fill] (3,.5) circle (.5pt);

\node at (3.5,.8) {\footnotesize{$W_0$}};
\node at (3.5,-.4) {\footnotesize{$W_1$}};
\node at (3.5,-1.1) {\footnotesize{$W_2$}};

\node at (-.3,0) {\footnotesize{$v_0$}};
\node at (1,-.7) {\footnotesize{$v_1$}};
\node at (2,-1.2) {\footnotesize{$v_2$}};
\node at (3,-1.7) {\footnotesize{$v_3$}};
\end{tikzpicture}
\caption{Recursive structure for Theorem \ref{t-rec2}}%
\label{fig-rec2}
\end{minipage}
\end{figure}

\begin{theorem}\label{t-rec2}
Let $V$ be a tree with root $v_0$ and $\mu=(\mu_v)_{v\in V}$ a weight on $V$. Let $(v_0,v_1,v_2,\ldots)$ be a branch starting from the root, and let $W_n$, $n\geq 0$, be defined as above. Then we have the following:

$-$ for $1<p<\infty$,
\[
c_p(V,\mu)= \frac{1}{\Bigg( \frac{1}{c'_p(W_0,\mu)^{p^*}} + \frac{1}{\Bigg( |\mu_{v_1}|^p+\frac{1}{\bigg( \frac{1}{c'_p(W_1,\mu)^{p^*}}+\frac{1}{\Big( |\mu_{v_2}|^p+\frac{1}{\big( \frac{1}{c'_p(W_2,\mu)^{p^*}} + \ldots \big)^{p/p^{\ast}}} \Big)^{p^{\ast}/p}}\bigg)^{p/p^{\ast}}} \Bigg)^{p^{\ast}/p}}\Bigg)^{1/p^{\ast}}};
\]

$-$ for $p=\infty$,
\[
c_\infty(V,\mu)= \frac{1}{ \frac{1}{c'_\infty(W_0,\mu)} + \frac{1}{\max\big( |\mu_{v_1}|,\frac{1}{ \frac{1}{c'_\infty(W_1,\mu)}+\frac{1}{ \max\big(|\mu_{v_2}|,\frac{1}{\big( \frac{1}{c'_\infty(W_2,\mu)} + \ldots \big)} \big)}} \big)}}.
\]
\end{theorem}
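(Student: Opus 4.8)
The plan is to reduce the entire continued fraction to a single one-step unfolding identity, and then to control the passage to the infinite limit through the generation-truncations $V_m$ introduced in Remark \ref{rem-contfr}. Throughout I work in $[0,\infty]$ with the conventions $\infty^{-1}=0$, $0^{-1}=\infty$, and I treat only the case $1<p<\infty$; the case $p=\infty$ is entirely analogous, with every $\ell^p$-combination replaced by a maximum.

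\emph{Step 1 (one-step unfolding).} Set $W_0=V(v_0)\setminus V(v_1)$, the rooted tree at $v_0$ whose children are $\Chi(v_0)\setminus\{v_1\}$ and whose subtree below each such child $v$ is exactly $V(v)$. The first claim is the identity
\begin{equation*}
\frac{1}{c_p(V,\mu)^{p^\ast}} = \frac{1}{c_p'(W_0,\mu)^{p^\ast}} + \frac{1}{\big(|\mu_{v_1}|^p + c_p(V(v_1),\mu)^p\big)^{p^\ast/p}}. \tag{$\star$}
\end{equation*}
For trees of finite length this is immediate from Theorem \ref{t-rec1}(c): split the sum over $\Chi(v_0)$ into the term $v=v_1$ and the remaining terms, and recognise the latter sum, by Theorem \ref{t-rec1}(c) applied to $W_0$, as $c_p(W_0,\mu)^{-p^\ast}$. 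The modified constant $c_p'$ from \eqref{eq-cp'} is exactly what is needed when $W_0=\{v_0\}$ is a singleton, so that the then empty remaining sum equals $0$ rather than $\infty$. For a general tree I would apply these finite identities to the truncations $V_m$, $(W_0)_m$ and $V(v_1)_{m-1}$, all of finite length, and let $m\to\infty$; since $c_p(V_m)\uparrow c_p(V)$ and likewise for the other two trees by Remark \ref{rem-contfr}(c), and since $t\mapsto t^{-p^\ast}$ together with the remaining operations are continuous and monotone on $[0,\infty]$, the finite identities pass to the limit and yield $(\star)$.

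\emph{Step 2 (iteration and even convergents).} Applying $(\star)$ to the subtree $V(v_n)$, whose branch is $(v_n,v_{n+1},\ldots)$ and whose first side-tree is $W_n$, gives, with $T_n:=c_p(V(v_n),\mu)$ and $T_0=c_p(V,\mu)$, the exact recursion
\begin{equation*}
\frac{1}{T_n^{p^\ast}} = \frac{1}{c_p'(W_n,\mu)^{p^\ast}} + \frac{1}{\big(|\mu_{v_{n+1}}|^p + T_{n+1}^p\big)^{p^\ast/p}}, \qquad n\ge 0.
\end{equation*}
Unfolding it $n$ times expresses $c_p(V,\mu)$ through the first $n$ levels of the stated continued fraction together with the tail $T_n$. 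I define the $2n$-th (even) convergent as the value obtained on setting $T_n=0$; unfolding $(\star)$ for the branch-truncated tree $V^{(n)}:=V\setminus\big(V(v_n)\setminus\{v_n\}\big)$, in which $v_n$ is a leaf so that $c_p(V^{(n)}(v_n),\mu)=0$ and whose side-trees $W_0,\dots,W_{n-1}$ agree with those of $V$, shows that this even convergent equals $c_p(V^{(n)},\mu)$. Since each level map $T_{n+1}\mapsto T_n$ above is increasing, the even convergents increase with $n$, and as $T_n\ge 0$ they satisfy $c_p(V^{(n)},\mu)\le c_p(V,\mu)$.

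\emph{Step 3 (the limit — the main obstacle).} It remains to prove $c_p(V^{(n)},\mu)\to c_p(V,\mu)$, i.e. that discarding the tail is harmless in the limit; this is the delicate point, and the reason only the even convergents are used. The key observation is that $V^{(n)}$ differs from $V$ only below $v_n$, which lies in generation $n$, so that all deleted vertices lie in generations $\ge n+1$; hence for every $m\le n$ the generation-truncation satisfies $(V^{(n)})_m=V_m$. Consequently
$c_p(V^{(n)},\mu)=\sup_m c_p\big((V^{(n)})_m,\mu\big)\ge c_p(V_m,\mu)$
for all $m\le n$, and letting $n\to\infty$ gives $\lim_n c_p(V^{(n)},\mu)\ge \sup_m c_p(V_m,\mu)=c_p(V,\mu)$. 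Together with the upper bound from Step 2 this forces $c_p(V^{(n)},\mu)\to c_p(V,\mu)$, so the continued fraction, read as the limit of its even convergents, equals $c_p(V,\mu)$. A branch that terminates at a leaf $v_m$ requires no limiting argument, since then $V^{(m)}=V$ and finitely many applications of $(\star)$ already produce the finite continued fraction.
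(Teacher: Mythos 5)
Your proof is correct, but it takes a genuinely different route from the paper's. The paper never leaves the realm of finite-length trees: it expands $c_p(V_m,\mu)$ as a finite continued fraction whose entries are the constants $c_p'(W_{n,m},\mu)$ of the \emph{truncated} side trees $W_{n,m}=W_n\cap V_m$, deduces $c_p(V_m,\mu)\le d_m$ from the monotonicity $c_p'(W_{n,m},\mu)\le c_p'(W_n,\mu)$, and obtains the reverse inequality $d\le c_p(V,\mu)$ by comparing partial continued fractions, letting first $m$ and then the depth $l$ tend to infinity. You instead prove the exact one-step identity $(\star)$ on the \emph{infinite} tree and iterate it. That identity is a point of independent interest which the paper never states: the full child-splitting formula of Theorem~\ref{t-rec1}(c) fails for general trees (Example~\ref{ex-cuttree}), but splitting off the single child $v_1$ and lumping the remaining children into $W_0$ avoids the problematic interchange of limit and infinite sum, since the infinite sum is absorbed into the definition of $c_p(W_0,\mu)$; your monotone passage to the limit in the finite identities is therefore sound. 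Your second departure is the identification $d_n=c_p(V^{(n)},\mu)$ of the even convergents as constants of the branch-truncated trees, which makes the upper bound $d_n\le c_p(V,\mu)$ automatic (nonnegative tail, increasing level maps) and reduces the lower bound to the combinatorial identity $(V^{(n)})_m=V_m$ for $m\le n$. In short, the paper's argument is more computational but stays entirely within finite trees; yours buys an exact recursion valid in full generality and an intrinsic meaning for the convergents, at the small price of having to justify $(\star)$ by a limiting argument.
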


\begin{proof}
Let again $V_m=\bigcup_{n=0}^m \gen_m$ be the tree up to generation $m\geq 1$, and set
\[
W_{n,m} = W_n\cap V_m, \ 0\leq n\leq m-1.
\]

Let $1<p<\infty$, and write the right-hand side of the claimed formula for $c_p(V,\mu)$ as $d$. It is easy to see that
\[
c_p(V_m,\mu)= \frac{1}{\Bigg( \frac{1}{c_p'(W_{0,m},\mu)^{p^*}} + \frac{1}{\Bigg( |\mu_{v_1}|^p+\frac{1}{\bigg( \frac{1}{c_p'(W_{1,m},\mu)^{p^*}}+\frac{1}{\Big(\ddots \frac{1}{\big( \frac{1}{c_p'(W_{m-1,m},\mu)^{p^*}} + \frac{1}{|\mu_{v_m}|^{p^\ast}} \big)^{p/p^{\ast}}} \Big)^{p^{\ast}/p}}\bigg)^{p/p^{\ast}}} \Bigg)^{p^{\ast}/p}}\Bigg)^{1/p^{\ast}}}.
\]

It is here that the curious reinterpretation \eqref{eq-cp'} intervenes: the contribution of the subtree $W_n$, $n\leq m-1$, inside $c_p(V_m,\mu)$ is
\[
\sum_{\substack{u\in \Chi(v_n)\\u\neq v_{n+1}}} \ldots,
\]
which equals 
\[
\frac{1}{c_p(W_{n,m},\mu)^{p^*}}
\]
unless when $W_n$ is a singleton, in which case the sum disappears, so that $\frac{1}{c_p'(W_{n,m},\mu)^{p^*}}$ needs to be taken as zero. 

Since $c_p'(W_{n,m},\mu)\leq c_p'(W_n,\mu)$ for all $n\leq m-1$, we have that
\[
c_p(V_m,\mu)\leq d_m,
\]
where
\[
d_m = \frac{1}{\Bigg( \frac{1}{c_p'(W_{0},\mu)^{p^*}} + \frac{1}{\Bigg( |\mu_{v_1}|^p+\frac{1}{\bigg( \frac{1}{c_p'(W_{1},\mu)^{p^*}}+\frac{1}{\Big(\ddots \frac{1}{\big( \frac{1}{c_p'(W_{m-1},\mu)^{p^*}} + \frac{1}{|\mu_{v_m}|^{p^\ast}} \big)^{p/p^{\ast}}} \Big)^{p^{\ast}/p}}\bigg)^{p/p^{\ast}}} \Bigg)^{p^{\ast}/p}}\Bigg)^{1/p^{\ast}}}.
\]
The $d_m$ are the even-numbered convergents of the above almost-classical continued fractions.

The sequence $(d_{m})_m$ is clearly increasing, and we interpret $d$ as
\[
d:=\lim_{m\to\infty}d_{m}.
\]
Thus we have that
\begin{equation}\label{eq-cf2}
c_p(V,\mu) = \lim_{m\to\infty} c_p(V_m,\mu) \leq d.
\end{equation}

On the other hand, let $1\leq l\leq m$. Then we observe that 
\[
\frac{1}{\Bigg( \frac{1}{c_p'(W_{0,m},\mu)^{p^*}} + \frac{1}{\Bigg( |\mu_{v_1}|^p+\frac{1}{\bigg( \frac{1}{c_p'(W_{1,m},\mu)^{p^*}}+\frac{1}{\Big(\ddots \frac{1}{\big( \frac{1}{c_p'(W_{l-1,m},\mu)^{p^*}} + \frac{1}{|\mu_{v_l}|^{p^\ast}} \big)^{p/p^{\ast}}} \Big)^{p^{\ast}/p}}\bigg)^{p/p^{\ast}}} \Bigg)^{p^{\ast}/p}}\Bigg)^{1/p^{\ast}}}
\]
is less or equal than $c_p(V_m,\mu)$. Letting first $m$ and then $l$ tend to infinity, we see that
\[
d\leq  c_p(V,\mu),
\]
which together with \eqref{eq-cf2} implies the result.

The proof in the case of $p=\infty$ is similar.
\end{proof}

\begin{example}\label{ex-comb3}
We consider half the comb tree of Example \ref{ex-comb}; more precisely, let $V=\{(n,k) : n\in \NN_0, 0\leq k\leq n\}$ so that, for $n\geq 0$, every vertex $(n,0)$ has two children, $(n+1,0)$ and $(n+1,1)$, while every vertex $(n,k)$, $k\geq 1$, has a single child $(n+1,k+1)$. When we define $\mu_{(n,k)}=\frac{1}{2^{k/2}}$ for $(n,k)\in V$, then $\mu_{(n,0)} = c_2'(W_n,\mu)=1$ for all $n\geq 0$, and hence the continued fraction for $c_2(V,\mu)^2$ becomes the continued fraction for the golden ratio $\phi$, that is, $c_2(V,\mu)=\sqrt{\phi}$. 
\end{example}

Let us also illustrate the curious modification of $c_p$ into $c_p'$.

\begin{example}\label{ex-comb4}
Let $1<p<\infty$, and consider the tree $V=\NN_0$ with weights $\mu_n$, $n\geq 0$. By Example \ref{ex-unbranched},
\begin{equation}\label{eq-comb4}
c_p(V,\mu)=\Big(\sum_{n=1}^\infty |\mu_n|^p\Big)^{1/p}.
\end{equation}
We obtain the same value by the formula in the result above because all $W_n$ are singletons and thus $\frac{1}{c_p'(W_n,\mu)}=0$, $n\geq 0$.

Now add infinite branches to each vertex of $\NN_0$, so that we arrive at the half-comb tree of the previous example. For $n\geq 0$, let $\mu_{(n,0)}=\mu_n$, and $\mu_{n,k}=1$ for $k\geq 1$. Then this time all $c_p(W_n,\mu)$ are infinite, so that again $\frac{1}{c_p'(W_n,\mu)}=0$, $n\geq 0$, and the new tree has the same constant $c_p$, given by \eqref{eq-comb4}. 

Finally, if we modify the tree $\NN_0$ by only attaching the tree of Example \ref{ex-cuttree} at the root $0$, with weights 1, then $c_p(W_0,\mu)=0$, so that the constant $c_p$ for this tree is zero. 
\end{example}

We end the section with some comments on the constants $c_p$ and $r_p$. The structure of the constants $c_p$ resembles that of the classical continued fractions. Generalized continued fractions of the type that we encounter here do not seem to have been studied in the literature yet (but see Subsection \ref{subs-rootI}). In the case when $p=2$, hence $p/p^\ast=1$, our expressions are similar to so-called branched continued fractions, see \cite{Sie83}, \cite{BoKu98}, which take the form
\[
a_0 + \sum_{k_0=1}^N  
\frac{a_{k_0}}
{ a_{k_0,0}+ \sum\limits_{k_1=1}^N 
\frac{a_{k_0,k_1}}
{a_{k_0,k_1,0}+  \sum\limits_{k_2=1}^N 
\frac{a_{k_0,k_1,k_2}} {a_{k_0,k_1,k_2,0}+ \ldots}}}.
\]
But the number of terms at each step is invariably $N$. And in our continued fractions the sums $\sum$ only appear after another fraction, which leads to the positive homogeneity observed above.

A noteworthy feature of our continued fractions is that they reflect the structure of the tree. In fact, knowing for each $v\in V$ the number of terms that appear successively in the denominators allows to reconstruct the tree. Fractions that reflect a tree structure in a different way have recently also come up in the work of Spier \cite{Spi20}.

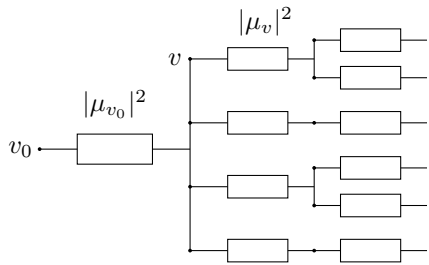
\begin{figure}
\begin{tikzpicture}
\draw[fill] (-2,0) circle (.5pt);
\draw (-2,0) -- (-1.5,0);
\draw (-1.5,-.2) rectangle (-.5,.2);
\draw (-.5,0) -- (0,0);

\draw (0,-1.35) -- (0,1.2);

\draw[fill] (0,1.2) circle (.5pt);
\draw (0,1.2) -- (0.5,1.2);
\draw (.5,1.05) rectangle (1.3,1.35);
\draw (1.3,1.2) -- (1.65,1.2);

\draw[fill] (0,.35) circle (.5pt);
\draw (0,.35) -- (0.5,.35);
\draw (.5,.2) rectangle (1.3,.5);
\draw (1.3,.35) -- (2,.35);

\draw[fill] (0,-.5) circle (.5pt);
\draw (0,-.5) -- (0.5,-.5);
\draw (.5,-.65) rectangle (1.3,-.35);
\draw (1.3,-.5) -- (1.65,-.5);

\draw[fill] (0,-1.35) circle (.5pt);
\draw (0,-1.35) -- (0.5,-1.35);
\draw (.5,-1.5) rectangle (1.3,-1.2);
\draw (1.3,-1.35) -- (2,-1.35);

\draw (1.65,.95) -- (1.65,1.45);

\draw (1.65,-.25) -- (1.65,-.75);

\draw[fill] (1.65,1.45) circle (.5pt);
\draw (1.65,1.45) -- (2,1.45);
\draw (2,1.3) rectangle (2.8,1.6);
\draw (2.8,1.45) -- (3.15,1.45);

\draw[fill] (1.65,.95) circle (.5pt);
\draw (1.65,.95) -- (2,.95);
\draw (2,.8) rectangle (2.8,1.1);
\draw (2.8,.95) -- (3.15,.95);

\draw[fill] (1.65,.35) circle (.5pt);
\draw (2,.2) rectangle (2.8,.5);
\draw (2.8,.35) -- (3.15,.35);
 
\draw[fill] (1.65,-.25) circle (.5pt);
\draw (1.65,-.25) -- (2,.-.25);
\draw (2,-.4) rectangle (2.8,-.1);
\draw (2.8,-.25) -- (3.15,.-.25);
 
\draw[fill] (1.65,-.75) circle (.5pt);
\draw (1.65,-.75) -- (2,-.75);
\draw (2,-.9) rectangle (2.8,-.6);
\draw (2.8,-.75) -- (3.15,-.75);

\draw[fill] (1.65,-1.35) circle (.5pt);
\draw (2,-1.5) rectangle (2.8,-1.2);
\draw (2.8,-1.35) -- (3.15,-1.35);

\node at (-2.25,0) {\footnotesize{$v_0$}};
\node at (-.2,1.2) {\footnotesize{$v$}};

\node at (-1,.6) {\footnotesize{$|\mu_{v_0}|^2$}};
\node at (1,1.7) {\footnotesize{$|\mu_{v}|^2$}};
\end{tikzpicture}
\caption{A weighted tree as an electrical circuit}%
\label{fig-circuit}
\end{figure}

As for the resistance constants, let us assume that the tree is locally finite. Then Theorem \ref{t-rec1} tells us that, for $p=2$,
\[
r_2(V,\mu)^2=|\mu_{v_0}|^2 + \frac{1}{\sum\limits_{v\in \Chi(v_0)}\frac{1}{r_2(V(v),\mu)^{2}}}.
\]
This is nothing but the formula for the resistance in an electrical circuit. More specifically, if we interpret, for any vertex $v\in V$, $|\mu_v|^2$ as the value of a resistor \textit{after} the vertex, see Figure \ref{fig-circuit}, then the formula tells us that $r_2(V,\mu)^2$ is exactly the total resistance of our circuit -- where we need to reconnect the various branches of the circuit ``at infinity'', that is, at the Poisson boundary in the language of harmonic functions on trees, see Section \ref{s-chaosPoisson}. This interpretation has motivated our terminology.

\section{Backward invariance on $\ell^p$ and $c_0$}\label{s-backwinv} 

We now return to the study of the action of the backward shift operator $B$. 

\subsection{Backward invariance on rooted trees of finite length}\label{s-backinv} 
Motivated by the strategy explained at the beginning of the previous section we will first only consider rooted trees of finite length, that is, trees where each branch starting form the root has a finite length (and therefore ends in a leaf). Note that for such trees the constants $c_p(V,\mu)$ and $r_p(V,\mu)$, $1\leq p\leq \infty$, are always finite.

For the results in this section we recall the notion of a backward invariant sequence in Definition \ref{d-backwinv}. We stress that $f$ is in general not a fixed point of the backward shift operator $B$ on $V$. In fact, since $(Bf)(v)=0$ for any leaf of $v$ of $V$, the zero-sequence is the only fixed point of $B$.

For the following result, the reverse H\"older inequality will be essential. 

\begin{theorem}\label{t-charfinp}
Let $V$ be a rooted tree of finite length with root $v_0$ and $\mu=(\mu_v)_{v\in V}$ a weight on $V$. Let $\mathcal{J}$ be the set of all backward invariant sequences $f$ on $V$ with $f(v_0)=1$. 

\emph{(a)} If $1\leq p<\infty$ then
\[
\inf_{f\in \mathcal{J}} \Big(\sum_{v\in V} |f(v)\mu_v|^p\Big)^{1/p}=r_p(V,\mu)\ \text{ and }\ \inf_{f\in \mathcal{J}} \Big(\sum_{\substack{v\in V\\v\neq v_0}} |f(v)\mu_v|^p\Big)^{1/p}=c_p(V,\mu).
\]

\emph{(b)} If $p=\infty$, then 
\[
\inf_{f\in \mathcal{J}} \sup_{v\in V} |f(v)\mu_v|=r_\infty(V,\mu)\ \text{ and } \
\inf_{f\in \mathcal{J}} \sup_{\substack{v\in V\\v\neq v_0}} |f(v)\mu_v|=c_\infty(V,\mu).
\] 
\end{theorem}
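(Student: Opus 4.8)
The plan is to prove both equalities by induction on the length of the tree, handling $p=1$, $1<p<\infty$, and $p=\infty$ in parallel. First I would observe that within each part the two equalities are equivalent: since every $f\in\mathcal{J}$ has $f(v_0)=1$, one has $\sum_{v\in V}|f(v)\mu_v|^p=|\mu_{v_0}|^p+\sum_{v\neq v_0}|f(v)\mu_v|^p$ (and the analogous identity with a maximum when $p=\infty$), while by definition $r_p(V,\mu)^p=|\mu_{v_0}|^p+c_p(V,\mu)^p$ and $r_\infty=\max(|\mu_{v_0}|,c_\infty)$. Hence it suffices to establish the $c_p$-equality for every finite-length tree; the $r_p$-equality then follows at once. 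For the base case, a one-vertex tree $\{v_0\}$ has $\mathcal{J}=\{e_{v_0}\}$, the left-hand side is $0$, and $c_p(\{v_0\},\mu)=0$ by Remark \ref{rem-contfr}(a).

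For the inductive step, each subtree $V(v)$, $v\in\Chi(v_0)$, has strictly smaller length, so the theorem holds for it; let $\mathcal{J}_v$ denote the analogue of $\mathcal{J}$ for $V(v)$. The structural point is that if $f\in\mathcal{J}$ then its restriction to each $V(v)$ is backward invariant on $V(v)$ (the defining relations at non-leaf vertices are inherited from $V$), that $a_v:=f(v)$ satisfies $\sum_{v\in\Chi(v_0)}a_v=f(v_0)=1$, and that $f|_{V(v)}/a_v\in\mathcal{J}_v$ whenever $a_v\neq0$. Applying the $r_p$-form of the induction hypothesis to $V(v)$ (its full sum includes the root $v$), together with homogeneity and the trivial bound when $a_v=0$, gives for $1\le p<\infty$
\[
\sum_{w\in V(v)}|f(w)\mu_w|^p\ge |a_v|^p\, r_p(V(v),\mu)^p,
\]
and the analogous $\sup_{w\in V(v)}|f(w)\mu_w|\ge |a_v|\, r_\infty(V(v),\mu)$ when $p=\infty$. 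Summing (resp. taking the maximum) over $v\in\Chi(v_0)$ yields $\sum_{v\neq v_0}|f(v)\mu_v|^p\ge\sum_{v\in\Chi(v_0)}|a_v|^p\,r_p(V(v),\mu)^p$. I stress that this holds even when $f$ vanishes at an interior child $v$ while being non-zero below it — the extra flexibility of backward invariant sequences — since the right-hand term is then zero.

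It remains to minimize the right-hand side under $\sum_v a_v=1$, and here I would invoke the reverse H\"older inequality on the index set $\Chi(v_0)$ with weights $\nu_v=r_p(V(v),\mu)$. A short argument shows $\inf_{\sum_v a_v=1}\Phi=\inf_{\|a\|_1=1}\Phi$ for any $\Phi$ depending only on $(|a_v|)_v$: if $\sum_v a_v=1$ then $\|a\|_1\ge1$, so rescaling only decreases $\Phi$, while replacing $a_v$ by $|a_v|$ preserves $\Phi$ and realizes $\sum_v a_v=1$. Thus for $1<p<\infty$ the infimum equals $\big(\sum_v r_p(V(v),\mu)^{-p^\ast}\big)^{-1/p^\ast}$, which by $r_p(V(v),\mu)^p=|\mu_v|^p+c_p(V(v),\mu)^p$ is exactly the right-hand side of \eqref{eq-cp} in Theorem \ref{t-rec1}(c); that theorem is available because $V$ has finite length, so the infimum equals $c_p(V,\mu)$. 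The cases $p=1$ (where reverse H\"older gives $\inf_v r_1(V(v),\mu)$, equal to $c_1(V,\mu)$ by the branch definition) and $p=\infty$ are identical. This proves the lower bound $\inf_{f\in\mathcal{J}}\ge c_p(V,\mu)$.

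For the reverse inequality I would construct near-optimal $f$: choose $a_v$ nearly attaining the reverse H\"older infimum, choose by induction $g_v\in\mathcal{J}_v$ with $\sum_{w\in V(v)}|g_v(w)\mu_w|^p\le r_p(V(v),\mu)^p+\delta_v$, and glue $f|_{V(v)}:=a_v g_v$ (and $f\equiv0$ on $V(v)$ when $a_v=0$); then $f\in\mathcal{J}$ and its norm is controlled by the above computation. The only delicate point is that $\Chi(v_0)$ may be infinite, so I would select the $\delta_v$ with $\sum_v|a_v|^p\delta_v$ as small as desired, which is possible since there are only countably many children and $\sum_v|a_v|^p r_p(V(v),\mu)^p<\infty$ (all constants being finite for finite length, by Remark \ref{rem-contfr}(e)). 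The main obstacle is therefore bookkeeping rather than conceptual: passing correctly between the constraints $\sum_v a_v=1$ and $\|a\|_1=1$ so as to apply reverse H\"older, ensuring that the recursion of Theorem \ref{t-rec1}(c) is legitimately available, and aggregating the approximation error over possibly infinitely many children.
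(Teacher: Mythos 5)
Your proof is correct, but it runs the induction in the opposite direction from the paper, and it is worth recording the difference. You peel the tree at the \emph{root}: decompose each $f\in\mathcal{J}$ according to the mass distribution $a_v=f(v)$ over $\Chi(v_0)$, apply the induction hypothesis (in its $r_p$-form) to each child subtree $V(v)$, minimize $\sum_v |a_v|^p r_p(V(v),\mu)^p$ under $\sum_v a_v=1$ by the reverse H\"older inequality (your reduction of the affine constraint $\sum_v a_v=1$ to $\|a\|_1=1$ is valid), and then identify the resulting expression with $c_p(V,\mu)$ via Theorem \ref{t-rec1}(c). The paper instead peels the tree at the \emph{leaves}: backward invariance at the vertices of the last generation plus reverse H\"older collapses generation $m+1$ into the modified weight $\mu'$ of \eqref{eq-formula}, the induction hypothesis is applied to the shorter tree $V_m$ with weight $\mu'$, and the identification with $c_p$ is the definitional backward–forward recurrence $c_p(V_{m+1},\mu)=c_p(V_m,\mu')$ of Remark \ref{rem-contfr}(c). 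Your route leans on Theorem \ref{t-rec1}(c), which is legitimate here — it is proved before and independently of this theorem, and its finite-length hypothesis is exactly what you have (Example \ref{ex-cuttree} shows it is not free in general) — whereas the paper's route needs only the definition of the constants; on the other hand, the paper's bottom-up bookkeeping is what gets reused to produce the \emph{explicit} minimizers of Theorem \ref{t-charfinpmin}, which your top-down argument does not directly yield. Your treatment of the two delicate points is sound: vanishing values $a_v=0$ at a child with nonzero flow below it cause no harm in the lower bound (the term is simply $0$), and in the gluing step the error over possibly infinitely many children is controlled either by your choice of $\delta_v$ or, even more simply, by taking the near-optimal $(a_v)$ of finite support, which the reverse H\"older inequality permits.
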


\begin{proof}
First, by the definition of the constants $r_p$ it suffices to show the result for the $c_p$.

(a) Let $1<p<\infty$. We proceed by induction on $m$, the length of $V$. For $m=0$ the assertion is trivial, see Remark \ref{rem-contfr}(a). Consider $m=1$. Since we are only interested in sequences $f$ of minimal norm we may assume that $f(v)\geq 0$ for all $v\in \Chi(v_0)$, so that $\|f\chi_{\Chi(v_0)}\|_1=\sum_{v\in \Chi(v_0)}f(v)=f(v_0)=1$. It thus follows from the reverse H\"older inequality, see Section \ref{s-notation}, that
\begin{equation}\label{eq-hoel0}
\begin{split}
\inf_{f\in \mathcal{J}} \Big(\sum_{\substack{v\in V\\v\neq v_0}} |f(v)\mu_v|^p\Big)^{1/p}&=\inf_{\|f\chi_{\Chi(v_0)}\|_1=1} \Big(\sum_{v\in \Chi(v_0)}|f(v)\mu_v|^p\Big)^{1/p}\\
&= \frac{1}{\big(\sum\limits_{v\in \Chi(v_0)}\frac{1}{|\mu_v|^{p^{\ast}}}\big)^{1/p^\ast}} = c_p(V,\mu).
\end{split}
\end{equation}

Let now $m\geq 1$, and suppose that we have obtained the result for all rooted trees of length $m$. Let $V$ be a rooted tree of length $m+1$.
Let $f$ be a backward invariant sequence on $V$ with $f(v_0)=1$. Since we are only interested in sequences of minimal norm, we may assume that if $f(v)=0$ for some $v\in V$ then $f(u)=0$ for all $u\in \Chi(v)$. Thus we have that
\begin{align*}
\sum_{\substack{v\in V\\v\neq v_0}} |f(v)\mu_v|^p &= \sum_{n=1}^{m-1} \Big(\sum_{v\in \gen_n} |f(v)\mu_v|^p\Big)+ \sum_{v\in \gen_{m}}|f(v)\mu_v|^p+\sum_{v\in \gen_{m+1}}|f(v)\mu_v|^p\\
&=  \sum_{n=1}^{m-1} \Big(\sum_{v\in \gen_n} |f(v)\mu_v|^p\Big)+ \sum_{\substack{v\in \gen_{m}\\v \text{ a leaf}}}|f(v)\mu_v|^p\\
&\hspace{13em}+\sum_{\substack{v\in \gen_{m}\\v \text{ not a leaf}}}\Big(|f(v)\mu_v|^p+\sum_{u\in \Chi(v)}|f(u)\mu_u|^p\Big).
\end{align*}

Consider $v\in \gen_{m}$ that is not a leaf. Since $f$ is backward invariant, we have that
\[
f(v)=\sum_{u\in \Chi(v)}f(u).
\]
Hence there are scalars $\alpha_u$ such that, for all $u\in \Chi(v)$,
\[
f(u) = \alpha_u f(v)\quad\text{and}\quad \sum_{u\in\Chi(v)} \alpha_u=1,
\]
which, by our assumption on $f$, is also true if $f(v)=0$. Thus,
\[
\sum_{u\in \Chi(v)}|f(u)\mu_u|^p = |f(v)|^p \sum_{u\in \Chi(v)}|\alpha_u\mu_u|^p.
\]
Once more, since we are only interested in sequences $f$ of minimal norm we may assume that $\alpha_u\geq 0$ for all $u\in \Chi(v)$, so that $\|(\alpha_u)_u\|_1=1$. On the other hand, it follows from another application of the reverse H\"older inequality that
\begin{equation}\label{eq-hoel}
\inf_{\|(\alpha_u)_u\|_1=1} \sum_{u\in \Chi(v)}|\alpha_u\mu_u|^p = \frac{1}{\Big(\sum\limits_{u\in \Chi(v)}\frac{1}{|\mu_u|^{p^{\ast}}}\Big)^{p/p^\ast}}.
\end{equation}

Altogether we have shown that
\begin{align*}
\inf_{f\in \mathcal{J}}\Big(\sum_{\substack{v\in V\\v\neq v_0}} |f(v)\mu_v|^p\Big)^{1/p} &= \inf_{f\in \mathcal{J}_m}\bigg(\sum_{n=1}^{m-1} \Big(\sum_{v\in \gen_n} |f(v)\mu_v|^p\Big)+
\sum_{\substack{v\in \gen_{m}\\v \text{ a leaf}}}|f(v)\mu_v|^p\\
&\hspace{5em}+ \sum_{\substack{v\in \gen_{m}\\v \text{ not a leaf}}}|f(v)|^p\Big(|\mu_v|^p+\frac{1}{\Big(\sum\limits_{u\in \Chi(v)}\frac{1}{|\mu_u|^{p^{\ast}}}\Big)^{p/p^\ast}}\Big)\bigg)^{1/p}\\
&= \inf_{f\in \mathcal{J}_m}\bigg(\sum_{n=1}^{m-1} \Big(\sum_{v\in \gen_n} |f(v)\mu_v|^p\Big)\\
&\hspace{5em}+\sum_{v\in \gen_{m}}|f(v)|^p\Big(|\mu_v|^p+\frac{1}{\Big(\sum\limits_{u\in \Chi(v)}\frac{1}{|\mu_u|^{p^{\ast}}}\Big)^{p/p^\ast}}\Big)\bigg)^{1/p},
\end{align*}
where $\mathcal{J}_m$ denotes the set of backward invariant sequences on $V_m=\bigcup_{n=0}^m \gen_n$ and where we have used the convention of Remark \ref{rem-contfr}(a).

Now let $\mu'$ be the weight on $V_m$ defined in Remark \ref{rem-contfr}(c). It follows with the induction hypothesis and that remark that
\[
\inf_{f\in \mathcal{J}}\Big(\sum_{\substack{v\in V\\v\neq v_0}} |f(v)\mu_v|^p\Big)^{1/p} = c_p(V_m,\mu') = c_p(V,\mu), 
\]
as had to be shown.

For $p=1$ one can proceed similarly. But a simple reflection shows the result to be obvious: the best way to spread the mass 1 is to pass it completely on along a branch with smallest possible $\ell^1$-norms of the weights. 

(b) The proof for $p=\infty$ is the same as that for $p<\infty$, where we now replace the second infimum in \eqref{eq-hoel0} by
\[
\inf_{\|f\chi_{\Chi(v_0)}\|_1=1} \sup_{v\in \Chi(v_0)}|f(v)\mu_v| = \frac{1}{\sum\limits_{v\in \Chi(v_0)}\frac{1}{|\mu_v|}},
\]
and similarly for \eqref{eq-hoel}. 
\end{proof}

We give two simple examples to see that the infimum in the formulas is not necessarily attained. 

\begin{example}\label{ex-att}
Let $V$ be the tree of length 1 given by the root and an infinite number of children.

(a) Let $p=2$ and $\mu_v=1$ for all $v\in V$. Sending the mass 1 uniformly to $N$ children leads to a backward invariant sequence of norm $(1+\frac{1}{N})^{1/2}$, which tends to 1, while there is clearly no backward invariant sequence of norm 1 that gives the root the value 1. Note, however, that the backward shift $B$ is not defined on $\ell^p(V,\mu)$, see Section \ref{s-notation}.

(b) Let $p=1$ and choose weights $\mu_v$ on the children $v$ of the root with $\inf_v|\mu_v|>0$ such that the infimum is not attained. Then there is no backward invariant sequence of norm  $r_1(V,\mu)$, while here $B$ is indeed an operator on $\ell^1(V,\mu)$.
\end{example} 

If $p\neq 1$, it is in fact the rather weak continuity assumption on $B$ to be defined on $\ell^p(V,\mu)$ or $c_0(V,\mu)$ that ensures the existence of a backward invariant sequence of minimal norm. Moreover, in $\ell^p(V,\mu)$, the minimal backward invariant sequence is unique, and one can state its form explicitly. 

\begin{theorem}\label{t-charfinpmin}
Let $V$ be a rooted tree of finite length with root $v_0$ and $\mu=(\mu_v)_{v\in V}$ a weight on $V$. 

\emph{(a)} Let $1< p<\infty$, and suppose that $B$ is defined on $\ell^p(V,\mu)$. Then there exists a unique backward invariant sequence $f$ on $V$ with $f(v_0)=1$ such that
\[
\Big(\sum_{v\in V} |f(v)\mu_v|^p\Big)^{1/p}=r_p(V,\mu),
\]
or such that
\[
\Big(\sum_{\substack{v\in V\\v\neq v_0}} |f(v)\mu_v|^p\Big)^{1/p}=c_p(V,\mu).
\]
For any $v=v_n\in \emph{\gen}_n$, $n\geq 1$, $f$ is given by
\[
f(v) = \prod_{k=1}^n\frac{r_p(V(v_k),\mu)^{-p^\ast}}{\sum\limits_{u\in\Chi(\prt(v_{k}))}r_p(V(u),\mu)^{-p^\ast}}>0,
\]
where $(v_0,v_1,\ldots,v_n)$ is the path from $v_0$ to $v_n$.

\emph{(b)} Suppose that $B$ is defined on $c_0(V,\mu)$ or, equivalently, on $\ell^\infty(V,\mu)$. Then there exists a backward invariant sequence $f$ on $V$ with $f(v_0)=1$ such that
\[
\sup_{v\in V} |f(v)\mu_v|=r_\infty(V,\mu)\ \text{ and } \ \sup_{\substack{v\in V\\v\neq v_0}} |f(v)\mu_v|=c_\infty(V,\mu).
\]
One such sequence $f$ is given, for any $v=v_n\in \emph{\gen}_n$, $n\geq 1$, by
\[
f(v) = \prod_{k=1}^n\frac{r_\infty(V(v_k),\mu)^{-1}}{\sum\limits_{u\in\Chi(\prt(v_{k}))}r_\infty(V(u),\mu)^{-1}}>0,
\]
where $(v_0,v_1,\ldots,v_n)$ is the path from $v_0$ to $v_n$.
\end{theorem}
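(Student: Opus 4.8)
The plan is to prove both parts by induction on the length $m$ of the tree, reducing at each step to a single application of the reverse Hölder inequality in which the subtrees $V(v)$, $v\in\Chi(v_0)$, are treated as ``super-leaves'' of effective weight $r_p(V(v),\mu)$. Since the root value $f(v_0)=1$ is fixed, the two minimization problems (for $r_p$ and for $c_p$) differ only by the additive constant $|\mu_{v_0}|^p$, so they have exactly the same minimizers; I therefore only need to treat the full norm together with the constant $r_p$, and then the $c_p$-statement is automatic.

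First I would record the equality case of the reverse Hölder inequality: for $1<p<\infty$ and positive weights $(\nu_j)_j$ with $\sum_j\nu_j^{-p^\ast}<\infty$, the infimum of $(\sum_j|x_j\nu_j|^p)^{1/p}$ over $\|x\|_1=1$ is attained \emph{uniquely}, by strict convexity of the $\ell^p$ norm, at $x_j=\nu_j^{-p^\ast}/\sum_i\nu_i^{-p^\ast}$. For $p=\infty$ the analogous point $x_j=\nu_j^{-1}/\sum_i\nu_i^{-1}$ attains the infimum but need not be the only minimizer. This is precisely where the hypothesis that $B$ be defined on $\ell^p(V,\mu)$ (resp.\ $c_0(V,\mu)$) enters: by Remark~\ref{rem-contfr}(b) it gives $\sum_{u\in\Chi(v)}|\mu_u|^{-p^\ast}<\infty$ for every $v$, and since $r_p(V(u),\mu)\geq|\mu_u|$ we obtain $\sum_{u\in\Chi(v)}r_p(V(u),\mu)^{-p^\ast}<\infty$, so the relevant minimizer is an honest $\ell^1$-normalized sequence and not the degenerate ``all-zero'' candidate responsible for non-attainment in Example~\ref{ex-att}(a).

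The inductive step rests on the decomposition
\[
\sum_{w\in V}|f(w)\mu_w|^p=|\mu_{v_0}|^p+\sum_{v\in\Chi(v_0)}\sum_{w\in V(v)}|f(w)\mu_w|^p,
\]
valid for any backward invariant $f$ with $f(v_0)=1$. Writing $f(v)=\alpha_v$, so that $\sum_v\alpha_v=1$, the restriction of $f$ to $V(v)$ is backward invariant with root value $\alpha_v$; by Theorem~\ref{t-charfinp} and positive homogeneity (Theorem~\ref{t-rec1}(a)) the inner sum is at least $|\alpha_v|^p\,r_p(V(v),\mu)^p$, with equality iff $f|_{V(v)}$ is itself norm-minimal for $V(v)$. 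Applying the reverse Hölder step to the $\alpha_v$ with weights $r_p(V(v),\mu)$ and invoking the recursion of Theorem~\ref{t-rec1}(c) bounds the right-hand side below by $r_p(V,\mu)^p$. Equality throughout forces the $\alpha_v$ to be the reverse Hölder minimizer, $\alpha_v=r_p(V(v),\mu)^{-p^\ast}/\sum_u r_p(V(u),\mu)^{-p^\ast}$, and forces each $f|_{V(v)}$ to be norm-minimal on $V(v)$. For $1<p<\infty$ both constraints have unique solutions---the first by the strict-convexity uniqueness above, the second by the induction hypothesis applied to the shorter tree $V(v)$---so the minimizer is unique; unwinding the recursion along the path $(v_0,\ldots,v_n)$ multiplies these ratios and reproduces exactly the stated product formula, and since every factor is strictly positive we get $f>0$ throughout, so no branch is starved of mass. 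For $p=\infty$ the identical computation exhibits the displayed $f$ as \emph{a} minimizer, while the non-uniqueness of the sup-norm minimizer is why no uniqueness is claimed.

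The main obstacle is the bookkeeping of the equality cases: I must check that equality in the global lower bound genuinely forces equality in \emph{both} the per-subtree estimates and the top-level reverse Hölder step simultaneously, and that the induction hypothesis truly applies to each $V(v)$---which it does, since $V(v)$ has length $\le m$ and inherits the standing hypothesis because the children of any $w\in V(v)$ are the same in $V(v)$ as in $V$, so $\sum_{u\in\Chi(w)}|\mu_u|^{-p^\ast}<\infty$ is preserved. Once this equality analysis is pinned down, verifying backward invariance of the explicit formula is immediate, as the ratio $f(u)/f(v)$ over $u\in\Chi(v)$ sums to $1$ by construction, and the fact that it attains the minimum is exactly the equality case just established.
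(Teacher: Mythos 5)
Your proposal is correct, but it organizes the induction differently from the paper. The paper peels the tree from the \emph{bottom}: for a tree of length $m+1$ it absorbs the last generation into a modified weight $\mu'$ on $V_m$ (the backward--forward recurrence of Remark \ref{rem-contfr}(c)), applies the induction hypothesis to $(V_m,\mu')$, and then reattaches generation $m+1$ through the equality case of the reverse H\"older inequality; the identification $r_p(V_m(v),\mu')=r_p(V(v),\mu)$ of Remark \ref{rem-contfr}(d) is then what turns the resulting expression into the stated product formula, and uniqueness is a one-line consequence of the convexity of $\mathcal{J}$ together with the strict convexity of the $\ell^p$-norm. You instead peel at the \emph{root}: you treat each $V(v)$, $v\in\Chi(v_0)$, as a super-leaf of effective weight $r_p(V(v),\mu)$, invoke Theorem \ref{t-charfinp} as a black box for the per-subtree lower bounds, apply reverse H\"older once at the top level, and identify the resulting bound with $r_p(V,\mu)^p$ via the recursion of Theorem \ref{t-rec1}(c) (whose finite-length hypothesis is available here). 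Your route makes the product formula essentially self-evident --- each factor is produced by exactly one level of the recursion, with no weight-modification bookkeeping --- at the cost of relying on Theorem \ref{t-rec1}(c) and of threading the uniqueness through the equality analysis (equality in the sum of nonnegative per-subtree deficits plus equality in the top-level reverse H\"older step), which you correctly identify as the point requiring care; the paper's route is self-contained relative to the proof of Theorem \ref{t-charfinp}, reusing its identity \eqref{eq-backinv}, and gets uniqueness more cheaply.

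One small repair for part (b): your reduction ``treat $r_p$, then $c_p$ is automatic'' is valid for $1<p<\infty$, where the two objective functions differ by the additive constant $|\mu_{v_0}|^p$, but it goes the wrong way for $p=\infty$: since $r_\infty=\max(|\mu_{v_0}|,c_\infty)$, a sequence attaining $r_\infty$ need not attain $c_\infty$ (take $|\mu_{v_0}|>c_\infty$). You should run the $p=\infty$ induction on $c_\infty$ instead --- your own computation already shows the displayed sequence satisfies $\sup_{w\in V(v)}|f(w)\mu_w|=|\alpha_v|\,r_\infty(V(v),\mu)$ with all these quantities equal to $c_\infty(V,\mu)$, so it attains $c_\infty$ and hence also $r_\infty$. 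This is a framing slip, not a gap in the argument.
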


We note here for a better understanding that, in (a), $\sum_{u\in\Chi(\prt(v_{k}))}r_p(V(u),\mu)^{-p^\ast}<\infty$ and $r_p(V(v_k),\mu)^{-p^\ast}>0$ for all $k$ because $r_p(V(u),\mu)\geq |\mu_u|$ for all $u$ and since $B$ is defined on $\ell^p(V,\mu)$; see Remark \ref{rem-contfr}(c). And similarly for (b).

\begin{proof} The result follows from an analysis of the proof of Theorem \ref{t-charfinp}. 

(a) We first show existence, where it suffices to consider the constant $c_p$. We proceed by induction on the length $m$ of the tree. For $m=0$ there is nothing to prove. Suppose that the claim has been proved for $m\geq 0$. Let $V$ be a tree of length $m+1$, and let $\mu$ be a weight on $V$ such that $B$ is defined on $\ell^p(V,\mu)$. We associate to $\mu$ a new weight $\mu'$ on $V_m$ as in Remark \ref{rem-contfr}(c), see \eqref{eq-formula}. Since $|\mu'_v| \geq |\mu_v|$ for all $v\in V_{m}$, $B$ is also defined on $\ell^p(V_m,\mu')$. Thus, by the induction hypothesis, a backward invariant sequence $f_m$ on $V_m$ with $f_m(v_0)=1$ and of minimal norm $(\sum_{v\in V_m,v\neq v_0}|f_m(v)\mu'_v|^p)^{1/p}$ is given by the following: for any $v=v_n\in \gen_n$, $1\leq n\leq m$, 
\[
f_{m}(v)=  \prod_{k=1}^n\frac{r_p(V_m(v_k),\mu')^{-p^\ast}}{\sum\limits_{u\in\Chi(\prt(v_{k}))}r_p(V_m(u),\mu')^{-p^\ast}}>0,
\]
where $(v_0,v_1,\ldots,v_n)$ is the path from $v_0$ to $v_n$; to avoid confusion note here that, for any $v\in V_m$ with $v\in\gen_n$, we have that
\begin{equation}\label{eq-Vmv}
V_m(v) = V_m\cap {\textstyle\bigcup_{k\geq 0}} \Chi^k(v)=V(v)_{m-n}
\end{equation}
is the set of descendants of $v$ in $V_m$. 

Let $\mathcal{J}_m$ and $\mathcal{J}$ denote the set of all backward invariant sequences $f$ on $V_{m}$ and $V$, respectively, with $f(v_0)=1$. Then we have seen in the proof of Theorem \ref{t-charfinp} that, for $m\geq 1$,
\begin{equation}\label{eq-backinv}
\begin{split}
\inf_{f\in \mathcal{J}}\Big(\sum_{\substack{v\in V\\v\neq v_0}} |f(v)\mu_v|^p\Big)^{1/p}&= \inf_{f\in \mathcal{J}_m}\bigg(\sum_{n=1}^{m-1} \Big(\sum_{v\in \gen_n} |f(v)\mu_v|^p\Big)\\
&\hspace{3em}+\sum_{v\in \gen_{m}}|f(v)|^p\Big(|\mu_v|^p+\frac{1}{\Big(\sum\limits_{u\in \Chi(v)}\frac{1}{|\mu_u|^{p^{\ast}}}\Big)^{p/p^\ast}}\Big)\bigg)^{1/p}.
\end{split}
\end{equation}
By the definition of $\mu'$, the infimum on the right-hand side is attained at $f=f_{m}$. 

Now, for any $v=v_m\in\gen_m$ that is not a leaf of $V$, the reverse H\"older inequality implies that there is a positive sequence $(\alpha_u)_{u\in \Chi(v_m)}$ with $\sum_{u\in \Chi(v_m)}\alpha_u=1$ such that
\begin{equation}\label{eq-hoel2}
\sum_{u\in \Chi(v_m)}|\alpha_u\mu_u|^p = \frac{1}{\Big(\sum\limits_{u\in \Chi(v_m)}\frac{1}{|\mu_u|^{p^{\ast}}}\Big)^{p/p^\ast}}.
\end{equation}
One such sequence is given by
\[
\alpha_u = \frac{|\mu_u|^{-p^\ast}}{\sum\limits_{w\in \Chi(v_m)}|\mu_w|^{-p^\ast}}>0,\quad u\in \Chi(v_m)
\]
(and in fact it is the only such sequence). See \cite[Theorem 13 and p. 119]{HLP34}, \cite[Lemma 4.2]{GrPa21} for these statements; and it is here that we have used the fact that $B:\ell^p(V,\mu)\to\KK^V$ is an operator: it implies that $\sum_{u\in \Chi(v_m)}\frac{1}{|\mu_u|^{p^{\ast}}}<\infty$.

Now let us first assume that $m\geq 1$. Then, as a consequence of the above, the proof of Theorem \ref{t-charfinp} shows that the infimum on the left-hand side of \eqref{eq-backinv} is attained at the following sequence $f$. If $v=v_n\in \gen_{n}$, $1\leq n\leq m$, then
\begin{align*}
f(v) := f_{m}(v) &= \prod_{k=1}^n\frac{r_p(V_m(v_k),\mu')^{-p^\ast}}{\sum\limits_{u\in\Chi(\prt(v_{k}))}r_p(V_m(u),\mu')^{-p^\ast}}\\
&= \prod_{k=1}^n\frac{r_p(V(v_k),\mu)^{-p^\ast}}{\sum\limits_{u\in\Chi(\prt(v_{k}))}r_p(V(u),\mu)^{-p^\ast}}>0,
\end{align*}
where we have used Remark \ref{rem-contfr}(d). 

If $v=v_{m+1}\in \gen_{m+1}$, which implies that $v_m:=\prt(v_{m+1})$ is not a leaf of $V$, then, by the proof of Theorem \ref{t-charfinp},
\begin{align*}
f(v) &:= \alpha_{v_{m+1}} f_{m}(v_m) = \frac{|\mu_{v_{m+1}}|^{-p^\ast}}{\sum\limits_{u\in \Chi(v_m)}|\mu_u|^{-p^\ast}}\prod_{k=1}^m\frac{r_p(V_m(v_k),\mu')^{-p^\ast}}{\sum\limits_{u\in\Chi(\prt(v_{k}))}r_p(V_m(u),\mu')^{-p^\ast}}\\
&= \frac{r_p(V(v_{m+1}),\mu)^{-p^\ast}}{\sum\limits_{u\in\Chi(\prt(v_{m+1}))}r_p(V(u),\mu)^{-p^\ast}}\prod_{k=1}^m\frac{r_p(V(v_k),\mu)^{-p^\ast}}{\sum\limits_{u\in\Chi(\prt(v_{k}))}r_p(V(u),\mu)^{-p^\ast}}\\
&= \prod_{k=1}^{m+1}\frac{r_p(V(v_k),\mu)^{-p^\ast}}{\sum\limits_{u\in\Chi(\prt(v_{k}))}r_p(V(u),\mu)^{-p^\ast}}>0,
\end{align*}
where we have used Remark \ref{rem-contfr}(d) again.

Finally, if $m=0$, only the case of $v=v_{m+1}\in \gen_{m+1}$ is relevant, and the previous equations provide $f(v)$ when we take empty products as 1.

Altogether we have proved the existence part of (a). Now, since the set $\mathcal{J}$ of backward invariant sequences $f$ on $V$ with $f(v_0)=1$ is convex, the uniqueness of a backward invariant sequence of minimal norm follows from the fact that the norm in $\ell^p(V\setminus\{v_0\},\mu)$ is strictly convex.

(b)  The proof of existence for $p = \infty$ is the same as that for $p < \infty$, where we replace \eqref{eq-hoel2} by
\[
\sup_{u\in \Chi(v_m)}|\alpha_u\mu_u| = \frac{1}{\sum\limits_{u\in \Chi(v_m)}\frac{1}{|\mu_u|}},
\]
where $\sum_{u\in \Chi(v_m)}\frac{1}{|\mu_u|}<\infty$ since $B:\ell^\infty(V,\mu)\to\KK^V$ is an operator. A positive 
sequence $(\alpha_u)_{u\in \Chi(v_m)}$ with $\sum_{u\in \Chi(v_m)}\alpha_u=1$ such that \eqref{eq-hoel2} holds is given by 
\[
\alpha_u = \frac{|\mu_u|^{-1}}{\sum\limits_{w\in \Chi(v_m)}|\mu_w|^{-1}}>0,\quad u\in \Chi(v_m).
\]
\end{proof}

Not surprisingly, for $1<p<\infty$, the minimal backward invariant sequence has all its entries non-zero: only by distributing the initial mass 1 over all vertices can one achieve a minimal norm. 

\begin{remark}\label{r-decr}
The backward invariant sequences $f$ of Theorem \ref{t-charfinpmin}(a) and (b) have the property that $(f(v_n))_{n=0,\ldots,m}$ is decreasing for every branch $(v_0,\ldots,v_m)$ starting from $v_0$. This follows from backward invariance and the positivity of the values.
\end{remark}

\begin{example}\label{ex-problemc0}
We stress that, in the case of $p=\infty$, a minimal sequence $f$ might not belong to $c_0(V,\mu)$. Let $V$ be the tree of length 1 where the root $v_0$ has infinitely many children $u_n$, $n\geq 1$, and let $\mu_{v_0}=1$, $\mu_{u_n}=2^{n}$, $n\geq 1$. Note that $B$ is defined on $c_0(V,\mu)$. Moreover, $r_\infty(V,\mu)=c_\infty(V,\mu)=1$. Then there is a unique backward invariant sequence $f$ with $f(v_0)=1$ 
such that $\sup_{v\in V} |f(v)\mu_v|=1$, and there is a unique backward invariant sequence $f$ with $f(v_0)=1$ such that $\sup_{v\in V,v\neq v_0} |f(v)\mu_v|=1$, and both are given by $f(u_n)=\frac{1}{2^n}$, $n\geq 1$. However, $f\notin c_0(V,\mu)$.

In addition, backward invariant sequences of minimal norm might not be unique. To see this, consider the binary tree $V$ of length 2, where the root $v_0$ and both its children have two children, and let $\mu$ be identically 1. Then $c_\infty(V,\mu)=\frac{1}{2}$, and there are infinitely many backward invariant sequences $f$ with $f(v_0)=1$ and $\sup_{v\in V, v\neq v_0} |f(v)\mu_v|=\frac{1}{2}$: they may differ in their values at generation 2. The same then also holds for $r_\infty$.
\end{example}

\subsection{Backward invariance on arbitrary rooted trees}\label{s-backinvarb}

We will now extend the previous results to rooted trees of arbitrary length, with or without leaves, where we start with the case of $1<p<\infty$.

\begin{theorem}\label{t-charpinv}
Let $V$ be a rooted tree with root $v_0$ and $\mu=(\mu_v)_{v\in V}$ a weight on $V$. Let $1< p<\infty$, and suppose that $B$ is defined on $\ell^p(V,\mu)$. Let $\mathcal{J}$ be the set of all backward invariant sequences $f$ on $V$ with $f(v_0)=1$.

\emph{(a)} Then
\[
\inf_{f\in \mathcal{J}}\Big(\sum_{v\in V} |f(v)\mu_v|^p\Big)^{1/p}= r_p(V,\mu)\ \text{ and } \ \inf_{f\in \mathcal{J}}\Big(\sum_{\substack{v\in V\\v\neq v_0}} |f(v)\mu_v|^p\Big)^{1/p}= c_p(V,\mu).
\]

\emph{(b)} If $r_p(V,\mu)<\infty$ or, equivalently, $c_p(V,\mu)<\infty$, then there exists a unique backward invariant sequence $f\in\ell^p(V,\mu)$ with $f(v_0)=1$ that attains the infima.
This sequence $f$ is given by 
\[
f(v)=\begin{cases} 
\displaystyle\prod_{k=1}^n\frac{r_p(V(v_k),\mu)^{-p^\ast}}{\sum\limits_{u\in\Chi(\prt(v_{k}))}r_p(V(u),\mu)^{-p^\ast}}>0,&\text{if } r_p(V(v),\mu)<\infty,\\
0, &\text{if } r_p(V(v),\mu)=\infty,\\
\end{cases}
\]
where $n\geq 1$ is such that $v=v_n\in \emph{\gen}_n$ and $(v_0,v_1,\ldots,v_n)$ is the path leading from $v_0$ to $v_n$.
\end{theorem}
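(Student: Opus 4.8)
The plan is to deduce both parts simultaneously from the finite-length case (Theorems \ref{t-charfinp} and \ref{t-charfinpmin}) by exhausting $V$ through the subtrees $V_m=\bigcup_{n=0}^m\gen_n$ and passing to the limit, using that $c_p(V_m,\mu)\uparrow c_p(V,\mu)$ and $r_p(V_m,\mu)\uparrow r_p(V,\mu)$ (Remark \ref{rem-contfr}(c),(d)). Since $f(v_0)=1$ gives $\sum_{v\in V}|f(v)\mu_v|^p=|\mu_{v_0}|^p+\sum_{v\neq v_0}|f(v)\mu_v|^p$ and $r_p(V,\mu)^p=|\mu_{v_0}|^p+c_p(V,\mu)^p$, it suffices throughout to treat the constant $c_p$. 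For the lower bound in (a), let $f\in\mathcal{J}$. For each $m$ the restriction $f|_{V_m}$ is backward invariant on $V_m$ with value $1$ at $v_0$, since the defining relation \eqref{eq-ser} is required only at non-leaves of $V_m$, that is on $V_{m-1}$, where $\Chi$ is unchanged. Hence Theorem \ref{t-charfinp} gives $\sum_{v\in V_m\setminus\{v_0\}}|f(v)\mu_v|^p\geq c_p(V_m,\mu)^p$; letting $m\to\infty$, monotone convergence on the left and Remark \ref{rem-contfr}(c) on the right yield $\sum_{v\neq v_0}|f(v)\mu_v|^p\geq c_p(V,\mu)^p$, and taking the infimum gives $\inf_{\mathcal{J}}\geq c_p(V,\mu)$. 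In particular, when $c_p(V,\mu)=\infty$ the infimum is $\infty$ and (a) holds, so from now on I assume $c_p(V,\mu)<\infty$.

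In this case let $f_m$ be the minimal backward invariant sequence on $V_m$ from Theorem \ref{t-charfinpmin}(a), extended by $0$ off $V_m$, so that $\sum_{v\neq v_0}|f_m(v)\mu_v|^p=c_p(V_m,\mu)^p$. The claim is that $f_m(v)\to f(v)$ pointwise, where $f$ is the sequence of the statement, which will simultaneously identify the explicit formula and supply the minimizer. Using \eqref{eq-Vmv} and Remark \ref{rem-contfr}(d) one has $r_p(V_m(v),\mu)\uparrow r_p(V(v),\mu)$, so each factor of the product defining $f_m(v)$ converges, the denominators $\sum_{u\in\Chi(\prt(v_k))}r_p(V_m(u),\mu)^{-p^\ast}$ converging by dominated convergence against the finite majorant $\sum_{u}|\mu_u|^{-p^\ast}$, which is finite because $B$ is defined on $\ell^p(V,\mu)$.

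It remains to treat the behaviour along branches meeting a vertex of infinite resistance, to check backward invariance, and to bound the norm. If $r_p(V(v),\mu)<\infty$ then, by Corollary \ref{c-subtrees}(a), every ancestor $v_k$ also has $r_p(V(v_k),\mu)<\infty$, the relevant denominators are finite and strictly positive, and $f_m(v)\to f(v)>0$; if $r_p(V(v),\mu)=\infty$ then the factor at the first ancestor of infinite resistance tends to $0$ while the remaining factors stay in $[0,1]$ (Remark \ref{r-decr}), whence $f_m(v)\to 0=f(v)$. For backward invariance, at a non-leaf $v$ with $r_p(V(v),\mu)<\infty$ the explicit formula gives $f(u)=f(v)\,r_p(V(u),\mu)^{-p^\ast}\big/\sum_{w\in\Chi(v)}r_p(V(w),\mu)^{-p^\ast}$ for $u\in\Chi(v)$, so $\sum_{u\in\Chi(v)}f(u)=f(v)$ (a sum of non-negative terms, hence unconditionally convergent); at a $v$ with $r_p(V(v),\mu)=\infty$, Corollary \ref{c-subtrees}(b) forces every child $u$ to have $r_p(V(u),\mu)=\infty$, so $\sum_{u\in\Chi(v)}f(u)=0=f(v)$. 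Thus $f\in\mathcal{J}$. Finally, Fatou's lemma applied to $f_m\to f$ gives $\sum_{v\neq v_0}|f(v)\mu_v|^p\leq\liminf_m c_p(V_m,\mu)^p=c_p(V,\mu)^p$, which together with the lower bound is an equality; hence $f$ attains the infimum, proving the upper bound in (a) and existence in (b). Uniqueness follows, exactly as in Theorem \ref{t-charfinpmin}, from the convexity of $\mathcal{J}$ and the strict convexity of the $\ell^p(V\setminus\{v_0\},\mu)$-norm for $1<p<\infty$.

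I expect the main obstacle to be the pointwise convergence $f_m\to f$: making the factorwise limit rigorous at vertices whose subtree has infinite resistance (the $\infty^{-p^\ast}=0$ and indeterminate $0/0$ situations) is exactly where the hypothesis that $B$ is defined on $\ell^p(V,\mu)$ is genuinely used, through Corollary \ref{c-subtrees} to propagate finiteness of the resistances along the branch and through the summability $\sum_{u\in\Chi(v)}|\mu_u|^{-p^\ast}<\infty$ to control the denominators, while the uniform bound from Remark \ref{r-decr} keeps the tail factors in $[0,1]$. Everything else is a routine truncate-and-pass-to-the-limit argument.
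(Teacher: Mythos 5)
Your proof is correct, and it takes a route that genuinely differs from the paper's in two places. The paper proves the upper bound in (a) by a compactness argument: it takes approximate minimizers on the truncations $V_m$ (from Theorem \ref{t-charfinp}), extends them by zero, invokes Banach--Alaoglu in $\ell^p(V,\mu)=(\ell^{p^\ast}(V,1/\mu))^\ast$ to extract a weak$^\ast$-convergent subsequence, and shows the limit is backward invariant because each functional $g\mapsto\sum_{u\in\Chi(v)}g(u)$ is weak$^\ast$-continuous (this is where ``$B$ is defined on $\ell^p(V,\mu)$'' enters there). Only afterwards, in (b), does it identify the minimizer, splitting into two cases: if $r_p(V(v),\mu)<\infty$ for all $v$ it passes to the pointwise limit of the exact minimizers $f_m$ of Theorem \ref{t-charfinpmin} exactly as you do; if some resistances are infinite, it prunes the tree to $\widetilde V=\{v\in V: r_p(V(v),\mu)<\infty\}$, checks via Theorem \ref{t-rec1} and Corollary \ref{c-subtrees}(b) that the resistances are unchanged, and reduces to the first case. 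You instead obtain (a)'s upper bound and (b)'s existence and formula in one constructive sweep: the exact minimizers converge pointwise everywhere, the vertices of infinite resistance being absorbed by the observation that the factor at the first infinite-resistance ancestor tends to $0$ while every factor lies in $[0,1]$; your use of Corollary \ref{c-subtrees}(b) to keep the denominator at that ancestor bounded below (some sibling has finite resistance, since the parent does) is the key point, and it plays precisely the role of the paper's pruning step. The trade-off: your argument is shorter, avoids functional-analytic compactness entirely, and never needs a $0/0$ interpretation because the bad factors are dominated; but it is tied to the explicit formula and the full strength of Theorem \ref{t-charfinpmin}, whereas the paper's weak$^\ast$ argument for (a) is formula-free and is the template that transfers verbatim to the $c_0/\ell^\infty$ setting of Theorem \ref{t-charinfinv}, where minimizers need not be unique and strict convexity is unavailable.
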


\begin{proof} (a) By the definition of $r_p$ we need only prove the assertion for $c_p$. We start with a preliminary inequality. Let $f$ be a backward invariant sequence on $V$ with $f(v_0)=1$. Since, for any $m\geq 1$, the restriction of $f$ to $V_m$ is backward invariant on $V_m$, Theorem \ref{t-charfinp} implies that
\[
c_p(V_m,\mu) \leq \Big(\sum_{\substack{v\in V_m\\v\neq v_0}} |f(v)\mu_v|^p\Big)^{1/p}.
\]
In view of Remark \ref{rem-contfr}(c) this implies that
\begin{equation}\label{eq-half}
c_p(V,\mu)\leq  \Big(\sum_{\substack{v\in V\\v\neq v_0}} |f(v)\mu_v|^p\Big)^{1/p}.
\end{equation}

Thus, if $c_p(V,\mu)=\infty$ nothing more needs to be proved. So suppose now that $c_p(V,\mu)<\infty$. Applying Theorem \ref{t-charfinp}(a) to the trees $V_m$, $m\geq 1$, we obtain backward invariant sequences $f_{m}$ on $V_m$ with $f_{m}(v_0)=1$ and 
\[
\Big(\sum_{\substack{v\in V_m\\v\neq v_0}} |f_m(v)\mu_v|^p\Big)^{1/p} \leq c_p(V_m,\mu)+\tfrac{1}{m}\leq c_p(V,\mu)+\tfrac{1}{m}.
\]
We extend each $f_{m}$ to a sequence of $\ell^p(V,\mu)$ by assigning to it the value 0 outside $V_m$, calling it again $f_{m}$. Then $(f_{m})_{m}$ is a bounded sequence in $\ell^p(V,\mu)$, which we regard as the dual of the separable space $\ell^{p*}(V,1/\mu)$. Therefore, by Banach-Alaoglu, $(f_{m})_m$ has a weak$^\ast$-convergent subsequence $(f_{m_k})_k$ with limit $f$, say. Then $f(v_0)=1$ and
\begin{equation}\label{eq-fixpt2}
\Big(\sum_{\substack{v\in V\\v\neq v_0}} |f(v)\mu_v|^p\Big)^{1/p} \leq c_p(V,\mu)<\infty,
\end{equation}
so that $f\in\ell^p(V,\mu)$. 

In view of \eqref{eq-half}, it remains to show that $f$ is backward invariant on $V$. For this, let $v\in V$ be not a leaf of $V$. By hypothesis on $B$, the functional $\varphi(g):= \sum_{u\in\Chi(v)} g(u)$ is defined and therefore continuous on $\ell^{p}(\Chi(v),\mu)$, hence weak$^\ast$-continuous. We also have that $\varphi(f_{m_k}) = f_{m_k}(v)$ for all sufficiently large $k$. Therefore $\sum_{u\in\Chi(v)} f(u) = f(v)$, which had to be shown.

(b) The uniqueness follows again from the strict convexity of $\ell^p$-norms.

For the formula for $f$, we distinguish two cases. First assume that $r_p(V(v),\mu)<\infty$ for all $v\in V$. Applying Theorem \ref{t-charfinpmin}(a) to the subtree $V_m$ of finite length, $m\geq 1$, we see that there exists a unique backward invariant sequence $f_m$ on $V_m$ with $f_m(v_0)=1$ such that
\begin{equation}\label{eq-fpB1}
\Big(\sum_{\substack{v\in V_m\\v\neq v_0}} |f_m(v)\mu_v|^p\Big)^{1/p}=c_p(V_m,\mu) \leq c_p(V,\mu),
\end{equation}
and for any $v=v_n\in\gen_n$, $1\leq n\leq m$, $f_m$ satisfies
\[
f_m(v) = \prod_{k=1}^{n}\frac{r_p(V_m(v_k),\mu)^{-p^\ast}}{\sum\limits_{u\in\Chi(\prt(v_{k}))}r_p(V_m(u),\mu)^{-p^\ast}},
\]
where $(v_0,\ldots,v_n)$ is the path from $v_0$ to $v_{n}$. Extend $f_m$ to $V$ by defining it to be zero outside $V_m$.

Now, $\sum_{u\in\Chi(\prt(v_{k}))}r_p(V_m(u),\mu)^{-p^\ast}<\infty$ for $k=1,\ldots,n$; see the comment after the statement of Theorem \ref{t-charfinpmin}. If we let $m\to\infty$, by Remark \ref{rem-contfr}(d), this sum decreases. Since, by assumption,  $r_p(V(u),\mu)<\infty$ for all $u\in V$, the sum tends to a strictly positive limit, as does $r_p(V_m(v_k),\mu)^{-p^\ast}$. Altogether, for any $v\in V$,
\[
f(v):=\lim_{m\to\infty} f_m(v) = \prod_{k=1}^{n}\frac{r_p(V(v_k),\mu)^{-p^\ast}}{\sum\limits_{u\in\Chi(\prt(v_{k}))}r_p(V(u),\mu)^{-p^\ast}}>0
\]
exists. Then $f(v_0)=1$, and the special form of the formula for $f$ implies that it is backward invariant on $V$. Fatou's lemma, applied to \eqref{eq-fpB1}, shows that \eqref{eq-fixpt2} holds, so that $f\in \ell^p(V,\mu)$ has minimal norm in $\mathcal{J}$, which proves (b) in the first case.

Secondly, suppose that $r_p(V(v),\mu)=\infty$ for certain $v\in V$. By Corollary \ref{c-subtrees}(b) we also have that $r_p(V(u),\mu)=\infty$ for all descendants $u$ of $v$. It follows from (a) that the only backward invariant sequence on the subtree $V(v)$ and in $\ell^p(V(v),\mu)$ is the zero sequence. Define a new tree $\widetilde{V}$ by deleting from $V$ all vertices $v$ with $r_p(V(v),\mu)=\infty$ (and therefore also all its descendants); then $\widetilde{V}$ also has $v_0$ as root. It follows that the backward invariant sequence $f$ on $V$ with $f(v_0)=1$ of minimal norm vanishes on $V\setminus \widetilde{V}$, while on $\widetilde{V}$ it coincides with the minimal backward invariant sequence on $\widetilde{V}$ with value 1 at $v_0$.

Now, by Theorem \ref{t-rec1}(b), we have for any $v\in \widetilde{V}$,
\[
r_p(\widetilde{V}(v),\mu)= r_p(V(v),\mu)<\infty,
\]
where we have used that, by Corollary \ref{c-subtrees}(b), if $r_p(V(v),\mu)<\infty$ then either $v$ is a leaf of $V$ or there is at least one child $u$ of $v$ with $r_p(V(u),\mu)<\infty$. Thus we can apply the first case to the tree $\widetilde{V}$, and thereby obtain (b) also in the second case.
\end{proof}

\begin{example}\label{ex-inffin}
Consider the two-branched rooted tree $V$ of Example \ref{ex-twobranched}, where along the upper branch we choose as weights $2^{-n/2}$ in generation $n\geq 1$, while on the lower branch all weights are 1. Then $r_2(V,\mu)=\sqrt{2}.$ The unique backward invariant sequence $f$ with $f(v_0)=1$ of minimal norm is given by the value 1 along the upper branch, while the lower branch carries the value 0. 
\end{example} 

For the space $X=\ell^1(V,\mu)$, the analogue of the previous theorem is rather obvious, see the comment in the proof of Theorem \ref{t-charfinp}(a) for $p=1$. Note, however, that the infimum need not be attained even if $B$ is an operator on $\ell^1(V,\mu)$, see Example \ref{ex-att}.

\begin{theorem}\label{t-char1inv}
Let $V$ be a rooted tree with root $v_0$ and $\mu=(\mu_v)_{v\in V}$ a weight on $V$. Let $\mathcal{J}$ be the set of all backward invariant sequences $f$ on $V$ with $f(v_0)=1$.

\emph{(a)} Then
\[
\inf_{f\in \mathcal{J}}\sum_{v\in V} |f(v)\mu_v| = r_1(V,\mu) \ \text{ and } \
\inf_{f\in \mathcal{J}}\sum_{\substack{v\in V\\v\neq v_0}} |f(v)\mu_v| = c_1(V,\mu).
\]

\emph{(b)} If $r_1(V,\mu) < \infty$ or, equivalently, $c_1(V,\mu) < \infty$, then a backward invariant sequence $f$ in $\ell^1(V,\mu)$ with $f(v_0)=1$ is given by
\[
f=\sum_{n=0}^\infty e_{v_n},
\]
where $(v_0,v_1,v_2,\ldots)$ is a branch starting from $v_0$ with $\sum_{n=0}^\infty |\mu_{v_n}| <\infty$.
\end{theorem}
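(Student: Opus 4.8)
Since $|f(v_0)\mu_{v_0}|=|\mu_{v_0}|$ is the same for every $f\in\mathcal{J}$ and $r_1(V,\mu)=|\mu_{v_0}|+c_1(V,\mu)$, the two identities in (a) are equivalent, so the plan is to prove only the $c_1$-identity $\inf_{f\in\mathcal{J}}\sum_{v\neq v_0}|f(v)\mu_v|=c_1(V,\mu)$ as the conjunction of two inequalities. I would stress at the outset that, in contrast with the range $1<p<\infty$, one must \emph{not} argue through the truncations $V_m$: Remark \ref{rem-contfr}(c) deliberately excludes $p=1$, and indeed for non-locally-finite trees it can happen that $\lim_m c_1(V_m,\mu)<c_1(V,\mu)$, since cheap truncated branches need not assemble into a single cheap branch of $V$. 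The definition of $c_1$ as a branch-infimum must therefore be used directly.

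The upper bound $\inf_{f\in\mathcal{J}}\sum_{v\neq v_0}|f(v)\mu_v|\leq c_1(V,\mu)$ comes from explicit competitors. Given any branch $(v_0,v_1,v_2,\ldots)$ starting from the root, the indicator $f=\sum_{n}e_{v_n}$ lies in $\mathcal{J}$: at a non-leaf $v_n$ on the branch, $v_{n+1}$ is the unique child carrying mass, so $\sum_{u\in\Chi(v_n)}f(u)=f(v_{n+1})=1=f(v_n)$ satisfies \eqref{eq-ser}, while at every vertex off the branch both sides of \eqref{eq-ser} vanish and a leaf imposes no condition. For this $f$ one has $\sum_{v\neq v_0}|f(v)\mu_v|=\sum_{n\geq1}|\mu_{v_n}|$, and taking the infimum over branches gives the bound. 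This already settles part (b): when $c_1(V,\mu)<\infty$ the branch-infimum is finite, so some branch satisfies $\sum_{n\geq1}|\mu_{v_n}|<\infty$ (otherwise every branch would have infinite weight and the infimum would be $\infty$); its indicator $f=\sum_{n\geq0}e_{v_n}$ is backward invariant with $f(v_0)=1$ and $\sum_v|f(v)\mu_v|=\sum_{n\geq0}|\mu_{v_n}|<\infty$, hence $f\in\ell^1(V,\mu)$, while $r_1(V,\mu)<\infty\Leftrightarrow c_1(V,\mu)<\infty$ because $|\mu_{v_0}|<\infty$.

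For the lower bound $\sum_{v\neq v_0}|f(v)\mu_v|\geq c_1(V,\mu)$, valid for each $f\in\mathcal{J}$, I would pass to $g=|f|$, which satisfies $g\geq0$, $g(v_0)=1$, and the \emph{super}-invariance $g(v)\leq\sum_{u\in\Chi(v)}g(u)$ at every non-leaf $v$ (triangle inequality in \eqref{eq-ser}). The idea is to route the unit mass at $v_0$ down the tree as a probability measure $\nu$ on branches: at each non-leaf $v$ with $g(v)>0$ one splits towards the children $u$ with probabilities $g(u)\big/\sum_{u'\in\Chi(v)}g(u')$, which is well defined since super-invariance forces the denominator to be $\geq g(v)>0$, and one stops at leaves. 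The $\nu$-mass of the branches passing through a vertex $v=v_n$ on a path $(v_0,\dots,v_n)$ equals $\prod_{k=1}^{n}\frac{g(v_k)}{\sum_{u\in\Chi(v_{k-1})}g(u)}$, which telescopes to at most $g(v_n)/g(v_0)=g(v_n)$. Since every branch in the support of $\nu$ is a maximal path and hence has weight $\sum_{n\geq1}|\mu_{v_n}|\geq c_1(V,\mu)$, Tonelli's theorem yields
\[
\sum_{v\neq v_0}g(v)|\mu_v|\;\geq\;\sum_{v\neq v_0}\nu(\{\beta:\beta\ni v\})\,|\mu_v|\;=\;\int\Big(\sum_{n\geq1}|\mu_{v_n}|\Big)\,d\nu(\beta)\;\geq\;c_1(V,\mu),
\]
as $\nu$ is a probability measure; taking the infimum over $f$ completes (a).

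The main obstacle is precisely this lower bound in the general infinite, possibly non-locally-finite setting. A crude descent/induction on generations is clean only for trees of finite length, and the truncation shortcut that served for $1<p<\infty$ is unavailable here, as the remark and the example above show. The flow/probability argument is what makes the branch-infimum $c_1$ emerge correctly and uniformly in all cases, including $c_1(V,\mu)=\infty$, where the inequality simply forces $\sum_{v\neq v_0}|f(v)\mu_v|=\infty$. One should not expect attainment or uniqueness of the minimizer for $p=1$, which is consistent with part (b) asserting only the existence of some backward invariant sequence in $\ell^1(V,\mu)$.
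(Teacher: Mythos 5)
Your proof is correct, and it supplies genuinely more than the paper records. The paper's entire argument for this theorem is the remark made in the proof of Theorem \ref{t-charfinp}(a) and invoked again just before the statement: ``the best way to spread the mass 1 is to pass it completely on along a branch with smallest possible $\ell^1$-norms of the weights.'' That sentence is exactly your upper bound --- branch indicators are backward invariant and realize each branch's weight, which also yields (b) --- but the paper offers no argument at all for the lower bound, which is the substantive half. Your routing argument fills this in: from $g=|f|$, which is only super-invariant, you build a Markov probability measure $\nu$ on the space of branches with $\nu(\{\beta:\beta\ni v\})\le g(v)$, and Tonelli turns $\sum_{v\ne v_0}g(v)|\mu_v|$ into an average of branch weights, each at least $c_1(V,\mu)$. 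This is in the spirit of the flow--measure correspondence of Proposition \ref{p-miflows} in the epilogue, but applied to superflows rather than flows, and it works uniformly for trees with leaves, non-locally-finite trees, and the case $c_1(V,\mu)=\infty$. Your preliminary observation that the truncation route used for $1<p<\infty$ (Theorem \ref{t-charpinv} via the $V_m$ and weak$^\ast$ compactness) is unavailable here is also correct and worth making explicit: Remark \ref{rem-contfr}(c) indeed excludes $p=1$, and on a non-locally-finite tree one can have $\lim_m c_1(V_m,\mu)<c_1(V,\mu)$ --- for instance, let the root have children $u_k$, $k\ge 1$, where the unique branch through $u_k$ carries weights $2^{-1},\dots,2^{-k}$ followed by weights $1,1,1,\dots$; then $c_1(V_m,\mu)\le 1$ for all $m$ while $c_1(V,\mu)=\infty$.

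Two small points should be tightened, though neither is a genuine gap. First, your transition probabilities require $\sum_{u\in\Chi(v)}g(u)$ to be finite as well as positive; finiteness is automatic because the series in \eqref{eq-ser} are required to converge unconditionally, and unconditional convergence of a scalar series is absolute convergence. Second, the existence of $\nu$ on the path space deserves a citation (Ionescu--Tulcea, or the Daniell--Kolmogorov argument the paper itself uses in Appendix 2). Alternatively, you can avoid measure theory entirely by a deterministic version of the same idea: set $h(v)=g(v)^{-1}\sum_{w\in V(v),\,w\ne v}g(w)|\mu_w|$ wherever $g(v)>0$; super-invariance gives, at every non-leaf $v$ with $g(v)>0$ and for every $\eps>0$, a child $u$ with $g(u)>0$ and $|\mu_u|+h(u)\le h(v)+\eps$, and following such children with summable slacks produces a branch of weight at most $h(v_0)+\eps=\sum_{v\ne v_0}g(v)|\mu_v|+\eps$.
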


The remaining case is that of $c_0(V,\mu)$. Here, however, we cannot guarantee that the backward invariant sequences are in $c_0(V,\mu)$; they might end up in $\ell^\infty(V,\mu)$. Just consider the tree $\NN_0$ with unit weights. And a backward invariant sequence of minimal norm might not be unique, see Example \ref{ex-problemc0}. But otherwise the proof of the following theorem proceeds along the same lines as that of Theorem \ref{t-charpinv}.

\begin{theorem}\label{t-charinfinv}
Let $V$ be a rooted tree with root $v_0$ and $\mu=(\mu_v)_{v\in V}$ a weight on $V$. Suppose that $B$ is defined on $c_0(V,\mu)$ or, equivalently, on $\ell^\infty(V,\mu)$. Let $\mathcal{J}$ be the set of all backward invariant sequences $f$ on $V$ with $f(v_0)=1$.

\emph{(a)} Then
\[
\inf_{f\in \mathcal{J}} \sup_{v\in V} |f(v)\mu_v| = r_\infty(V,\mu)\ \text{ and } \
\inf_{f\in \mathcal{J}} \sup_{\substack{v\in V\\v\neq v_0}} |f(v)\mu_v| = c_\infty(V,\mu).
\]

\emph{(b)} If $r_\infty(V,\mu)<\infty$ or, equivalently, $c_\infty(V,\mu)<\infty$, then there exists a backward invariant sequence $f\in\ell^\infty(V,\mu)$ with $f(v_0)=1$ that attains the infimum for $c_\infty(V,\mu)$ and therefore that for $r_\infty(V,\mu)$. One such sequence $f$ is given by 
\[
f(v)=\begin{cases} 
\displaystyle\prod_{k=1}^n\frac{r_\infty(V(v_k),\mu)^{-1}}{\sum\limits_{u\in\Chi(\prt(v_{k}))}r_\infty(V(u),\mu)^{-1}}>0,&\text{if } r_\infty(V(v),\mu)<\infty,\\
0, &\text{if } r_\infty(V(v),\mu)=\infty,\\
\end{cases}
\]
where $n\geq 1$ is such that $v=v_n\in \emph{\gen}_n$ and $(v_0,v_1,\ldots,v_n)$ is the path leading from $v_0$ to $v_n$.
\end{theorem}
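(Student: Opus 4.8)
The plan is to mirror the proof of Theorem \ref{t-charpinv} for $1<p<\infty$, replacing the $\ell^p$-norm estimates by their supremum analogues and using that $\ell^\infty(V,\mu)$ is the dual of the separable space $\ell^1(V,1/\mu)$. Since $r_\infty(V,\mu)=\max(|\mu_{v_0}|,c_\infty(V,\mu))$ and every $f\in\mathcal J$ satisfies $|f(v_0)\mu_{v_0}|=|\mu_{v_0}|$, it suffices throughout to argue for $c_\infty$; the statements for $r_\infty$ then follow by taking the maximum with $|\mu_{v_0}|$.

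For (a), I would first record the lower bound: for $f\in\mathcal J$, the restriction to each truncation $V_m$ is backward invariant on $V_m$, so Theorem \ref{t-charfinp}(b) gives $c_\infty(V_m,\mu)\leq\sup_{v\in V_m,\,v\neq v_0}|f(v)\mu_v|$, and letting $m\to\infty$ with $c_\infty(V,\mu)=\sup_m c_\infty(V_m,\mu)$ from Remark \ref{rem-contfr}(c) yields $c_\infty(V,\mu)\leq\sup_{v\neq v_0}|f(v)\mu_v|$. If $c_\infty(V,\mu)=\infty$ this proves (a). Otherwise I would invoke Theorem \ref{t-charfinpmin}(b) on each $V_m$ to get a backward invariant $f_m$ on $V_m$ with $f_m(v_0)=1$ and $\sup_{v\in V_m,\,v\neq v_0}|f_m(v)\mu_v|=c_\infty(V_m,\mu)\leq c_\infty(V,\mu)$; extending by zero, $(f_m)_m$ is bounded in $\ell^\infty(V,\mu)$, so by Banach--Alaoglu a subsequence $(f_{m_k})_k$ converges weak$^\ast$ to some $f$. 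Testing against $\mu_v e_v\in\ell^1(V,1/\mu)$ gives pointwise convergence $f_{m_k}(v)\to f(v)$, hence $f(v_0)=1$ and, for every $v\neq v_0$, $|f(v)\mu_v|=\lim_k|f_{m_k}(v)\mu_v|\leq c_\infty(V,\mu)$, so $f\in\ell^\infty(V,\mu)$ with $\sup_{v\neq v_0}|f(v)\mu_v|\leq c_\infty(V,\mu)$.

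The one genuinely new point is backward invariance of the weak$^\ast$-limit $f$. For a non-leaf $v$, the functional $g\mapsto\sum_{u\in\Chi(v)}g(u)$ on $\ell^\infty(\Chi(v),\mu)$ is represented by the constant sequence $\mathbf 1$, which lies in the predual $\ell^1(\Chi(v),1/\mu)$ precisely because $B$ is defined on $\ell^\infty(V,\mu)$, i.e. $\sum_{u\in\Chi(v)}|\mu_u|^{-1}<\infty$ by Remark \ref{rem-contfr}(b). This functional is therefore weak$^\ast$-continuous, and since $\sum_{u\in\Chi(v)}f_{m_k}(u)=f_{m_k}(v)$ for all large $k$, passing to the limit gives $\sum_{u\in\Chi(v)}f(u)=f(v)$. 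This is exactly where the hypothesis that $B$ is defined on $\ell^\infty$ is used (in place of reflexivity in the $\ell^p$ case), and I expect it to be the main point to get right; together with the lower bound it finishes (a).

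For (b), assuming $c_\infty(V,\mu)<\infty$, I would split into the two cases of Theorem \ref{t-charpinv}(b). If $r_\infty(V(v),\mu)<\infty$ for all $v$, apply Theorem \ref{t-charfinpmin}(b) on each $V_m$ to get the explicit product sequences $f_m$; by Remark \ref{rem-contfr}(d) the sums $\sum_{u\in\Chi(\prt(v_k))}r_\infty(V_m(u),\mu)^{-1}$ decrease in $m$ to finite, strictly positive limits (finite since $r_\infty(V(u),\mu)\geq|\mu_u|$ and $B$ is defined on $\ell^\infty$, positive by the case hypothesis), so $f(v):=\lim_m f_m(v)>0$ exists and has the stated product form with $r_\infty(V(v_k),\mu)$ in place of $r_\infty(V_m(v_k),\mu)$. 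The product form makes $f$ backward invariant with $f(v_0)=1$, and pointwise convergence together with $\sup_{v\in V_m,\,v\neq v_0}|f_m(v)\mu_v|\leq c_\infty(V,\mu)$ gives $\sup_{v\neq v_0}|f(v)\mu_v|\leq c_\infty(V,\mu)$, hence equality by (a); this supremum estimate replaces the Fatou argument of the $\ell^p$ case. In the remaining case, where $r_\infty(V(v),\mu)=\infty$ for some $v$, Corollary \ref{c-subtrees}(b) (applicable since $B$ defined on $\ell^\infty$ supplies the required continuity at every vertex) propagates this to all descendants of $v$; by (a) there is then no backward invariant sequence in $\ell^\infty(V(v),\mu)$ with value $1$ at $v$, forcing a minimal $f$ to vanish on $V(v)$. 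Deleting all such subtrees yields $\widetilde V$ with $r_\infty(\widetilde V(v),\mu)=r_\infty(V(v),\mu)<\infty$ by Theorem \ref{t-rec1}(b) and with $c_\infty(\widetilde V,\mu)=c_\infty(V,\mu)$, so the first case applied to $\widetilde V$ (extended by zero) completes the proof. No uniqueness is claimed, consistently with Example \ref{ex-problemc0}.
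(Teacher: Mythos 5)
Your proposal is correct and follows exactly the route the paper intends: the paper does not write out a separate proof but states that Theorem \ref{t-charinfinv} ``proceeds along the same lines as that of Theorem \ref{t-charpinv}'', which is precisely the adaptation you carry out, including the two points where the $\ell^p$ argument must genuinely change (weak$^\ast$-continuity of $g\mapsto\sum_{u\in\Chi(v)}g(u)$ now coming from $\mathbf{1}\in\ell^1(\Chi(v),1/\mu)$, i.e.\ from $B$ being defined on $\ell^\infty(V,\mu)$, rather than from reflexivity, and pointwise sup estimates replacing Fatou), and correctly refraining from any uniqueness claim, consistent with Example \ref{ex-problemc0}.
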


\begin{remark}\label{r-decr2}
The backward invariant sequences $f$ mentioned in parts (b) of the three theorems of this subsection have the property that $(f(v_n))_{n\geq 0}$ is decreasing for every branch $(v_0,v_1,v_2,\ldots)$ starting from $v_0$; see Remark \ref{r-decr}.
\end{remark}

\section{A characterization of chaos on $\ell^p$ and $c_0$ over rooted trees}\label{s-chaosrooted}

The previous section allows us rather easily to characterize chaotic weighted backward shifts on spaces of type $\ell^p$ and $c_0$. Since chaotic operators are in particular hypercyclic, the underlying tree cannot have leaves, see \cite[Remark 4.1]{GrPa21}.

\vspace{\baselineskip}
\textit{Throughout the remainder of this paper (Sections \ref{s-chaosrooted} to \ref{s-chaosPoisson}) we will assume that the trees have no leaves.}
\vspace{\baselineskip}

Also, it turns out that only the finiteness of the constants $c_p$ or $r_p$ play a r\^ole, not their values. We will therefore, from now on, give preference to the constant $c_p$.

We first consider rooted trees $V$. As usual, we start by considering the unweighted backward shift $B$ on weighted sequence spaces.

\begin{theorem}\label{t-charpmu}
Let $V$ be a rooted tree and $\mu=(\mu_v)_{v\in V}$ a weight on $V$. Suppose that the backward shift $B$ is an operator on $\ell^p(V,\mu)$, $1< p<\infty$. Then $B$ is chaotic on $\ell^p(V,\mu)$ if and only if, for every $v\in V$,
\begin{equation}\label{eq-chaos}
c_p(V(v),\mu) < \infty.
\end{equation}
\end{theorem}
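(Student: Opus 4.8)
The plan is to combine the abstract characterization of chaos on rooted trees (Theorem \ref{chaos-fixedpoint}) with the quantitative description of backward invariant sequences (Theorem \ref{t-charpinv}). Since $V$ is rooted, Theorem \ref{chaos-fixedpoint} says that $B$ is chaotic on $X=\ell^p(V,\mu)$ if and only if its condition (i) holds; moreover, the tree being rooted makes the series in \eqref{eq-fpleft} a finite sum, so (i) reduces to the single requirement that for every $v\in V$ there be a fixed point $f$ for $B$ in $X$ over the subtree $V(v)$ with $f(v)=1$. First I would unwind this requirement: by Definition \ref{d-fpover} such an $f$ is a fixed point of $B\colon X(V(v))\to X(V(v))$, that is, $\sum_{u\in\Chi(w)}f(u)=f(w)$ for every $w\in V(v)$, which is exactly backward invariance on the rooted tree $V(v)$ (with root $v$) in the sense of Definition \ref{d-backwinv}. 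Here I use the standing assumption that $V$, and hence $V(v)$, is leafless, so the two notions coincide at every vertex and no leaf correction is needed.

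Next I would apply Theorem \ref{t-charpinv} to the rooted tree $V(v)$. Its hypothesis, that $B$ is defined on $\ell^p(V(v),\mu)$, is inherited from the assumption that $B$ is an operator, hence defined, on $\ell^p(V,\mu)$: by Remark \ref{rem-contfr}(b) the latter gives $\sum_{u\in\Chi(w)}|\mu_u|^{-p^\ast}<\infty$ for all $w\in V$, a condition involving only children that passes to the subtree. Theorem \ref{t-charpinv}(a) then identifies the infimal norm (omitting the root term) of a backward invariant sequence on $V(v)$ with value $1$ at $v$ as $c_p(V(v),\mu)$. Consequently, if $c_p(V(v),\mu)<\infty$, Theorem \ref{t-charpinv}(b) produces a backward invariant $f\in\ell^p(V(v),\mu)$ with $f(v)=1$, i.e. the fixed point over $V(v)$ required by condition (i); and if $c_p(V(v),\mu)=\infty$ for some $v$, the infimum is infinite, so no backward invariant sequence with $f(v)=1$ lies in $\ell^p(V(v),\mu)$ and (i) fails at $v$. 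Chaining these equivalences over all $v\in V$ yields the stated characterization.

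The only thing that needs care is the bookkeeping of the reduction: identifying a fixed point over $V(v)$ with a backward invariant sequence on $V(v)$, and confirming that $B$ remains defined after restricting to $V(v)$. Beyond this there is no real obstacle, since the genuine analytic content --- the reverse H\"older computation relating the minimal-norm backward invariant sequence to the continued fraction $c_p$ --- has already been established in Theorems \ref{t-charfinp}, \ref{t-charfinpmin} and \ref{t-charpinv}. In effect the present theorem is the payoff of that work, obtained simply by feeding Theorem \ref{t-charpinv} into Theorem \ref{chaos-fixedpoint}.
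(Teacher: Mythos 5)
Your proposal is correct and takes essentially the same route as the paper's proof: invoke Theorem \ref{chaos-fixedpoint} (noting that \eqref{eq-fpleft} is automatic on rooted trees), identify fixed points over $V(v)$ with backward invariant sequences using leaflessness, and conclude via Theorem \ref{t-charpinv}(a). Your additional checks --- that $B$ remains defined on $\ell^p(V(v),\mu)$ after restriction --- are correct bookkeeping that the paper leaves implicit.
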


\begin{proof}
By Theorem \ref{chaos-fixedpoint}, $B$ is chaotic on $\ell^p(V,\mu)$ if and only if, for any $v\in V$, there is a fixed point $f$ for $B$ in $\ell^p(V(v),\mu)$ with $f(v)=1$; note that \eqref{eq-fpleft} is automatically satisfied for rooted trees. Since $V$ has no leaves, being a fixed point for $B$ is equivalent to being backward invariant. Thus the result follows from Theorem \ref{t-charpinv}(a).
\end{proof}

\begin{remark}\label{rem-alt}
Example \ref{ex-inffin} tells us that if a weighted tree has a finite continued fraction then there still may exist a vertex whose subtree has an infinite continued fraction. Thus it is not sufficient to demand \eqref{eq-chaos} only for the root.
\end{remark}

In the same way, Theorem \ref{t-char1inv} implies the following.

\begin{theorem}\label{t-char1mu}
Let $V$ be a rooted tree and $\mu=(\mu_v)_{v\in V}$ a weight on $V$. Suppose that the backward shift $B$ is an operator on $\ell^1(V,\mu)$. 
Then $B$ is chaotic on $\ell^1(V,\mu)$ if and only if, for every $v\in V$, there exists a branch $(v,v_1,v_2,\ldots)$ starting from $v$ such that
\[
\sum_{n=1}^\infty |\mu_{v_n}| <\infty.
\]
\end{theorem}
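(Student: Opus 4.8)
The plan is to follow the exact template of the proof of Theorem \ref{t-charpmu}, since the case $p=1$ is governed by the same structural results with $c_1$ replacing $c_p$. First I would invoke Theorem \ref{chaos-fixedpoint}: because $V$ is a rooted tree, all of its assertions are equivalent, so $B$ is chaotic on $\ell^1(V,\mu)$ if and only if, for every $v\in V$, there is a fixed point $f$ for $B$ in $\ell^1(V,\mu)$ over the subtree $V(v)$ with $f(v)=1$. The auxiliary condition \eqref{eq-fpleft} costs nothing here, as it reduces to a finite sum for a rooted tree. Since we have assumed throughout that $V$ has no leaves, the subtree $V(v)$ is itself leafless, and hence a fixed point for $B$ over $V(v)$ is precisely a backward invariant sequence on $V(v)$ in the sense of Definition \ref{d-backwinv}; the defining equations coincide at every vertex because there are no leaves to exempt.

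Next I would apply Theorem \ref{t-char1inv} to the rooted subtree $V(v)$, whose root is $v$. Writing $\mathcal{J}$ for the backward invariant sequences on $V(v)$ taking the value $1$ at $v$, part (a) gives $\inf_{f\in\mathcal{J}}\sum_{u\neq v}|f(u)\mu_u| = c_1(V(v),\mu)$. From this the equivalence is read off in two steps. If a backward invariant $f\in\ell^1(V(v),\mu)$ with $f(v)=1$ exists, then $f\in\mathcal{J}$ has finite $\ell^1$-mass off the root, whence $c_1(V(v),\mu)<\infty$. Conversely, if $c_1(V(v),\mu)<\infty$, then Theorem \ref{t-char1inv}(b) exhibits an explicit backward invariant sequence in $\ell^1(V(v),\mu)$ with value $1$ at $v$, namely the indicator of a suitable branch. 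Thus chaos is equivalent to $c_1(V(v),\mu)<\infty$ for every $v\in V$.

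Finally I would unwind the definition of $c_1$. By Definition \ref{d-const}, $c_1(V(v),\mu)=\inf_{(v,v_1,v_2,\ldots)}\sum_{n\geq 1}|\mu_{v_n}|$, the infimum being over branches of $V(v)$ starting at $v$. An infimum of nonnegative extended reals is finite precisely when at least one member of the family is finite, so $c_1(V(v),\mu)<\infty$ holds if and only if there is a branch $(v,v_1,v_2,\ldots)$ with $\sum_{n\geq 1}|\mu_{v_n}|<\infty$. Substituting this into the previous paragraph yields the claimed characterization.

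I anticipate no serious obstacle: the statement is a direct corollary of Theorem \ref{chaos-fixedpoint}, Theorem \ref{t-char1inv}, and the transparent form of $c_1$ as a branchwise infimum. The only point requiring a word of care is that finiteness of the infimum $c_1(V(v),\mu)$ must be matched both with the genuine existence of a summable branch and, through Theorem \ref{t-char1inv}(b), with an honest backward invariant sequence lying in the space; but both matchings are immediate for $p=1$, in contrast to the more delicate attainment questions that arise for $1<p<\infty$ (cf. Example \ref{ex-att}).
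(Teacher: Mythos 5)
Your proposal is correct and takes essentially the same route as the paper: the paper derives this theorem ``in the same way'' as Theorem \ref{t-charpmu}, namely by combining Theorem \ref{chaos-fixedpoint} (chaos on a rooted tree $\Leftrightarrow$ fixed points over each $V(v)$, with \eqref{eq-fpleft} automatic), the identification of fixed points with backward invariant sequences on the leafless subtrees $V(v)$, and Theorem \ref{t-char1inv}, after which $c_1(V(v),\mu)$ is read off as the branchwise infimum $\inf_{(v,v_1,v_2,\ldots)}\sum_{n\geq 1}|\mu_{v_n}|$. Your additional care about attainment (using part (b) to produce the branch-indicator sequence, versus the mere finiteness of the infimum in part (a)) is exactly the right point to flag for $p=1$, and it is handled correctly.
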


In the case of $X=c_0(V,\mu)$ one cannot obtain fixed points of $B$ via Theorem \ref{t-charinfinv}: the only fixed points for $B$ on the tree $\NN_0$ with unit weights are the constant sequences. As a consequence we obtain a slightly more involved characterization.

\begin{theorem}\label{t-char0mu}
Let $V$ be a rooted tree and $\mu=(\mu_v)_{v\in V}$ a weight on $V$. Suppose that the backward shift $B$ is an operator on $c_0(V,\mu)$. Then the following assertions are equivalent:
\begin{enumerate}
\item[\rm (i)] $B$ is chaotic on $c_0(V,\mu)$;
\item[\rm (ii)] there is a weight $\widetilde{\mu}$ on $V$ with $(\mu_v/\widetilde{\mu}_v)_v\in c_0(V)$ such that, for every $v\in V$, 
\[
c_\infty(V(v),\widetilde{\mu}) < \infty;
\]
\item[\rm (iii)] for every $v\in V$ there is a weight $\widetilde{\mu}$ on $V(v)$ with $(\mu_u/\widetilde{\mu}_u)_u\in c_0(V(v))$ such that
\[
c_\infty(V(v),\widetilde{\mu}) < \infty.
\]
\end{enumerate}
\end{theorem}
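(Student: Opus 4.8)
The plan is to establish the cycle $(\mathrm{i})\Rightarrow(\mathrm{ii})\Rightarrow(\mathrm{iii})\Rightarrow(\mathrm{i})$, of which only the first implication carries any real content. Two routine facts will be used throughout. First, whenever $(\mu_v/\widetilde{\mu}_v)_v\in c_0(V)$ one has the inclusion $\ell^\infty(V,\widetilde{\mu})\subseteq c_0(V,\mu)$: if $\sup_v|g(v)\widetilde{\mu}_v|=:M<\infty$, then $|g(v)\mu_v|\le M\,|\mu_v/\widetilde{\mu}_v|$, which vanishes at infinity in the $c_0$ sense. Second, by Remark \ref{rem-contfr}(b) the hypothesis that $B$ be defined on $c_0(V,\mu)$ is exactly the condition $\sum_{u\in\Chi(w)}1/|\mu_u|<\infty$ for every $w$; this property passes to every full subtree $V(v)$ of descendants, and to any weight $\widetilde{\mu}$ with $(\mu_u/\widetilde{\mu}_u)_u\in c_0$, since then $|\widetilde{\mu}_u|\ge|\mu_u|$ off a finite set, so that $B$ is also defined on $c_0(V(v),\widetilde{\mu})$.

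For $(\mathrm{i})\Rightarrow(\mathrm{ii})$, the idea is to read off the auxiliary weight from a universal fixed point. Since $B$ is chaotic on the rooted tree $V$, Theorem \ref{chaos-fixedpoint} (condition (iii\textsubscript{c})) furnishes a universal fixed point $f\in c_0(V,\mu)$, so $f(v)\neq 0$ for all $v$ and $f(w)=\sum_{u\in\Chi(w)}f(u)$ for every $w$ (the tree being leafless). I would then set $\widetilde{\mu}_v=1/|f(v)|$. The first property in (ii) is immediate, as $|\mu_v/\widetilde{\mu}_v|=|f(v)\mu_v|$, which lies in $c_0(V)$ precisely because $f\in c_0(V,\mu)$. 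For the second, fix $v$ and consider $g:=f/f(v)$ on $V(v)$, a backward invariant sequence with $g(v)=1$ and with $|g(u)\widetilde{\mu}_u|=1/|f(v)|$ for all $u\in V(v)$. The restriction of $g$ to each finite truncation $V(v)_m$ is then an admissible competitor in the formula of Theorem \ref{t-charfinp}(b), whence $c_\infty(V(v)_m,\widetilde{\mu})\le 1/|f(v)|$; passing to the supremum over $m$ via Remark \ref{rem-contfr}(c) gives $c_\infty(V(v),\widetilde{\mu})\le 1/|f(v)|<\infty$. Note that this argument uses only the finite-tree version, so no continuity of $B$ on $c_0(V,\widetilde{\mu})$ is needed here.

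The implication $(\mathrm{ii})\Rightarrow(\mathrm{iii})$ is purely formal: restrict the single weight $\widetilde{\mu}$ to each $V(v)$, observing that the restriction of a $c_0(V)$ sequence lies in $c_0(V(v))$, while $c_\infty(V(v),\widetilde{\mu})<\infty$ is already assumed. For $(\mathrm{iii})\Rightarrow(\mathrm{i})$, fix $v$ and take $\widetilde{\mu}$ on $V(v)$ as in (iii). As observed above, $B$ is defined on $c_0(V(v),\widetilde{\mu})$, so Theorem \ref{t-charinfinv}(b) yields a backward invariant sequence $f\in\ell^\infty(V(v),\widetilde{\mu})$ with $f(v)=1$. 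By the inclusion $\ell^\infty(V(v),\widetilde{\mu})\subseteq c_0(V(v),\mu)$ we have $f\in c_0(V(v),\mu)$, and since $V$ is leafless, backward invariance makes $f$ a fixed point for $B$ in $c_0(V,\mu)$ over $V(v)$ with $f(v)=1$. As condition \eqref{eq-fpleft} is automatic for rooted trees, Theorem \ref{chaos-fixedpoint} then gives that $B$ is chaotic.

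The main obstacle is the direction $(\mathrm{i})\Rightarrow(\mathrm{ii})$: one must recognize that the correct auxiliary weight is the pointwise reciprocal of a universal fixed point, and the crux is that this single choice of $\widetilde{\mu}$ simultaneously forces $\mu/\widetilde{\mu}$ to vanish at infinity and keeps every $c_\infty(V(v),\widetilde{\mu})$ finite. Both conclusions drop out of the one identity $|f(u)\widetilde{\mu}_u|\equiv 1$. The remaining implications are then bookkeeping, once the embedding $\ell^\infty(V,\widetilde{\mu})\hookrightarrow c_0(V,\mu)$ and the stability of the property ``$B$ is defined'' have been recorded.
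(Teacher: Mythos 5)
Your proposal is correct and takes essentially the same route as the paper: the cycle (i)$\Rightarrow$(ii)$\Rightarrow$(iii)$\Rightarrow$(i), with (i)$\Rightarrow$(ii) obtained from a universal fixed point $f$ (Theorem \ref{chaos-fixedpoint}) fed into the finite-truncation formula of Theorem \ref{t-charfinp}(b) and Remark \ref{rem-contfr}(c), and (iii)$\Rightarrow$(i) obtained from Theorem \ref{t-charinfinv} together with Theorem \ref{chaos-fixedpoint}. The only deviation is your choice $\widetilde{\mu}_v = 1/|f(v)|$, which makes $|f(u)\widetilde{\mu}_u|\equiv 1$ and is a little cleaner than the paper's rescaling $\widetilde{\mu}_v=(n+1)\mu_v$ on $F_{n+1}\setminus F_n$ (where $|f\mu|\leq \tfrac{1}{n}$ off $F_n$), but the surrounding argument is identical.
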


\begin{proof} Suppose that (i) holds. Since $B$ is chaotic on $c_0(V,\mu)$, it admits by Theorem \ref{chaos-fixedpoint} a universal fixed point $f$. Since $f\in c_0(V,\mu)$, there exists an increasing sequence $(F_n)_{n\geq 1}$ of finite subsets of $V$ with $\bigcup_{n=1}^\infty F_n=V$ such that, for all $v\notin F_n$, $n\geq 1$, 
\[
|f(v)\mu_v|\leq \frac{1}{n}.
\]
Let $\widetilde{\mu}_v = (n+1) \mu_v$ for $v\in F_{n+1}\setminus F_{n}$, $n\geq 0$, where $F_0=\varnothing$. Then $(\mu_v/\widetilde{\mu}_v)_v\in c_0(V)$ and $f\in \ell^\infty(V,\widetilde{\mu})$.

Now let $v\in V$. Let $m\geq 1$ be large enough so that $v\in V_m$. Then the restriction of $\frac{1}{f(v)}f$ to $V_m(v)$, see \eqref{eq-Vmv}, is a backward invariant sequence on $V_m(v)$ with value 1 at $v$, which implies by Theorem \ref{t-charfinp}(b) that
\[
c_\infty(V_m(v),\widetilde{\mu}) \leq r_\infty(V_m(v),\widetilde{\mu}) \leq \frac{1}{|f(v)|}\sup_{u\in V_m(v)}|f(u)\widetilde{\mu}(u)|\leq \frac{1}{|f(v)|}\|f\|_{\ell^\infty(V,\widetilde{\mu})}.
\]
Letting $m\to\infty$ we obtain (ii), see Remark \ref{rem-contfr}(c).

Since (ii) trivially implies (iii), it remains to show that (iii) implies (i).

Thus, let $v\in V$, and let a weight $\widetilde{\mu}$ on $V(v)$ be given by (iii) so that $c_\infty(V(v),\widetilde{\mu}) < \infty$. Since $B$ is defined on $c_0(V,\mu)$ and $(\mu_u/\widetilde{\mu}_u)_u\in c_0(V(v))$, we have that $B$ is also defined on $c_0(V(v),\widetilde{\mu})$. By Theorem \ref{t-charinfinv}(a) there is then a backward invariant sequence $f\in \ell^\infty(V(v), \widetilde{\mu})$ with $f(v)=1$, and $f$ is a fixed point for $B$. Using again that $(\mu_u/\widetilde{\mu}_u)_u\in c_0(V(v))$, we deduce that $f\in c_0(V(v),\mu)$. Since $v\in V$ was arbitrary, Theorem \ref{chaos-fixedpoint} tells us that $B$ is chaotic on $c_0(V,\mu)$; note that \eqref{eq-fpleft} holds for all rooted trees.
\end{proof}

By the usual conjugacy argument, see Section \ref{s-notation} and \cite{GrPa21}, we can deduce a characterization of chaos for weighted backward shifts on unweighted spaces. Recall the notation $\lambda(v\to u)$ for weights $\lambda$ and vertices $v\in V$, $u\in V(v)$, see Section \ref{s-notation}.

\begin{theorem}\label{t-charplambda}
Let $V$ be a rooted tree and $\lambda=(\lambda_v)_{v\in V}$ a weight. Suppose that the weighted backward shift $B_\lambda$ is an operator on $\ell^p(V)$, $1< p<\infty$. Then $B_\lambda$ is chaotic on $\ell^p(V)$ if and only if, for every $v\in V$,
\[
c_p\big(V(v),(\tfrac{1}{\lambda(v\to u)})_{u\in V(v)}\big) < \infty.
\]
\end{theorem}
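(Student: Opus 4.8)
The plan is to reduce the weighted shift on the unweighted space to the unweighted shift on a suitably weighted space, and then to invoke the characterization already established in Theorem \ref{t-charpmu}. First I would use the conjugacy recalled in Section \ref{s-notation}: $B_\lambda$ on $X=\ell^p(V)$ is (isometrically) conjugate to the unweighted backward shift $B$ on $X_\mu=\ell^p(V,\mu)$, where the weight $\mu$ is determined by \eqref{eq-conjB2}. Since $V$ is rooted with root $v_0$, the root is an ancestor of every vertex, so in \eqref{eq-conjB2} one may take the common ancestor $w=v_0$; after normalizing $\mu_{v_0}=1$, and using that $\lambda(v_0\to v_0)=1$ is an empty product, this gives
\[
\mu_v = \frac{1}{\lambda(v_0\to v)},\quad v\in V.
\]
Because $\phi_\mu$ is an isometric isomorphism intertwining the two operators, $B_\lambda$ is an operator on $\ell^p(V)$ precisely when $B$ is an operator on $\ell^p(V,\mu)$, and $B_\lambda$ is chaotic on $\ell^p(V)$ if and only if $B$ is chaotic on $\ell^p(V,\mu)$.

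By Theorem \ref{t-charpmu}, the latter holds if and only if $c_p(V(v),\mu)<\infty$ for every $v\in V$, so it remains to compare the restriction of $\mu$ to $V(v)$ with the weight $(1/\lambda(v\to u))_{u\in V(v)}$ appearing in the statement. For $u\in V(v)$ the path from $v_0$ to $u$ passes through $v$, so by the multiplicativity \eqref{eq-lambda} one has $\lambda(v_0\to u)=\lambda(v_0\to v)\,\lambda(v\to u)$, whence
\[
\mu_u = \frac{1}{\lambda(v_0\to v)}\cdot\frac{1}{\lambda(v\to u)},\quad u\in V(v).
\]
Thus the restriction of $\mu$ to $V(v)$ equals the fixed scalar $1/\lambda(v_0\to v)$ times the weight $(1/\lambda(v\to u))_{u\in V(v)}$, the scalar being independent of $u$ once $v$ is fixed.

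The final step is to invoke the positive homogeneity of $c_p$ in the weight from Theorem \ref{t-rec1}(a), which yields
\[
c_p(V(v),\mu) = \Big|\frac{1}{\lambda(v_0\to v)}\Big|\, c_p\big(V(v),(\tfrac{1}{\lambda(v\to u)})_{u\in V(v)}\big).
\]
Since $\lambda(v_0\to v)$ is a fixed nonzero scalar, the factor $|1/\lambda(v_0\to v)|$ lies in $(0,\infty)$, so $c_p(V(v),\mu)<\infty$ if and only if $c_p\big(V(v),(1/\lambda(v\to u))_{u\in V(v)}\big)<\infty$. Combining this equivalence, valid for every $v\in V$, with the two preceding reductions establishes the characterization. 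I expect no genuine obstacle in this argument: it is entirely a matter of transporting the weight through the conjugacy and tracking the scalar factor, the only points requiring care being the correct specialization of \eqref{eq-conjB2} to the rooted case (with $w=v_0$) and the combined use of \eqref{eq-lambda} and homogeneity to identify the restricted weight up to a positive multiplicative constant.
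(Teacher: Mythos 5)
Your proposal is correct and follows essentially the same route as the paper: the paper likewise derives Theorem \ref{t-charplambda} from Theorem \ref{t-charpmu} via the conjugacy \eqref{eq-conjB2}, which produces the weight $(\tfrac{1}{\lambda(v_0\to u)}\mu_{v_0})_{u}$, and then rescales by $\lambda(v_0\to v)/\mu_{v_0}$ using \eqref{eq-lambda} together with the positive homogeneity of $c_p$. Your write-up simply makes these steps explicit (normalizing $\mu_{v_0}=1$ and tracking the scalar through the modulus), so there is nothing to correct.
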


Indeed, the conjugacy \eqref{eq-conjB2} leads to the stated condition but with weight $(\tfrac{1}{\lambda(v_0\to u)}\mu_{v_0})_{u\in V(v)}$, where $v_0$ denotes the root. Since the continued fractions $c_p$ are positively homogeneous in the weight, one may multiply by $\lambda(v_0\to v)/\mu_{v_0}$, see \eqref{eq-lambda}. The same remark applies to the following two results.

\begin{theorem}\label{t-char1lambda}
Let $V$ be a rooted tree and $\lambda=(\lambda_v)_{v\in V}$ a weight. Suppose that the weighted backward shift $B_\lambda$ is an operator on $\ell^1(V)$. Then $B_\lambda$ is chaotic on $\ell^1(V)$ if and only if, for every $v\in V$, there exists a branch $(v,v_1,v_2,\ldots)$ starting from $v$ such that
\[
\sum_{n=1}^\infty \frac{1}{|\lambda(v\to v_n)|} <\infty.
\]
\end{theorem}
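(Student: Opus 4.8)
The plan is to reduce the statement to Theorem \ref{t-char1mu} via the standard conjugacy between a weighted backward shift on an unweighted space and the unweighted backward shift on a weighted space, exactly as in the comment following Theorem \ref{t-charplambda}. First I would fix the root $v_0$ of $V$ and define the weight $\mu=(\mu_v)_{v\in V}$ by the conjugacy formula \eqref{eq-conjB2}; since $v_0$ is an ancestor of every vertex of a rooted tree, the common ancestor $w$ appearing in that formula may be taken to be $v_0$ itself, so that $\mu_v = \mu_{v_0}/\lambda(v_0\to v)$ for all $v\in V$. By the discussion in Section \ref{s-notation}, the map $\phi_\mu$ is then an isometric isomorphism conjugating $B$ on $\ell^1(V,\mu)$ with $B_\lambda$ on $\ell^1(V)$. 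In particular, since $B_\lambda$ is an operator on $\ell^1(V)$ by hypothesis, $B$ is an operator on $\ell^1(V,\mu)$, and the two operators share all their dynamical properties; hence $B_\lambda$ is chaotic on $\ell^1(V)$ if and only if $B$ is chaotic on $\ell^1(V,\mu)$.

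Next I would invoke Theorem \ref{t-char1mu}, which characterizes chaos of $B$ on $\ell^1(V,\mu)$ by the existence, for every $v\in V$, of a branch $(v,v_1,v_2,\ldots)$ starting from $v$ with $\sum_{n\geq1}|\mu_{v_n}|<\infty$. The only remaining task is to rewrite this summability condition in terms of $\lambda$. Since each $v_n$ is a descendant of $v$, which is in turn a descendant of $v_0$, the cocycle identity \eqref{eq-lambda} gives $\lambda(v_0\to v_n)=\lambda(v_0\to v)\,\lambda(v\to v_n)$, so that
\[
|\mu_{v_n}| = \frac{|\mu_{v_0}|}{|\lambda(v_0\to v_n)|} = \frac{|\mu_{v_0}|}{|\lambda(v_0\to v)|}\cdot\frac{1}{|\lambda(v\to v_n)|}.
\]
The factor $|\mu_{v_0}|/|\lambda(v_0\to v)|$ is a fixed positive constant independent of $n$, so $\sum_{n\geq1}|\mu_{v_n}|$ converges if and only if $\sum_{n\geq1}1/|\lambda(v\to v_n)|$ converges. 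Feeding this equivalence into the condition of Theorem \ref{t-char1mu} yields precisely the stated characterization.

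I do not expect any genuine obstacle here, as this is a routine transfer argument. The single point requiring a little care is the bookkeeping of the multiplicative constant produced by the conjugacy: one must confirm that it depends only on $v$ and the fixed root, not on the running index $n$, so that it can be pulled out of the sum without affecting convergence. This is exactly the role played by the positive homogeneity invoked in the derivation of Theorem \ref{t-charplambda}, realized here concretely through \eqref{eq-lambda}.
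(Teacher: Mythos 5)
Your proposal is correct and is essentially the paper's own argument: the paper proves Theorem \ref{t-char1lambda} by exactly this conjugacy reduction to Theorem \ref{t-char1mu}, noting (in the remark after Theorem \ref{t-charplambda}, which it says applies equally here) that the conjugacy \eqref{eq-conjB2} produces the weight $(\mu_{v_0}/\lambda(v_0\to u))_u$ and that the resulting constant factor $\lambda(v_0\to v)/\mu_{v_0}$ can be removed by homogeneity, i.e.\ via \eqref{eq-lambda}. Your bookkeeping of that constant matches the paper's.
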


This result should be compared with the characterizations of hypercyclicity and mixing of $B_\lambda$ on $\ell^1(V)$, see \cite[Theorem 4.4]{GrPa21}. Interestingly, $B_\lambda$ is mixing if and only if, for every $v\in V$, there are children $v_n\in\Chi^n(v)$, $n\geq 1$, such that
\[
|\lambda(v\to v_n)|\to\infty
\]
as $n\to\infty$, but it is not required that the $v_n$ belong to a single branch.

With Theorem \ref{t-char1lambda}, for rooted trees, the question of chaos for weighted backward shifts on $\ell^1(V)$ can be considered to be settled, both for theoretical and practical purposes. 

\begin{theorem}\label{t-char0lambda}
Let $V$ be a rooted tree and $\lambda=(\lambda_v)_{v\in V}$ a weight. Suppose that the weighted backward shift $B_\lambda$ is an operator on $c_0(V)$. Then the following assertions are equivalent:
\begin{enumerate}
\item[\rm (i)] $B_\lambda$ is chaotic on $c_0(V)$;
\item[\rm (ii)] there is a non-zero sequence $(\varepsilon_v)_{v\in V}\in c_0(V)$ such that, for every $v\in V$, 
\[
c_\infty\big(V(v),(\tfrac{1}{\varepsilon_{u}\lambda(v\to u)})_{u\in V(v)}\big) < \infty;
\]
\item[\rm (iii)] for every $v\in V$ there is a non-zero sequence $(\varepsilon_u)_{u\in V(v)}\in c_0(V(v))$ such that
\[
c_\infty\big(V(v),(\tfrac{1}{\varepsilon_{u}\lambda(v\to u)})_{u\in V(v)}\big) < \infty.
\]
\end{enumerate}
\end{theorem}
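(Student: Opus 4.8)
The plan is to deduce the result from Theorem \ref{t-char0mu} by the same conjugacy argument already used to pass from Theorem \ref{t-charpmu} to Theorem \ref{t-charplambda}. Fix the root $v_0$ of $V$ and normalise $\mu_{v_0}=1$. By \eqref{eq-conjB2}, applied with the distinguished vertex taken to be the root, $B_\lambda$ on $c_0(V)$ is conjugate, via the isometric isomorphism $\phi_\mu$, to the unweighted backward shift $B$ on the weighted space $c_0(V,\mu)=\{f:(f(v)\mu_v)_v\in c_0(V)\}$ with
\[
\mu_v = \frac{1}{\lambda(v_0\to v)},\quad v\in V .
\]
Since conjugate operators share all dynamical properties, and since the isometric isomorphism also transfers the hypothesis that $B_\lambda$ is an operator on $c_0(V)$ into the hypothesis that $B$ is an operator on $c_0(V,\mu)$, the shift $B_\lambda$ is chaotic on $c_0(V)$ if and only if $B$ is chaotic on $c_0(V,\mu)$. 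Theorem \ref{t-char0mu} characterises the latter, so it remains only to rewrite its conditions (ii) and (iii) in terms of $\lambda$.

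To this end I would introduce the dictionary $\widetilde\mu_v=\tfrac{1}{\varepsilon_v\lambda(v_0\to v)}$, equivalently $\varepsilon_v=\mu_v/\widetilde\mu_v$. Because a weight has only non-zero (finite) entries, this sets up a bijection between weights $\widetilde\mu$ on $V$ (or on $V(v)$) and sequences $(\varepsilon_v)_v$ all of whose entries are non-zero, under which $(\mu_v/\widetilde\mu_v)_v\in c_0(V)$ reads exactly as $(\varepsilon_v)_v\in c_0(V)$. To compare the two continued fractions, note that for $u\in V(v)$ the path from $v_0$ to $u$ passes through $v$, so \eqref{eq-lambda} gives $\lambda(v_0\to u)=\lambda(v_0\to v)\lambda(v\to u)$; hence on $V(v)$ the weight $(\widetilde\mu_u)_u$ equals the fixed multiple $\tfrac{1}{\lambda(v_0\to v)}\big(\tfrac{1}{\varepsilon_u\lambda(v\to u)}\big)_u$. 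By positive homogeneity of $c_\infty$ in the weight (Theorem \ref{t-rec1}(a); recall that the constant depends on the weight only through its moduli), this constant factor merely rescales $c_\infty$ by $\tfrac{1}{|\lambda(v_0\to v)|}$, so that
\[
c_\infty(V(v),\widetilde\mu)<\infty \Longleftrightarrow c_\infty\big(V(v),(\tfrac{1}{\varepsilon_u\lambda(v\to u)})_{u\in V(v)}\big)<\infty .
\]

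With this dictionary, condition (ii) (respectively (iii)) of Theorem \ref{t-char0mu} for $B$ on $c_0(V,\mu)$ becomes verbatim condition (ii) (respectively (iii)) of the present theorem, while chaos of $B$ on $c_0(V,\mu)$ corresponds to (i). Since Theorem \ref{t-char0mu} asserts the equivalence of its three conditions, the three conditions here are equivalent as well. I do not anticipate a genuine obstacle; the points demanding care are minor bookkeeping: one must pass to moduli when invoking homogeneity, the scaling constant $\lambda(v_0\to v)$ being possibly complex, and one must observe that $\varepsilon_v\neq 0$ for all $v$ so that $\widetilde\mu$ is a legitimate weight. The implication (ii)$\Longrightarrow$(iii) can moreover be read off directly by restricting $(\varepsilon_v)_{v\in V}$ to $V(v)$, exactly mirroring the trivial implication in Theorem \ref{t-char0mu}.
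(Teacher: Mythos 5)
Your proposal is correct and follows essentially the same route as the paper: the paper also obtains Theorem \ref{t-char0lambda} by conjugating $B_\lambda$ on $c_0(V)$ to the unweighted shift $B$ on $c_0(V,\mu)$ with $\mu_v=\mu_{v_0}/\lambda(v_0\to v)$, applying Theorem \ref{t-char0mu}, and then using the positive homogeneity of $c_\infty$ in (the modulus of) the weight together with \eqref{eq-lambda} to replace $\lambda(v_0\to u)$ by $\lambda(v\to u)$. Your dictionary $\widetilde\mu_v=\tfrac{1}{\varepsilon_v\lambda(v_0\to v)}$ and the attention to the non-vanishing of the $\varepsilon_v$ make explicit exactly the bookkeeping the paper leaves implicit in its remark following Theorem \ref{t-charplambda}.
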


\section{A characterization of chaos on $\ell^p$ and $c_0$ over unrooted trees}\label{s-chaosunrootedlc}

Let us turn to weighted backward shifts on unrooted trees. We need to translate Theorem \ref{t-chunrooted} into the setting of the spaces $\ell^p$ and $c_0$. But this will be easy with the results of Section~\ref{s-backwinv}.

\begin{theorem}\label{t-charpmubil}
Let $V$ be an unrooted tree and $\mu=(\mu_v)_{v\in V}$ a weight on $V$. Suppose that the backward shift $B$ is an operator on $\ell^p(V,\mu)$, $1< p<\infty$. Then $B$ is chaotic on $\ell^p(V,\mu)$ if and only if, for every $v\in V$,
\[
c_p(V(v),\mu) < \infty
\]
and, for every $\eps>0$, there is some $N\geq 1$ such that
\[
c_p(V_-^N(v),\mu)<\eps.
\]
\end{theorem}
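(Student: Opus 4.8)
The plan is to read the result off the general characterization of chaos on unrooted trees, Theorem \ref{t-chunrooted}, by interpreting its two conditions (i) and (ii) in the concrete space $X=\ell^p(V,\mu)$ with the help of the backward-invariance theory of Section \ref{s-backwinv}, in particular Theorem \ref{t-charpinv}. Throughout, $V$ is leafless, so that a fixed point for $B$ over a subtree is exactly a backward invariant sequence there; moreover, since we work with the unweighted shift $B$ (i.e. $\lambda=1$), the induced weight $\lambda_-^N$ on $V_-^N(v)$ is again identically $1$, so that ``backward invariant over $V_-^N(v)$'' is the unweighted notion matching $c_p(V_-^N(v),\mu)$. Since $(e_v)_{v\in V}$ is an unconditional basis of $\ell^p(V,\mu)$ for $1<p<\infty$, Theorem \ref{t-chunrooted} applies and reduces the task to translating (i) and (ii) for each $v\in V$.

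First I would treat condition (i). The subtree $V(v)$ carries the restriction of $\mu$ and the unchanged child mapping, so $B$ is defined on $\ell^p(V(v),\mu)$ (the reverse-H\"older sums $\sum_{u\in\Chi(w)}|\mu_u|^{-p^\ast}$ are finite because $B$ is an operator on $\ell^p(V,\mu)$; see Remark \ref{rem-contfr}(b)). A fixed point for $B$ in $\ell^p(V,\mu)$ over $V(v)$ with $f(v)=1$ is then precisely a backward invariant sequence on $V(v)$ lying in $\ell^p(V(v),\mu)$ with value $1$ at the root $v$. By Theorem \ref{t-charpinv}(a) the infimum of its norm equals $c_p(V(v),\mu)$, and by part (b) this infimum is attained when the constant is finite; hence such a sequence exists if and only if $c_p(V(v),\mu)<\infty$, which is the first condition of the statement.

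Next I would treat condition (ii), where the real work lies. Fix $v$ and set $W=V_-^N(v)$, rooted at $v$. The essential preliminary step is to check that $B$ is defined on $\ell^p(W,\mu)$ so that Theorem \ref{t-charpinv} may be applied to $W$: by \eqref{eq-childN} each $\Chi_W(w)$ is either $\Chi_V^N(w)$ or $\{v_{-nN-N}\}\cup(\Chi_V^N(v_{-nN-N})\setminus\{v_{-nN}\})$, and since $B$ is an operator it is continuous, so $B^N$ is continuous on $\ell^p(V,\mu)$, giving $\sum_{u\in\Chi_V^N(w)}|\mu_u|^{-p^\ast}<\infty$ for every $w$; adjoining at most one further term keeps the sum finite, so the required reverse-H\"older condition holds on $W$. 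For a backward invariant $f$ on $W$ with $f(v)=1$ (extended by $0$ to $V$) one has $\|f-e_v\|_p^p=\sum_{u\in W,\,u\neq v}|f(u)\mu_u|^p$, whose infimum over all such $f$ equals $c_p(W,\mu)=c_p(V_-^N(v),\mu)$ by Theorem \ref{t-charpinv}(a). Consequently, for a given $\eps>0$ the existence of a backward invariant $f$ over $V_-^N(v)$ with $f(v)=1$ and $\|f-e_v\|_p<\eps$ is equivalent to $c_p(V_-^N(v),\mu)<\eps$, the forward implication using that the infimum is a lower bound, the reverse using the minimizer furnished by Theorem \ref{t-charpinv}(b). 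Thus condition (ii) is equivalent to: for every $\eps>0$ there is $N\geq1$ with $c_p(V_-^N(v),\mu)<\eps$, which is the second condition. Combining the two translations with Theorem \ref{t-chunrooted} yields the stated equivalence. The main obstacle I anticipate is precisely the verification that $B$ remains defined on the re-rooted tree $V_-^N(v)$ despite its modified parent-child relation, together with the clean identification of $\|f-e_v\|_p$ with the truncated $c_p$-sum; once these are in place the equivalence is purely formal.
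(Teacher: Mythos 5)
Your proposal is correct and takes essentially the same route as the paper's own proof: both reduce the statement to Theorem \ref{t-chunrooted} and translate its conditions (i) and (ii) into the constants $c_p$ via Theorem \ref{t-charpinv}(a), with the key step being the verification via \eqref{eq-childN} that $B$ remains defined on $\ell^p(V_-^N(v),\mu)$ because its child sets differ from the degree-$N$ descendant sets in $V$ by at most two elements and $B^N$ is an operator on $\ell^p(V,\mu)$. Your write-up is merely more detailed (for instance, invoking the attained minimizer from Theorem \ref{t-charpinv}(b), where a near-minimizer from part (a) already suffices).
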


\begin{proof} This is a direct consequence of Theorem \ref{t-chunrooted} together with Theorem \ref{t-charpinv}(a). Note that, by hypothesis, each power $B^N$, $N\geq 1$, is an operator on $\ell^p(V,\mu)$. Now, by \eqref{eq-childN}, for any $v\in V$ and $N\geq 1$, the sets of children for vertices in $V_-^N(v)$ differ by at most two elements from sets of descendants of degree $N$ for vertices in $V$; hence the backward shift $B$ is defined on $\ell^p(V_-^N(v),\mu)$, so that the hypothesis of Theorem \ref{t-charpinv}(a) is satisfied for $V_-^N(v)$.
\end{proof}

\begin{remark} By Theorem \ref{t-rec2} and the definition of the tree $V_-^N(v)$, see Figure \ref{fig-v_N} and \eqref{eq-WnNN}, the value $c_p(V_-^N(v),\mu)$ is an almost classical continued fraction of generalized continued fractions. In fact, we see that
\begin{align*}
&c_p(V_-^N(v),\mu)=\\
&=\frac{1}{\Bigg( \frac{1}{c'_p(W_{-N}^N,\mu)^{p^*}} + \frac{1}{\Bigg( |\mu_{\prt^N(v)}|^p+\frac{1}{\bigg( \frac{1}{c'_p(W_{-2N}^N,\mu)^{p^*}}+\frac{1}{\Big( |\mu_{\prt^{2N}(v)}|^p+\frac{1}{\big( \frac{1}{c'_p(W_{-3N}^N,\mu)^{p^*}} + \ldots \big)^{p/p^{\ast}}} \Big)^{p^{\ast}/p}}\bigg)^{p/p^{\ast}}} \Bigg)^{p^{\ast}/p}}\Bigg)^{1/p^{\ast}}},
\end{align*}
where the limit is taken along the even-numbered convergents. Note the special interpretation of $c_p'$ explained in \eqref{eq-cp'}.

A corresponding remark applies to the constants for $V_-^N(v)$ in the following theorems.
\end{remark} 

In the same way, using Theorem \ref{t-char1inv}(a), we see that the previous result holds true also for $p=1$. But in that case the constants $c_1$ have a particularly simple form. We thus obtain the following.

\begin{theorem}\label{t-char1mubil}
Let $V$ be an unrooted tree and $\mu=(\mu_v)_{v\in V}$ a weight on $V$. Suppose that the backward shift $B$ is an operator on $\ell^1(V,\mu)$. 
Then $B$ is chaotic on $\ell^1(V,\mu)$ if and only if, for every $v\in V$, there exists a branch $(v,v_1,v_2,\ldots)$ starting from $v$ such that
\[
\sum_{n=1}^\infty |\mu_{v_n}| <\infty
\]
and, for every $\eps>0$, there is some $N\geq 1$ such that, either
\[
\sum_{n=1}^\infty |\mu_{\prt^{nN}(v)}| <\eps,
\]
or there is a branch $(v_0,v_1,v_2,\ldots)$ starting from $v_0:=\prt^{N}(v)$ with $v_N\neq v$ such that
\begin{equation}\label{eq-condalt}
\sum_{n=1}^{\infty} |\mu_{v_{nN}}| <\eps.
\end{equation}
\end{theorem}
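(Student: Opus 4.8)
The plan is to deduce the result from the general unrooted characterization in Theorem~\ref{t-chunrooted}, whose two conditions I will translate into statements about the weight $\mu$ using the $\ell^1$ computations of Theorem~\ref{t-char1inv}. Since $V$ has no leaves, a fixed point for $B$ over a subtree is the same as a backward invariant sequence there, and $(e_v)_{v\in V}$ is an unconditional basis of $\ell^1(V,\mu)$, so Theorem~\ref{t-chunrooted} applies. Note that Theorem~\ref{t-char1inv} carries no hypothesis on $B$, so it may be applied directly to the rooted trees $V(v)$ and $V_-^N(v)$, where in the latter case the minimising backward invariant sequences provided by part (b) lie in $\ell^1(V_-^N(v),\mu)$.

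First I would treat condition (i) of Theorem~\ref{t-chunrooted}. A fixed point $f$ for $B$ in $\ell^1(V,\mu)$ over $V(v)$ with $f(v)=1$ is precisely a backward invariant sequence on the rooted tree $V(v)$, lying in $\ell^1(V(v),\mu)$, with value $1$ at the root $v$. By Theorem~\ref{t-char1inv}(a) such a sequence exists if and only if $c_1(V(v),\mu)<\infty$, and by Definition~\ref{d-const}(a) this constant equals $\inf\sum_{n\geq 1}|\mu_{v_n}|$ over branches $(v,v_1,v_2,\ldots)$ starting at $v$. Hence condition (i) of Theorem~\ref{t-chunrooted} at $v$ is equivalent to the first displayed condition of the present theorem.

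Next I turn to condition (ii) of Theorem~\ref{t-chunrooted} at $v$. Applying Theorem~\ref{t-char1inv} to the rooted tree $V_-^N(v)$ (with root $v$), a backward invariant sequence $f$ with $f(v)=1$ and $\|f-e_v\|<\eps$ exists if and only if $c_1(V_-^N(v),\mu)<\eps$; indeed $\|f-e_v\|=\sum_{w\neq v}|f(w)\mu_w|$, whose infimum over such $f$ is $c_1(V_-^N(v),\mu)$, attained in the limit by sequences concentrated on a single branch. Thus condition (ii) is equivalent to $\inf_{N\geq 1}c_1(V_-^N(v),\mu)=0$. Now $c_1(V_-^N(v),\mu)$ is the infimum of $\sum_{n\geq 1}|\mu_{w_n}|$ over branches $(w_0=v,w_1,\ldots)$ of $V_-^N(v)$, and by the parent--child rule \eqref{eq-childN} each such branch either runs forever along the spine $v,\prt^N(v),\prt^{2N}(v),\ldots$ (with sum $\sum_{n\geq 1}|\mu_{\prt^{nN}(v)}|$), or follows the spine to $\prt^{mN}(v)$ for some $m\geq 0$ and then leaves it into $\Chi_V^N(\prt^{(m+1)N}(v))\setminus\{\prt^{mN}(v)\}$, after which it descends through degree-$N$ descendants of $V$. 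The easy implication is then immediate: the spine branch realises the first alternative, and an immediate departure ($m=0$) realises the second, so either displayed alternative forces $c_1(V_-^N(v),\mu)<\eps$.

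The main obstacle, and the crux of the argument, is the converse: a branch leaving the spine only after $m\geq 1$ steps is a priori of neither advertised form. Here I would exploit the freedom to enlarge $N$. If such a branch has total weight below $\eps$, then in particular its tail $\sum_{j\geq 0}|\mu_{u_{jN}}|$ (with $u_0\in\Chi_V^N(\prt^{(m+1)N}(v))\setminus\{\prt^{mN}(v)\}$) lies below $\eps$. Passing to the step $N'':=(m+1)N$ and taking the branch of $V$ from $\prt^{N''}(v)=\prt^{(m+1)N}(v)$ that descends through $u_0,u_N,u_{2N},\ldots$, one checks that its degree-$N''$ vertices $u_{mN},u_{(2m+1)N},\ldots$ form a subset of $\{u_{jN}:j\geq 0\}$, so the associated sum $\sum_{n\geq 1}|\mu_{w_{nN''}}|$ is at most the tail and hence below $\eps$; moreover its first degree-$N''$ vertex $u_{mN}$ differs from $v$, because $u_0$ and $\prt^{mN}(v)$ are distinct degree-$N$ descendants of $\prt^{(m+1)N}(v)$ and thus have disjoint descendant sets. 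Therefore the second alternative of the present theorem holds with $N''$ in place of $N$. Combining the three cases shows that $\inf_N c_1(V_-^N(v),\mu)=0$ is equivalent to condition (ii) as stated, which completes the translation and the proof.
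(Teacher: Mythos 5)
Your proposal is correct and takes essentially the same route as the paper: the paper likewise derives the theorem by combining Theorem \ref{t-chunrooted} with Theorem \ref{t-char1inv}(a), using that $c_1(V_-^N(v),\mu)$ is the infimum of weight sums over branches of $V_-^N(v)$, which either follow the spine forever or leave it after $m\geq 0$ steps. Your passage to $N''=(m+1)N$ for a branch leaving the spine after $m\geq 1$ steps is exactly the paper's observation that condition \eqref{eq-condalt2} implies \eqref{eq-condalt} ``by considering $v_0:=\prt^{(j+1)N}(v)$,'' so the two arguments coincide, with yours supplying the details the paper leaves implicit.
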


In fact, the last condition initially reads as follows: there is some $j\geq 0$ and a branch $(\prt^{(j+1)N}(v),v_1,v_2,\ldots)$ starting  from $\prt^{(j+1)N}(v)$ with $v_N\neq \prt^{jN}(v)$ such that
\begin{equation}\label{eq-condalt2}
\sum_{n=1}^{j} |\mu_{\prt^{nN}(v)}|+ \sum_{n=1}^\infty |\mu_{v_{nN}}| <\eps.
\end{equation}
Then \eqref{eq-condalt2} implies \eqref{eq-condalt} by considering $v_0:=\prt^{(j+1)N}(v)$. On the other hand, \eqref{eq-condalt} implies \eqref{eq-condalt2} with $j=0$.

\begin{example}
Neither alternative of the second condition in Theorem \ref{t-char1mubil} can be dropped in general. For the first alternative, this follows from considering the tree $V=\ZZ$.

On the other hand, consider once more the comb $V=\ZZ\times\NN_0$ of Example \ref{ex-comb}. For the weights we take $\mu_{(n,0)} = \tfrac{1}{2^n}$, $n\geq 0$; $\mu_{(n,0)} = \tfrac{1}{|n|}$, $n\leq -1$; and $\mu_{(n+m,m)} = \tfrac{1}{2^m}\mu_{(n,0)}$ for $m\geq 1$ (recall that $(n+m,m)$ is a descendant of degree $m$ of $(n,0)$). Then the first condition and the second alternative of the second condition are satisfied for every $v\in V$, so that $B$ is chaotic. But the first alternative is never satisfied.
\end{example}

The case of the space $c_0(V,\mu)$ is again more complicated. 

\begin{theorem}\label{t-char0mubil}
Let $V$ be an unrooted tree and $\mu=(\mu_v)_{v\in V}$ a weight on $V$. Suppose that the backward shift $B$ is an operator on $c_0(V,\mu)$. Then the following assertions are equivalent:
\begin{enumerate}
\item[\rm (i)] $B$ is chaotic on $c_0(V,\mu)$;
\item[\rm (ii)] there is a weight $\widetilde{\mu}$ on $V$ with $(\mu_v/\widetilde{\mu}_v)_v\in c_0(V)$ such that, for every $v\in V$, 
\[
c_\infty(V(v),\widetilde{\mu}) < \infty
\]
and, for any $\eps>0$, there is some $N\geq 1$ such that
\[
c_\infty(V_-^N(v),\widetilde{\mu}) < \eps;
\]
\item[\rm (iii)] for every $v\in V$, there is a weight $\widetilde{\mu}$ on $V$ with $(\mu_u/\widetilde{\mu}_u)_u\in c_0(V)$ such that
\[
c_\infty(V(v),\widetilde{\mu}) < \infty
\]
and, for any $\eps>0$, there is some $N\geq 1$ such that, 
\[
c_\infty(V_-^N(v),\widetilde{\mu}) < \eps.
\]
\end{enumerate}
\end{theorem}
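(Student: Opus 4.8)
The plan is to derive the three equivalences from the two-sided description of chaos in Theorem~\ref{t-chunrooted} together with the identification of the relevant infima as the constants $c_\infty$ in Theorem~\ref{t-charinfinv}. Recall that, by Theorem~\ref{t-chunrooted}, $B$ is chaotic on $c_0(V,\mu)$ precisely when, for every $v\in V$, there is a fixed point for $B$ in $c_0(V,\mu)$ over $V(v)$ with value $1$ at $v$, and for every $\eps>0$ some $N\geq 1$ and a backward invariant sequence over $V^N_-(v)$ with value $1$ at $v$ whose distance to $e_v$ is less than $\eps$. Since (ii)$\Rightarrow$(iii) is trivial (take the same $\widetilde\mu$ for every $v$), it remains to prove (iii)$\Rightarrow$(i) and (i)$\Rightarrow$(ii). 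Throughout I use that $(\mu_u/\widetilde\mu_u)_u\in c_0(V)$ forces $|\widetilde\mu_u|\geq|\mu_u|$ off a finite set, so that $B$, and hence every power $B^N$, is an operator on $c_0(V,\widetilde\mu)$ as well, and on each $c_0(V^N_-(v),\widetilde\mu)$ by the description \eqref{eq-childN} of the children in $V^N_-(v)$.

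For (iii)$\Rightarrow$(i), fix $v$ and the weight $\widetilde\mu$ provided by (iii). From $c_\infty(V(v),\widetilde\mu)<\infty$ and Theorem~\ref{t-charinfinv}(b) I obtain a backward invariant sequence $f\in\ell^\infty(V(v),\widetilde\mu)$ with $f(v)=1$; since $|f(u)\mu_u|=|f(u)\widetilde\mu_u|\,|\mu_u/\widetilde\mu_u|\to 0$, this $f$ lies in $c_0(V(v),\mu)$ and, $V$ being leafless, is a fixed point over $V(v)$, which is condition (i) of Theorem~\ref{t-chunrooted}. Given $\eps>0$, put $M=\sup_u|\mu_u/\widetilde\mu_u|<\infty$ and choose, by (iii), some $N$ with $c_\infty(V^N_-(v),\widetilde\mu)<\eps/M$. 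Theorem~\ref{t-charinfinv}(b) yields a backward invariant sequence $g$ over $V^N_-(v)$ with $g(v)=1$ and $\sup_{w\neq v}|g(w)\widetilde\mu_w|=c_\infty(V^N_-(v),\widetilde\mu)<\eps/M$; then $\sup_{w\neq v}|g(w)\mu_w|<\eps$, and $g\in c_0(V,\mu)$ after extending it by $0$, so $\|g-e_v\|<\eps$. This is condition (ii) of Theorem~\ref{t-chunrooted}, and chaos follows.

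The substantial direction is (i)$\Rightarrow$(ii), where a single $\widetilde\mu$ must serve all $v$. Since $B$ is chaotic it has, by Theorem~\ref{chaos-fixedpoint}, a universal fixed point $f\in c_0(V,\mu)$, and the density of periodic points supplies, for each $v\in V$ and each $m\geq 1$, a periodic point $g_{v,m}\in c_0(V,\mu)$ with $g_{v,m}(v)=1$ and $\sup_{w\neq v}|g_{v,m}(w)\mu_w|<\delta_{v,m}$, with $\delta_{v,m}>0$ at my disposal; let $N_{v,m}$ be its period. I will build $\widetilde\mu_w=c_w\mu_w$ with $c_w\geq 1$ and $c_w\to\infty$ (so $(\mu_w/\widetilde\mu_w)_w\in c_0(V)$) such that $\sup_w|f(w)\widetilde\mu_w|<\infty$ and, for every $v$ and $m$, $\sup_{w\neq v}|g_{v,m}(w)\widetilde\mu_w|<1/m$. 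The first property gives, after restricting $f$ to $V(v)$ and normalising, $c_\infty(V(v),\widetilde\mu)\leq|f(v)|^{-1}\|f\|_{\ell^\infty(V,\widetilde\mu)}<\infty$ by Theorem~\ref{t-charinfinv}(a), exactly as in the rooted case of Theorem~\ref{t-char0mu}. The second property gives, via the sign-flipped sequence attached to $g_{v,m}$ on $V^{N_{v,m}}_-(v)$ and Theorem~\ref{t-charinfinv}(a), that $c_\infty(V^{N_{v,m}}_-(v),\widetilde\mu)<1/m$; letting $m\to\infty$ covers every $\eps$. Thus (ii) holds once the weight is built.

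To construct $c_w$ I enumerate the vertices and, separately, all tasks $(v,m)$, and process them in an interleaved induction, defining $c_w$ on one new vertex at a time. When a task $(v,m)$ is first treated I fix its $g_{v,m}$ with $\delta_{v,m}$ so small that $\max_{w\in F}c_w\cdot\delta_{v,m}<1/(2m)$, where $F$ is the finite set on which $c$ is already defined; this controls the already-defined vertices. When defining $c_w$ at a new vertex I set it equal to a slowly growing target, capped by $1/|f(w)\mu_w|$ (to keep $f$ bounded in $\widetilde\mu$) and by $\tfrac{1}{2m}|g_{v,m}(w)\mu_w|^{-1}$ for every already-activated task $(v,m)$ with $g_{v,m}(w)\neq 0$ (to keep the future contribution below $1/(2m)$), and floored at $1$. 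The main obstacle, and the only delicate point, is to verify that these infinitely many caps do not prevent $c_w\to\infty$: here I use that $f$ and each $g_{v,m}$ lie in $c_0(V,\mu)$, so that every individual cap tends to $\infty$ along the vertex enumeration, and that by activating tasks slowly only finitely many ``old'' caps can be small at once while the ``new'' ones exceed a prescribed level by the choice of $\delta_{v,m}$; consequently the minimum of the caps still tends to $\infty$. This reconciles the competing demands that $\widetilde\mu$ be large enough for $(\mu_w/\widetilde\mu_w)_w\in c_0(V)$ yet small enough that the chosen periodic points remain small in the $\widetilde\mu$-norm, and completes (i)$\Rightarrow$(ii).
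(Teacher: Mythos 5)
Your proposal is correct, and its overall skeleton is the same as the paper's: both directions are routed through Theorem \ref{t-chunrooted} (the fixed-point/backward-invariant description of chaos) and Theorem \ref{t-charinfinv} (identifying the relevant infima with $c_\infty$), and your (iii)$\Rightarrow$(i) argument, including the continuity bookkeeping via $|\widetilde\mu_u|\geq|\mu_u|$ off a finite set and the factor $M=\sup_u|\mu_u/\widetilde\mu_u|$, is essentially identical to the paper's. Where you genuinely diverge is in (i)$\Rightarrow$(ii). The paper enumerates $V$ with infinite repetition as $(v_n)_n$, extracts from Theorem \ref{t-chunrooted} fixed points $f_n$ over $V(v_n)$ with $\|f_n\|<1/n$ and backward invariant sequences $g_n$ over $V_-^{N_n}(v_n)$ with $\|g_n-e_{v_n}\|<1/n$, and then invokes a standalone Lemma (\ref{l-char0mubil}): for any countable family with norms tending to $0$ in $c_0(V,\mu)$ there is one weight $\widetilde\mu$ with $\mu/\widetilde\mu\in c_0(V)$ under which the norms still tend to $0$. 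You instead keep a single bounded object (a universal fixed point, from Theorem \ref{chaos-fixedpoint}) to handle all the trees $V(v)$ at once via the cap $c_w\leq 1/|f(w)\mu_w|$, and you handle the trees $V_-^N(v)$ by choosing the periodic points $g_{v,m}$ \emph{adaptively}, with $\delta_{v,m}$ picked only after the finitely many already-defined values $c_w$ are known; this adaptivity is exactly what replaces the paper's lemma, whose hypothesis (norms $\to 0$ fixed in advance) your data need not satisfy. The one delicate point, $c_w\to\infty$ against infinitely many caps, is flagged by you and does work, provided you make explicit that at the $k$-th activation you take $\delta_{v,m}$ so small that $\tfrac{1}{2m\delta_{v,m}}\geq k$: then for any level $K$, tasks activated after step $K$ impose caps exceeding $K$ everywhere, while the finitely many earlier tasks impose caps that tend to $\infty$ along the vertex enumeration because $g_{v,m}\in c_0(V,\mu)$. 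In short: the paper's route factors the diagonalization into a clean, reusable lemma applied to data prepared in advance; yours interleaves data selection and weight construction, at the price of more intricate bookkeeping but with the small advantage of needing only one bounded universal fixed point rather than a null sequence of fixed points.
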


For the proof we will need the following simple lemma.

\begin{lemma}\label{l-char0mubil}
Let $V$ be a countable set, $\mu=(\mu_v)_{v\in V}$ a weight on $V$, and $f_n\in c_0(V,\mu)$, $n\geq 1$, such that $\|f_n\|_{c_0(V,\mu)}\to 0$ as $n\to \infty$. Then there is a weight $\widetilde{\mu}$ on $V$ with $(\mu_v/\widetilde{\mu}_v)_v\in c_0(V)$ such that $f_n\in c_0(V,\widetilde{\mu})$, $n\geq 1$, and $\|f_n\|_{c_0(V,\widetilde{\mu})}\to 0$ as $n\to \infty$.
\end{lemma}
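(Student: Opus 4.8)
The plan is to look for the new weight in the multiplicative form $\widetilde{\mu}_v = c_v\mu_v$ with positive scalars $c_v$, so that the three requirements become conditions on $(c_v)_v$ alone. Writing $b_n := \|f_n\|_{c_0(V,\mu)} = \sup_{v}|f_n(v)\mu_v|$, with $b_n\to 0$ by hypothesis, I need: (1) $(1/c_v)_v\in c_0(V)$; (2) for each $n$, $(c_v f_n(v)\mu_v)_v\in c_0(V)$; and (3) $\sup_v c_v|f_n(v)\mu_v|\to 0$ as $n\to\infty$. The tension is that (1) forces the $c_v$ to be large off a finite set, while (2) and especially the uniform-in-$n$ decay demanded by (3) are threatened by any amplification of the tails of the $f_n$; the whole point is to let $c_v\to\infty$ at exactly the right rate.

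First I would introduce the envelope $h(v)=\sup_{n\geq 1}|f_n(v)\mu_v|$ and check that $h\in c_0(V)$. Given $\eps>0$, pick $N$ with $b_n<\eps$ for $n>N$; for the finitely many indices $n\le N$ the sets $\{v:|f_n(v)\mu_v|\ge\eps\}$ are finite because $f_n\in c_0(V,\mu)$, so their union $F$ is finite, and off $F$ every term $|f_n(v)\mu_v|$ (all $n$) is below $\eps$, whence $h(v)\le\eps$. Thus $h\in c_0(V)$, and in particular $h^{1/2}\in c_0(V)$.

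The key idea is a geometric-mean (square-root) choice of weight. On the set where $h(v)\neq 0$ I would put $c_v=h(v)^{-1/2}$; since $h(v)\to 0$ co-finitely, there $c_v\to\infty$ and $1/c_v=h(v)^{1/2}\in c_0(V)$. On the remaining set $Z=\{v:h(v)=0\}$ every $f_n$ vanishes, so I am free to enumerate $Z$ and let $c_v\to\infty$ along it (e.g. $c_{z_j}=j$), which keeps $(1/c_v)_v\in c_0(V)$ and does not affect the $f_n$; this settles (1). For the central estimate, on $\{h\neq 0\}$ I would combine the two available bounds $|f_n(v)\mu_v|\le h(v)$ and $|f_n(v)\mu_v|\le b_n$ to get
\[
c_v|f_n(v)\mu_v| = h(v)^{-1/2}|f_n(v)\mu_v| \le \min\!\big(h(v)^{1/2},\, b_n\,h(v)^{-1/2}\big) \le b_n^{1/2},
\]
the last inequality holding because the minimum of an increasing and a decreasing function of $h(v)$ is largest where they coincide, at $h(v)=b_n$. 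Hence $\sup_v c_v|f_n(v)\mu_v|\le b_n^{1/2}\to 0$, which is (3); and $|c_v f_n(v)\mu_v|\le h(v)^{1/2}$ with $h^{1/2}\in c_0(V)$ gives (2).

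The main obstacle is exactly reconciling (1) with (3): any weight that grows fast enough to push $(\mu_v/\widetilde{\mu}_v)_v$ into $c_0(V)$ risks magnifying the tails of the $f_n$ and destroying the uniform decay of $\|f_n\|_{c_0(V,\widetilde{\mu})}$. The resolution is to balance the two natural bounds on $|f_n(v)\mu_v|$ against each other by the square-root weight $h(v)^{-1/2}$, which spends ``half'' of the decay of $h$ on blowing up the weight and retains the other half as a margin that still tends to $0$ both in $v$ (for (2)) and, uniformly, in $n$ (for (3)).
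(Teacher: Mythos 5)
Your proof is correct. It rests on the same balancing idea as the paper's proof --- spend a square root of the available decay on enlarging the weight and keep the other square root as a margin --- but the construction is packaged genuinely differently. The paper fixes a positive null sequence $\varepsilon_n \geq \|f_n\|_{c_0(V,\mu)}$, builds an exhaustion $F_1\subset F_2\subset\cdots$ of $V$ by finite sets so that $|f_n(v)\mu_v|\le\varepsilon_{m+1}$ whenever $n\le m$ and $v\notin F_m$, and defines $\widetilde{\mu}_v=\mu_v/\sqrt{\varepsilon_{m+1}}$ piecewise constantly on the shells $F_{m+1}\setminus F_m$; the verification then splits into the cases $m\ge n$ and $m<n$, which is exactly parallel to your estimate $c_v|f_n(v)\mu_v|\le\min\big(h(v)^{1/2},\,b_n h(v)^{-1/2}\big)\le b_n^{1/2}$. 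You instead form the pointwise envelope $h(v)=\sup_n|f_n(v)\mu_v|$, prove $h\in c_0(V)$ (by the same finite/tail splitting that the paper uses to construct its sets $F_m$), and take $\widetilde{\mu}=h^{-1/2}\mu$ directly, handling the set where all $f_n$ vanish by an arbitrary divergent multiplier. Your version is somewhat more canonical --- the weight is intrinsic to the data rather than depending on a chosen exhaustion and a chosen sequence $(\varepsilon_n)$ --- at the small cost of the separate treatment of the zero set of $h$, which the shell construction avoids automatically; both arguments are equally elementary and yield the same conclusion.
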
 

\begin{proof}
Let $(\eps_n)_{n\geq 1}$ be a decreasing sequence tending to zero such that $\|f_n\|_{c_0(V,\mu)}\leq\eps_n$ for $n\geq 1$. Since every sequence $f_n$ belongs to $c_0(V,\mu)$, we can construct an increasing sequence $(F_m)_{m\geq 1}$ of finite subsets of $V$ with $\bigcup_{m=1}^\infty F_m=V$ such that, for all $m\geq 1$, $1\leq n\leq m$ and $v\notin F_m$,
\[
|f_n(v)\mu_v|\leq \eps_{m+1}.
\]
For $m\geq 0$, let $\widetilde{\mu}_v = \tfrac{1}{\sqrt{\eps_{m+1}}} \mu_v$ for $v\in F_{m+1}\setminus F_{m}$, where $F_0=\varnothing$. Then $(\mu_v/\widetilde{\mu}_v)_v\in c_0(V)$.

Now let $n\geq 1$ and $v\in V$. Then there is some $m\geq 0$ such that $v\in F_{m+1}\setminus F_{m}$. If $m\geq n$, then
\[
|f_n(v)\widetilde{\mu}_v| = |f_n(v)\mu_v|\Big|\frac{\widetilde{\mu}_v}{\mu_v}\Big|\leq \sqrt{\eps_{m+1}}\leq \sqrt{\eps_{n}}.
\]
And if $m<n$ then
\[
|f_n(v)\widetilde{\mu}_v| = |f_n(v)\mu_v|\Big|\frac{\widetilde{\mu}_v}{\mu_v}\Big|\leq \frac{\eps_n}{\sqrt{\eps_{m+1}}}\leq \sqrt{\eps_{n}}.
\]
These inequalities show that $f_n\in c_0(V,\widetilde{\mu})$, $n\geq 1$, and $\|f_n\|_{c_0(V,\widetilde{\mu})}\to 0$ as $n\to \infty$.
\end{proof}

\begin{proof}[Proof of Theorem \ref{t-char0mubil}]
We start by showing that (i) implies (ii). Thus let $B$ be chaotic on $c_0(V,\mu)$. Let $(v_n)_{n\geq 1}$ be a sequence from $V$ in which every $v\in V$ appears infinitely often. It then follows from Theorem \ref{t-chunrooted} that, for each $n\geq 1$, 
there is a fixed point $f_n$ for $B$ in $c_0(V(v_n),\mu)$ such that $f_n(v_n) \neq 0$, $\|f_n\|_{c_0(V(v_n),\mu)}<\frac{1}{n}$, and there is some $N_n\geq 1$ and a backward invariant sequence $g_n$ in $c_0(V_-^{N_n}(v_n),\mu)$ such that $g_n(v_n)=1$ and 
\[
\|g_n-e_{v_n}\|_{c_0(V_-^{N_n}(v_n),\mu)}<\frac{1}{n}.
\]
We extend the $f_n$ and $g_n$ to all of $V$ by assigning the values 0 outside the stated trees. Applying Lemma \ref{l-char0mubil} to the sequences $f_n$ and $g_n-e_{v_n}$, $n\geq 1$, we obtain a weight $\widetilde{\mu}$ on $V$ with $(\mu_v/\widetilde{\mu}_v)_v\in c_0(V)$ such that $f_n, g_n \in c_0(V,\widetilde{\mu})$, $n\geq 1$, and 
\[
\|g_n-e_{v_n}\|_{c_0(V_-^{N_n}(v_n),\widetilde{\mu})}\to 0\text{ as $n\to \infty$}.
\]

With Theorem \ref{t-charinfinv}(a) we deduce that, for any $v\in V$,
\[
c_\infty(V(v),\widetilde{\mu})<\infty.
\]
On the other hand, it follows as in the proof of Theorem \ref{t-charpmubil} that $B$ is defined on each $c_0(V_-^{N_n}(v_n),\mu)$. Since $(\mu_v/\widetilde{\mu}_v)_v\in c_0(V)$, the weight $\mu$ may be replaced here by the weight $\widetilde{\mu}$. Theorem \ref{t-charinfinv}(a) now implies that
\[
c_\infty(V_-^{N_n}(v_n),\widetilde{\mu}) \to 0 \text{ as $n\to \infty$},
\]
which shows that, for any $v\in V$ and $\eps>0$, there is some $N\geq 1$ such that 
\[
c_\infty(V_-^{N}(v),\widetilde{\mu}) <\eps.
\]

Next, (ii) trivially implies (iii).

Assume finally that (iii) holds. Let $v\in V$ and $\widetilde{\mu}$ be the associated weight on $V$ with $(\mu_u/\widetilde{\mu}_u)_u\in c_0(V)$.
Then Theorem \ref{t-charinfinv}(a) implies that there exists a backward invariant sequence $f\in \ell^\infty(V(v),\widetilde{\mu})$ with $f(v)=1$, hence $f\in c_0(V(v),\mu)$; moreover, for any $\eps>0$ there is some $N\geq 1$ and a backward invariant sequence $f\in \ell^\infty(V_-^N(v),\widetilde{\mu})$ with $f(v)=1$ such that
\[
\|f-e_v\|_{\ell^\infty(V_-^N(v),\widetilde{\mu})}< \eps,
\] 
hence $f\in c_0(V_-^N(v),\mu)$ with
\[
\|f-e_v\|_{\ell^\infty(V_-^N(v),\mu)}< M\eps,
\]
where $M=\sup_{u\in V} \big|\mu_u/\widetilde{\mu}_u\big|$. Note that in both cases the hypothesis in Theorem \ref{t-charinfinv}(a) is satisfied by the argument given in the proof of Theorem \ref{t-charpmubil} and since $(\mu_u/\widetilde{\mu}_u)_u\in c_0(V)$. 

Theorem \ref{t-chunrooted} then implies that $B$ is chaotic on $c_0(V,\mu)$, hence (i).
\end{proof}

We use once more a conjugacy to transfer the weight from the space to the operator.

\begin{theorem}\label{t-charplambdabil}
Let $V$ be an unrooted tree and $\lambda=(\lambda_v)_{v\in V}$ a weight. Suppose that the weighted backward shift $B_\lambda$ is an operator on $\ell^p(V)$, $1< p<\infty$. Then $B_\lambda$ is chaotic on $\ell^p(V)$ if and only if, for every $v\in V$,
\[
c_p\big(V(v),(\tfrac{1}{\lambda(v\to u)})_{u\in V(v)}\big) < \infty
\]
and, for every $\eps>0$, there is some $N\geq 1$ such that
\[
c_p\big(V_-^N(v),(\tfrac{\lambda(w\to v)}{\lambda(w\to u)})_{u\in V_-^N(v)}\big)<\eps,
\]
where $w$ is a common ancestor of $u$ and $v$.
\end{theorem}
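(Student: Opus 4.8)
The plan is to reduce the statement to Theorem~\ref{t-charpmubil} by means of the standard conjugacy between weighted shifts on unweighted spaces and the unweighted shift on weighted spaces, exactly as Theorem~\ref{t-charplambda} was deduced from Theorem~\ref{t-charpmu} in the rooted case. Fixing a vertex $v_0\in V$ and setting $\mu_{v_0}=1$, I would define the weight $\mu=(\mu_x)_{x\in V}$ by \eqref{eq-conjB2}, so that $\mu_x=\lambda(w\to v_0)/\lambda(w\to x)$ for a common ancestor $w$ of $x$ and $v_0$. Then $B_\lambda$ on $\ell^p(V)$ is conjugate, via the isometric isomorphism $\phi_\mu$, to the unweighted backward shift $B$ on $\ell^p(V,\mu)$; since conjugate operators share all dynamical properties, $B_\lambda$ is chaotic on $\ell^p(V)$ if and only if $B$ is chaotic on $\ell^p(V,\mu)$. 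By Theorem~\ref{t-charpmubil} the latter holds precisely when, for every $v\in V$, $c_p(V(v),\mu)<\infty$ and, for every $\eps>0$, there is some $N\geq 1$ with $c_p(V_-^N(v),\mu)<\eps$. It remains to rewrite these two conditions in terms of $\lambda$.

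For the first condition, fix $v\in V$ and take $u\in V(v)$, so that $v$ is an ancestor of $u$. Choosing a vertex $w$ that is a common ancestor of $v_0$, $v$ and $u$, the cocycle relation \eqref{eq-lambda} gives $\lambda(w\to u)=\lambda(w\to v)\lambda(v\to u)$, whence
\[
\mu_u=\frac{\lambda(w\to v_0)}{\lambda(w\to v)}\cdot\frac{1}{\lambda(v\to u)}=\mu_v\cdot\frac{1}{\lambda(v\to u)}.
\]
The factor $\mu_v=\lambda(w\to v_0)/\lambda(w\to v)$ is a positive constant independent of $u$, so by the positive homogeneity of $c_p$ in the weight (Theorem~\ref{t-rec1}(a)),
\[
c_p(V(v),\mu)=\mu_v\, c_p\big(V(v),(\tfrac{1}{\lambda(v\to u)})_{u\in V(v)}\big).
\]
Hence $c_p(V(v),\mu)<\infty$ if and only if $c_p\big(V(v),(\tfrac{1}{\lambda(v\to u)})_{u\in V(v)}\big)<\infty$, which is the first condition of the theorem.

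For the second condition, fix $v\in V$, $N\geq 1$ and $u\in V_-^N(v)$, and choose a single vertex $w$ that is a common ancestor of $v_0$, $v$ and $u$; by \eqref{eq-lambda} the value $\mu_x=\lambda(w\to v_0)/\lambda(w\to x)$ does not depend on this choice. Writing out $\mu_u$ and $\mu_v$ with this common $w$ yields
\[
\mu_u=\mu_v\cdot\frac{\lambda(w\to v)}{\lambda(w\to u)},
\]
and the constant $\mu_v=\lambda(w\to v_0)/\lambda(w\to v)$ depends neither on $u$ nor on $N$. Consequently, again by Theorem~\ref{t-rec1}(a),
\[
c_p(V_-^N(v),\mu)=\mu_v\, c_p\big(V_-^N(v),(\tfrac{\lambda(w\to v)}{\lambda(w\to u)})_{u\in V_-^N(v)}\big),
\]
so that $c_p(V_-^N(v),\mu)<\eps$ is equivalent to $c_p\big(V_-^N(v),(\tfrac{\lambda(w\to v)}{\lambda(w\to u)})_u\big)<\eps/\mu_v$. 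Since $\mu_v>0$ is a fixed constant and $\eps>0$ is arbitrary, the clause ``for every $\eps>0$ there is $N\geq 1$ with $c_p(V_-^N(v),\mu)<\eps$'' is equivalent to the corresponding clause for the $\lambda$-weight, which is the second condition of the theorem. Together with the preceding paragraph this proves the equivalence.

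I expect the only real obstacle to be the weight bookkeeping in the last step: one must check that the rescaling constant $\mu_v$ is genuinely independent of both $u$ and $N$, and that the ratio $\lambda(w\to v)/\lambda(w\to u)$ is independent of the choice of common ancestor $w$ (both following from the cocycle identity \eqref{eq-lambda}), since only then does the ``$\forall\eps\,\exists N$'' quantifier transfer cleanly under homogeneity. The definedness issue — that $B$ is defined on each $\ell^p(V_-^N(v),\mu)$ so that Theorem~\ref{t-charpmubil} may be invoked — is already settled inside the proof of that theorem via \eqref{eq-childN}, and transfers here because $B_\lambda$ being an operator on $\ell^p(V)$ makes $B$ an operator on $\ell^p(V,\mu)$.
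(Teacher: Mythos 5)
Your proposal is correct and follows essentially the same route as the paper: the paper likewise deduces Theorem~\ref{t-charplambdabil} from Theorem~\ref{t-charpmubil} via the conjugacy \eqref{eq-conjB2}, then uses the cocycle identity \eqref{eq-lambda} (as in \eqref{eq-conjug}) and the positive homogeneity of $c_p$ to convert the weight $\mu$ into the stated $\lambda$-expressions. The only cosmetic point is that $\mu_v$ need only be a non-zero scalar rather than positive, but since $c_p$ depends only on the moduli of the weights, rescaling by $|\mu_v|$ gives exactly your argument.
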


In fact, the conjugacy \eqref{eq-conjB2} initially gives other weights. Then an argument as in \eqref{eq-conjug} and the positive homogeneity of the continued fractions $c_p$ in the weight allow to arrive at the stated conditions. 

In the same manner one obtains the following two results.

\begin{theorem}\label{t-char1lambdabil}
Let $V$ be an unrooted tree and $\lambda=(\lambda_v)_{v\in V}$ a weight. Suppose that the weighted backward shift $B_\lambda$ is an operator on $\ell^1(V)$. Then $B_\lambda$ is chaotic on $\ell^1(V)$ if and only if, for every $v\in V$, there exists a branch $(v,v_1,v_2,\ldots)$ starting from $v$ such that
\[
\sum_{n=1}^\infty \frac{1}{|\lambda(v\to v_n)|} <\infty
\]
and, for every $\eps>0$, there is some $N\geq 1$ such that, either
\[
\sum_{n=1}^\infty |\lambda(\prt^{nN}(v)\to v)| <\eps,
\]
or there is a branch $(v_0,v_1,v_2,\ldots)$ starting from $v_0:=\prt^{N}(v)$ with $v_N\neq v$ such that
\begin{equation}\label{eq-cone}
\sum_{n=1}^\infty \Big|\frac{\lambda(\prt^{N}(v)\to v)}{\lambda(\prt^{N}(v)\to v_{nN})}\Big| <\eps.
\end{equation}
\end{theorem}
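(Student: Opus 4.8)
The plan is to follow verbatim the conjugacy argument already used for Theorem \ref{t-charplambdabil}, reducing the statement to the characterization for the \emph{unweighted} shift in Theorem \ref{t-char1mubil}. First I would fix a reference vertex $z\in V$, normalize $\mu_z=1$, and invoke the conjugacy \eqref{eq-conjB2} to identify $B_\lambda$ on $\ell^1(V)$ with the unweighted backward shift $B$ on $\ell^1(V,\mu)$, where $\mu_u=\lambda(w\to z)/\lambda(w\to u)$ and $w$ is a common ancestor of $u$ and $z$. Since conjugate operators share all dynamical properties, $B_\lambda$ is chaotic on $\ell^1(V)$ if and only if $B$ is chaotic on $\ell^1(V,\mu)$, and the latter is governed by the two conditions of Theorem \ref{t-char1mubil}. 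It then remains to rewrite those conditions, phrased in terms of $\mu$, in terms of the products $\lambda(\cdot\to\cdot)$.

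The core computation is that, for each fixed $v\in V$, every weight $\mu_u$ occurring in the conditions equals $|\mu_v|$ times a ratio of $\lambda$-products based at $v$. Using \eqref{eq-lambda}, together with the fact (as in \eqref{eq-conjug}) that $\lambda(w\to z)/\lambda(w\to v)$ is independent of the chosen common ancestor $w$ and equals $\mu_v$, I would record the three identities
\begin{align*}
\mu_{v_n} &= \mu_v\cdot \tfrac{1}{\lambda(v\to v_n)} \qquad (v_n\in V(v)),\\
\mu_{\prt^{nN}(v)} &= \mu_v\cdot \lambda(\prt^{nN}(v)\to v),\\
\mu_{v_{nN}} &= \mu_v\cdot \tfrac{\lambda(\prt^{N}(v)\to v)}{\lambda(\prt^{N}(v)\to v_{nN})} \qquad (v_{nN}\in V(\prt^{N}(v))).
\end{align*}
Substituting the first identity into condition~(1) of Theorem \ref{t-char1mubil} and dividing by the nonzero constant $|\mu_v|$ yields the branch condition $\sum_n 1/|\lambda(v\to v_n)|<\infty$. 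Substituting the second and third identities into the two alternatives of condition~(2) gives, after dividing by $|\mu_v|$, exactly the first alternative and \eqref{eq-cone}; the multiplicative constant $|\mu_v|$ is harmless here because the clause is quantified over all $\eps>0$, so replacing $\eps$ by $\eps/|\mu_v|$ absorbs it. The branch data ($v_0:=\prt^{N}(v)$, $v_N\neq v$) carry over unchanged, since only the summand is affected.

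I do not expect a serious obstacle; the argument is essentially routine once the template of Theorem \ref{t-charplambdabil} is in place. The only delicate point is the bookkeeping behind the three $\mu$-identities: one must check that a common ancestor $w$ of $z$ and the lowest relevant vertex is simultaneously an ancestor of all the others, so that \eqref{eq-lambda} splits the products correctly, and one must verify the independence of $\mu_v$ from the choice of $w$ (both consequences of \eqref{eq-lambda}). A little extra care is needed for the ancestors $\prt^{nN}(v)$, which may lie \emph{above} the reference vertex $z$: there one selects $w$ above $\prt^{nN}(v)$ and the same splitting still produces $\mu_{\prt^{nN}(v)}=\mu_v\,\lambda(\prt^{nN}(v)\to v)$. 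With the three identities established, the equivalence of the two sets of conditions follows at once, completing the proof.
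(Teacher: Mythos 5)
Your proposal is correct and is precisely the argument the paper intends: the paper proves this theorem by the conjugacy \eqref{eq-conjB2} reducing $B_\lambda$ on $\ell^1(V)$ to $B$ on $\ell^1(V,\mu)$, applying Theorem \ref{t-char1mubil}, and then rewriting the weights via \eqref{eq-lambda}/\eqref{eq-conjug} and homogeneity (here, absorbing the constant $|\mu_v|$ into $\eps$), exactly as you do. Your three $\mu$-identities and the remark about ancestors lying above the reference vertex are the same bookkeeping the paper leaves implicit in the phrase ``in the same manner one obtains the following two results.''
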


With Theorem \ref{t-char1lambdabil}, the question of chaos for weighted backward shifts on $\ell^1(V)$ can be considered to be also closed on unrooted trees. 

\begin{theorem}\label{t-char0lambdabil}
Let $V$ be an unrooted tree and $\lambda=(\lambda_v)_{v\in V}$ a weight. Suppose that the weighted backward shift $B_\lambda$ is an operator on $c_0(V)$. Then the following assertions are equivalent:
\begin{enumerate}
\item[\rm (i)] $B_\lambda$ is chaotic on $c_0(V)$;
\item[\rm (ii)] there is a non-zero sequence $(\varepsilon_v)_{v\in V}\in c_0(V)$ such that, for every $v\in V$, 
\[
c_\infty\big(V(v),(\tfrac{1}{\varepsilon_{u}\lambda(v\to u)})_{u\in V(v)}\big) < \infty
\]
and, for any $\eps>0$, there is some $N\geq 1$ such that
\[
c_\infty\big(V_-^N(v),(\tfrac{\lambda(w\to v)}{\varepsilon_{u}\lambda(w\to u)})_{u\in V_-^N(v)}\big) < \eps,
\]
where $w$ is a common ancestor of $u$ and $v$;
\item[\rm (iii)] for every $v\in V$, there is a non-zero sequence $(\varepsilon_u)_{u\in V}\in c_0(V)$ such that,
\[
c_\infty\big(V(v),(\tfrac{1}{\varepsilon_{u}\lambda(v\to u)})_{u\in V(v)}\big) < \infty
\]
and, for any $\eps>0$, there is some $N\geq 1$ such that
\[
c_\infty\big(V_-^N(v),(\tfrac{\lambda(w\to v)}{\varepsilon_{u}\lambda(w\to u)})_{u\in V_-^N(v)}\big) < \eps,
\]
where $w$ is a common ancestor of $u$ and $v$.
\end{enumerate}
\end{theorem}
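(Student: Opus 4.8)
The plan is to deduce the result from the characterization for the \emph{unweighted} backward shift $B$ on a \emph{weighted} space, namely Theorem~\ref{t-char0mubil}, by the standard conjugacy, in exactly the same manner in which Theorem~\ref{t-charplambdabil} was obtained from Theorem~\ref{t-charpmubil} and Theorem~\ref{t-char0lambda} from Theorem~\ref{t-char0mu} in the rooted case. First I would fix a reference vertex $v_0\in V$ and let $\mu$ be the weight defined by \eqref{eq-conjB2}, so that $B_\lambda$ on $c_0(V)$ is conjugate to $B$ on $c_0(V,\mu)$; since conjugate operators share all dynamical properties, $B_\lambda$ is chaotic on $c_0(V)$ if and only if $B$ is chaotic on $c_0(V,\mu)$. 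Applying Theorem~\ref{t-char0mubil} to $B$ on $c_0(V,\mu)$ then reduces everything to re-expressing its conditions in terms of $\lambda$, after which the equivalences of the three conditions (i), (ii), (iii) transfer verbatim.

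The core step is the translation of the auxiliary weight. Given a weight $\widetilde\mu$ with $(\mu_v/\widetilde\mu_v)_v\in c_0(V)$, I would set $\varepsilon_u=\mu_u/\widetilde\mu_u$, so that $(\varepsilon_u)_u\in c_0(V)$ has all entries nonzero and $\widetilde\mu_u=\mu_u/\varepsilon_u$. For $u\in V(v)$ the common ancestor of $u$ and $v_0$ coincides with that of $v$ and $v_0$, and \eqref{eq-lambda} gives $\lambda(w\to u)=\lambda(w\to v)\lambda(v\to u)$; hence $\widetilde\mu_u=C_v\,\tfrac{1}{\varepsilon_u\lambda(v\to u)}$ with a $u$-independent factor $C_v$, and by positive homogeneity of $c_\infty$ (Theorem~\ref{t-rec1}(a)) the finiteness of $c_\infty(V(v),\widetilde\mu)$ is equivalent to that of $c_\infty(V(v),(\tfrac{1}{\varepsilon_u\lambda(v\to u)})_u)$. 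For $u\in V_-^N(v)$ I would use the factorisation \eqref{eq-conjug}: writing $w$ for a common ancestor of $u$, $v$ and $v_0$ and $w'$ for the common ancestor of $u$ and $v$, the factor $\lambda(w\to w')$ cancels, so that $\widetilde\mu_u=C_v'\,\tfrac{\lambda(w'\to v)}{\varepsilon_u\lambda(w'\to u)}$ with a $u$-independent constant $C_v'$. Positive homogeneity then turns the requirement $c_\infty(V_-^N(v),\widetilde\mu)<\eps$ into the displayed condition on $c_\infty(V_-^N(v),(\tfrac{\lambda(w\to v)}{\varepsilon_u\lambda(w\to u)})_u)$, once $C_v'$ is absorbed into $\eps$. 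The same computation handles both the global case (ii) and the per-vertex case (iii).

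The main obstacle is not depth but careful bookkeeping of common ancestors in the tree $V_-^N(v)$, which carries the rewired parent--child relation \eqref{eq-childN}: I must confirm that the constant $\tfrac{\lambda(w\to v_0)}{\lambda(w\to v)}$ is genuinely independent of $u$, which follows because this ratio does not depend on the choice of common ancestor of $v$ and $v_0$, by \eqref{eq-lambda}. I would also record, as in the proof of Theorem~\ref{t-charpmubil}, that $B$ is defined on each $c_0(V_-^N(v),\mu)$: by \eqref{eq-childN} the child-sets in $V_-^N(v)$ differ by at most two elements from the degree-$N$ descendant sets in $V$, so the hypotheses of Theorem~\ref{t-char0mubil} are met on the rewired trees. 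Finally, since every $\varepsilon_u=\mu_u/\widetilde\mu_u$ is nonzero, the resulting $(\varepsilon_v)_v$ is a bona fide non-zero element of $c_0(V)$, matching the statement.
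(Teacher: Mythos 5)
Your proposal is correct and follows essentially the same route as the paper: the paper obtains Theorem~\ref{t-char0lambdabil} from Theorem~\ref{t-char0mubil} precisely via the conjugacy \eqref{eq-conjB2}, an argument as in \eqref{eq-conjug}, and the positive homogeneity of $c_\infty$ in the weight. Your translation $\varepsilon_u=\mu_u/\widetilde\mu_u$ and the absorption of the $u$-independent constants into $\eps$ correctly fill in the details the paper leaves implicit.
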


We have seen in Theorem \ref{t-chunrootedsimplified} that for certain shifts on unrooted trees without a free left end the characterizations simplify; see also Remark \ref{rem-sym}(a). And for unrooted trees with a free left end we note Theorem \ref{chaos-fixedpoint}; see also Remark \ref{rem-GMM}(b). As a consequence we have the following result, which we only state for weighted backward shifts on unweighted spaces. Recall that $\gen(v)$ denotes the generation containing the vertex $v\in V$.

Note that condition \eqref{eq-simpl} below is satisfied for symmetric weights if $p\neq 1$ and if the tree has no free left end.

\begin{theorem}\label{t-charlambdabil_simple}
Let $V$ be an unrooted tree, $\lambda=(\lambda_v)_{v\in V}$ a weight, and $X=\ell^p(V)$, $1\leq p<\infty$, or $X=c_0(V)$. Suppose that the weighted backward shift $B_\lambda$ is an operator on $X$. 

\emph{(a)} Suppose that, for every $v\in V$,
\begin{equation}\label{eq-simpl}
\Big(\frac{\lambda(w\to u)}{\lambda(w\to v)}\Big)_{u\in \emph{\gen}(v)} \notin \ell^{p^\ast}(\emph{\gen}(v)),
\end{equation}
where $w$ is a common ancestor of $u$ and $v$, and where $p^\ast=1$ if $X=c_0(V)$. Then $B_\lambda$ is chaotic on $X$ if and only if the following condition is satisfied:

\emph{(C)} if $X=\ell^1(V)$: for every $v\in V$, there exists a branch $(v,v_1,v_2,\ldots)$ starting from $v$ such that
\[
\sum_{n=1}^\infty \frac{1}{|\lambda(v\to v_n)|} <\infty;
\]
if $X=\ell^p(V)$, $1< p<\infty$: for every $v\in V$, 
\[
c_p\big(V(v),(\tfrac{1}{\lambda(v\to u)})_{u\in V(v)}\big) < \infty;
\]
if $X=c_0(V)$: there is a non-zero sequence $(\varepsilon_v)_{v\in V}\in c_0(V)$ such that, for every $v\in V$, 
\[
c_\infty\big(V(v),(\tfrac{1}{\varepsilon_{u}\lambda(v\to u)})_{u\in V(v)}\big) < \infty
\]
or, equivalently, for every $v\in V$, there is a non-zero sequence $(\varepsilon_u)_{u\in V(v)}\in c_0(V(v))$ such that,
\[
c_\infty\big(V(v),(\tfrac{1}{\varepsilon_{u}\lambda(v\to u)})_{u\in V(v)}\big) < \infty.
\]

\emph{(b)} Suppose that $V$ has a free left end. Then $B_\lambda$ is chaotic on $X$ if and only if condition \emph{(C)} is satisfied and, for some $v\in V$,
\[
\sum_{n=1}^\infty \lambda(\prt^n(v)\to v)e_{\prt^{n}(v)}\in X.
\]
\end{theorem}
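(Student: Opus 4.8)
Both parts will be deduced from the reduction theorems already at hand, with essentially no new computation. The engine for part (a) is Theorem \ref{t-chunrootedsimplified} and the engine for part (b) is Theorem \ref{chaos-fixedpoint}; in both cases the resulting ``fixed point over $V(v)$'' condition is then converted into condition (C) by the backward-invariance results of Section \ref{s-backwinv} together with the conjugacy \eqref{eq-conjB2}. The first step of part (a) is to recognise that \eqref{eq-simpl} is exactly the hypothesis \eqref{eq-simpcond} of Theorem \ref{t-chunrootedsimplified}. Indeed, by Remark \ref{rem-sym}(a), the existence of some $f\in X$ with $\sum_{u\in\gen(v)}\big|\tfrac{\lambda(w\to u)}{\lambda(w\to v)}f(u)\big|=\infty$ means precisely that $\big(\tfrac{\lambda(w\to u)}{\lambda(w\to v)}\big)_{u\in\gen(v)}$ does not lie in the $\alpha$-dual of $X$ over $\gen(v)$; since this $\alpha$-dual is $\ell^{p^\ast}(\gen(v))$, with $p^\ast=1$ when $X=c_0(V)$, this is exactly \eqref{eq-simpl}. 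Here one uses that restriction to $\gen(v)$ carries $X$ onto $\ell^p(\gen(v))$, resp. $c_0(\gen(v))$, and that any sequence supported on $\gen(v)$ extends by zero into $X$. Hence Theorem \ref{t-chunrootedsimplified} applies: $B_\lambda$ is chaotic on $X$ if and only if, for every $v\in V$, there is a fixed point for $B_\lambda$ in $X$ over the rooted tree $V(v)$ with $f(v)=1$.

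It remains to translate this condition into (C). Fix $v$; since $V$ is leafless, $V(v)$ is a leafless rooted tree and a fixed point over $V(v)$ is the same thing as a $\lambda$-backward invariant sequence on $V(v)$. Passing by the conjugacy \eqref{eq-conjB2} with reference vertex $v$ to the unweighted shift $B$ on the weighted space over $V(v)$ with weight $\mu_u=\tfrac{1}{\lambda(v\to u)}$, and absorbing the normalisation through positive homogeneity of the constants exactly as in the remark after Theorem \ref{t-charplambda}, the existence of such a sequence with value $1$ at $v$ is characterised case by case. For $1<p<\infty$, Theorem \ref{t-charpinv}(a) shows existence in $\ell^p$ is equivalent to $c_p\big(V(v),(\tfrac{1}{\lambda(v\to u)})_u\big)<\infty$. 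For $p=1$, Theorem \ref{t-char1inv} shows existence in $\ell^1$ is equivalent to the existence of a branch $(v,v_1,v_2,\ldots)$ with $\sum_{n}\tfrac{1}{|\lambda(v\to v_n)|}<\infty$. For $X=c_0(V)$, the single-vertex form of Theorem \ref{t-char0mu} (transported by conjugacy, or directly Theorem \ref{t-char0lambda}) shows existence in $c_0$ is equivalent to the existence of a non-zero $(\varepsilon_u)_{u\in V(v)}\in c_0(V(v))$ with $c_\infty\big(V(v),(\tfrac{1}{\varepsilon_u\lambda(v\to u)})_u\big)<\infty$. Quantifying over all $v\in V$ now yields precisely condition (C); in the $c_0$ case the equivalence of this per-vertex form with the global form involving one sequence $(\varepsilon_v)_{v\in V}\in c_0(V)$ is the implication (ii)$\Leftrightarrow$(iii) already recorded in Theorem \ref{t-char0lambda}.

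For part (b) the tree has a free left end, so all assertions of Theorem \ref{chaos-fixedpoint} are equivalent; in particular $B_\lambda$ is chaotic if and only if its condition (i) holds, namely that for every $v\in V$ there is a fixed point over $V(v)$ with $f(v)=1$ and that \eqref{eq-fpleft} holds. The first clause translates into (C) verbatim as in part (a), and by Remark \ref{rem-GMM}(b) it suffices to require \eqref{eq-fpleft} for a single $v$, which is the remaining stated condition $\sum_{n\geq1}\lambda(\prt^n(v)\to v)e_{\prt^{n}(v)}\in X$. No analogue of \eqref{eq-simpl} is needed here, because a free left end makes the ``left-hand'' half of chaos automatic.

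The only genuinely delicate point is the $c_0$ bookkeeping in the second paragraph: one must verify that a single fixed point in $c_0(V(v))$ with value $1$ at $v$ both yields and is yielded by a weight $\widetilde\mu=\mu/\varepsilon$ with $(\varepsilon_u)_u\in c_0(V(v))$ and $c_\infty(V(v),\widetilde\mu)<\infty$ --- the per-vertex analogue of the (i)$\Rightarrow$(ii) and (iii)$\Rightarrow$(i) arguments in Theorem \ref{t-char0mu}, the forward direction manufacturing $\widetilde\mu$ from the decay of the fixed point exactly as there --- and that the conjugacy turns $(\tfrac{1}{\varepsilon_u\lambda(v\to u)})_u$ into precisely this $\widetilde\mu$. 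The remaining cases are immediate once \eqref{eq-simpl} has been matched with \eqref{eq-simpcond}.
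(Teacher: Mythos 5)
Your proposal is correct and is essentially the argument the paper intends: part (a) by matching \eqref{eq-simpl} with \eqref{eq-simpcond} via Remark \ref{rem-sym}(a) and invoking Theorem \ref{t-chunrootedsimplified}, part (b) via Theorem \ref{chaos-fixedpoint} and Remark \ref{rem-GMM}(b), with the fixed-point conditions over $V(v)$ converted into (C) by the conjugacy \eqref{eq-conjB2} and the backward-invariance results of Section \ref{s-backwinv}. The paper states the theorem as an immediate consequence of exactly these ingredients, and your write-up (including the careful per-vertex $c_0$ bookkeeping, which the paper leaves implicit) fills in the details correctly.
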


\section{Particular shifts on rooted or unrooted trees}\label{s-special_trees}

When $X=\ell^p(V)$, $1<p<\infty$, or $X=c_0(V)$, the characterizations of chaotic weighted backward shifts obtained in the previous two sections require to evaluate a continued fraction, which is in general rather difficult. Only in one specific case are we able to turn them into a simple explicit form, the case when the shift and the tree are symmetric. This will be considered in Subsection \ref{subs-symsym}. As a special case we will then look at Rolewicz operators, that is, operators of the form $\lambda B$. 

In other situations, we will start again from scratch and use the results in Sections \ref{s-periodic} and \ref{s-unrooted} to obtain a new, easy characterization for chaos under certain assumptions, see Subsection \ref{subs-sym}. In the final subsection we study a particular family of weighted backward shifts that all turn out to be chaotic.

\subsection{Symmetric shifts on symmetric trees}\label{subs-symsym}
There is one important special case in which the continued fractions can be considerably simplified. Recall the notions of symmetric weighted backward shifts and symmetric trees given in Section \ref{s-notation} and the related numbers $\lambda_n$ of common weights and $\gamma_n$ of number of children in generation $n$.

\begin{theorem}\label{t-symsym}
Let $V$ be a symmetric tree with $|\Chi(v)|<\infty$ for all $v\in V$, and let $\lambda=(\lambda_n)_{n}$ be a symmetric weight. Let $X=\ell^p(V)$, $1 < p<\infty$, or $X=c_0(V)$, and suppose that the weighted backward shift $B_\lambda$ is an operator on $X$. 

\emph{(a)} Let the tree be either rooted, or unrooted without a free left end. Then $B_\lambda$ is chaotic if and only if 
\begin{align*}
\sum_{n=1}^\infty \frac{1}{(\gamma_0\cdots\gamma_{n-1})^{p/p*}|\lambda_1\cdots\lambda_n|^{p} } < \infty,  &\text{ if $X=\ell^p(V)$,}\\
\lim_{n\to\infty}\gamma_0\cdots\gamma_{n-1}|\lambda_1\cdots\lambda_n|= \infty,  &\text{ if $X=c_0(V)$.}
\end{align*}

\emph{(b)} If the tree is unrooted with a free left end, then $B_\lambda$ is chaotic if and only if the condition in \emph{(a)} holds and 
\begin{align*}
\sum_{n=1}^\infty |\lambda_{-n+1}\cdots\lambda_0|^{p} < \infty,  &\text{ if $X=\ell^p(V)$,}\\
\lim_{n\to\infty} \lambda_{-n+1}\cdots\lambda_0 =0,  &\text{ if $X=c_0(V)$.}
\end{align*}
\end{theorem}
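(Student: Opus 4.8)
The plan is to deduce everything from the general characterizations already proved, reducing each case to a condition on the downward subtrees $V(v)$ that symmetry makes explicit.

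\textbf{Step 1 (reduction).} In the rooted case chaos is characterized directly by Theorem~\ref{t-charplambda} (for $\ell^p$) and Theorem~\ref{t-char0lambda} (for $c_0$), both phrased on the subtrees $V(v)$. In the unrooted case without a free left end (part~(a)) I would first check the simplifying hypothesis \eqref{eq-simpl}: for a symmetric weight the ratio $\lambda(w\to u)/\lambda(w\to v)$ is $1$ on each generation $\gen(v)$, so \eqref{eq-simpl} just asks that every generation be infinite, which holds for a symmetric tree without free left end when $p\neq 1$ (as recorded before Theorem~\ref{t-charlambdabil_simple}; cf. Remark~\ref{rem-sym}(b)). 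Theorem~\ref{t-charlambdabil_simple}(a) then reduces chaos to condition~(C), again a condition on the $V(v)$. In the unrooted case with a free left end (part~(b)), Theorem~\ref{t-charlambdabil_simple}(b) reduces chaos to condition~(C) together with the left-end condition $\sum_{n\geq 1}\lambda(\prt^n(v)\to v)e_{\prt^{n}(v)}\in X$ for some $v$. Thus the task splits into evaluating condition~(C) and the left-end condition.

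\textbf{Step 2 (the $\ell^p$ computation).} Fix $v\in\gen_m$ and put $\mu_u=1/\lambda(v\to u)$ on $V(v)$. By symmetry $V(v)$ is a symmetric rooted tree whose level $\Chi^n(v)$ has $\gamma_m\cdots\gamma_{m+n-1}$ vertices, each of weight $|\mu_u|=1/|\lambda_{m+1}\cdots\lambda_{m+n}|$. Writing $C_m=c_p(V(v),\mu)$, the recursion \eqref{eq-cp} of Theorem~\ref{t-rec1}(c) (applicable since $\Chi(v)$ is finite, so the summation functional is continuous) together with the positive homogeneity of $c_p$ gives
\[
C_m^p=\frac{1+C_{m+1}^p}{\gamma_m^{p/p^\ast}\,|\lambda_{m+1}|^p}.
\]
Unrolling this in $[0,\infty]$ yields
\[
C_m^p=\sum_{n\geq 1}\frac{1}{(\gamma_m\cdots\gamma_{m+n-1})^{p/p^\ast}\,|\lambda_{m+1}\cdots\lambda_{m+n}|^p},
\]
which for $m=0$ is exactly the series in (a). Since each term of the $m=0$ series equals a fixed positive constant times the corresponding term of the $m$-series, $C_m<\infty$ for all $m$ iff $C_0<\infty$, i.e. iff that series converges. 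With Step~1 this settles the $\ell^p$ statements in (a), and the downward part of (b).

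\textbf{Step 3 ($c_0$, the left end, and the main obstacle).} The decisive point in the $c_0$ case is that local finiteness makes each level $\Chi^n(v)$ finite, so the symmetric backward invariant sequence over $V(v)$ with value $1$ at $v$, namely $h(u)=(\gamma_m\cdots\gamma_{m+n-1}\lambda_{m+1}\cdots\lambda_{m+n})^{-1}$ for $u\in\Chi^n(v)$, lies in $c_0(V(v))$ \emph{exactly} when $\gamma_m\cdots\gamma_{m+n-1}|\lambda_{m+1}\cdots\lambda_{m+n}|\to\infty$; as in Step~2 this holds for all $m$ iff it holds for $m=0$. For the ``if'' direction, in the rooted case $h$ is itself a universal fixed point in $c_0(V)$ and chaos follows from Theorem~\ref{chaos-fixedpoint}; in the unrooted case without free left end I would feed $h\in c_0(V(v))$ into the construction in the proof of Theorem~\ref{t-char0mu} (via Lemma~\ref{l-char0mubil}) to produce the sequence $\varepsilon$ required by condition~(C). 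For the ``only if'' direction, chaos yields by Theorem~\ref{chaos-fixedpoint} a universal fixed point $g\in c_0(V)$, and backward invariance gives $\sum_{u\in\Chi^n(v)}g(u)=1/(\lambda_{m+1}\cdots\lambda_{m+n})$, whence $\sup_{u\in\Chi^n(v)}|g(u)|\geq(\gamma_m\cdots\gamma_{m+n-1}|\lambda_{m+1}\cdots\lambda_{m+n}|)^{-1}$; since $g\in c_0$ forces the left-hand side to $0$, the limit condition follows. Finally, the left-end condition in (b) is an immediate norm computation: along a free left end the $\prt^n(v)$ are distinct, $\lambda(\prt^n(v)\to v)=\lambda_{-n+1}\cdots\lambda_0$, and membership of $\sum_n\lambda(\prt^n(v)\to v)e_{\prt^{n}(v)}$ in $\ell^p(V)$ (resp. $c_0(V)$) reads $\sum_n|\lambda_{-n+1}\cdots\lambda_0|^p<\infty$ (resp. $\lambda_{-n+1}\cdots\lambda_0\to 0$). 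I expect the $c_0$ case to be the genuine obstacle: the unperturbed requirement $c_\infty(V(v),\mu)<\infty$ is strictly weaker than the limit condition (for a regular tree with $\gamma$ children and constant weight $\lambda$ it already holds when $\gamma|\lambda|\geq 1$, whereas the limit condition demands $\gamma|\lambda|>1$), so one must pass through the $\varepsilon$-perturbed formulation and use the finiteness of the levels $\Chi^n(v)$ to locate $c_0$-membership precisely at the boundary.
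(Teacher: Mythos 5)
Your Step 1 is exactly the paper's reduction (Theorems \ref{t-charplambda}, \ref{t-char0lambda}, \ref{t-charlambdabil_simple}, with \eqref{eq-simpl} checked via symmetry), and your Step 3 together with the left-end computation in (b) is essentially the paper's \emph{second} proof of this theorem (Theorems \ref{pr-symneccond} and \ref{pr-symcond}, where condition (A1) holds trivially for symmetric trees): the explicit symmetric fixed point for sufficiency, and the estimate $\sup_{u\in\Chi^n(v)}|g(u)|\geq(\gamma_m\cdots\gamma_{m+n-1}|\lambda_{m+1}\cdots\lambda_{m+n}|)^{-1}$ for necessity. Those parts are fine.

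The genuine gap is in Step 2. The recursion $C_m^p=(1+C_{m+1}^p)/(\gamma_m^{p/p^\ast}|\lambda_{m+1}|^p)$ is correctly derived from Theorem \ref{t-rec1}(c), but ``unrolling'' it does not yield the claimed identity, because the recursion does not determine $(C_m)_m$: in $[0,\infty]$ it has many nonnegative solutions. Unrolling $k$ times gives the exact identity
\[
C_0^p=\sum_{n=1}^{k}\frac{1}{(\gamma_0\cdots\gamma_{n-1})^{p/p^\ast}|\lambda_1\cdots\lambda_n|^p}+\frac{C_k^p}{(\gamma_0\cdots\gamma_{k-1})^{p/p^\ast}|\lambda_1\cdots\lambda_k|^p},
\]
and the remainder need not vanish: on the branch $\NN_0$ with $\gamma_n=1$, $\lambda_n=2$, every sequence $C_m^p=\tfrac{1}{2^p-1}+2^{pm}\eps$ with $\eps\geq 0$ solves the recursion, as does $C_m\equiv\infty$ on any tree. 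Hence your argument only proves $C_0^p\geq{}$the series, i.e.\ ``chaos $\Rightarrow$ the series converges''. The sufficiency direction ``series converges $\Rightarrow c_p(V(v),\mu)<\infty\Rightarrow$ chaos'' is not established, and that is half of the theorem for $\ell^p$. Two standard repairs: (i) as in the paper, compute $c_p$ for the truncations $V_m$, where the recursion terminates (convention of Remark \ref{rem-contfr}(a)) and produces exactly the partial sums, and then use $c_p(V,\mu)=\lim_{m\to\infty}c_p(V_m,\mu)$ from Remark \ref{rem-contfr}(c); or (ii) mirror your own Step 3: the symmetric backward invariant sequence $h(u)=(\gamma_m\cdots\gamma_{m+n-1}\lambda_{m+1}\cdots\lambda_{m+n})^{-1}$ for $u\in\Chi^n(v)$ has $\ell^p$-norm off the root equal to the $1/p$-th power of the series, so Theorem \ref{t-charpinv}(a) (or Theorems \ref{chaos-fixedpoint} and \ref{t-chunrootedsimplified} directly) supplies the missing upper bound and hence chaos when the series converges.
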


\begin{proof}
Let $X=\ell^p(V)$, $1<p<\infty$. If the tree is rooted, then Theorem \ref{t-charplambda} shows that $B_\lambda$ is chaotic if and only if, for every $v\in V$, $v\in\gen_n$ say,
\[
c_p\big(V(v),(\tfrac{1}{\lambda(v\to u)})_{u\in V(v)}\big)=\lim_{m\to\infty}c_p\big(V(v)_{m-n},(\tfrac{1}{\lambda(v\to u)})_{u\in V(v)}\big) < \infty;
\]
see \eqref{eq-Vmv} for the meaning of $V(v)_{m-n}=V_m(v)$. 

Then, for $m\geq n+1$, one sees that $c_p\big(V(v)_{m-n},(\tfrac{1}{\lambda(v\to u)})_{u\in V(v)}\big)$ equals
\[
\frac{1}{\Bigg( \gamma_n\frac{1}{\Bigg( \frac{1}{|\lambda_{n+1}|^p}+\frac{1}{+\big( \ddots \gamma_{m-2}\frac{1}{\big(\frac{1}{|\lambda_{n+1}\ldots\lambda_{m-1}|^p}+\frac{1}{ \gamma_{m-1}^{p/p^{\ast}}|\lambda_{n+1}\ldots\lambda_{m}|^{p}} \big)^{p^{\ast}/p}}\big)^{p/p^{\ast}} } \Bigg)^{p^{\ast}/p}}\Bigg)^{1/p^{\ast}}},
\]
see Remark \ref{rem-contfr}(a), which turns into
\[
\Big(\frac{1}{\gamma_n^{p/p*}|\lambda_{n+1}|^{p} }+\frac{1}{(\gamma_{n}\gamma_{n+1})^{p/p*}|\lambda_{n+1}\lambda_{n+2}|^{p} }+\ldots +\frac{1}{(\gamma_n\cdots\gamma_{m-1})^{p/p*}|\lambda_{n+1}\cdots\lambda_{m}|^{p} }\Big)^{1/p}.
\]
From here the assertion follows.

If the tree is unrooted, then the assertion follows in the same way from Theorem \ref{t-charlambdabil_simple}. Note that if the tree has no free left end and the weight $\lambda$ is symmetric, then each generation is infinite and $\tfrac{\lambda(w\to u)}{\lambda(w\to v)}=1$ for all $u$ and $v$ in the same generation and any common ancestor $w$, so that \eqref{eq-simpl} is automatically satisfied. 

Now let $X=c_0(V)$. If the tree is rooted, then it follows from Theorem \ref{t-char0lambda} that $B_\lambda$ is chaotic if and only if, for every $v\in V$, $v\in\gen_n$ say, there is a non-zero sequence $(\varepsilon_u)_{u\in V(v)}\in c_0(V(v))$ such that, 
\[
c_\infty\big(V(v),(\tfrac{1}{\varepsilon_{u}\lambda(v\to u)})_{u\in V(v)}\big)=\lim_{m\to\infty}c_\infty\big(V(v)_{m-n},(\tfrac{1}{\varepsilon_{u}\lambda(v\to u)})_{u\in V(v)}\big) < \infty.
\]
Since, by hypothesis, every generation in $V(v)$ only has a finite number of members, we may replace each $\varepsilon_u$, $u\in V(v)_k$, $k\geq 0$, by 
\[
\max_{w\in V(v)_k} |\varepsilon_w|;
\]
the new sequence is still in $c_0(V(v))$. In other words, we may assume that the $\varepsilon_u$ are strictly positive and only depend on the generation of $u$, so that we can write
\[
\varepsilon_n= \varepsilon_u\quad \text{if $u\in \gen_m$, $m\geq n$.}
\]

Then we have, for any $m\geq n+1$, 
\begin{align*}
c_\infty\big(V(v)_{m-n},&(\tfrac{1}{\varepsilon_{u}\lambda(v\to u)})_{u\in V(v)}\big)\\
&=\frac{1}{ \gamma_n\tfrac{1}{ \max\bigg(\tfrac{1}{\varepsilon_{n+1}|\lambda_{n+1}|},\tfrac{1}{  \ddots \gamma_{m-2}\tfrac{1}{\max\Big(\frac{1}{\varepsilon_{m-1}|\lambda_{n+1}\ldots \lambda_{m-1}|},\frac{1}{\gamma_{m-1}\varepsilon_{m}|\lambda_{n+1}\ldots \lambda_m|}\Big)}  }\bigg) }}\\
& = \max \Big(\frac{1}{\gamma_n\varepsilon_{n+1}|\lambda_{n+1}|}, \frac{1}{\gamma_n\gamma_{n+1}\varepsilon_{n+2}|\lambda_{n+1}\lambda_{n+2}|},\ldots, \frac{1}{\gamma_n\cdots\gamma_{m-1}\varepsilon_{m}|\lambda_{n+1}\cdots\lambda_{m}|}\Big).
\end{align*}
Thus, $B_\lambda$ is chaotic on $c_0(V)$ if and only if there is a strictly positive null sequence $(\varepsilon_n)_n$ such that
\[
\sup_n \frac{1}{\gamma_0\cdots\gamma_{n-1}\varepsilon_{n}|\lambda_{1}\cdots\lambda_{n}|} <\infty,
\]
which is equivalent to the condition in the assertion.

If the tree is unrooted then we finish the proof by using Theorem \ref{t-charlambdabil_simple}, just as we did in the case when $X=\ell^p(V)$, $1<p<\infty$.
\end{proof}

We mention that the result recovers the well-known characterizations of chaos in the case of the classical trees $\NN_0$ and $\ZZ$, see \cite[Examples 4.9, 4.15]{GrPe11}.

Comparing, for $X=c_0(V)$, the conditions for chaos with those for the mixing property obtained in \cite[Corollaries 4.7 and 5.5]{GrPa21}, we have the following; note that $\gamma_{-n}\cdots \gamma_{-1}\to \infty$ as $n\to\infty$ if the tree is unrooted without a free left end.

\begin{corollary} 
Let $V$ be a symmetric tree with $|\Chi(v)|<\infty$ for all $v\in V$, and let $\lambda=(\lambda_n)_{n}$ be a symmetric weight. Let the weighted backward shift $B_\lambda$ be an operator on $c_0(V)$. Then $B_\lambda$ is chaotic on $c_0(V)$ if and only if it is mixing.
\end{corollary}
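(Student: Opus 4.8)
The plan is to leave one implication to the general theory and to establish the other by a direct comparison of explicit conditions. Indeed, by the final assertion of Theorem \ref{chaos-fixedpoint}, every chaotic weighted backward shift is mixing, so the whole content of the corollary is the reverse implication: on $c_0(V)$, for symmetric data, mixing forces chaos. I would therefore focus entirely on showing mixing $\Rightarrow$ chaos.

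For this I would invoke the characterizations of the mixing property for weighted backward shifts on $c_0(V)$ from \cite[Corollaries 4.7 and 5.5]{GrPa21} and specialize them to a symmetric weight on a symmetric tree, simplifying the resulting expressions exactly as in the proof of Theorem \ref{t-symsym} (collapsing the sums over $\Chi^n(v)$ using that all $|\lambda(v\to u)|$, $u\in\Chi^n(v)$, coincide and that $|\Chi^n(v)| = \gamma_m\cdots\gamma_{m+n-1}$ when $v\in\gen_m$). This splits the mixing condition into a forward part, governing the norms of the $B_\lambda^n$-preimages of each $e_v$, and a backward part, governing whether the leftover term $B_\lambda^n e_v = \lambda(\prt^n(v)\to v)e_{\prt^n(v)}$ can be removed by a small correction.

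The forward part is, for every $v\in\gen_m$, that $\gamma_m\cdots\gamma_{m+n-1}|\lambda_{m+1}\cdots\lambda_{m+n}|\to\infty$; this is literally the forward condition appearing in Theorem \ref{t-symsym}(a), so the forward parts of mixing and of chaos coincide. For the backward part I would distinguish the three cases. If $V$ is rooted, then $\prt^n(v)$ is undefined for large $n$, so $B_\lambda^n e_v = 0$ eventually and the backward part is vacuous for both notions. If $V$ is unrooted with a free left end, then the cost of cancelling $B_\lambda^n e_v$ stays bounded below, so mixing requires $B_\lambda^n e_v\to 0$, that is $|\lambda_{-n+1}\cdots\lambda_0|\to 0$, which is precisely the extra condition in Theorem \ref{t-symsym}(b). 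Finally, if $V$ is unrooted without a free left end, then $\gamma_{-n}\cdots\gamma_{-1}\to\infty$, and since the minimal norm of a $B_\lambda^n$-preimage of $e_{\prt^n(v)}$ is $(\gamma_{-n}\cdots\gamma_{-1}|\lambda_{-n+1}\cdots\lambda_0|)^{-1}$, the cancellation of $B_\lambda^n e_v$ costs only $1/(\gamma_{-n}\cdots\gamma_{-1})\to 0$; hence the backward part is automatically satisfied, matching the absence of any leftward condition in Theorem \ref{t-symsym}(a). In every case the mixing conditions thus reduce to the chaos conditions of Theorem \ref{t-symsym}, which yields mixing $\Rightarrow$ chaos.

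The main obstacle I anticipate is the bookkeeping in the unrooted no-free-left-end case: one must verify that the leftward ingredient of the mixing criterion from \cite{GrPa21}, once specialized, is exactly the cancellation cost $1/(\gamma_{-n}\cdots\gamma_{-1})$, and therefore collapses to nothing precisely when $\gamma_{-n}\cdots\gamma_{-1}\to\infty$, so that mixing does not secretly impose the extra decay $|\lambda_{-n+1}\cdots\lambda_0|\to 0$ that chaos omits here. Getting this matching right, rather than any analytic difficulty, is the crux; the role of the hypothesis that $B_\lambda$ be an operator on $c_0(V)$ is only to guarantee that the relevant sums $\sum_{u\in\Chi(v)}|\lambda_u|$ are finite, so that all the quantities above are well defined.
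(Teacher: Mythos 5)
Your proposal is correct and takes essentially the same route as the paper: the paper's (one-sentence) proof likewise compares the chaos conditions of Theorem \ref{t-symsym} with the mixing characterizations of \cite[Corollaries 4.7 and 5.5]{GrPa21}, hinging on the very same observation that $\gamma_{-n}\cdots\gamma_{-1}\to\infty$ when the unrooted tree has no free left end, so that the leftward ingredient of the mixing criterion is free in that case. Your three-case analysis (rooted; free left end, where the bounded cancellation cost forces $|\lambda_{-n+1}\cdots\lambda_0|\to 0$; no free left end, where cancellation costs $1/(\gamma_{-n}\cdots\gamma_{-1})\to 0$) simply spells out the details the paper leaves implicit, and your use of Theorem \ref{chaos-fixedpoint} for the chaos-implies-mixing direction is equally valid.
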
 

The much simpler case when $X=\ell^1(V)$ will be treated, for symmetric shifts on arbitrary trees, in Subsection \ref{subs-sym}.

\subsection{Rolewicz operators}\label{subs-rol}
Rolewicz operators are particular symmetric weighted backward shifts where the weight $\lambda=(\lambda)_{v\in V}$ is a constant sequence. In other words, we regard the multiple $\lambda B$, $\lambda\in\KK\setminus\{0\}$, of the unweighted backward shift; see also \cite{GrPa21}. They are named after Rolewicz \cite{Rol} who identified them as hypercyclic operators, the first ever in the context of Banach spaces.

The case of $X=\ell^1(V)$ can be treated completely; note that, by \cite[Example 2.5]{GrPa21}, any Rolewicz operator is continuous on this space. Thus, Theorem \ref{t-char1lambda} together with \cite[Corollaries 4.5 and 5.4]{GrPa21} implies the following; see also Theorem \ref{t-l1sym} below.

\begin{theorem} 
Let $V$ be a tree and $\lambda\in\KK\setminus\{0\}$. Let $\lambda B$ be the corresponding Rolewicz operator. Then the following assertions are equivalent:
\begin{enumerate}
\item[\emph{(i)}] $\lambda B$ is chaotic on $\ell^1(V)$;
\item[\emph{(ii)}] $\lambda B$ is mixing on $\ell^1(V)$;
\item[\emph{(iii)}] $\lambda B$ is hypercyclic on $\ell^1(V)$;
\item[\emph{(iv)}] the tree is rooted and $|\lambda|>1$.
\end{enumerate}
\end{theorem}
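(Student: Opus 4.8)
The plan is to prove the four conditions equivalent via the cycle $\text{(iv)}\Rightarrow\text{(i)}\Rightarrow\text{(iii)}\Rightarrow\text{(iv)}$, and then to fold in (ii) through $\text{(iv)}\Rightarrow\text{(ii)}\Rightarrow\text{(iii)}$. Throughout I would invoke the blanket assumption of Sections \ref{s-chaosrooted}--\ref{s-chaosPoisson} that $V$ is leafless; this guarantees that $\Chi^{n}(v)\neq\varnothing$ for all $v\in V$ and $n\geq 1$, and since the weight is the constant $\lambda$ it gives $\lambda(v\to u)=\lambda^{n}$ for every $u\in\Chi^{n}(v)$. I would also recall that, by \cite[Example 2.5]{GrPa21}, a Rolewicz operator is always an operator on $\ell^1(V)$, so that the standing hypothesis of Theorem \ref{t-char1lambda} and of the cited corollaries is automatically fulfilled.

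For $\text{(iv)}\Rightarrow\text{(i)}$, assume $V$ is rooted and $|\lambda|>1$. Given $v\in V$, I would pick any infinite branch $(v,v_1,v_2,\ldots)$ starting at $v$, which exists because $V$ has no leaves; then
\[
\sum_{n=1}^{\infty}\frac{1}{|\lambda(v\to v_n)|}=\sum_{n=1}^{\infty}\frac{1}{|\lambda|^{n}}<\infty,
\]
so the criterion of Theorem \ref{t-char1lambda} holds at every vertex and $\lambda B$ is chaotic on $\ell^1(V)$. The implication $\text{(i)}\Rightarrow\text{(iii)}$ is immediate from the definition of chaos, and $\text{(ii)}\Rightarrow\text{(iii)}$ holds because every mixing operator is hypercyclic.

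To close the cycle I would appeal to the direct descriptions of hypercyclicity and mixing for constant weights. By \cite[Corollaries 4.5 and 5.4]{GrPa21}, $\lambda B$ is hypercyclic, respectively mixing, on $\ell^1(V)$ if and only if $V$ is rooted and $|\lambda|>1$; this yields both $\text{(iii)}\Rightarrow\text{(iv)}$ and $\text{(iv)}\Rightarrow\text{(ii)}$ at once, and the equivalence of all four statements follows.

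The main obstacle is precisely the ``rooted'' clause in (iv): the forward branch condition $\sum_{n}|\lambda|^{-n}<\infty$ is satisfied on rooted and unrooted trees alike, so one must explain why no constant-weight shift can be chaotic (or even hypercyclic) on an unrooted tree. This is the exact analogue of the classical fact that no multiple $\lambda B$ of the bilateral backward shift is hypercyclic on $\ell^1(\ZZ)$, the obstruction coming from the infinite ``left'' direction. If one preferred to argue the chaos statement intrinsically rather than through the hypercyclicity corollary, one could run Theorem \ref{t-char1lambdabil} in the unrooted case and check that its left-hand condition fails for a constant weight: the first alternative $\sum_{n}|\lambda|^{nN}<\eps$ forces $|\lambda|<1$, which is incompatible with the forward condition $|\lambda|>1$, while the second alternative produces the sum $\sum_{n\geq 1}|\lambda|^{N(1-n)}\geq 1$, which can never be made smaller than a prescribed $\eps<1$.
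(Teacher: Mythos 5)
Your proposal is correct and follows essentially the same route as the paper, which likewise combines the continuity of Rolewicz operators on $\ell^1(V)$ from \cite[Example 2.5]{GrPa21}, the branch criterion of Theorem \ref{t-char1lambda} for the implication $\text{(iv)}\Rightarrow\text{(i)}$, and the characterizations of hypercyclicity and mixing in \cite[Corollaries 4.5 and 5.4]{GrPa21} to close the cycle. Your supplementary intrinsic argument for the unrooted case via Theorem \ref{t-char1lambdabil} (first alternative forces $|\lambda|<1$, second alternative has leading term $1$) is also exactly the observation the paper records in the proof of Theorem \ref{t-l1sym}.
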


If $X=\ell^p(V)$, $1<p<\infty$, or $X=c_0(V)$, then the continuity of $\lambda B$ implies that $\sup_{v\in V}|\Chi(v)|<\infty$, see \cite[Example 2.5]{GrPa21}. In that case, Theorem \ref{t-symsym} provides a characterization of chaos for the Rolewicz operator on these spaces when the underlying tree is symmetric. In the special case of trees in which each vertex has the same number of children, we then have the following in view of the discussions after \cite[Corollary 4.5]{GrPa21} and after \cite[Corollary 5.4]{GrPa21}.

\begin{theorem} Let $V$ be a tree in which each vertex has exactly $N\geq 1$ children. Let $X=\ell^p(V)$, $1< p<\infty$, or $X=c_0(V)$, and let $\lambda\in\KK\setminus\{0\}$. Let $\lambda B$ be the corresponding Rolewicz operator. Then the following assertions are equivalent:
\begin{enumerate}
\item[\emph{(i)}] $\lambda B$ is chaotic;
\item[\emph{(ii)}] $\lambda B$ is mixing;
\item[\emph{(iii)}] $\lambda B$ is hypercyclic;
\item[\emph{(iv)}] the tree is rooted, or unrooted without a free left end, and\\
$|\lambda|>N^{-1/p^*}  \text{ if $X=\ell^p(V)$, $1<p<\infty$};$\\
$|\lambda|>N^{-1} \text{ if $X=c_0(V)$}.$
\end{enumerate}
\end{theorem}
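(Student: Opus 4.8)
The plan is to establish the cycle of implications $(\text{iv})\Rightarrow(\text{i})\Rightarrow(\text{ii})\Rightarrow(\text{iii})\Rightarrow(\text{iv})$, which renders all four assertions equivalent. The starting observation is that the tree is leafless and symmetric with $\gamma_n=N$ in every generation, while the Rolewicz weight is the symmetric weight $\lambda_n=\lambda$; since $|\Chi(v)|=N<\infty$ for all $v$, Theorem \ref{t-symsym} is applicable, and in its criteria one simply has $\gamma_0\cdots\gamma_{n-1}=N^n$ and $|\lambda_1\cdots\lambda_n|=|\lambda|^n$. This elementary reduction is what drives the whole argument.

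For $(\text{iv})\Rightarrow(\text{i})$ I would note that (iv) forces the tree to be rooted or unrooted without a free left end, so Theorem \ref{t-symsym}(a) is the relevant criterion. Substituting the values above, the series there collapses, for $X=\ell^p(V)$, to the geometric series $\sum_{n\geq 1}\big(N^{p/p^\ast}|\lambda|^p\big)^{-n}$, which converges exactly when $N^{p/p^\ast}|\lambda|^p>1$, that is $|\lambda|>N^{-1/p^\ast}$; for $X=c_0(V)$ the quantity $(N|\lambda|)^n$ tends to infinity exactly when $|\lambda|>N^{-1}$. Hence the inequality in (iv) is precisely the chaos criterion of Theorem \ref{t-symsym}(a), and chaos follows.

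The two middle implications are immediate: $(\text{i})\Rightarrow(\text{ii})$ is the final assertion of Theorem \ref{chaos-fixedpoint} (a chaotic weighted backward shift is mixing), which is available because $(e_v)_{v\in V}$ is an unconditional basis of $X$ and $\lambda B$ is an operator on $X$, and $(\text{ii})\Rightarrow(\text{iii})$ holds since every mixing operator is hypercyclic. For $(\text{iii})\Rightarrow(\text{iv})$ I will invoke the characterisations of hypercyclic and mixing Rolewicz operators on regular trees from \cite{GrPa21}: the discussions following \cite[Corollary 4.5]{GrPa21} and \cite[Corollary 5.4]{GrPa21} show that $\lambda B$ is hypercyclic on $X$ exactly when the tree is rooted or unrooted without a free left end and $|\lambda|$ obeys the bound in (iv). Together these steps close the cycle.

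The one point I expect to require genuine care -- and hence the main obstacle -- is the borderline case $N=1$ of an unrooted tree, namely $\ZZ$, which does possess a free left end so that Theorem \ref{t-symsym}(a) no longer applies. This is exactly where the tree-type clause of (iv) must be justified. Using Theorem \ref{t-symsym}(b), chaos would demand simultaneously the part-(a) condition, which for $N=1$ reads $|\lambda|>1$, and the additional part-(b) condition $\sum_{n\geq 1}|\lambda_{-n+1}\cdots\lambda_0|^p<\infty$ (respectively $\lambda_{-n+1}\cdots\lambda_0\to 0$), which reads $|\lambda|<1$; as these are incompatible, $\lambda B$ is never chaotic on $\ZZ$, consistently with (iv) excluding this tree and with the classical fact that a constant-weight bilateral shift is not even hypercyclic (so that $(\text{iii})\Rightarrow(\text{iv})$ holds vacuously there). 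For every $N\geq 2$ no vertex has a single child, so an unrooted tree can have no free left end at all and the tree-type clause in (iv) is automatically met, so no such subtlety arises. Verifying that this dichotomy is consistent across all four statements is the delicate final point.
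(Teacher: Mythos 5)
Your proposal is correct and follows essentially the same route as the paper: the paper likewise obtains the chaos characterization by specializing Theorem \ref{t-symsym} (with $\gamma_n=N$, $\lambda_n=\lambda$, so the criterion becomes a geometric series), and settles hypercyclicity and mixing by citing the discussions after Corollaries 4.5 and 5.4 of \cite{GrPa21}, exactly as in your step (iii)$\Rightarrow$(iv). Your explicit handling of the borderline case $N=1$ (where the unrooted tree is $\ZZ$ and has a free left end) is a careful addition that the paper leaves implicit, but it does not change the substance of the argument.
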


In \cite[Example 4.6]{GrPa21} we have given a symmetric tree on which some Rolewicz operator is hypercyclic but not mixing. But there are also Rolewicz operators that are mixing and not chaotic.

\begin{example}
There exists a symmetric rooted tree $V$ on which the unweighted backward shift $B$ is mixing on $X=\ell^p(V)$, $1<p<\infty$, but not chaotic. To see this, fix an integer $m\geq 2^{p/p^\ast}$. Then consider the symmetric rooted tree with $\gamma_{m^n}=2$, $n\geq 1$, and $\gamma_n=1$ otherwise. Since, for all $v\in V$, $|\Chi^n(v)|\to\infty$ as $n\to\infty$, \cite[Corollary 4.5]{GrPa21} implies that $B$ is mixing on $X$. On the other hand, Theorem \ref{t-symsym} shows that $B$ is not chaotic on $\ell^p(V)$. 
\end{example}

We refer to Corollary \ref{co-rol} and Example \ref{ex-nonsuff} for further discussions on Rolewicz operators.

\subsection{Symmetric shifts}\label{subs-sym}
In this subsection we consider symmetric weighted backward shifts on arbitrary trees. The case of $X=\ell^1(V)$ is easily treated.

\begin{theorem}\label{t-l1sym}
Let $V$ be a tree and $\lambda=(\lambda_n)_{n}$ a symmetric weight. Suppose that the weighted backward shift $B_{\lambda}$ is an operator on $\ell^1(V)$. Then $B_\lambda$ is chaotic if and only if
\[
\sum_{n=1}^\infty \frac{1}{|\lambda_1\cdots\lambda_n|} < \infty
\]
and, if the tree is unrooted, then
\[
\sum_{n=1}^\infty |\lambda_{-n+1}\cdots\lambda_0| < \infty.
\]
\end{theorem}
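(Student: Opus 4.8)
The plan is to specialize the two existing $\ell^1$-characterizations — Theorem \ref{t-char1lambda} for rooted trees and Theorem \ref{t-char1lambdabil} for unrooted trees — to a symmetric weight. The one structural fact I would use repeatedly is that, because $B_\lambda$ is an operator on $\ell^1(V)$, the weight is bounded, say $|\lambda_n|\le C$ for all $n$; this is the only role of the operator hypothesis, and it is decisive. It is convenient to fix the reference vertex $v_0\in\gen_0$ and set $b_0=1$, $b_{j-1}=|\lambda_j|\,b_j$ for $j\in\ZZ$, so that $b_j=1/|\lambda_1\cdots\lambda_j|$ for $j\ge1$ and, by telescoping, $|\lambda_{a+1}\cdots\lambda_b|=b_a/b_b$ for $a<b$. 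In this notation the two conditions to be proved read simply $\sum_{n\ge1}b_n<\infty$ (right tail) and $\sum_{n\ge1}b_{-n}<\infty$ (left tail). The symmetry of $\lambda$ yields, for $v\in\gen_m$, the branch-independent identities $|\lambda(v\to u)|=b_m/b_{m+n}$ for $u\in\Chi^n(v)$ and $|\lambda(\prt^{n}(v)\to v)|=b_{m-n}/b_m$.

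\emph{Rooted case.} Here Theorem \ref{t-char1lambda} characterizes chaos by the existence, for each $v$, of a branch with $\sum_{n\ge1}1/|\lambda(v\to v_n)|<\infty$. By the identities above this sum equals $\tfrac{1}{b_m}\sum_{k>m}b_k$, which is branch-independent, and (the tree being leafless) a branch always exists; so the requirement at $v\in\gen_m$ is just $\sum_{k>m}b_k<\infty$. Since this differs from the case $m=0$ only by finitely many terms, it holds for all $m\ge0$ exactly when $\sum_{n\ge1}b_n<\infty$, i.e. when $\sum_{n\ge1}1/|\lambda_1\cdots\lambda_n|<\infty$. No left-hand condition appears, as claimed.

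\emph{Unrooted case.} I would apply Theorem \ref{t-char1lambdabil}, whose first condition reduces word for word as above (now for every $m\in\ZZ$) to $\sum_{n\ge1}b_n<\infty$. For its second condition I would first rule out the branch alternative \eqref{eq-cone}: for a symmetric weight its $n=1$ summand is $|\lambda(\prt^N(v)\to v)/\lambda(\prt^N(v)\to v_N)|$, and since $v$ and $v_N$ lie in the same generation this ratio equals $1$; hence \eqref{eq-cone} is $\ge1$ and can never be $<\eps$ for $\eps<1$. Thus the second condition collapses to: for every $v\in\gen_m$ and every $\eps>0$ there is $N$ with $\sum_{n\ge1}|\lambda(\prt^{nN}(v)\to v)|=\tfrac{1}{b_m}\sum_{n\ge1}b_{m-nN}<\eps$. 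It remains to match this family of requirements with $\sum_{n\ge1}b_{-n}<\infty$.

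Sufficiency is immediate: if the left tail converges then $\sum_{n\ge1}b_{m-nN}\le\sum_{k\le m-N}b_k\to0$ as $N\to\infty$, so a large $N$ works at every $m$. The necessity is the crux, and is exactly where boundedness enters. From $b_{j-1}=|\lambda_j|b_j\le C\,b_j$ the values of $b$ on any block of $N$ consecutive indices are comparable up to $C^{N-1}$; precisely, for $j$ in the block $\{-nN-N+1,\dots,-nN\}$ one has $b_j\le C^{N-1}b_{-nN}$, whence $b_{-nN}\ge\frac{1}{N\,C^{N-1}}\sum_{j\in\text{block}}b_j$. Summing over $n\ge1$, whose blocks partition $\{k\le -N\}$, gives $\sum_{n\ge1}b_{-nN}\ge\frac{1}{N\,C^{N-1}}\sum_{k\le -N}b_k$ for every $N$. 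Consequently, if the left tail diverged, the requirement at $m=0$ would fail for every $N$ (and every $\eps$), so chaos forces $\sum_{n\ge1}b_{-n}<\infty$, i.e. $\sum_{n\ge1}|\lambda_{-n+1}\cdots\lambda_0|<\infty$. I expect this necessity step to be the main obstacle: the passage from smallness of a single arithmetic-progression sum of left products to finiteness of the whole left sum is false for unbounded weights, and only the block comparison afforded by $\sup_n|\lambda_n|<\infty$ bridges it.
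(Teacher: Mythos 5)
Your proposal is correct and follows essentially the same route as the paper: specialize Theorems \ref{t-char1lambda} and \ref{t-char1lambdabil}, observe that for a symmetric weight the first term of the series in \eqref{eq-cone} equals $1$ so that the branch alternative is ruled out for $\eps<1$, and use $\sup_n|\lambda_n|<\infty$ (forced by continuity of $B_\lambda$ on $\ell^1(V)$) to pass from the arithmetic-progression sums $\sum_{n\ge 1} b_{-nN}$ to the full left tail $\sum_{n\ge 1} b_{-n}$. Your block-comparison estimate is exactly the detail the paper leaves implicit in the phrase ``which then leads to the second condition,'' and it is sound (up to replacing $C$ by $\max(C,1)$).
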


This is a direct consequence of Theorems \ref{t-char1lambda} and \ref{t-char1lambdabil}. In the unrooted case one needs to note that the first term in the series in \eqref{eq-cone} is 1, so that only the first alternative of the second characterizing condition in Theorem  \ref{t-char1lambdabil} can hold for $\varepsilon<1$. Moreover, by the continuity of $B_\lambda$, we have that $\sup_n |\lambda_n|<\infty$, see \cite[Proposition 2.3]{GrPa21}, which then leads to the second condition in the theorem above.

So let us turn to the spaces $\ell^p(V)$, $1<p<\infty$, and $c_0(V)$. The simplification of the continued fractions in Subsection \ref{subs-symsym} relied heavily on the fact that both the tree and the weight are symmetric. When we drop the symmetry of the tree, the structure of the continued fractions becomes quite involved.

So we will here go back to the results in Sections \ref{s-periodic} and \ref{s-unrooted} in order to obtain a characterization of chaos under certain additional assumptions. In particular, we will obtain a new, simpler proof of Theorem \ref{t-symsym}.

First, it is easy to arrive at a necessary condition for chaos.

\begin{theorem}\label{pr-symneccond}
Let $V$ be a tree with $|\Chi(v)|<\infty$ for all $v\in V$, and let $\lambda=(\lambda_n)_{n}$ be a symmetric weight.  Let $X=\ell^p(V)$, $1< p<\infty$, or $X=c_0(V)$, and suppose that the weighted backward shift $B_{\lambda}$ is an operator on $X$. If $B_\lambda$ is chaotic then, for all $v\in V$, 
\begin{align*}
\sum_{n=1}^\infty \frac{1}{|\Chi^n(v)|^{p/p*}|\lambda_{m+1}\cdots\lambda_{m+n}|^{p} } < \infty,  &\text{ if $X=\ell^p(V)$,}\\
\lim_{n\to\infty}|\Chi^n(v)||\lambda_{m+1}\cdots\lambda_{m+n}|= \infty,  &\text{ if $X=c_0(V)$,}
\end{align*}
where $m$ is such that $v\in\emph{\gen}_m$.
\end{theorem}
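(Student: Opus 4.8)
The plan is to exploit the fixed points guaranteed by chaos and to apply a single H\"older estimate on each sphere $\Chi^n(v)$. By Theorem \ref{chaos-fixedpoint}, the implication (ii\textsubscript{a}) $\Longrightarrow$ (iii\textsubscript{b}) holds for \emph{any} tree, so chaos of $B_\lambda$ yields, for every $v\in V$, a fixed point $f\in X$ with $f(v)=1$. Since the tree is leafless, being a fixed point is the same as being backward invariant, so iterating $(B_\lambda^n f)(v)=\sum_{u\in\Chi^n(v)}\lambda(v\to u)f(u)$ gives $\sum_{u\in\Chi^n(v)}\lambda(v\to u)f(u)=f(v)=1$ for every $n\geq 1$. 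Here the \emph{symmetry} of the weight is decisive: writing $v\in\gen_m$, the path from $v$ to any $u\in\Chi^n(v)$ runs through generations $m+1,\dots,m+n$, so $\lambda(v\to u)=\lambda_{m+1}\cdots\lambda_{m+n}$ is independent of $u$ and can be factored out, producing the exact mass identity
\[
\sum_{u\in\Chi^n(v)} f(u) = \frac{1}{\lambda_{m+1}\cdots\lambda_{m+n}}.
\]

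For $X=\ell^p(V)$ I would bound the left-hand side from above by H\"older's inequality on the finite set $\Chi^n(v)$ (finite by local finiteness), which has $|\Chi^n(v)|$ elements:
\[
\Big|\sum_{u\in\Chi^n(v)} f(u)\Big| \leq |\Chi^n(v)|^{1/p^\ast}\Big(\sum_{u\in\Chi^n(v)} |f(u)|^p\Big)^{1/p}.
\]
Combining with the mass identity and raising to the $p$-th power gives
\[
\frac{1}{|\Chi^n(v)|^{p/p^\ast}\,|\lambda_{m+1}\cdots\lambda_{m+n}|^p} \leq \sum_{u\in\Chi^n(v)} |f(u)|^p.
\]
Because the spheres $\Chi^n(v)\subseteq\gen_{m+n}$ are pairwise disjoint, summing over $n\geq 1$ bounds the claimed series by $\sum_{w\in V}|f(w)|^p=\|f\|_p^p<\infty$, which is exactly the asserted condition.

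For $X=c_0(V)$ I would instead use the crude bound $\big|\sum_{u\in\Chi^n(v)} f(u)\big|\leq |\Chi^n(v)|\sup_{u\in\Chi^n(v)}|f(u)|$, so that
\[
\frac{1}{|\Chi^n(v)|\,|\lambda_{m+1}\cdots\lambda_{m+n}|} \leq \sup_{u\in\Chi^n(v)}|f(u)|.
\]
Since $f\in c_0(V)$ and every finite subset of $V$ meets only finitely many of the generations $\gen_{m+n}$, the right-hand side tends to $0$ as $n\to\infty$; hence $|\Chi^n(v)|\,|\lambda_{m+1}\cdots\lambda_{m+n}|\to\infty$, as required.

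I do not expect a serious obstacle: the argument reduces to one H\"older (resp.\ supremum) estimate per generation, together with the disjointness of the spheres $\Chi^n(v)$. The only points needing a little care are verifying that local finiteness of the tree makes each $\Chi^n(v)$ a genuinely finite index set for H\"older, and, in the $c_0$ case, justifying $\sup_{u\in\Chi^n(v)}|f(u)|\to 0$ directly from the defining property of $c_0(V)$.
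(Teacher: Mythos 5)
Your proposal is correct and follows essentially the same route as the paper's proof: take a fixed point $f\in X$ with $f(v)=1$ from Theorem \ref{chaos-fixedpoint}, use the symmetry of the weight to factor $\lambda_{m+1}\cdots\lambda_{m+n}$ out of $(B_\lambda^nf)(v)=1$, apply H\"older (resp.\ the sup bound) on each sphere $\Chi^n(v)$, and sum over $n$ (resp.\ let $n\to\infty$) using $f\in X$. The extra care you flag (finiteness of the spheres, disjointness across generations, and the $c_0$ tail estimate) is exactly what makes the paper's terse argument work.
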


\begin{proof} 
Let $X=\ell^p(V)$, $1<p<\infty$. Let $v\in \gen_m$. Since $B_\lambda$ is chaotic, there is by Theorem \ref{chaos-fixedpoint} a fixed point $f\in X$ with $f(v)=1$. In particular we have that
\[
1=f(v)=\lambda_{m+1}\cdots\lambda_{m+n}\sum_{u\in\Chi^n(v)} f(u)
\]
and hence by H\"older's inequality
\[
\frac{1}{|\Chi^n(v)|^{p/p*}||\lambda_{m+1}\cdots\lambda_{m+n}|^p}\leq \| f\chi_{\Chi^n(v)}\|^p.
\]
Summing over $n\geq 1$ yields the result since $f\in X$.

The proof for $X=c_0(V)$ is similar.
\end{proof}

For $X=c_0(V)$, the necessary condition also follows from \cite[Theorem 4.4]{GrPa21} and the fact that chaotic weighted backward shifts are mixing.

We will give an example below to show that these conditions are in general not sufficient, even when the tree is rooted. Before doing so we will show that we arrive indeed at a characterization under any of the following assumptions on the geometry of the tree and the weights:
\begin{enumerate}
\item[(A1)] for all $v\in V$, there are $N\in\NN$ and $C>0$ such that, for all $n\geq 1$ and each path $(v,v_1,\ldots,v_{nN})$ of length $nN$ in $V$,
\[
|\Chi^{nN}(v)|\leq C|\Chi^N(v)|\ |\Chi^{N}(v_{N})|\ |\Chi^{N}(v_{2N})|\cdots |\Chi^N(v_{(n-1)N})|;
\]
\item[(A2)] for all $v\in V$, there are $N\in\NN$ and $\rho>1$ such that, for all $u\in \Chi^{nN}(v)$, $n\geq 0$, 
\[
|\Chi^N(u)|^{1/p^\ast}|\lambda_{m+nN+1}\cdots\lambda_{m+(n+1)N}| \geq\rho,
\]
where $m$ is such that $v\in\gen_m$; here, $p^\ast=1$ if $X=c_0(V)$;
\item[(A3)] $X=c_0(V)$ and $|\lambda_n|\geq 1$ for all $n$.
\end{enumerate}

\begin{theorem}\label{pr-symcond}
Let $V$ be a tree with $|\Chi(v)|<\infty$ for all $v\in V$, and let $\lambda=(\lambda_n)_{n}$ be a symmetric weight. Let $X=\ell^p(V)$, $1< p<\infty$, or $X=c_0(V)$, and suppose that the weighted backward shift $B_{\lambda}$ is an operator on $X$. 

Assume that one of the conditions \emph{(A1)}, \emph{(A2)} or \emph{(A3)} holds.

\emph{(a)} Let the tree be either rooted, or unrooted without a free left end. Then $B_\lambda$ is chaotic if and only if, for all $v\in V$, 
\begin{equation}\label{eq-simpl3}
\begin{split}
\sum_{n=1}^\infty \frac{1}{|\Chi^n(v)|^{p/p*}|\lambda_{m+1}\cdots\lambda_{m+n}|^{p} } < \infty,  &\text{ if $X=\ell^p(V)$,}\\
\lim_{n\to\infty}|\Chi^n(v)||\lambda_{m+1}\cdots\lambda_{m+n}|= \infty,  &\text{ if $X=c_0(V)$,}
\end{split}
\end{equation}
where $m$ is such that $v\in\emph{\gen}_m$.

\emph{(b)} If the tree is unrooted with a free left end, then $B_\lambda$ is chaotic if and only if the condition in \emph{(a)} holds and 
\begin{align*}
\sum_{n=1}^\infty |\lambda_{-n+1}\cdots\lambda_0|^{p} < \infty,  &\text{ if $X=\ell^p(V)$,}\\
\lim_{n\to\infty} \lambda_{-n+1}\cdots\lambda_0 =0,  &\text{ if $X=c_0(V)$.}
\end{align*}
\end{theorem}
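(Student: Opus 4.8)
Since the necessity of \eqref{eq-simpl3} is already contained in Theorem \ref{pr-symneccond} (and the extra left condition in part (b) is necessary by Theorem \ref{chaos-fixedpoint} via \eqref{eq-fpleft}), the whole task is to prove sufficiency: assuming one of (A1), (A2), (A3) together with \eqref{eq-simpl3}, I would show that $B_\lambda$ is chaotic. The plan is first to peel off the geometry of the tree by invoking the reduction results of Sections \ref{s-periodic} and \ref{s-unrooted}. If $V$ is rooted, chaos is equivalent, by Theorem \ref{chaos-fixedpoint}, to the existence, for every $v$, of a fixed point $f$ for $B_\lambda$ in $X$ over $V(v)$ with $f(v)=1$, condition \eqref{eq-fpleft} being automatic. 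If $V$ is unrooted without a free left end, I would use Theorem \ref{t-chunrootedsimplified}: its hypothesis \eqref{eq-simpcond} reduces, for a symmetric weight, to finding $f\in X$ with $\sum_{u\in\gen(v)}|f(u)|=\infty$ (Remark \ref{rem-sym}(b)), which is possible because the absence of a free left end forces every generation to be infinite. In the free-left-end case of part (b), I would again apply Theorem \ref{chaos-fixedpoint}, where now \eqref{eq-fpleft} is exactly the additional left series condition (and, by Remark \ref{rem-GMM}(b), it need only hold for one $v$). In every case the problem is thus reduced to producing, for each $v$, a single fixed point of $B_\lambda$ over the rooted tree $V(v)$ that lies in $X$.

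To build such a fixed point I would use the block size $N$ furnished by (A1) or (A2) and distribute the mass $1$ placed at $v$ in a ``block-uniform'' way. Concretely, to each $w\in\Chi^{nN}(v)$ I assign its mass $f(w)$, and to a block-child $u\in\Chi^N(w)$ I set $f(u)=f(w)/(|\Chi^N(w)|\,\lambda(w\to u))$; by symmetry of $\lambda$ this factor is the same for all $u\in\Chi^N(w)$, so the backward relation over $N$ steps holds. The intermediate generations inside a block are filled by spreading mass proportionally to the number of remaining descendants: for $z\in\Chi^r(w)$, $0\le r<N$, put $f(z)=f(w)\,|\Chi^{N-r}(z)|/(|\Chi^N(w)|\,\lambda(w\to z))$. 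A direct computation, using $\lambda(w\to z')=\lambda(w\to z)\lambda_{z'}$ and $\sum_{z'\in\Chi(z)}|\Chi^{N-r-1}(z')|=|\Chi^{N-r}(z)|$, shows that this $f$ genuinely satisfies $f(z)=\sum_{z'\in\Chi(z)}\lambda_{z'}f(z')$, i.e.\ it is a fixed point of $B_\lambda$ over $V(v)$ with $f(v)=1$.

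It then remains to estimate $\|f\|_{\ell^p(V)}$. At the block levels the key gain comes from (A1): writing $|f(u)|=h(u)/|\lambda_{m+1}\cdots\lambda_{m+nN}|$ for $u\in\Chi^{nN}(v)$, where $h$ is the block-uniform distribution with $\sum_{u\in\Chi^{nN}(v)}h(u)=1$, the bound $\prod_{j=0}^{n-1}|\Chi^N(v_{jN}(u))|\ge|\Chi^{nN}(v)|/C$ gives $h(u)\le C/|\Chi^{nN}(v)|$ and hence $\sum_{u\in\Chi^{nN}(v)}|f(u)|^p\le C^p|\Chi^{nN}(v)|^{-p/p^\ast}|\lambda_{m+1}\cdots\lambda_{m+nN}|^{-p}$, which is a constant multiple of the block subseries of \eqref{eq-simpl3}; the intermediate generations add only a bounded factor because $\sum_{z\in\Chi^r(w)}|\Chi^{N-r}(z)|^p\le|\Chi^N(w)|^p$. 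Summing shows $f\in\ell^p(V)$, equivalently $c_p(V(v),(1/\lambda(v\to u))_u)<\infty$ in the language of Theorem \ref{t-charpinv}(a), and the reduction above then yields chaos.

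The main obstacle, and the reason three separate hypotheses appear, is comparing the full series \eqref{eq-simpl3} with its block subseries and, for $c_0$, checking $f\to0$ rather than mere boundedness. Under (A2) this is painless: the per-block factor $\ge\rho>1$ gives $|\Chi^{nN}(v)|^{-p/p^\ast}|\lambda_{m+1}\cdots\lambda_{m+nN}|^{-p}\le\rho^{-pn}$, so the block series converges geometrically and dominates the whole; under (A1) one absorbs the finitely many within-block weight ratios; and under (A3), with $X=c_0$ and $|\lambda_n|\ge1$, one argues directly that the block-uniform $f$ tends to $0$. Finally, the same computations with the reverse H\"older inequality replaced by its $p=\infty$ form, together with Theorems \ref{t-char0lambda} and \ref{t-charlambdabil_simple}, dispatch the space $c_0(V)$; this recovers, in particular, the symmetric-tree Theorem \ref{t-symsym}.
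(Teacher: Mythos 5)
Your overall architecture coincides with the paper's: necessity from Theorem \ref{pr-symneccond} and Theorem \ref{chaos-fixedpoint}, reduction of sufficiency to producing, for each $v$, a fixed (or periodic) point over $V(v)$ via Theorems \ref{chaos-fixedpoint} and \ref{t-chunrootedsimplified}, and block-level estimates under (A1)/(A2) that match the paper's. The construction itself differs: you fill the intermediate generations inside each block proportionally to descendant counts, producing a genuine fixed point, whereas the paper assigns the value $0$ to all generations that are not multiples of $N$, so its $f$ is only a \emph{periodic} point of period $N$ supported on $\bigcup_n\Chi^{nN}(v)$ (which suffices, as the paper notes, by Theorem \ref{chaos-fixedpoint}). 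This difference is where your proof has a genuine gap. Your estimate for the intermediate generations uses $\sum_{z\in\Chi^r(w)}|\Chi^{N-r}(z)|^p\le|\Chi^N(w)|^p$, which (since by symmetry $\lambda(w\to z)=\lambda_{m+nN+1}\cdots\lambda_{m+nN+r}$ for all $z\in\Chi^r(w)$) only yields
\[
\sum_{z\in\Chi^r(w)}|f(z)|^p\ \le\ \frac{|f(w)|^p}{|\lambda_{m+nN+1}\cdots\lambda_{m+nN+r}|^{p}},
\]
and the factor $|\lambda_{m+nN+1}\cdots\lambda_{m+nN+r}|^{-p}$ is \emph{not} uniformly bounded in $n$: the weights may tend to $0$. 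Your claim that ``under (A1) one absorbs the finitely many within-block weight ratios'' cannot work, because (A1) is a purely geometric condition on the tree and says nothing about the weights; (A2) controls only full-block products $|\Chi^N(u)|^{1/p^\ast}|\lambda_{m+nN+1}\cdots\lambda_{m+(n+1)N}|$, not partial ones. The estimate can in fact be repaired, but only by a further idea you do not supply: since $B_\lambda$ is an operator on $\ell^p(V)$, one has $|\Chi(z)|\,|\lambda_{j+1}|^{p^\ast}\le M^{p^\ast}$ for all $z\in\gen_j$, hence $\max_{z\in\Chi^r(w)}|\Chi^{N-r}(z)|^{p-1}\le M^{(N-r)p}\,|\lambda_{m+nN+r+1}\cdots\lambda_{m+(n+1)N}|^{-p}$, which bounds the $(n,r)$-contribution by $M^{(N-r)p}$ times the block contribution at level $(n+1)N$; summability then follows as for the block levels. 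Without an argument of this kind (or without simply setting the intermediate values to zero, as the paper does), your proof of the crucial membership $f\in\ell^p(V(v))$ is incomplete.

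The case (A3) is a second, independent gap. Under (A3) alone neither (A1) nor (A2) is assumed, so there is no block size $N$ ``furnished'' to you, and the sentence ``one argues directly that the block-uniform $f$ tends to $0$'' is not an argument. The paper's proof under (A3) is genuinely different: it chooses \emph{variable} block lengths $M_n$ recursively, using \eqref{eq-simpl3} and local finiteness so that every $u\in\Chi^{M_{n-1}}(v)$ satisfies $|\Chi^{M_n-M_{n-1}}(u)|\,|\lambda_{m+M_{n-1}+1}\cdots\lambda_{m+M_n}|\ge 2$, and only then fills the intermediate levels proportionally, where $|\lambda_n|\ge1$ gives the bound $|f(u)|\le 2^{-n}$. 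A fixed $N$ cannot serve here, since different vertices need different depths before their subtrees branch enough; to make your proportional construction (say with $N=1$) work one would instead need a separate argument (for instance: $|f|$ is non-increasing along paths under (A3), so $\{|f|\ge\eps\}$ is an ancestor-closed subtree of bounded width, and K\H{o}nig's lemma plus \eqref{eq-simpl3} rules out infinitely many vertices in it), which again is missing from your proposal.
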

 
\begin{proof} 
By Theorems \ref{pr-symneccond} and \ref{chaos-fixedpoint} the conditions are necessary.

Conversely, suppose that the conditions are satisfied and one of (A1), (A2) or (A3) holds. By Theorems \ref{t-chunrootedsimplified} and \ref{chaos-fixedpoint}, it then suffices to show that, for any $v\in V$, there is a fixed point $f$ for $B_\lambda$ in $X$ over $V(v)$ with $f(v)=1$. Note that the additional assumption in Theorem \ref{t-chunrootedsimplified}, equivalently condition \eqref{eq-simpl}, is satisfied by the symmetry of the weight, see the proof of Theorem \ref{t-symsym}. Also, by Theorem \ref{chaos-fixedpoint}, it suffices if $f$ is just a periodic point of some period $N\geq 1$.

Let $v\in V$. We first assume that (A1) or (A2) holds, and we choose $N\in \NN$ accordingly. Let $X=\ell^p(V)$, $1< p<\infty$. We define $f$ on $V(v)$ in a canonical way. Indeed, let $f(v)=1$, and if $u\in \Chi^{nN}(v)$ for some $n\geq 1$, then we set
\[
f(u)= \frac{1}{ |\Chi^N(v)|\ |\Chi^{N}(v_{N})|\ |\Chi^{N}(v_{2N})|\cdots |\Chi^N(v_{(n-1)N})|\lambda_{m+1}\lambda_{m+2}\cdots\lambda_{m+nN}},
\]
where $m$ is such that $v\in\gen_m$ and $(v,v_1,\ldots,v_{nN})$ is the path leading from $v$ to $v_{nN}:=u$. Any other vertex in $V(v)$ is given the value 0.

It is then easily checked that, on $V(v)$, $B_\lambda^{N}f=f$. Moreover, 
\[
\|f\|^p = 1 + \sum_{n=1}^\infty \sum_{v_{nN} \in \Chi^{nN}(v)} \frac{1}{ |\Chi^N(v)|^p\ |\Chi^{N}(v_{N})|^p\cdots |\Chi^N(v_{(n-1)N})|^p|\lambda_{m+1}\lambda_{m+2}\cdots\lambda_{m+nN}|^p}.
\]

Under (A1), for the given $C$ we have that
\begin{align*}
\|f\|^p&\leq 1+ \sum_{n=1}^\infty \sum_{v_{nN} \in \Chi^{nN}(v)} \frac{C^p}{ |\Chi^{nN}(v)|^p|\lambda_{m+1}\lambda_{m+2}\cdots\lambda_{m+nN}|^p}\\
&= 1+ \sum_{n=1}^\infty  \frac{C^p}{ |\Chi^{nN}(v)|^{p/p\ast}|\lambda_{m+1}\lambda_{m+2}\cdots\lambda_{m+nN}|^p} <\infty,
\end{align*}
where we have used that $p-1=\tfrac{p}{p^*}$.

Under (A2), for the given $\rho$ we obtain that
\begin{align*}
\|f\|^p&\leq 1+ \sum_{n=1}^\infty \sum_{v_{nN} \in \Chi^{nN}(v)} \frac{1}{ |\Chi^N(v)|\ |\Chi^{N}(v_{N})|\cdots |\Chi^N(v_{(n-1)N})|}\frac{1}{\rho^{pn}}\\
&= 1+ \sum_{n=1}^\infty \frac{1}{\rho^{pn}} <\infty,
\end{align*}
where we have used that $p-\tfrac{p}{p^*}=1$ and 
\[
 \sum_{v_{nN} \in \Chi^{nN}(v)} \frac{1}{ |\Chi^N(v)|\ |\Chi^{N}(v_{N})|\cdots |\Chi^N(v_{(n-1)N})|}=1,\ n\geq 1.
\]

Under both conditions we can thus conclude that $f\in \ell^p(V(v))$ is a periodic point for $B_\lambda$ over $V(v)$ with $f(v)=1$.

The proof for $c_0(V)$ is very similar.

Now suppose that (A3) holds, thus $X=c_0(V)$. Let $v\in V$, $v\in\gen_m$ say. By (a), we find recursively a strictly increasing sequence $(M_n)_{n\geq 0}$ of positive integers with $M_0=0$ such that, for all $u\in \Chi^{M_{n-1}}(v)$, $n\geq 1$,
\begin{equation}\label{eq-mn}
|\Chi^{M_{n}-M_{n-1}}(u)||\lambda_{m+M_{n-1}+1}\cdots\lambda_{m+M_{n}}|\geq 2.
\end{equation}
Then we define $f$ on $V(v)$ in the following way. We start with $f(v)=1$. If $u\in \Chi^{M_n}(v)$, $n\geq 1$, we set
\[
f(u)= \frac{1}{ |\Chi^{M_1}(v)|\ |\Chi^{M_2-M_1}(v_{M_1})| \cdots |\Chi^{M_{n}-M_{n-1}}(v_{M_{n-1}})|\lambda_{m+1}\lambda_{m+2}\cdots\lambda_{m+M_n}},
\]
where $(v,v_1,\ldots,v_{M_n})$ is the path leading from $v$ to $u:=v_{M_n}$. It follows from \eqref{eq-mn} that
\[
|f(u)|\leq \frac{1}{2^n}.
\] 

Finally, if $u\in \Chi^j(v)$, $M_{n}<j<M_{n+1}$, $n\geq 0$, then we define
\[
f(u)= f(v_{M_{n}}) \frac{|\Chi^{M_{n+1}-j}(u)|}{|\Chi^{M_{n+1}-M_{n}}(v_{M_{n}})|}\frac{1}{\lambda_{m+M_{n}+1}\cdots\lambda_{m+j}},
\]
where $v_{M_{n}}=\prt^{j-M_{n}}(u) \in \Chi^{M_{n}}(v)$. Since the second factor on the right is at most 1, we have with (A3) that
\[
|f(u)|\leq \frac{1}{2^{n}}.
\]

Altogether we have defined $f$ on $V(v)$, and we see that $f\in c_0(V(v))$. It is easily verified that $f$ is a fixed point for $B_\lambda$ over $V(v)$.
\end{proof}

We note that condition (A1) is satisfied, with $N=1$ and $C=1$, for any symmetric tree. Thus Theorem \ref{pr-symcond} gives us a second, and easier proof of Theorem \ref{t-symsym}. 

Condition (A2) is particularly interesting for $X=c_0(V)$ because it implies that $B_\lambda$ is chaotic on that space if
\[
|\Chi^n(v)||\lambda_{m+1}\cdots\lambda_{m+n}|\to \infty
\]
holds uniformly for $v\in V$ as $n\to\infty$, where $m$ is such that $v\in\gen_m$, and $\lambda_{-n+1}\cdots\lambda_0 \to 0$ as $n\to\infty$ if $V$ has a free left end.

Incidentally, in view of (b), condition (A3) cannot hold for chaotic symmetric shifts on unrooted trees with a free left end.

Condition (A2) is also satisfied for any Rolewicz operator $\lambda B$ with $|\lambda|>1$, and condition (A3) is satisfied for any Rolewicz operator $\lambda B$ on $c_0(V)$ with $|\lambda|\geq 1$. By \cite[Example 2.5]{GrPa21}, continuity of $\lambda B$ on the stated spaces is equivalent to $\sup_{v\in V}|\Chi(v)|<\infty$. Let us call a branch starting from some $v\in V$ a \textit{free right end} if $|\Chi^n(v)|=1$ for all $n\geq 1$. Then we have the following.

\begin{corollary}\label{co-rol}
Let $V$ be a rooted tree, or an unrooted tree without a free left end, with $\sup_{v\in V}|\Chi(v)|<\infty$. 
 
\emph{(a)} Every Rolewicz operator $\lambda B$ with $|\lambda|>1$ is chaotic on $X=\ell^p(V)$, $1< p<\infty$, and on $X=c_0(V)$.

\emph{(b)} If $V$ has no free right end, then every Rolewicz operator $\lambda B$ with $|\lambda|\geq 1$ is chaotic on $X=c_0(V)$. 
\end{corollary}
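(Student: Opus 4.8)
The plan is to verify, in each case, the hypotheses of Theorem \ref{pr-symcond}, which already characterizes chaos for symmetric shifts as soon as one of the structural conditions (A1), (A2), (A3) is available. The Rolewicz operator $\lambda B$ carries the constant, hence symmetric, weight $\lambda_n=\lambda$, and the assumption $\sup_{v\in V}|\Chi(v)|<\infty$ guarantees by \cite[Example 2.5]{GrPa21} both that $|\Chi(v)|<\infty$ for all $v$ and that $\lambda B$ is an operator on the spaces in question, so Theorem \ref{pr-symcond} applies. Throughout I would use that the trees are leafless (Sections \ref{s-chaosrooted}--\ref{s-chaosPoisson}), so that $|\Chi^n(u)|\geq 1$ for every $u\in V$ and $n\geq 1$.

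First I would treat part (a), where $|\lambda|>1$. I claim condition (A2) holds with $N=1$ and $\rho=|\lambda|>1$: for every $u\in V$, since $|\Chi(u)|\geq 1$,
\[
|\Chi(u)|^{1/p^\ast}|\lambda|\geq |\lambda|=\rho>1,
\]
where $p^\ast=1$ when $X=c_0(V)$. It then remains to check the characterizing condition \eqref{eq-simpl3}. As the weight is constant, $|\lambda_{m+1}\cdots\lambda_{m+n}|=|\lambda|^n$, so for $X=\ell^p(V)$ one bounds
\[
\sum_{n=1}^\infty \frac{1}{|\Chi^n(v)|^{p/p^\ast}|\lambda|^{np}}\leq \sum_{n=1}^\infty \frac{1}{|\lambda|^{np}}<\infty,
\]
while for $X=c_0(V)$ one has $|\Chi^n(v)|\,|\lambda|^n\geq |\lambda|^n\to\infty$. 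Since the tree is rooted or unrooted without a free left end, Theorem \ref{pr-symcond}(a) yields that $\lambda B$ is chaotic.

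For part (b) I would instead invoke condition (A3), which holds because $X=c_0(V)$ and $|\lambda_n|=|\lambda|\geq 1$ for all $n$. The only remaining point is condition \eqref{eq-simpl3} for $c_0(V)$, namely $\lim_{n\to\infty}|\Chi^n(v)|\,|\lambda|^n=\infty$ for every $v$. The key step here, and the one genuinely non-routine ingredient, is to show that the absence of a free right end forces $|\Chi^n(v)|\to\infty$. Since $|\Chi^{n+1}(v)|=\sum_{u\in\Chi^n(v)}|\Chi(u)|$ with each $|\Chi(u)|\geq 1$, the sequence $(|\Chi^n(v)|)_n$ is non-decreasing; if it were bounded it would (being integer-valued) stabilize at some value $K$ from a generation $n_0$ on, and then a sum of $K$ terms each $\geq 1$ equalling $K$ forces every vertex beyond generation $n_0$ to have exactly one child, so any $u\in\Chi^{n_0}(v)$ would start a free right end, contrary to hypothesis. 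Hence $|\Chi^n(v)|\to\infty$, and with $|\lambda|\geq 1$ we get $|\Chi^n(v)|\,|\lambda|^n\to\infty$; Theorem \ref{pr-symcond}(a) again gives chaos.

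In summary, once the combinatorial equivalence in part (b) (no free right end $\iff$ $|\Chi^n(v)|\to\infty$ for all $v$) is established, both parts reduce to substituting the constant weight into the conditions of Theorem \ref{pr-symcond} and reading off convergence or divergence from $|\lambda|$; that combinatorial observation is the main obstacle, everything else being a direct verification.
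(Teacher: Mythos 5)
Your proof is correct and follows exactly the route the paper intends: the corollary is stated immediately after the paper's observations that (A2) holds for Rolewicz operators with $|\lambda|>1$ and (A3) for $|\lambda|\geq 1$ on $c_0(V)$, so that both parts follow from Theorem \ref{pr-symcond}(a). Your verification of \eqref{eq-simpl3}, including the combinatorial argument that the absence of a free right end in a leafless tree forces $|\Chi^n(v)|\to\infty$, correctly supplies the details the paper leaves implicit.
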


We now show, as previously announced, that the necessary condition for chaos in Theorem \ref{pr-symneccond} is not, in general, sufficient, even for rooted trees. Our counter-examples are even Rolewicz operators, and they show that the conditions on $\lambda$ in the corollary are optimal.

\begin{example}\label{ex-nonsuff}
(a) Let $1<p< \infty$.  Then there exists a rooted tree $V$ with $|\Chi(v)| \leq 2$ for all $v\in V$ so that the unweighted backward shift $B$ is an operator on $\ell^p(V)$ that satisfies \eqref{eq-simpl3} (and is therefore mixing \cite[Corollary 4.5]{GrPa21}) but that is not chaotic.

We start the construction by considering the rooted binary tree $V_{\text{bin}}$, that is, the tree with root $v_0$ in which every vertex has exactly two children. Let
\[
M_n=\frac{n(n+1)}{2}, n\geq 0,
\]
and we will call the $M_n$-th generation
\[
I_n = \gen_{M_n}, n\geq 0.
\]
Since, for $n\geq 1$, $|I_{n-1}|=2^{M_{n-1}}$ and since there are $n$ generations from $I_{n-1}$ to $I_{n}$, we can partition $I_{n}$ into $2^{M_{n-1}}$ sets 
\[
I_{n,j}, j=1,\ldots, 2^{M_{n-1}}
\]
of cardinality $2^n$ according to their ancestors of degree $n$. 

Now, in any set $I_{n,j}$, $n\geq 1$, $j=1,\ldots, 2^{M_{n-1}}$, we choose (exactly) one vertex, which we will call \textit{distinguished}. Then, for any $v\in I_{n,j}$ that is not distinguished, we add $N_n$ vertices between $v$ and $\prt(v)$, which we choose so large that
\begin{equation}\label{eq-Nn}
\frac{2^n}{N_n^{1/p}}\leq \frac{1}{n}\Big(\frac{1}{2^{M_{n-1}}}-\frac{1}{2^{M_{n}}}\Big), n\geq 1,
\end{equation}
see Figure \ref{fig-symweight}.

\begin{figure}
\begin{tikzpicture}
\draw[fill] (0,0) circle (.5pt);

\draw[->,>=latex] (0,0) -- (1,.9);\draw[fill] (1,.9) circle (.5pt);
\draw[->,>=latex] (0,0) -- (1,-.9);\draw[fill] (1,-.9) circle (.5pt);

\draw[->,>=latex] (1,.9) -- (2,1.5);\draw[fill] (2,1.5) circle (.5pt);
\draw[->,>=latex] (1,.9) -- (2,.5);\draw[fill] (2,.5) circle (.5pt);
\draw[->,>=latex] (1,-.9) -- (2,-1.5);\draw[fill] (2,-1.5) circle (.5pt);
\draw[->,>=latex] (1,-.9) -- (2,-.5);\draw[fill] (2,-.5) circle (.5pt);

\draw[->,>=latex] (2,1.5) -- (2.25,1.575);\draw[fill] (2.25,1.575) circle (.5pt);
\draw[->,>=latex] (2.25,1.575) -- (2.5,1.65);\draw[fill] (2.5,1.65) circle (.5pt);
\draw[->,>=latex] (2.5,1.65) -- (2.75,1.725);\draw[fill] (2.75,1.725) circle (.5pt);
\draw[->,>=latex] (2.75,1.725) -- (3,1.8);\draw[fill] (3,1.8) circle (.5pt);

\draw[->,>=latex] (2,1.5) -- (2.25,1.425);\draw[fill] (2.25,1.425) circle (.5pt);
\draw[->,>=latex] (2.25,1.425) -- (2.5,1.35);\draw[fill] (2.5,1.35) circle (.5pt);
\draw[->,>=latex] (2.5,1.35) -- (2.75,1.275);\draw[fill] (2.75,1.275) circle (.5pt);
\draw[->,>=latex] (2.75,1.275) -- (3,1.2);\draw[fill] (3,1.2) circle (.5pt);

\draw[->,>=latex] (2,.5) -- (2.25,.575);\draw[fill] (2.25,.575) circle (.5pt);
\draw[->,>=latex] (2.25,.575) -- (2.5,.65);\draw[fill] (2.5,.65) circle (.5pt);
\draw[->,>=latex] (2.5,.65) -- (2.75,.725);\draw[fill] (2.75,.725) circle (.5pt);
\draw[->,>=latex] (2.75,.725) -- (3,.8);\draw[fill] (3,.8) circle (.5pt);

\draw[->,>=latex] (2,.5) -- (2.25,0.425);\draw[fill] (2.25,.425) circle (.5pt);
\draw[->,>=latex] (2.25,.425) -- (2.5,0.35);\draw[fill] (2.5,.35) circle (.5pt);
\draw[->,>=latex] (2.5,.35) -- (2.75,0.275);\draw[fill] (2.75,.275) circle (.5pt);
\draw[->,>=latex] (2.75,.275) -- (3,0.2);\draw[fill] (3,.2) circle (.5pt);

\draw[->,>=latex] (2,-1.5) -- (2.25,-1.425);\draw[fill] (2.25,-1.425) circle (.5pt);
\draw[->,>=latex] (2.25,-1.425) -- (2.5,-1.35);\draw[fill] (2.5,-1.35) circle (.5pt);
\draw[->,>=latex] (2.5,-1.35) -- (2.75,-1.275);\draw[fill] (2.75,-1.275) circle (.5pt);
\draw[->,>=latex] (2.75,-1.275) -- (3,-1.2);\draw[fill] (3,-1.2) circle (.5pt);

\draw[->,>=latex] (2,-.5) -- (2.25,-.575);\draw[fill] (2.25,-.575) circle (.5pt);
\draw[->,>=latex] (2.25,-.575) -- (2.5,-.65);\draw[fill] (2.5,-.65) circle (.5pt);
\draw[->,>=latex] (2.5,-.65) -- (2.75,-.725);\draw[fill] (2.75,-.725) circle (.5pt);
\draw[->,>=latex] (2.75,-.725) -- (3,-.8);\draw[fill] (3,-.8) circle (.5pt);

\draw[->,>=latex] (2,-.5) -- (2.25,-0.425);\draw[fill] (2.25,-.425) circle (.5pt);
\draw[->,>=latex] (2.25,-.425) -- (2.5,-0.35);\draw[fill] (2.5,-.35) circle (.5pt);
\draw[->,>=latex] (2.5,-.35) -- (2.75,-0.275);\draw[fill] (2.75,-.275) circle (.5pt);
\draw[->,>=latex] (2.75,-.275) -- (3,-0.2);\draw[fill] (3,-.2) circle (.5pt);

\draw[->,>=latex] (2,-1.5) -- (3,-1.8);\draw[fill] (3,-1.8) circle (.5pt);
\end{tikzpicture}
\caption{Adding vertices to a binary tree (set $I_{n,j}$ with ancestors)}%
\label{fig-symweight}
\end{figure}
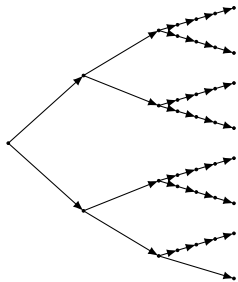

The new tree will be denoted by $V$. Since any vertex in $V$ has at most two children, the unweighted backward shift $B$ is an operator on $\ell^p(V)$, see \cite[Example 2.5]{GrPa21}.

We first show that \eqref{eq-simpl3} holds. Let $v\in V$. We construct a branch starting from $v$ in the following way. First, there is a path, of length $l$ say, from $v$ to some vertex in $I_{n}$ for some $n\geq 0$. From there we continue the branch in a unique way through successive distinguished vertices in $I_{k}$, $k\geq n+1$. Note that from the vertex in $I_{n}$ on we follow an original branch in $V_\text{bin}$.

Now, any vertex in $I_{k-1}$, $k\geq 1$, has $2^{k}$ descendants of degree $k$. Hence, for $k\geq n+1$,
\[
|\Chi^j(v)| \geq 2^k\quad\text{if $l+M_k-M_{n}\leq j <l+M_{k+1}-M_{n}$},
\]
and therefore
\[
\sum_{j=l+M_k-M_{n}}^{l+M_{k+1}-M_{n}-1} \frac{1}{|\Chi^j(v)|^{p/p^\ast}} \leq \frac{k+1}{2^{kp/p^\ast}},
\]
which implies that \eqref{eq-simpl3} holds for the operator $B$ on $\ell^p(V)$.

We will now show that $B$ is not chaotic. Otherwise there is a fixed point $f\in\ell^p(V)$ for $B$ with $f(v_0)=1$. We fix an integer
\[
n\geq 2\|f\|.
\]
Since the additional vertices have no influence on the action of $B$, we have that
\[
1=f(v_0)=\sum_{v\in I_{n}}f(v),
\]
and since $|I_{n}|=2^{M_{n}}$, there is some $w=:w_{n}\in I_{n}$ such that
\[
|f(w_n)|\geq \frac{1}{2^{M_{n}}}.
\]
Now, the set of children of $w_{n}$ of degree $n+1$ in the original tree $V_{\text{bin}}$ is given by some $I_{n+1,j}$, $1\leq j\leq 2^{M_{n}}$. Let $w_{n+1}$ be its distinguished vertex. Since $f$ is a fixed point for $B$, we have that
\[
|f(w_{n})|= \Big|\sum_{v\in I_{n+1,j}} f(v)\Big|\leq |f(w_{n+1})| + \sum_{\substack{v\in I_{n+1,j}\\v\neq w_{n+1}}} |f(v)|.
\]
But any of the $2^{n+1}-1$ vertices $v\in I_{n+1,j}\setminus\{w_{n+1}\}$ is non-distinguished, so that the added $N_{n+1}$ vertices all share the value of $f(v)$, whence
\[
\|f\|^p\geq N_{n+1} |f(v)|^p.
\]
Thus we have that 
\[
|f(w_{n+1})| \geq |f(w_{n})| - \frac{2^{n+1}}{N_{n+1}^{1/p}}\|f\|.
\]
Repeating this argument, we find successively distinguished vertices $w_k$, $k\geq n+1$, such that
\[
|f(w_{k})| \geq |f(w_{n})| - \sum_{j=n+1}^k\frac{2^{j}}{N_{j}^{1/p}}\|f\|\geq \frac{1}{2^{M_{n}}} - \sum_{j=n+1}^\infty\frac{2^{j}}{N_{j}^{1/p}}\|f\|.
\]
Using \eqref{eq-Nn}, we obtain that, for all $k\geq n+1$, 
\begin{align*}
|f(w_{k})| &\geq \frac{1}{2^{M_{n}}} - \sum_{j=n+1}^\infty\frac{1}{j}\Big(\frac{1}{2^{M_{j-1}}}-\frac{1}{2^{M_{j}}}\Big)\|f\|\\
& \geq \frac{1}{2^{M_{n}}} - \frac{1}{n+1}\frac{1}{2^{M_{n}}}\|f\|\geq \frac{1}{2^{M_n+1}},
\end{align*}
which contradicts the fact that $f\in\ell^p(V)$.

(b) Let $0<\lambda<1$. There exists a rooted tree $V$ with $|\Chi(v)| \leq 2$ for all $v\in V$ so that the Rolewicz operator $\lambda B$ is an operator on $c_0(V)$ that satisfies \eqref{eq-simpl3} (and is therefore mixing \cite[Corollary 4.5]{GrPa21}) but that is not chaotic. Note that by Corollary \ref{co-rol}(b) we cannot take the unweighted shift here.

We construct a counter-example for $\tfrac{1}{2^{1/3}}<\lambda<1$. For smaller values, the same ideas work when, in the initial tree, each vertex has $N$ children, where $N>\tfrac{1}{\lambda^3}$.

Thus let $\tfrac{1}{2^{1/3}}<\lambda<1$. We perform the same modifications on the rooted binary tree $V_\text{bin}$ as above, but this time with
\[
M_n=2^n, n\geq 0,
\]
and $N_n$ so large that
\begin{equation}\label{eq-Nn0}
\lambda^{N_n} \leq \frac{1}{n 2^{M_{n}-M_{n-1}}}\Big(\frac{1}{2^{M_{n-1}}}-\frac{1}{2^{M_{n}}}\Big), n\geq 1.
\end{equation}
We then consider the operator $\lambda B$ on the modified tree $V$, which is an operator on $c_0(V)$, see \cite[Example 2.5]{GrPa21}.

In order to show that \eqref{eq-simpl3} holds, let $v\in V$, and choose as before a branch starting from $v$ that first passes through some vertex in $I_{n}$ for some $n\geq 0$ and then successively through distinguished vertices in $I_{k}$, $k\geq n+1$. We need here a slightly stronger lower estimate on the number of descendants: for $k\geq n+1$,
\[
|\Chi^j(v)| \geq \max\big(2^{M_k-M_{k-1}},2^{j-(l+M_{k}-M_n)}\big) \quad\text{if $l+M_k-M_{n}\leq j <l+M_{k+1}-M_{n}$}.
\]
Since $2\lambda>1$, $j\to\max(2^{M_k-M_{k-1}}\lambda^j,2^{j-(l+M_{k}-M_n)}\lambda^j)$ achieves its minimum at $j=l+M_{k}-M_{n}+M_k-M_{k-1}$ so that, for $l+M_k-M_{n}\leq j <l+M_{k+1}-M_{n}$,
\[
|\Chi^j(v)|\lambda^j \geq 2^{M_k-M_{k-1}}\lambda^{l+M_{k}-M_{n}+M_k-M_{k-1}}= \lambda^{l-2^n} \big(2\lambda^3\big)^{2^{k-1}}.
\]
Since $\lambda>\tfrac{1}{2^{1/3}}$, the right-hand side tends to infinity as $k\to\infty$, so that \eqref{eq-simpl3} holds for the Rolewicz operator $\lambda B$ on $c_0(V)$.

On the other hand, suppose that $\lambda B$ is chaotic. Then there is a fixed point $f\in c_0(V)$ for $\lambda B$ with $f(v_0)=1$. Fix
\[
n\geq 2\|f\|.
\]
Since
\[
1=f(v_0)=\sum_{v\in I_{n}}\lambda^{l(v)}f(v),
\]
where $l(v)$ is the length of the path from $v_0$ to $v$ in $V$, and since $|\lambda|\leq 1$, there is some $w=:w_{n}\in I_{n}$ such that
\[
|f(w_n)|\geq \frac{1}{2^{M_{n}}}.
\]
We consider again the branch starting from $w_n$ that passes successively through distinguished vertices $w_k$ in $I_k$, $k\geq n+1$. Choose $j$ such that $w_{n+1}\in I_{n+1,j}$. Then
\begin{align*}
|f(w_{n})|&= \Big|\lambda^{M_{n+1}-M_n}f(w_{n+1})+ \sum_{\substack{v\in I_{n+1,j}\\v\neq w_{n+1}}} \lambda^{M_{n+1}-M_n+N_{n+1}}f(v)\Big|\\
&\leq |f(w_{n+1})| + \sum_{\substack{v\in I_{n+1,j}\\v\neq w_{n+1}}} \lambda^{N_{n+1}}\|f\|\leq |f(w_{n+1})| + 2^{M_{n+1}-M_n} \lambda^{N_{n+1}}\|f\|,
\end{align*}
and thus 
\[
|f(w_{n+1})| \geq |f(w_{n})| - 2^{M_{n+1}-M_n} \lambda^{N_{n+1}}\|f\|.
\]
We find inductively and then using \eqref{eq-Nn0} that, for $k\geq n+1$,
\begin{align*}
|f(w_{k})| &\geq |f(w_{n})| - \sum_{j=n+1}^k 2^{M_{j}-M_{j-1}} \lambda^{N_{j}}\|f\|\geq \frac{1}{2^{M_{n}}} - \sum_{j=n+1}^\infty2^{M_{j}-M_{j-1}}\lambda^{N_{j}}\|f\|\\
&\geq \frac{1}{2^{M_{n}}} - \sum_{j=n+1}^\infty\frac{1}{j}\Big(\frac{1}{2^{M_{j-1}}}-\frac{1}{2^{M_{j}}}\Big)\|f\|\geq \frac{1}{2^{M_{n}}} - \frac{1}{n+1}\frac{1}{2^{M_{n}}}\|f\|\geq \frac{1}{2^{M_n+1}},
\end{align*}
in contradiction to the fact that $f\in c_0(V)$.
\end{example}

In conclusion, if $|\lambda|\leq 1$ for $\ell^p(V)$, $1<p<\infty$, or if $|\lambda|<1$ for $c_0(V)$, we do not have handy characterizations of chaos for Rolewicz operators, unless the tree is symmetric. The best we have to offer are the general characterizations of Theorems
\ref{t-charplambda}, \ref{t-char0lambda}, and \ref{t-charlambdabil_simple}.

\subsection{The Bergman shift}\label{subs-Bergm}
In this short subsection we return to a weighted backward shift that we have already studied in  \cite{GrPa21}.

\begin{example}
Let $V$ be an arbitrary locally finite rooted tree with root $v_0$, and let $q\geq 1$ be a real constant. We consider the weighted backward shift $B_{\lambda,q}$ on $V$ of parameter $q$ that is given by
\[
\lambda_{v,q}=\frac{1}{\sqrt{|\Chi(\prt(v))|}}\sqrt{\frac{n_v+q-1}{n_v}}, \ v\neq v_0,
\]
where $n_v\geq 1$ is such that $v\in\gen_{n_v}$. This is the adjoint of the Dirichlet shift, see \cite{CPT17} and \cite[Example 4.8]{GrPa21}. By analogy with the special case when $V=\NN_0$ we call $B_{\lambda,q}$ a \textit{Bergman shift} on $V$.

In \cite{GrPa21} we have shown that $B_{\lambda,q}$ is mixing on $\ell^2(V)$ for $q>1$, while it is not hypercyclic if $q=1$. We will show here that it is even chaotic for $q>1$. For this it suffices to construct, for any $v\in V$, a fixed point $f$ for the operator $B_{\lambda,q}$ in $\ell^2(V(v))$ with $f(v)=1$. This is straightforward. We define $f(v)=1$ and recursively, for any $u\in V(v)$, $u\neq v$,
\[
f(u) = \frac{f(\prt(u))}{\lambda_{u,q}|\Chi(\prt(u))|}.
\]
We then obviously have a fixed point for $B_{\lambda,q}$ over $V(v)$. Now, for any $u\in V(v)$, $u\neq v$,
\[
f(u)= \frac{1}{\sqrt{|\Chi(\prt(u))}|\sqrt{|\Chi(\prt^2(u))}|\cdots\sqrt{|\Chi(v)}|}\sqrt{\frac{n_u(n_u-1)\cdots(m+1)}{(n_u-1+q)(n_u-2+q)\cdots(m+q)}},
\]
where $m$ is such that $v\in\gen_m$. It follows that, for all $k\geq 1$,
\[
\sum_{u\in \Chi^k(v)}|f(u)|^2 = \frac{(m+k)(m+k-1)\cdots(m+1)}{(m+k-1+q)(m+k-2+q)\cdots(m+q)}\sim C \frac{1}{k^{q-1}}
\]
for some constant $C>0$ depending on $m$ and $q$, which implies that $f\in \ell^2(V(v))$ since $q>1$. Altogether we have shown that $B_{\lambda,q}$ is chaotic.
\end{example}

\section{Appendix 1: A characterization of hypercyclic and mixing weighted backward shifts on trees}\label{s-appendix}
In \cite{GrPa21} we have characterized hypercyclic and mixing weighted backward shifts on the spaces $\ell^p(V)$, $1\leq p<\infty$, and $c_0(V)$ over a tree $V$. Our aim there was to obtain a characterization directly in terms of the weights involved. 

Using very much the same ideas one can also arrive at a characterization of hypercyclic and mixing weighted backward shifts on very general Fr\'echet sequence spaces over the tree $V$. The conditions here are much less explicit and require to find a suitable sequence of elements in the underlying space; that is, our results are in the spirit of Theorems \ref{chaos-fixedpoint} and \ref{t-chunrooted}. The characterizations are well adapted to showing that any chaotic weighted backward shift is mixing, see Remark \ref{rem-GMM}(a).

We continue to assume our trees to be leafless, which is, anyway, necessary for having a weighted backward shift hypercyclic.

For the results in this section we can relax the hypothesis of unconditionality of the basis $(e_v)_{v\in V}$. Let $V$ be a tree and $X$ a Fr\'echet sequence space over $V$. Let the generations be enumerated according to some vertex $v_0$. We will demand the following condition (B):

\begin{enumerate}
\item[(B0)] for any $v\in V$, $e_v\in X$;
\item[(B1)] for any $f\in X$ and any $n$, the series
\[
f\chi_{\gen_n} = \sum_{v\in \gen_n} f(v) e_v
\]
converge unconditionally in $X$;
\item[(B2)] for any $f\in X$,
\[
f= \sum_{n} f\chi_{\gen_n} = \sum_{n<0} f\chi_{\gen_n} + \sum_{n\geq 0} f\chi_{\gen_n}
\]
converges in $X$, where the sums extend over the $n$ with non-empty generations $\gen_n$;
\item[(B3)] for any $\varepsilon>0$ there is some $\delta>0$ such that, for any $f\in X$, any $n$ and any $W\subset \gen_n$,
\[
\text{if $\|f\chi_{\gen_n}\|<\delta$ then $\|f\chi_{W}\|<\varepsilon$.}
\]
\end{enumerate}

Condition (B) seems quite natural in view of the fact that there is a natural ordering between the generations, but none within the generations. For example, for the classical trees $\NN_0$ and $\ZZ$, (B) simply says that $(e_n)_n$ is a basis in $X$.

Note that condition (B3) expresses a kind of uniform unconditionality of the convergence in (B1). 

Clearly, any unconditional basis $(e_v)_{v\in V}$ satisfies condition (B), and under condition (B), the set $\{e_v : v\in V\}$ has a dense linear span in $X$.

In the following, the \textit{support} of a sequence $f\in\KK^V$ is the set $\{v\in V: f(v)\neq 0\}$.

\begin{theorem}\label{t-charHCroot}
Let $V$ be a rooted tree and $\lambda=(\lambda_v)_{v\in V}$ a weight. Let $X$ be a Fr\'echet sequence space over $V$ in which $(e_v)_{v\in V}$ satisfies \emph{(B)}, and suppose that the weighted backward shift $B_\lambda$ is an operator on $X$. 

{\rm (a)} The following assertions are equivalent:
\begin{enumerate}[label={\rm (\roman*)}]
    \item $B_\lambda$ is hypercyclic;
    \item $B_\lambda$ is weakly mixing;
    \item there is an increasing sequence $(n_k)_k$ of positive integers such that, for each $v\in V$, there is some $f_{v,n_k}\in X$ of support in $\Chi^{n_k}(v)$ such that, for all $v\in V$,
		\[
		f_{v,n_k}\to 0 \text{ in } X \quad\text{and}\quad \sum_{u\in \Chi^{n_k}(v)} \lambda(v\to u) f_{v,n_k}(u) \to 1
		\]
		as $k\to\infty$.
\end{enumerate}

{\rm (b)} $B_\lambda$ is mixing if and only if condition {\rm (iii)} holds for the full sequence $(n_k)_k=(n)_n$.
\end{theorem}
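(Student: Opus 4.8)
The plan is to prove the cycle (i)\,$\Rightarrow$\,(iii)\,$\Rightarrow$\,(ii)\,$\Rightarrow$\,(i), the last implication being trivial, and then to obtain (b) by running the same machinery with the full sequence in place of a subsequence. The two computational facts I would isolate at the outset are: (1) if $f\in X$ is supported in $\Chi^{n}(v)$, then $B_\lambda^{n}f=\big(\sum_{u\in\Chi^{n}(v)}\lambda(v\to u)f(u)\big)e_v$, since $(B_\lambda^{n}f)(w)=\sum_{u\in\Chi^{n}(w)}\lambda(w\to u)f(u)$ can be non-zero only when $\Chi^{n}(w)$ meets $\Chi^{n}(v)$, which forces $w=v$ because each vertex has a unique ancestor of degree $n$; and (2) for a \emph{rooted} tree and fixed $v\in\gen_m$ one has $B_\lambda^{n}e_v=0$ once $n>m$, so $B_\lambda^{n}x\to 0$ for every $x$ in the span of the $e_v$. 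Fact (2) is exactly what makes the rooted case clean.

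For (iii)\,$\Rightarrow$\,(ii) I would apply the Hypercyclicity Criterion (see \cite{BM09}, \cite{GrPe11}) along $(n_k)_k$. Take $X_0=\mathrm{span}\{e_v:v\in V\}$, which is dense under (B), and define the (not necessarily continuous) right-inverse maps $S_{n_k}\colon X_0\to X$ by $S_{n_k}e_v=f_{v,n_k}$, extended linearly. For $x\in X_0$ the three requirements hold: $B_\lambda^{n_k}x\to 0$ by Fact (2); $S_{n_k}x\to 0$ because it is a fixed finite combination of the $f_{v,n_k}\to 0$; and, by Fact (1), $B_\lambda^{n_k}S_{n_k}e_v=\big(\sum_{u\in\Chi^{n_k}(v)}\lambda(v\to u)f_{v,n_k}(u)\big)e_v\to e_v$ since the scalar tends to $1$ and $e_v\in X$ by (B0). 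Hence $B_\lambda$ satisfies the Hypercyclicity Criterion and is weakly mixing.

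The hard direction is (i)\,$\Rightarrow$\,(iii), where the difficulty is to produce a \emph{single} increasing sequence $(n_k)_k$ serving all vertices at once. Here I would use a hypercyclic vector $x$ together with a target-splitting device. Enumerate $V=\{w_1,w_2,\dots\}$ and, at step $k$, set the finite target $y_k=\sum_{j=1}^{k}e_{w_j}\in X$. Since the orbit of the hypercyclic vector $x$ returns to any neighbourhood of $y_k$ infinitely often, I can choose $n_k>n_{k-1}$ with $\|B_\lambda^{n_k}x-y_k\|<1/k$. Because coordinate evaluation is continuous on the Fréchet sequence space $X$, this gives $(B_\lambda^{n_k}x)(w_j)\to 1$ for each fixed $j$. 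Now define $f_{w_j,n_k}=x\chi_{\Chi^{n_k}(w_j)}$ for $j\le k$ (and $f_{w_j,n_k}=0$ for $k<j$). Each is supported in $\Chi^{n_k}(w_j)$, and $\sum_{u\in\Chi^{n_k}(w_j)}\lambda(w_j\to u)f_{w_j,n_k}(u)=(B_\lambda^{n_k}x)(w_j)\to 1$. For smallness, note $\Chi^{n_k}(w_j)\subseteq\gen_{n_{w_j}+n_k}$; since $\sum_n x\chi_{\gen_n}$ converges by (B2) its terms satisfy $\|x\chi_{\gen_{n_{w_j}+n_k}}\|\to 0$, whence $\|f_{w_j,n_k}\|\to 0$ by the uniform unconditionality (B3). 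Thus for every fixed $v=w_j$ both required limits hold along $(n_k)_k$.

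Part (b) repeats this with the full sequence $(n)_n$. For (iii)\,$\Rightarrow$ mixing I would feed the full-sequence data into the mixing version of the Gethner--Shapiro/Kitai criterion (\cite{BM09}, \cite{GrPe11}), exactly as in the paragraph above. For mixing\,$\Rightarrow$\,(iii) I would fix $v$ and, for each $m$, use the mixing property with $U_2=B(e_v,1/m)$ and a ball $U_1=B(0,\delta_m)$ around $0$ to find $N_m$ so that for all $n\ge N_m$ there is $g_n\in U_1$ with $B_\lambda^{n}g_n\in U_2$; then $f_{v,n}:=g_n\chi_{\Chi^{n}(v)}$ works. The only extra ingredient is that $\|g_n\|<\delta_m$ must force $\|g_n\chi_{\gen_{n_v+n}}\|$ small uniformly in $n$, which I would get from the equicontinuity of the block projections $f\mapsto f\chi_{\gen_l}$ (a consequence of (B2) and barrelledness of $X$ via Banach--Steinhaus), followed again by (B3). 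The main obstacle of the whole argument is precisely the uniformity in the hypercyclic case, resolved by the observation that a single return time $n_k$ to a neighbourhood of the \emph{sum} $y_k$ simultaneously controls all coordinates $w_1,\dots,w_k$, after which restriction to the disjoint sets $\Chi^{n_k}(w_j)$ separates the information without ever invoking weak mixing.
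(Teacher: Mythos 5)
Your proof is correct, and its skeleton coincides with the paper's: (iii)$\Rightarrow$(ii) via the Hypercyclicity Criterion with right inverses $e_v\mapsto f_{v,n_k}$ defined on $\vect\{e_v:v\in V\}$, the trivial (ii)$\Rightarrow$(i), the extraction of the data in (iii) by restricting orbit vectors to the sets $\Chi^{n}(v)$, and part (b) by the full-sequence version of the criterion together with the mixing property applied to balls around $0$ and around the target. The genuine difference lies in how you produce the smallness $f_{v,n_k}\to 0$ in (i)$\Rightarrow$(iii). The paper chooses a hypercyclic vector of small F-norm (hypercyclic vectors are dense), lets its orbit approximate $\chi_F$ for finite sets $F$, normalizes the blocks to $\tfrac{1}{\alpha_v}f\chi_{\Chi^{n}(v)}$ so that the sums equal $1$ exactly, and derives smallness of the blocks from a uniform statement: the maps $f\mapsto f\chi_W$, $W\subset\gen_n$, are equicontinuous by (B2) and Banach--Steinhaus combined with (B3); it then diagonalizes over triples $(F_k,N_k,\varepsilon_k)$. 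You instead fix a single hypercyclic vector $x$, aim at the growing targets $y_k=\sum_{j\leq k}e_{w_j}$ along increasing return times $n_k$, leave the blocks unnormalized (the sums only tend to $1$, which is all that (iii) requires), and obtain smallness from the observation that the generation blocks $x\chi_{\gen_m}$ of this one fixed vector tend to $0$, being the terms of the convergent series in (B2), followed by (B3). This makes your part (a) somewhat more self-contained: no uniform boundedness principle and no normalization are needed there, and the coordination of all vertices under one sequence $(n_k)$ is handled cleanly by the cumulative targets $y_k$. The trade-off is that the equicontinuity lemma cannot be dispensed with entirely: in part (b), mixing forces you to work with varying vectors $g_n$ rather than with the orbit of a single vector, so the tail-decay trick is unavailable, and there you correctly fall back on exactly the paper's Banach--Steinhaus plus (B3) argument --- the same tool the paper sets up once for (a) and then reuses in (b) and in the unrooted analogue, Theorem \ref{t-charHCunroot}.
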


\begin{proof} (a) (iii) $\Longrightarrow$ (ii). We apply the Hypercyclicity Criterion, see \cite[Theorem 1.1]{GrPa21} for the notation used here; it also holds for separable Fr\'echet spaces. Let $X_0=Y_0=\vect\{e_v:v\in V\}$, which is dense in $X$ by assumption. Since $B_\lambda$ is a weighted backward shift on a rooted  tree, we have that for any $f\in X_0$ that
\[
B_\lambda^n f= 0
\]
if $n$ is sufficiently large. Next, using the elements given by (iii), we define 
\[
R_{n_k} : Y_0\to X, \ k\geq 1,
\]
by
\[
R_{n_k} e_v = f_{v,n_k}, \ v\in V,
\]
followed by linear extension to $Y_0$. By hypothesis we then have that
\[
R_{n_k} g\to 0
\]
for all $g\in Y_0$. Finally, since $f_{v,n_k}$ has its support in $\Chi^{n_k}(v)$, we find that
\[
B_\lambda^{n_k} R_{n_k} e_v = \sum_{u\in\Chi^{n_k}(v)} \lambda(v\to u) f_{v,n_k}(u) e_v \to e_v
\]
as $k\to\infty$, and hence 
\[
B_\lambda^{n_k} R_{n_k} g \to g
\]
for all $g\in Y_0$. Thus, $B_\lambda$ satisfies the Hypercyclicity Criterion and is therefore weakly mixing.

The implication (ii) $\Longrightarrow$ (i) holds for any operator.

(i) $\Longrightarrow$ (iii). Let $\|\cdot\|$ be an F-norm inducing the topology of $X$.

Let $F\subset V$ be a finite set, $N$ a positive integer and $\varepsilon >0$. Since the embedding of $X$ into $\KK^V$ is continuous and $F$ is finite, there is some $\delta > 0$ such that, for any $f\in X$,
\begin{equation}\label{eq-FK}
\text{if } \|f-\chi_F\|<\delta \text{ then } |f(v)|\geq \tfrac{1}{2} \text{ for all } v\in F.
\end{equation}

Next, it follows from (B2) and the Banach-Steinhaus theorem that, for any $\delta >0$, there is some $\eta>0$ such that, for any $f\in X$,
\[
\text{if } \|f\|<\eta \text{ then }  \|f\chi_{\gen_n}\|<\delta \text{ for all } n.
\]
Together with (B3) we obtain that, for the given $\varepsilon>0$, there is some $\eta >0$ such that, for any $f\in X$,
\begin{equation}\label{eq-BS}
\text{if } \|f\|<\eta \text{ then }  \|f\chi_W\|<\varepsilon \text{ whenever $W\subset \gen_n$ for some $n$}.
\end{equation}

Now, (i) implies that there is a hypercyclic vector $f\in X$ for $B_\lambda$ with $\|f\|<\eta$, and hence some $n\geq N$ such that
\[
\|B_\lambda^n f -\chi_F\| < \delta,
\]
from which we get by \eqref{eq-FK} that, for all $v\in F$,
\[
|(B_\lambda^n f)(v)| \geq \tfrac{1}{2}.
\] 

Let us consider, for $v\in F$,
\[
\alpha_{v}=(B_\lambda^n f)(v)=\sum_{u\in \Chi^n(v)} \lambda(v\to u) f(u),
\]
and set
\[
f_{v,F,N,\varepsilon} = \frac{1}{\alpha_{v}} f \chi_{\Chi^n(v)},\ v\in F.
\]
Then we have that each sequence $f_{v,F,N,\varepsilon}$, $v\in F$, has its support in $\Chi^n(v)$ and  
\[
\sum_{u\in \Chi^n(v)} \lambda(v\to u) f_{v,F,N,\varepsilon}(u) =1.
\]
Moreover, since $\|f\|<\eta$, it follows from \eqref{eq-BS} that
\[
\|f_{v,F,N,\varepsilon}\| \leq  (\tfrac{1}{|\alpha_v|} +1)\|f \chi_{\Chi^n(v)}\| < 3\varepsilon,\ v\in F;
\]
here we have used the property of F-norms that $\|\alpha x\|\leq (|\alpha|+1)\|x\|$ for each scalar $\alpha$ and vector $x$, see \cite[Section 2.1]{GrPe11}.

In order to deduce (iii) it now suffices to apply this construction to an increasing sequence $(F_k)_k$ of finite subsets of $V$ whose union is $V$, an increasing sequence $(N_k)_k$ of positive integers and a null sequence $(\varepsilon_k)_k$ of strictly positive numbers.

(b) The proof is very similar to that of (a) if one notes that whenever the Hypercyclicity Criterion holds for the full sequence of positive integers then the operator is mixing; in the opposite direction, we replace the existence of a hypercyclic vector by the mixing property.
\end{proof}

As the proof shows, for the implication (iii) $\Longrightarrow$ (ii) it suffices that $\{e_v:v\in V\}$ has a dense linear span in $X$. 

The result has an analogue for unrooted  trees.

\begin{theorem}\label{t-charHCunroot}
Let $V$ be an unrooted  tree and $\lambda=(\lambda_v)_v$ a weight. Let $X$ be a Fr\'echet sequence space over $V$ in which $(e_v)_{v\in V}$ satisfies \emph{(B)}, and suppose that the weighted backward shift $B_\lambda$ is an operator on $X$. 

{\rm (a)} The following assertions are equivalent:
\begin{enumerate}[label={\rm (\roman*)}]
    \item $B_\lambda$ is hypercyclic;
    \item $B_\lambda$ is weakly mixing;
    \item there is an increasing sequence $(n_k)_k$ of positive integers such that, for each $v\in V$, there are $f_{v,n_k}\in X$ of support in $\Chi^{n_k}(v)$ and $g_{v,n_k}\in X$ of support in $\Chi^{n_k}(\prt^{n_k}(v))$ such that, for all $v\in V$,
		\[
		f_{v,n_k}\to 0 \text{ and } g_{v,n_k}\to 0 \text{ in } X,
		\]
		\[		
		\sum_{u\in \Chi^{n_k}(v)} \lambda(v\to u) f_{v,n_k}(u) \to 1,
		\]
		and 
		\[		
		\Big(\lambda(\prt^{n_k}(v)\to v)+\sum_{u\in \Chi^{n_k}(\prt^{n_k}(v))} \lambda(\prt^{n_k}(v)\to u) g_{v,n_k}(u)\Big) e_{\prt^{n_k}(v)}  \to 0 \text{ in } X
		\]
		as $k\to\infty$.
\end{enumerate}

{\rm (b)} $B_\lambda$ is mixing if and only if condition {\rm (iii)} holds for the full sequence $(n_k)_k=(n)_n$.
\end{theorem}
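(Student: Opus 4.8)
The plan is to follow the proof of the rooted analogue, Theorem \ref{t-charHCroot}, essentially verbatim, the only genuinely new ingredient being the auxiliary sequences $g_{v,n_k}$. Their purpose is to compensate for the fact that, on an unrooted tree, the ``destruction'' half of the Hypercyclicity Criterion is no longer automatic: in the rooted case $B_\lambda^{n}f=0$ for finitely supported $f$ and large $n$, whereas here $B_\lambda^{n}e_v=\lambda(\prt^{n}(v)\to v)e_{\prt^{n}(v)}$ never vanishes. I would prove the cycle $(\mathrm{iii})\Rightarrow(\mathrm{ii})\Rightarrow(\mathrm{i})\Rightarrow(\mathrm{iii})$, the implication $(\mathrm{ii})\Rightarrow(\mathrm{i})$ being trivial, and then treat (b) by the same scheme with the full sequence.

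For $(\mathrm{iii})\Rightarrow(\mathrm{ii})$ I would avoid the fixed set $X_0$ altogether and instead show directly that $B_\lambda\oplus B_\lambda$ is topologically transitive, which gives weak mixing. Given nonempty open $U_1,U_2,W_1,W_2\subseteq X$, choose finitely supported $a\in U_1$, $c\in U_2$, $b\in W_1$, $d\in W_2$, which is possible since $\vect\{e_v:v\in V\}$ is dense. For large $k$ set
\[
x_1=\sum_{v}a(v)\,(e_v+g_{v,n_k})+\sum_{v}b(v)\,f_{v,n_k},\qquad
x_2=\sum_{v}c(v)\,(e_v+g_{v,n_k})+\sum_{v}d(v)\,f_{v,n_k},
\]
where the sums are finite. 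Since each $f_{v,n_k}\to0$ and $g_{v,n_k}\to0$, we have $x_1\to a$ and $x_2\to c$, so $x_1\in U_1$, $x_2\in U_2$ for large $k$; and since $f_{v,n_k}$ is supported in $\Chi^{n_k}(v)$ with $\sum_{u}\lambda(v\to u)f_{v,n_k}(u)\to1$, while $B_\lambda^{n_k}(e_v+g_{v,n_k})\to0$, we get $B_\lambda^{n_k}x_1\to b\in W_1$ and $B_\lambda^{n_k}x_2\to d\in W_2$. As the same $n_k$ serves both coordinates, $(B_\lambda^{n_k}\oplus B_\lambda^{n_k})(U_1\times U_2)$ meets $W_1\times W_2$.

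For $(\mathrm{i})\Rightarrow(\mathrm{iii})$ the construction of the $f_{v,n_k}$ is identical to the rooted case: taking a hypercyclic vector $f$ with small $F$-norm and $n$ large with $\|B_\lambda^{n}f-\chi_F\|<\delta$, one reads off $\alpha_v=(B_\lambda^{n}f)(v)$, uses \eqref{eq-FK} to get $|\alpha_v|\ge\tfrac12$, and sets $f_{v,n}=\tfrac1{\alpha_v}f\chi_{\Chi^{n}(v)}$, whose norm is controlled by \eqref{eq-BS}. The new part is the $g_{v,n_k}$, for which I would use the localization identity: for any $x\in X$,
\[
B_\lambda^{n}\big(x\chi_{\Chi^{n}(\prt^{n}(v))}\big)=(B_\lambda^{n}x)(\prt^{n}(v))\,e_{\prt^{n}(v)}=(B_\lambda^{n}x)\chi_{\{\prt^{n}(v)\}},
\]
since a vertex $w$ receives a contribution only when $\Chi^{n}(w)$ meets $\Chi^{n}(\prt^{n}(v))$, which forces $w=\prt^{n}(v)$. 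Thus, whenever $B_\lambda$ (being hypercyclic, hence topologically transitive) provides a vector $x$ close to $e_v$ with $B_\lambda^{n}x$ close to $0$, the localized vector $x\chi_{\Chi^{n}(\prt^{n}(v))}$ has its support in $\Chi^{n}(\prt^{n}(v))$, is still close to $e_v$ (because $v\in\Chi^{n}(\prt^{n}(v))$, and $\|(x-e_v)\chi_{\Chi^{n}(\prt^{n}(v))}\|$ is small by \eqref{eq-BS}), and its image is small (again by \eqref{eq-BS}, applied to the singleton $\{\prt^{n}(v)\}$). Setting $g_{v,n}=x\chi_{\Chi^{n}(\prt^{n}(v))}-e_v$ then produces exactly the required witness, and crucially the smallness of $g_{v,n}$ and of $B_\lambda^{n}(e_v+g_{v,n})$ is obtained without dividing by the weights.

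The main obstacle is the coordination of a single increasing sequence $(n_k)$ that simultaneously serves the $f$-part and the $g$-part for every $v$. The $f$-part is uniform in $v$: one hypercyclic vector and one return time $n$ handle all $v\in F$ at once. The delicate point is that the localized $g$-construction rests on transition times at which $B_\lambda^{n}$ carries a neighbourhood of $e_v$ into a neighbourhood of $0$, and one must realize these at the very same $n_k$ produced by the $\chi_{F}$-approximation and for all $v$ in a finite set $F$ together. As in the rooted proof I would run the construction along increasing finite sets $F_k\uparrow V$, increasing $N_k$, and $\varepsilon_k\downarrow0$, extracting at each stage a common $n_k\ge N_k$ and defining $f_{v,n_k},g_{v,n_k}$ for $v\in F_k$; arranging the $g$-part at this common $n_k$ for all $v\in F_k$ is where the argument requires the most care. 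Part (b) is then obtained by the same reasoning, replacing the mere existence of a hypercyclic vector by the mixing property so that the approximations hold for all sufficiently large $n$, which makes the choice of the full sequence $(n_k)=(n)_n$ immediate; the sufficiency direction is unchanged, since the transitivity argument above delivers the conclusion for every large $k$ once (iii) holds along $(n)_n$.
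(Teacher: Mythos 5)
Your implication (iii) $\Rightarrow$ (ii) is correct (it simply inlines the Hypercyclicity Criterion variant that the paper cites from \cite{GrPa21}), and the $f$-part of (i) $\Rightarrow$ (iii) is indeed the rooted argument verbatim. The genuine gap is in the $g$-part of (i) $\Rightarrow$ (iii), precisely at the point you defer as ``requiring the most care'': topological transitivity only asserts that for a given pair of open sets there exists \emph{some} $n$ with $B_\lambda^n(U_1)\cap U_2\neq\varnothing$; it never lets you prescribe $n$. Your construction needs, for every $v$ in the finite set $F$, a vector $x_v$ with $x_v\approx e_v$ and $(B_\lambda^{n}x_v)\chi_{\{\prt^{n}(v)\}}\approx 0$ \emph{at the very same} $n$ produced by the $\chi_F$-approximation. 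Separate applications of transitivity (one per vertex, plus one for the $f$-part) yield unrelated times, and for a merely hypercyclic operator the corresponding return-time sets need not intersect --- that they do is essentially equivalent to weak mixing, which is what is being proved. So the induction over $F_k$, $N_k$, $\varepsilon_k$ cannot be closed as written; the coordination is not a matter of care but requires a further idea that is absent from the proposal. (Your remark about part (b) is correct: under mixing, finitely many ``for all sufficiently large $n$'' statements can be intersected, so there the coordination is free.)

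The paper resolves the coordination with a \emph{single} application of transitivity whose source is not a small ball but a ball around $\chi_G$, where $G\subset F$ contains exactly one vertex of $F$ per generation: one finds $f$ and $n\geq N$ with $\|f-\chi_G\|<\eta$ and $\|B^nf-\chi_F\|<\eta$, together with the combinatorial conditions \eqref{eq-tt2} and \eqref{eq-tt3}, achievable by taking $n$ large. This one vector serves both halves. The $f$-part is read off from $B^nf\approx\chi_F$ as in the rooted case (the condition $\Chi^n(F)\cap G=\varnothing$ guarantees $f\chi_{\Chi^n(v)}$ is still small even though $f$ is not). For the $g$-part one takes, for $v\in F$, the unique $w\in G$ in $\gen(v)$, so that $\prt^n(w)=\prt^n(v)$ by \eqref{eq-tt3}, and sets $g_{v}=f\chi_{\Chi^n(\prt^n(w))}-e_w$: this equals $(f-\chi_G)\chi_{\Chi^n(\prt^n(w))}$, hence is small, and $\bigl(1+\sum_u g_v(u)\bigr)e_{\prt^n(w)}=(B^nf)\chi_{\{\prt^n(w)\}}$ is small because $\prt^n(w)\in\prt^n(G)$ is \emph{not} in $F$ by \eqref{eq-tt2}. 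In other words, the smallness of the image comes from $B^nf\approx\chi_F$ evaluated off $F$, not from any vector being mapped near $0$ --- this is the idea your proposal is missing. Note finally that since this $g_v$ is centred at $w\neq v$, the weighted version produces the non-vanishing mismatch $\lambda(\prt^n(v)\to v)-\lambda(\prt^n(v)\to w)$, which is exactly why the paper first conjugates to the unweighted shift; your plan of centring at $v$ would avoid that conjugation, but only if the coordination problem were solved.
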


\begin{proof}
The proof is similar to that of Theorem \ref{t-charHCroot}, so we only indicate the necessary modifications. But in order to avoid a minor technical problem in the proof of the second implication below we note that it suffices to do the proof for the unweighted backward shift $B$. In fact, a simple but slightly tedious verification shows that the characterizing condition in (iii) is invariant under the passage form the dynamical system $B_\lambda:X\to X$ to the conjugate dynamical system $B:X_\mu\to X_\mu$, where the weight $\mu$ on $V$ depends on $\lambda$ as described in Section \ref{s-notation} and $X_\mu=\{f\in\KK^V : (f(v)\mu_v)_{v\in V}\in X\}$ carries its natural topology. Of course, this reduction could also have been done in the proof of Theorem \ref{t-charHCroot}.

Thus we now assume that $\lambda_v=1$ for all $v\in V$.

(a) (iii) $\Longrightarrow$ (ii). We apply here the variant of the Hypercyclicity Criterion as given in \cite[Proposition 5.1]{GrPa21}, which also holds for separable Fr\'echet spaces. As before we set $X_0=Y_0=\vect\{e_v:v\in V\}$, and we define $I_{n_k}:X_0\to X$ and $R_{n_k}:Y_0\to X$, $k\geq 1$, by
\[
I_{n_k} e_v = e_v+g_{v,n_k},
\]
\[
R_{n_k} e_v = f_{v,n_k},
\]
with linear extension to $X_0=Y_0$. Then the hypotheses of \cite[Proposition 5.1]{GrPa21} are satisfied; for this, note in particular that
\[
B^{n_k} I_{n_k} e_v = \Big(1+\sum_{u\in \Chi^{n_k}(\prt^{n_k}(v))} g_{v,n_k}(u)\Big) e_{\prt^{n_k}(v)}.
\]

(i) $\Longrightarrow$ (iii). Let $\|\cdot\|$ be an F-norm inducing the topology of $X$.

Let $F\subset V$ be a finite set, $N$ a positive integer and $\varepsilon >0$. We choose $\delta>0$ and $\eta>0$ with $\eta\leq\delta$ so that \eqref{eq-FK} and \eqref{eq-BS} hold. We next form a subset $G$ of $F$ by choosing exactly one vertex from $F$ per generation. Then, by topological transitivity of $B$, there is some $f\in X$ and some $n\geq N$ such that
\begin{equation}\label{eq-tt}
\big\|f-\chi_G\big\|< {\eta}\quad\text{and}\text \quad \big\|B^nf-\chi_F\big\|<\eta,
\end{equation}
\begin{equation}\label{eq-tt2}
\Chi^n(F)\cap G=\varnothing\quad\text{and}\text \quad \prt^n(G)\cap F=\varnothing
\end{equation}
and, for any $v,w\in F$ that belong to the same generation,
\begin{equation}\label{eq-tt3}
\prt^n(v)=\prt^n(w).
\end{equation}

It follows from \eqref{eq-tt2} that, for any $v\in F$, $\chi_G\chi_{\Chi^n(v)}=0$. Arguing as in the proof of Theorem \ref{t-charHCroot} and using \eqref{eq-FK}, \eqref{eq-BS}, and \eqref{eq-tt} we find, for any $v\in F$, some $f_{v,F,N,\varepsilon}\in X$ of support in $\Chi^n(v)$ such that
\[
\sum_{u\in \Chi^n(v)} f_{v,F,N,\varepsilon}(u) =1
\]
and
\[
\|f_{v,F,N,\varepsilon}\| <3\varepsilon.
\]

Next, let again $v\in F$. By construction, there is a unique $w\in G$ in the same generation as $v$, and by \eqref{eq-tt3} we have that $\prt^n(v)=\prt^n(w)$. Then we set
\[
g_{v,F,N,\varepsilon} = f\chi_{\Chi^n(\prt^n(w))}-e_w.
\]
Thus $g_{v,F,N,\varepsilon}\in X$ has its support in $\Chi^n(\prt^n(w))=\Chi^n(\prt^n(v))$; moreover, \eqref{eq-BS}, \eqref{eq-tt}, and the fact that $G$ contains no vertex from $\Chi^n(\prt^n(v))$ other than $w$ implies that 
\[
\|g_{v,F,N,\varepsilon}\|<\eps.
\]
On the other hand, it follows from \eqref{eq-tt2} that $\chi_F\chi_{\{\prt^n(w)\}}=0$; we can then deduce from \eqref{eq-BS} and \eqref{eq-tt} that
\begin{align*}
\Big\|\Big(1+\sum_{u\in \Chi^{n}(\prt^{n}(v))}g_{v,F,N,\varepsilon}(u)\Big) e_{\prt^{n}(v)}\Big\|&=\Big\|\Big(1+\sum_{u\in \Chi^{n}(\prt^{n}(w))}g_{v,F,N,\varepsilon}(u)\Big) e_{\prt^{n}(w)}\Big\|\\
&=\big\| \big(B^n f\big)\chi_{\{\prt^n(w)\}}\big\|<\eps.
\end{align*}

To finish the proof we now perform this construction with an increasing sequence $(F_k)_k$ of finite subsets of $V$ whose union is $V$, an increasing sequence $(N_k)_k$ of positive integers and a null sequence $(\varepsilon_k)_k$ of strictly positive numbers.

(b) This follows with the ideas from the proof of (a) just like in the proof of Theorem \ref{t-charHCroot}.
\end{proof}

The two results generalize the corresponding known results for the trees $\NN_0$ and $\ZZ$, see \cite[Theorems 4.8 and 4.13]{GrPe11}. Recall that, in the case of $\NN_0$, it suffices to have the characterizing condition only for the root $v_0=0$, see \cite[Lemma 4.2]{GrPe11}, which is not the case for general rooted trees, see \cite[Example 4.6]{GrPa21}.

\section{Appendix 2: Chaos and the boundary of a tree}\label{s-chaosPoisson}
We will discuss here a relationship of our work on chaotic weighted backward shifts with harmonic functions on trees. The following discussion was motivated and influenced by recent work of Abakumov, Nestoridis and Picardello \cite{ANP17}, \cite{ANP21} on certain universality phenomena on trees, which continue to be leafless.

Harmonic functions on rooted trees were first considered by Cartier \cite{Car72}, \cite{Car73}. For this purpose he defined the  boundary of a tree, also identified as its Poisson boundary, see \cite[Example 14.30]{LyPe16}. We will give here a characterization of chaos for weighted backward shifts on rooted trees in terms of harmonic functions on the tree, and we will also provide a probabilistic characterization on the boundary. 

The relationship between chaotic weighted backward shifts and analysis on the tree is more transparent for the unweighted backward shift $B$. This will not restrict the generality of our discussion because of the by now familiar conjugacy of any weighted backward shift $B_\lambda$ on a Fr\'echet sequence space $X$ over a tree $V$ with the unweighted backward shift $B$ on a weighted space $X_\mu$, see Section \ref{s-notation}. So, throughout this section, we will only consider the unweighted backward shift.

We first recall the setup. Given a rooted tree $T=(V,E)$ with root $v_0$, the generations $\gen_n$, $n\geq 0$, define a projective system together with the powers of the map $\prt$,
\[
\gen_m\to \gen_n, \quad v\to \prt^{m-n}(v)
\]
for $m\geq n\geq 0$. The corresponding projective (or inverse) limit
\[
\partial T = \varprojlim_{n\geq 0} \gen_n
\]
is called the \textit{boundary} of the tree. It consists of all possible branches or \textit{infinite geodesics} starting from the root $v_0$, 
\begin{equation}\label{eq-branch}
\zeta=(v_0,v_1,v_2,\ldots).
\end{equation}

The boundary $\partial T$ is endowed with the projective limit topology, based on the discrete topology on the generations $\gen_n$; it is the topology inherited from $\prod_{n=0}^\infty \gen_n$ under its product topology. A base of the topology is given by the sets
\[
\partial T_v, \ v\in V
\]
of all infinite geodesics passing through $v$; note that these sets coincide with the boundary of the subtree $V(v)$. In this way, $\partial T$ turns into a separable completely metrizable topological space (that is, a Polish space) that is totally disconnected. If the tree is locally finite then $\partial T$ is compact.

Now suppose that, for any vertex $v\in V$, there are transition probabilities 
\[
p(v,u)>0,\ u\in \Chi(v), \text{ with $\sum_{u\in\Chi(v)} p(v,u)=1$}.
\]
Let $P$ denote the family $(p(v,u))_{v\in V, u\in \Chi(v)}$. Note that the existence of such a family requires that the tree is leafless. Then a function $h$ on $V$ is called \textit{harmonic} or \textit{$P$-harmonic} if, for any $v\in V$,
\[
h(v) = \sum_{u\in\Chi(v)} p(v,u) h(u).
\]
We remark that many texts also allow a transition probability $p(v,\prt(v))>0$, see for example \cite{Car72}, \cite{Car73}, \cite{Ana11}, \cite{LyPe16}. This is quite natural when the tree is not considered to be directed. We follow here \cite{PiWo19} and \cite{ANP21}; they call $P$ a \textit{forward-only transition operator}.

Now, for any $n\geq 0$, define transition probabilities of degree $n$ from the root $v_0$ to vertices $v\in V$ by setting
\[
\mu^P_v := \prod_{k=1}^n p(v_{k-1},v_k),
\]
where $v\in \gen_n$ and $(v_0,\ldots,v_n)$ is the path from $v_0$ to $v=v_n$; note that $\mu^P_{v_0}=1$. Then we see that $h$ is $P$-harmonic if and only if, for any $v\in V$,
\[
\mu^P_v h(v) = \sum_{u\in\Chi(v)}\mu^P_u h(u), 
\]
which is equivalent to saying that $(\mu^P_v h(v))_{v\in V}$ is a fixed point for the (unweighted) backward shift $B$. 

For the definition of the weighted space $X_\mu$ we refer to Section \ref{s-notation}. In view of Theorem \ref{chaos-positive} we have the following.

\begin{theorem}\label{chaos-harmonic}
Let $T=(V,E)$ be a rooted tree, $X$ be a Fr\'echet sequence space over $V$ in which $(e_v)_{v\in V}$ is an unconditional basis, and suppose that the backward shift $B$ is an operator on $X$. Let $P$ be a forward-only transition operator on $V$. 

Then $B$ is chaotic on $X$ if and only if there exists a strictly positive $P$-harmonic function $h$ on $V$ such that
\[
h\in X_{\mu^P}.
\]
\end{theorem}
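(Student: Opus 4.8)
The plan is to deduce the statement directly from Theorem~\ref{chaos-positive} after making precise the dictionary between harmonic functions and fixed points that was set up in the discussion preceding the theorem. Since we are dealing with the \emph{unweighted} backward shift $B$, the underlying weight is $\lambda=1$, which is positive, and the tree $T$ is rooted; hence all three assertions of Theorem~\ref{chaos-positive} are equivalent. In particular, the equivalence of (i\textsubscript{a}) and (ii) in that theorem tells us that $B$ is chaotic on $X$ if and only if $B$ admits a positive universal fixed point in $X$. It therefore suffices to show that positive universal fixed points for $B$ in $X$ correspond precisely to strictly positive $P$-harmonic functions $h$ with $h\in X_{\mu^P}$, and the whole proof reduces to verifying this correspondence.

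To set up the correspondence I would invoke the computation just before the statement: a function $h$ on $V$ is $P$-harmonic if and only if the sequence $f:=(\mu^P_v h(v))_{v\in V}$ satisfies $\mu^P_v h(v)=\sum_{u\in\Chi(v)}\mu^P_u h(u)$ for every $v\in V$, that is, if and only if $f$ is a fixed point for $B$. The point I would stress is that each $\mu^P_v=\prod_{k=1}^{n} p(v_{k-1},v_k)$ is a finite product of strictly positive transition probabilities, so $\mu^P_v>0$ for all $v\in V$. Consequently $f(v)=\mu^P_v h(v)>0$ for every $v$ if and only if $h(v)>0$ for every $v$; in other words, $f$ is strictly positive (and therefore universal, $f(v)\neq 0$) exactly when $h$ is strictly positive. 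Moreover, by the very definition $X_{\mu^P}=\{g\in\KK^V:(g(v)\mu^P_v)_v\in X\}$, we have $h\in X_{\mu^P}$ if and only if $f=(\mu^P_v h(v))_v\in X$.

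Combining these two equivalences, the assignment $h\mapsto f=(\mu^P_v h(v))_v$ is a bijection, with inverse $f\mapsto (f(v)/\mu^P_v)_v$, between the strictly positive $P$-harmonic functions $h$ lying in $X_{\mu^P}$ and the positive universal fixed points $f$ for $B$ in $X$. Hence such an $h$ exists if and only if $B$ has a positive universal fixed point in $X$, which by Theorem~\ref{chaos-positive} holds if and only if $B$ is chaotic on $X$; this is exactly the asserted equivalence. I do not expect any genuine obstacle here: the argument is essentially a change of variables, and the only step requiring a little care is the sign bookkeeping, which goes through precisely because the forward-only transition operator $P$ has strictly positive entries and hence all the degree-$n$ weights $\mu^P_v$ are strictly positive.
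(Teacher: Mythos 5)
Your proposal is correct and is essentially the paper's own argument: the paper derives the theorem from Theorem~\ref{chaos-positive} (equivalence of chaos with the existence of a positive universal fixed point on a rooted tree) together with the observation, made in the discussion preceding the statement, that $h\mapsto(\mu^P_vh(v))_{v\in V}$ is a bijection between strictly positive $P$-harmonic functions in $X_{\mu^P}$ and positive universal fixed points of $B$ in $X$. Your careful bookkeeping that $\mu^P_v>0$ for all $v$ is exactly the point that makes this dictionary work.
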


Note that one may choose the transition operator $P$ arbitrarily. It might be natural, for example, to take for a locally finite tree
\[
p(v,u) = \frac{1}{|\Chi(v)|}, \ v\in V, u\in \Chi(v).
\]
Let us mention that if $X=\ell^p(V)$, $1\leq p<\infty$, then the space of $P$-harmonic functions in $X_{\mu^P}$ is a particular \textit{Bergman space} as introduced in \cite{CCPS18}.

The previous characterization can also be expressed in terms of martingales. For this we endow the boundary $\partial T$ with its Borel $\sigma$-algebra, denoted by $\mathcal{A}=\mathcal{B}(\partial T)$. It is the $\sigma$-algebra generated by the sets $\partial T_v$, $v\in V$. We will also need the sub-$\sigma$-algebras
\[
\mathcal{A}_n = \sigma (\{\partial T_v: v\in \gen_n\}),\ n\geq 0,
\] 
that is, the $\sigma$-algebras generated by the sets $\partial T_v$ with vertices $v$ from generation $n$. Then $(\mathcal{A}_n)_{n\geq 0}$ is a filtration. Finally, let $\mu$ be a probability measure on $(\partial T,\mathcal{A})$ such that $\mu(\partial T_v)>0$ for all $v\in V$.

To say that $(\varphi_n)_{n\geq 0}$ is a martingale for $(\mathcal{A}_n)_{n\geq 0}$ means that, for each $n\geq 0$, $\varphi_n$ is $\mu$-integrable, 
and that for each $v\in \gen_n$ it takes on $\partial T_v$ the constant value
\begin{align*}
\varphi_n|_{\partial T_v} &= \frac{1}{\mu(\partial T_v)}\int_{\partial T_v} \varphi_{n+1} d\mu\\
&= \frac{1}{\mu(\partial T_v)}\sum_{u\in\Chi(v)}\mu(\partial T_u) \varphi_{n+1}|_{\partial T_u};
\end{align*}
note that $(\partial T_u)_{u\in\Chi(v)}$ is a partition of $\partial T_v$. This is equivalent to saying that
\[
(\mu(\partial T_v) \varphi_{|v|}|_{\partial T_v})_{v\in V}
\]
is a fixed point for the backward shift $B$ on $V$; here, $|v|$ denotes the distance of $v$ from the root, that is, $v\in \gen_{|v|}$. Together with Theorem \ref{chaos-positive} this proves one implication of the following characterization, the other one being obtained in the same spirit.

Recall that a Borel measure is said to be of full topological support if it is strictly positive on every non-empty open set. In our context it means that $\mu(\partial T_v)>0$ for all $v\in V$.

\begin{theorem}\label{chaos-martingale}
Let $T=(V,E)$ be a rooted tree, $X$ be a Fr\'echet sequence space over $V$ in which $(e_v)_{v\in V}$ is an unconditional basis, and suppose that the backward shift $B$ is an operator on $X$. Let $\mu$ be a probability measure on $(\partial T,\mathcal{A})$ of full topological support.

Then $B$ is chaotic on $X$ if and only if there exists a strictly positive martingale $(\varphi_n)_{n\geq 0}$ for the filtration $(\mathcal{A}_n)_{n\geq 0}$ such that
\[
(\varphi_{|v|}|_{\partial T_v})_{v\in V} \in X_{(\mu(\partial T_v))_v}.
\]
\end{theorem}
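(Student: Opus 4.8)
The plan is to reduce the statement to Theorem \ref{chaos-positive}, exploiting the one-to-one correspondence—already sketched in the discussion preceding the theorem—between strictly positive martingales for $(\mathcal{A}_n)_{n\geq 0}$ and strictly positive fixed points of $B$ in $X$. Throughout I would abbreviate $\mu_v := \mu(\partial T_v)$, which is strictly positive for every $v\in V$ by the full-support hypothesis, and I note that since $B$ is the unweighted shift, Theorem \ref{chaos-positive} applies with the positive weight $\lambda=1$ on the rooted tree $V$.

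First, for a strictly positive martingale $(\varphi_n)_{n\geq 0}$ I would set $h(v) := \mu_v\,\varphi_{|v|}|_{\partial T_v}$ for $v\in V$. Written out via the partition $(\partial T_u)_{u\in\Chi(v)}$ of $\partial T_v$, the martingale relation reads $\varphi_n|_{\partial T_v} = \frac{1}{\mu_v}\sum_{u\in\Chi(v)}\mu_u\,\varphi_{n+1}|_{\partial T_u}$, which upon multiplication by $\mu_v$ is exactly $h(v) = \sum_{u\in\Chi(v)} h(u)$. Because $h$ is strictly positive the series converges unconditionally, so $h$ is a strictly positive fixed point for $B$; and the membership condition $(\varphi_{|v|}|_{\partial T_v})_v \in X_{(\mu_v)_v}$ is, by the definition of $X_{(\mu_v)_v}$, precisely the assertion that $(h(v))_v = (\mu_v\varphi_{|v|}|_{\partial T_v})_v \in X$. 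Thus $h$ is a positive universal fixed point in $X$, and Theorem \ref{chaos-positive} yields that $B$ is chaotic. This is the easy implication.

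For the converse, assuming $B$ chaotic, Theorem \ref{chaos-positive} provides a positive universal fixed point $h\in X$, i.e.\ $h(v)>0$ and $h(v)=\sum_{u\in\Chi(v)}h(u)$ for all $v$. I would define $\varphi_n$ on $\partial T$ to be constant on each $\partial T_v$, $v\in\gen_n$, with value $\varphi_n|_{\partial T_v} := h(v)/\mu_v$; this is well defined and strictly positive since $\mu_v>0$, and it is $\mathcal{A}_n$-measurable. To confirm it is a genuine martingale I must check integrability together with the conditional-expectation identity. Integrability follows from $\int_{\partial T}\varphi_n\,d\mu = \sum_{v\in\gen_n} h(v)$, and a telescoping argument using the fixed-point equation, $\sum_{v\in\gen_{n+1}}h(v)=\sum_{v\in\gen_n}\sum_{u\in\Chi(v)}h(u)=\sum_{v\in\gen_n}h(v)$, shows this equals $h(v_0)<\infty$ for every $n$. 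The identity $\frac{1}{\mu_v}\int_{\partial T_v}\varphi_{n+1}\,d\mu = \frac{1}{\mu_v}\sum_{u\in\Chi(v)}\mu_u (h(u)/\mu_u) = h(v)/\mu_v = \varphi_n|_{\partial T_v}$ is then immediate, and the membership condition holds as in the first implication.

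The main obstacle is not conceptual but one of bookkeeping: one must verify that the candidate $\varphi_n$ really is $\mu$-integrable and satisfies the martingale relation, and in particular that the total mass $\sum_{v\in\gen_n}h(v)$ stays finite and constant across generations. This is exactly where the hypothesis that $B$ is an operator on $X$—so that $Bh$ is given by unconditionally convergent series, legitimizing the regrouping of sums over $\gen_{n+1}$ into sums over children—and the strict positivity of $\mu_v$ on every $\partial T_v$ are both used. Once the martingale/fixed-point dictionary is set up cleanly in both directions, the theorem is a direct consequence of Theorem \ref{chaos-positive}.
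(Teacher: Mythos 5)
Your proposal is correct and follows essentially the same route as the paper: the paper establishes exactly the dictionary you describe (the martingale relation for $(\varphi_n)_{n\geq 0}$ is equivalent to $(\mu(\partial T_v)\varphi_{|v|}|_{\partial T_v})_{v\in V}$ being a fixed point of $B$) and then invokes Theorem \ref{chaos-positive} for rooted trees in both directions. Your added bookkeeping (integrability via the telescoping identity $\sum_{v\in\gen_{n+1}}h(v)=\sum_{v\in\gen_n}h(v)$, justified by positivity of the terms) is exactly the routine verification the paper leaves implicit with the phrase ``the other one being obtained in the same spirit.''
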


We add a third characterization of chaos directly in terms of probabilities on the boundary $\partial T$. Let $\mu$ be a probability measure on $(\partial T,\mathcal{A})$. Then the sequence $(\mu(\partial T_v))_{v\in V}$ is a fixed point for the backward shift $B$. Indeed, for any $v\in V$, 
\[
\mu(\partial T_v) = \sum_{u\in\Chi(v)}\mu(\partial T_u)
\]
because $(\mu(\partial T_u))_{u\in \Chi(v)}$ is a partition of $\partial T_v$. Conversely, if $(f(v))_v$ is a positive fixed point for $B$ with value 1 at the root, then, for any $n\geq 0$,
\[
\mu_n(\partial T_v):=f(v),\ v\in \gen_n
\]
defines a probability measure on $\mathcal{A}_n$. By the Daniell-Kolmogorov consistency (or extension) theorem, the $\mu_n$, $n\geq 0$, can be extended to a probability measure $\mu$ on $(\partial T,\mathcal{A})$, see \cite{Par05} (for a more general result see Proposition \ref{p-miflows} below). Thus, again by Theorem \ref{chaos-positive}, we have the following.

\begin{theorem}\label{chaos-proba}
Let $T=(V,E)$ be a rooted tree, $X$ be a Fr\'echet sequence space over $V$ in which $(e_v)_{v\in V}$ is an unconditional basis, and suppose that the backward shift $B$ is an operator on $X$. 

Then $B$ is chaotic on $X$ if and only if there exists a probability measure $\mu$ on $(\partial T,\mathcal{A})$ of full topological support such that
\[
(\mu(\partial T_v))_{v\in V} \in X.
\]
\end{theorem}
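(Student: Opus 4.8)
The plan is to deduce everything from Theorem \ref{chaos-positive}. Since the tree $T=(V,E)$ is rooted, that theorem tells us that the backward shift $B$ is chaotic on $X$ if and only if $B$ possesses a positive universal fixed point in $X$, that is, a fixed point $f\in X$ with $f(v)>0$ for every $v\in V$. Thus it suffices to set up a bijective correspondence between positive universal fixed points of $B$ (normalised to take the value $1$ at the root) and probability measures $\mu$ on $(\partial T,\mathcal A)$ of full topological support, in such a way that $f=(\mu(\partial T_v))_{v\in V}$. Both halves of this correspondence were sketched in the discussion preceding the statement; I would make them precise as follows.

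For the easier implication, suppose such a measure $\mu$ is given. Using that $(\partial T_u)_{u\in\Chi(v)}$ is a partition of $\partial T_v$ for each $v\in V$ --- which is exactly how the basic open sets of $\partial T$ nest in a leafless tree --- countable additivity yields $\mu(\partial T_v)=\sum_{u\in\Chi(v)}\mu(\partial T_u)$. Hence $f:=(\mu(\partial T_v))_{v\in V}$ satisfies $(Bf)(v)=f(v)$ for all $v$, so it is a fixed point of $B$. The hypothesis that $\mu$ has full topological support means precisely that $\mu(\partial T_v)>0$ for every $v\in V$, so $f$ is strictly positive, hence a positive universal fixed point; and by assumption $f\in X$. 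Theorem \ref{chaos-positive} then gives that $B$ is chaotic.

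For the reverse implication, assume $B$ is chaotic. By Theorem \ref{chaos-positive} there is a positive universal fixed point $f\in X$, and after dividing by $f(v_0)>0$ (which keeps $f$ in $X$, positive, and a fixed point) we may assume $f(v_0)=1$; the fixed-point equation at the root forces $\sum_{u\in\Chi(v_0)}f(u)=1$. Setting $\mu_n(\partial T_v):=f(v)$ for $v\in\gen_n$ defines, for each $n\geq0$, a probability measure on $\mathcal A_n$, and the fixed-point relation $f(v)=\sum_{u\in\Chi(v)}f(u)$ is exactly the consistency needed to pass from level $n$ to level $n+1$. The Daniell--Kolmogorov consistency theorem, see \cite{Par05}, then extends the family $(\mu_n)_n$ to a single probability measure $\mu$ on $(\partial T,\mathcal A)$ with $\mu(\partial T_v)=f(v)$ for all $v$. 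Since $f>0$, this $\mu$ has full topological support, and $(\mu(\partial T_v))_{v\in V}=f\in X$, as required.

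The only genuinely non-formal point is the extension step: one must verify that the family $(\mu_n)_n$ on the finite algebras $\mathcal A_n$ is consistent with respect to the projective system $\partial T=\varprojlim_n\gen_n$ and that the Daniell--Kolmogorov theorem applies, so that the resulting measure indeed assigns $f(v)$ to the cylinder $\partial T_v$. This is where the totally disconnected, Polish structure of $\partial T$ recorded earlier is used; everything else is a direct translation of the fixed-point equation for $B$ into the additivity of $\mu$ over the partitions $\Chi(v)$. I would remark that this is the same mechanism that produced the harmonic-function and martingale characterizations in Theorems \ref{chaos-harmonic} and \ref{chaos-martingale}, now read with the transition probabilities absorbed into $\mu$ itself.
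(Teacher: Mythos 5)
Your proposal is correct and takes essentially the same route as the paper: the paper likewise reduces the statement to Theorem \ref{chaos-positive} (for rooted trees, chaos is equivalent to the existence of a positive universal fixed point) and establishes exactly your correspondence, using countable additivity of $\mu$ over the partitions $(\partial T_u)_{u\in\Chi(v)}$ of $\partial T_v$ in one direction and the Daniell--Kolmogorov consistency theorem applied to $\mu_n(\partial T_v):=f(v)$, $v\in\gen_n$, in the other. There is nothing to correct.
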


\section{Epilogue}\label{s-epi}

We have kindly been informed by Nikolaos Chalmoukis that some of our results in Section \ref{s-backwinv} on rooted trees are strongly linked with investigations of potential theory on trees. We explain this link, which is provided by the notion of capacity, in the first subsection. We then discuss, in the same spirit, the bearing of our results on potential theory on unrooted trees, before returning to rooted trees once more.

\subsection{Potential theory on rooted trees I}\label{subs-rootI}

Let $T=(V,E)$ be a rooted directed tree, with or without leaves. The boundary $\partial T$ of $T$, its subsets $\partial T_v$, $v\in V$, and its Borel sets are defined as in Appendix~2, with the only difference that the branches starting from $v_0$, which define the points in $\partial T$, see \eqref{eq-branch}, may be of finite length.

The notion of capacity can be defined in the following way.

\begin{definition}\label{def-cap}
Let $T=(V,E)$ be a rooted tree with root $v_0$, $w=(w_v)_{v\in V}$ a positive weight on $V$, and $1<p<\infty$. Then the \textit{$p$-capacity} of the boundary $\partial T$ with respect to $w$ is given by 
\[
\capac_p(\partial T,w)= \sup\Big\{ \mu(\partial T)^p : \text{ $\mu$ a positive Borel measure on $\partial T$ with $\sum_{v\in V}(\mu(\partial T_v)w_v)^{p^\ast}\leq 1$}\Big\}.
\]
\end{definition}

For locally finite, leafless rooted trees, this appears in \cite[Theorem 1]{ChLe19} and \cite[Theorem A]{ArLe23} if $1<p<\infty$ and the weight $w=1$.

In potential theory, a \textit{flow} is a function $f:V\to \KK$ such that, for any $v\in V$ that is not a leaf,
\[
f(v) = \sum_{u\in\Chi(v)} f(u),
\]
where the series are supposed to be unconditionally convergent; in other words, a flow is a backward invariant sequence in the sense of Definition \ref{d-backwinv}. As explained at the end of Appendix~2, for leafless trees, there is a one-to-one correspondence between positive Borel measures $\mu$ on $\partial T$ and positive flows $f$ on $V$ given by
\[
f(v)= \mu(\partial T_v),\ v\in V.
\]
This extends to trees with leaves; it suffices to replace any leaf by an infinite branch and apply the result for leafless trees. Note that if $f(v_0)=1$ then $\mu$ is a probability measure.

Therefore we have that
\[
\capac_p(\partial T,w)= \sup\Big\{ f(v_0)^p : \text{ $f$ a positive flow on $V$ with $\sum_{v\in V}(f(v)w_v)^{p^\ast}\leq 1$}\Big\}.
\]
Now, by a standard argument, we find that
\[
\capac_p(\partial T,w)= \Big(\inf\Big\{ \sum_{v\in V}(f(v)w_v)^{p^\ast} : \text{ $f$ a positive flow on $V$ with $f(v_0)=1$}\Big\}\Big)^{-p/p^\ast};
\]
this is the definition given in \cite[Section 16.1]{LyPe16} for $p=2$. The value
\[
\mathcal{E}_p(f,w)= \sum_{v\in V}(f(v)w_v)^{p^\ast}
\]
is called the \textit{$p$-energy} of a positive flow (or the corresponding measure), see \cite{ChLe19}. A flow $f$ with $f(v_0)=1$ is called a \textit{unit flow}.
  	
Theorem \ref{t-charpinv}(a) determines the minimal norm for (real or complex) unit flows. Since it turns out that a unit flow with minimal norm is positive, one may restrict attention to such flows. Thus we get the following result from Theorem \ref{t-charpinv}, where for $\zeta=(v_0,v_1,v_2,\ldots)\in \partial T$, which may be a finite or infinite sequence, we write
\[
\lim_{v\to\zeta} f(v)= \lim_{n\nearrow} f(v_n).
\]

\begin{theorem}\label{t-charpinvbis}
Let $T=(V,E)$ be a rooted tree with root $v_0$, $w=(w_v)_{v\in V}$ a positive weight on $V$, and $1< p<\infty$. Suppose that $B$ is defined on $\ell^p(V,w)$. Then
\[
\capac_{p^\ast}(\partial T,w)= r_{p}(V,w)^{-p^\ast}.
\]

If $\capac_{p^\ast}(\partial T,w)>0$, there exists a unique positive unit flow $f$ on $V$ of minimal energy, and for very boundary point $\zeta\in\partial T$ we have that
\[
\lim_{v\to\zeta} f(v)= \mu(\{\zeta\})\in [0,1],
\]
where $\mu$ is the corresponding probability measure on $\partial T$.
\end{theorem}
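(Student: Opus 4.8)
The plan is to read off both assertions from Theorem~\ref{t-charpinv} once the capacity has been rewritten as an infimum of energies. First I would record, as already noted in the text preceding the statement, that for a positive weight
\[
\capac_{q}(\partial T,w)=\Big(\inf\Big\{\sum_{v\in V}(f(v)w_v)^{q^\ast}: f \text{ a positive unit flow on } V\Big\}\Big)^{-q/q^\ast},
\]
and then specialize to $q=p^\ast$. Since $(p^\ast)^\ast=p$, the energy becomes $\mathcal{E}_{p^\ast}(f,w)=\sum_{v}(f(v)w_v)^{p}$ and the exponent collapses to $-p^\ast/p$, so that $\capac_{p^\ast}(\partial T,w)=\big(\inf_f\sum_v(f(v)w_v)^p\big)^{-p^\ast/p}$, the infimum being taken over positive unit flows.

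Next I would identify this infimum with $r_p(V,w)^p$. A positive unit flow is exactly a nonnegative backward invariant sequence with value $1$ at $v_0$, so it lies in the set $\mathcal{J}$ of Theorem~\ref{t-charpinv}. Part (a) of that theorem gives $\inf_{f\in\mathcal{J}}\sum_v|f(v)w_v|^p=r_p(V,w)^p$ over all (possibly signed) unit flows; restricting to positive flows can only raise the infimum, while part (b) provides, whenever $r_p(V,w)<\infty$, an explicit extremal unit flow that is \emph{nonnegative} and attains $r_p(V,w)^p$. Hence the two infima coincide (both equal $+\infty$ when $r_p(V,w)=\infty$, trivially). Substituting yields $\capac_{p^\ast}(\partial T,w)=(r_p(V,w)^p)^{-p^\ast/p}=r_p(V,w)^{-p^\ast}$. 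The only genuine subtlety here is the passage from signed to positive flows, which rests entirely on the positivity asserted in Theorem~\ref{t-charpinv}(b).

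For the second part, the condition $\capac_{p^\ast}(\partial T,w)>0$ is equivalent to $r_p(V,w)<\infty$. Minimal $p^\ast$-energy means minimal $\sum_v(f(v)w_v)^p$, i.e. minimal $\ell^p(V,w)$-norm, so Theorem~\ref{t-charpinv}(b) furnishes directly the unique positive unit flow $f$ of minimal energy. It then remains to compute its boundary values. Using the flow--measure correspondence recalled in Appendix~2 (extended to trees with leaves), the associated probability measure $\mu$ satisfies $\mu(\partial T_v)=f(v)$ for all $v$. For an infinite geodesic $\zeta=(v_0,v_1,v_2,\ldots)$ the sets $\partial T_{v_n}$ decrease with $\bigcap_n\partial T_{v_n}=\{\zeta\}$, so continuity from above of the finite measure $\mu$ gives $\lim_{v\to\zeta}f(v)=\lim_n\mu(\partial T_{v_n})=\mu(\{\zeta\})$; for a finite geodesic ending at a leaf $v_m$ one has $\partial T_{v_m}=\{\zeta\}$ and the value is simply $f(v_m)=\mu(\{\zeta\})$. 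That $\mu(\{\zeta\})\in[0,1]$ is immediate since $\mu$ is a probability measure (alternatively, $f$ decreases along branches by Remark~\ref{r-decr2}, with $f(v_0)=1$). I expect the main obstacle to be bookkeeping rather than conceptual: correctly tracking the conjugate exponents, justifying the reduction to positive flows, and identifying the extremal flow's limits along $\partial T$ with the atoms of $\mu$ through continuity from above.
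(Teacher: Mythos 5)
Your proposal is correct and follows essentially the same route as the paper: the paper likewise rewrites $\capac_{p^\ast}(\partial T,w)$ as the $(-p^\ast/p)$-power of the infimal $p$-energy over positive unit flows, obtains the value $r_p(V,w)^p$ of the infimum over all (real or complex) unit flows from Theorem \ref{t-charpinv}(a), and uses the positivity of the extremal sequence in Theorem \ref{t-charpinv}(b) to justify restricting to positive flows, with uniqueness and the equality of the two infima following exactly as you argue. Your computation of the boundary limits (continuity from above along $\partial T_{v_n}\downarrow\{\zeta\}$, and the leaf case $\partial T_{v_m}=\{\zeta\}$) merely fills in details that the paper leaves implicit.
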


Theorem \ref{t-charpinv}(b) provides an explicit formula for the minimal flow $f$, and $(f(v_n))_{n\geq 0}$ is decreasing for every branch $(v_0, v_1,v_2,\ldots)$ starting from $v_0$, see Remark \ref{r-decr2}.

The measure corresponding to the unique positive unit flow on $V$ of minimal energy is the harmonic measure on $\partial T$, see \cite[Theorem 16.2]{LyPe16}, called the equilibrium measure in \cite{ArLe23}. In \cite[Section 7]{ArLe23} one can also find an expression for $\capac_p(\partial T,w)$ as a continued fraction when $p=2$ and $w=1$; see also \cite[p.\ 57]{Soa94}.

We can now express chaos on rooted trees in terms of capacity, see Theorem \ref{t-charpmu}; note that $\partial T_v$, $v\in V$, coincides with the boundary of the tree $V(v)$ of descendants of $v$.

\begin{corollary}
Let $T=(V,E)$ be a rooted tree with root $v_0$, $w=(w_v)_{v\in V}$ a (real or complex) weight on $V$, and $1< p<\infty$. Suppose that the backward shift $B$ is an operator on $\ell^p(V,w)$. Then $B$ is chaotic on $\ell^p(V,w)$ if and only if, for every $v\in V$, 
\[
\capac_{p^\ast}(\partial T_v,|w|)>0.
\]
\end{corollary}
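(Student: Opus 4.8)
The plan is to chain the characterization of chaos via the continued fraction $c_p$ in Theorem \ref{t-charpmu} with the identification of capacity as a resistance in Theorem \ref{t-charpinvbis}, applied separately on each subtree $V(v)$. Everything reduces to the three equivalences: chaos $\Leftrightarrow$ finiteness of $c_p(V(v),|w|)$ $\Leftrightarrow$ finiteness of $r_p(V(v),|w|)$ $\Leftrightarrow$ positivity of $\capac_{p^\ast}(\partial T_v,|w|)$.

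First I would pass from $w$ to $|w|$. The norm of $\ell^p(V,w)$ is $\|f\|^p=\sum_v|f(v)w_v|^p$, so $\ell^p(V,w)$ and $\ell^p(V,|w|)$ are the same space and $B$ is chaotic on one exactly when it is chaotic on the other; likewise the constants $c_p(V(v),w)$ and $r_p(V(v),w)$ depend on $w$ only through $|w|$ by Definition \ref{d-const}. Working with the positive weight $|w|$, Theorem \ref{t-charpmu} then says that $B$ is chaotic on $\ell^p(V,|w|)$ if and only if $c_p(V(v),|w|)<\infty$ for every $v\in V$. Invoking the relation $r_p(V(v),|w|)=\bigl(|w_v|^p+c_p(V(v),|w|)^p\bigr)^{1/p}$ from Definition \ref{d-const}, this is equivalent to $r_p(V(v),|w|)<\infty$ for every $v\in V$.

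Next I would apply Theorem \ref{t-charpinvbis} on the subtree $V(v)$, whose root is $v$ and whose boundary is exactly $\partial T_v$ (as recorded in the remark preceding the corollary). Since $B$ is an operator on $\ell^p(V,|w|)$, the local summability condition $\sum_{u\in\Chi(v)}|w_u|^{-p^\ast}<\infty$ of Remark \ref{rem-contfr}(b) holds at every vertex, so $B$ is in particular defined on each $\ell^p(V(v),|w|)$ and the hypothesis of Theorem \ref{t-charpinvbis} is met. That theorem yields $\capac_{p^\ast}(\partial T_v,|w|)=r_p(V(v),|w|)^{-p^\ast}$. Under the convention $\infty^{-1}=0$, the right-hand side is strictly positive precisely when $r_p(V(v),|w|)<\infty$, so the two families of conditions match vertex by vertex, which closes the chain of equivalences.

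The argument is essentially bookkeeping, and I do not expect a genuine obstacle. The only points that need care are the reduction from $w$ to $|w|$, the verification that being an operator on $\ell^p(V,|w|)$ forces $B$ to be defined on every subtree $V(v)$ so that Theorem \ref{t-charpinvbis} applies, and the sign convention identifying vanishing of a capacity with infiniteness of the corresponding resistance.
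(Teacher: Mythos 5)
Your proposal is correct and is essentially identical to the paper's own (one-line) proof: the paper likewise obtains the corollary by combining Theorem \ref{t-charpmu} (chaos $\Leftrightarrow$ $c_p(V(v),\mu)<\infty$ for all $v$, hence $\Leftrightarrow$ $r_p(V(v),\mu)<\infty$) with Theorem \ref{t-charpinvbis} applied to each rooted subtree $V(v)$, whose boundary is exactly $\partial T_v$. The bookkeeping you spell out---the reduction from $w$ to $|w|$, the observation that $B$ being an operator on $\ell^p(V,w)$ guarantees it is defined on each $\ell^p(V(v),|w|)$, and the convention identifying $\capac_{p^\ast}(\partial T_v,|w|)=r_p(V(v),|w|)^{-p^\ast}>0$ with $r_p(V(v),|w|)<\infty$---is precisely what the paper leaves implicit.
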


\subsection{The boundary of an unrooted tree}\label{subs-boundaryunrooted}

We pass to unrooted trees, again with or without leaves. These do not seem to have been investigated in potential theory yet. We first need a notion of boundary. By slightly modifying the terminology in Sections \ref{s-notation} and \ref{s-chaosPoisson}, we will call a \textit{branch} any maximal sequence
\[
\zeta = (v_n)_{n< m}
\]
in the tree with $v_{n-1}=\prt(v_{n})$ for $n<m$, where $m\in \ZZ\cup\{\infty\}$; for a finite $m$ this branch terminates in a leaf.

\begin{definition} 
Let $T=(V,E)$ be an unrooted tree. The \textit{boundary} $\partial T$ of $T$ is defined as the set of all branches.
\end{definition}

Like in Appendix 2, we endow $\partial T$ with the topology inherited from $\prod_{n\in\ZZ} \gen_n$ under the product topology, with the discrete topology on the generations. Thus $\partial T$ turns into a separable completely metrizable topological space. A base of the topology is again given by the sets 
\[
\partial T_v,\ v\in V
\]
of branches passing through $v$. These sets also generate the Borel $\sigma$-algebra $\mathcal{B}(\partial T)$ of $\partial T$.

\begin{example}
For the classical tree $T$ given by $V=\ZZ$, $\partial T$ can be identified with $\{\infty\}$. For the comb tree of Example \ref{ex-comb}, $\partial T$ can be identified with $\ZZ\cup \{\infty\}$. For a tree with a free left end, $\partial T= \partial T_{v_0}$, where $v_0$ is such that all its ancestors only have one child.
\end{example}

\begin{remark} 
Our choice of boundary reflects the order of the directed tree. One might also think of adding a supplementary boundary point $-\infty$, which can be understood as the point obtained by following the tree ``backwards.'' More formally, it can be represented by any reversed branch $(\ldots, v_1, v_0, v_{-1}, \ldots)$, where $v_{n-1}=\prt(v_{n})$ for all $n$. Note than any two such reversed branches have a common tail.

This point of view would have the advantage of making $\partial T\cup \{-\infty\}$ compact if the tree is locally finite. However, as we will see, if one looks at measures on the boundary that define flows on the tree, then $-\infty$ is irrelevant. The point $-\infty$ will reappear when we follow our flow to its limit.
\end{remark}

Similarly to the rooted case, we have a correspondence between (real or complex) Borel measures on $\partial T$ and (real or complex) flows on $V$. However, there is a restriction. The following was obtained in \cite[Proposition 2.2]{ChLe19} for locally finite, leafless rooted trees, but it holds for arbitrary trees. Let $L(V)$ denote the set of leaves of the tree.

\begin{proposition}\label{p-miflows}
Let $T=(V,E)$ be a directed (rooted or unrooted) tree with generations \emph{$\gen_n$}, $n\in\NN$ or $n\in\ZZ$. Let $f:V\to \KK$ be a flow. Then there is a Borel measure $\mu$ on $\partial T$ such that $f(v)=\mu(\partial T_v)$ for all $v\in V$ if and only if 
\[
\sum_{v\in L(V)} |f(v)|<\infty \quad\text{and}\quad \sup_{n\geq 0} \sum_{v\in \emph{$\gen_n$}} |f(v)|  <\infty.
\] 
In that case, $\mu$ is uniquely determined by $f$.
\end{proposition}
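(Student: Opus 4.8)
The plan is to prove the biconditional in Proposition \ref{p-miflows} by reducing the unrooted (and leafy) case to a version of the Daniell--Kolmogorov extension theorem on a probability space, which is the mechanism already invoked for rooted leafless trees at the end of Appendix 2. The necessity direction is immediate: if $\mu$ is a finite Borel measure with $f(v)=\mu(\partial T_v)$, then for any leaf $v$ the set $\partial T_v=\{\zeta\}$ is the singleton branch terminating at $v$, and these singletons are pairwise disjoint, so $\sum_{v\in L(V)}|f(v)|\le|\mu|(\partial T)<\infty$; likewise, for each fixed $n$ the sets $\partial T_v$, $v\in\gen_n$, are pairwise disjoint Borel sets, whence $\sum_{v\in\gen_n}|f(v)|\le|\mu|(\partial T)<\infty$ uniformly in $n$. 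The uniqueness of $\mu$ also follows here, since the sets $\partial T_v$ form a $\pi$-system generating $\mathcal{B}(\partial T)$, so two finite Borel measures agreeing on all $\partial T_v$ coincide.

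For sufficiency I would first treat the leafless case, reducing the unrooted situation to rooted pieces exactly as in Section \ref{s-unrooted}. Fix a reference vertex $v_0$ and enumerate generations so that $v_0\in\gen_0$. The key structural observation is that $\partial T$ decomposes as the disjoint union, over the branch $(v_{-n})_{n\ge 0}=(\prt^n(v_0))_n$ going backwards, of the boundaries of the sets $W_{-n}=V(v_{-n})\setminus V(v_{-n+1})$, together with possibly one extra branch escaping to $-\infty$; concretely every branch either eventually enters some $V(v_{-n})$ from the left and then descends, belonging to the boundary of one $W_{-n}$, or it coincides with the reversed backbone itself. The condition $\sup_{n\ge 0}\sum_{v\in\gen_n}|f(v)|<\infty$ is precisely what guarantees that the masses $f(v_{-n})=\mu(\partial T_{v_{-n}})$ stay bounded, so that the escaping branch receives a well-defined (possibly zero) mass $\lim_{n\to\infty}f(v_{-n})$, and the total mass assembled over the pieces $\partial T_{W_{-n}}$ is finite. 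On each $W_{-n}$, which is a rooted tree with root $v_{-n}$, the restricted flow is a nonnegative (after the usual real/complex splitting into four nonnegative flows) finite flow, so the rooted result applies to produce a Borel measure there; I would then sum these measures and add the point mass on the escaping branch.

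The rooted building block itself I would prove by the Daniell--Kolmogorov consistency theorem: the flow equation $f(v)=\sum_{u\in\Chi(v)}f(u)$ says exactly that the set functions $\mu_n$ defined on the algebra generated by $\{\partial T_v:v\in\gen_n\}$ by $\mu_n(\partial T_v)=f(v)$ are consistent under the projections $\gen_m\to\gen_n$, so they extend to a measure on the projective limit $\partial T$. To handle leaves I would use the device already flagged in Subsection \ref{subs-rootI}: replace each leaf $v$ by an infinite branch, apply the leafless result, and then push the resulting mass on that adjoined branch back to the singleton $\partial T_v=\{\zeta_v\}$; the hypothesis $\sum_{v\in L(V)}|f(v)|<\infty$ ensures these point masses form a summable family, so the construction yields a genuine finite Borel measure.

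The main obstacle I anticipate is not the consistency argument but the bookkeeping of \emph{where mass can escape} in the unrooted, non-locally-finite setting, and showing that the two stated summability conditions are exactly the right ones. In particular I must verify that the only way mass can leak out of the reassembled measure is along the single reversed backbone direction (the prospective $-\infty$ point discussed in the preceding remark), that this leaked mass equals the monotone limit $\lim_{n\to\infty}f(v_{-n})$, and that it does \emph{not} correspond to any branch in $\partial T$ and so is simply discarded. Getting the $\sigma$-additivity of the assembled measure right across infinitely many pieces $\partial T_{W_{-n}}$, while only assuming a uniform bound on generation sums rather than absolute summability, is the delicate point; here a tightness/continuity-from-above argument using the compactness of the cylinder structure (or, in the non-locally-finite case, a direct verification that the finitely additive set function is continuous at $\varnothing$ on the generating algebra) will be needed to legitimately invoke Carath\'eodory extension.
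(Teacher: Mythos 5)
Your necessity argument and your $\pi$-system uniqueness argument are fine (for uniqueness of \emph{signed} measures you additionally need $\mu_1(\partial T)=\mu_2(\partial T)$, which follows since $\partial T$ is a countable disjoint union of sets $\partial T_v$; the paper instead uses Jordan decompositions). But the sufficiency direction has a genuine gap at its core step: the phrase ``the usual real/complex splitting into four nonnegative flows.'' The pointwise positive part of a flow is not a flow: if $f(v)=f(u_1)+f(u_2)$ with $f(u_1)=1$, $f(u_2)=-1$, then $f^+(v)=0\neq 1=f^+(u_1)+f^+(u_2)$. In fact, by the Corollary immediately following the proposition, a real flow is a difference of two positive flows \emph{if and only if} it is measure-induced, and the comb-tree example after that corollary exhibits a real flow admitting no such splitting. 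So assuming the splitting is circular: it is equivalent to the statement you are trying to prove. A valid splitting does exist under the hypothesis --- one can show $g(v):=\lim_n\sum_{u\in\Chi^n(v)}f^+(u)$ and $h:=g-f$ are positive flows, using that $\sup_n\sum_{v\in\gen_n}|f(v)|<\infty$ makes all iterated sums absolutely convergent and monotone in $n$ --- but this is a substantive argument that crucially consumes the sup-hypothesis, and your proposal gives no indication of it; without it, the Daniell--Kolmogorov step (a theorem about \emph{positive} consistent families) has nothing to act on. Your fallback for $\sigma$-additivity of a signed cylinder set function is also not viable as stated: on non-locally-finite trees the cylinders $\partial T_v$ are not compact (and $\partial T$ is not locally compact), so ``compactness of the cylinder structure'' is unavailable, and ``continuity at $\varnothing$ on the generating algebra'' is precisely the hard part, for which you offer no mechanism.

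The paper's proof supplies exactly this missing mechanism, and does so without your decomposition into the rooted pieces $W_{-n}$: after reducing to real flows on leafless trees (handling leaves by grafting infinite branches, as you also propose), it forms the atomic measures $\mu_n=\sum_{v\in\gen_n}f(v)\delta_{\zeta_v}$, proves the family $(\mu_n)_n$ is \emph{uniformly tight} --- the key construction being finite sets $V_k\subset\gen_{N+k}$ with $V_{k+1}\subset\Chi(V_k)$ and $\sum_{v\in V_k}|f(v)|>M-\eps$, whose union is a locally finite subtree with compact boundary capturing all but $\eps$ of every $|\mu_n|$ --- and then invokes Prokhorov's theorem and the fact that $\chi_{\partial T_v}$ is continuous (the cylinders being clopen) to identify the weak limit as the desired measure. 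Two smaller points about your reduction: the ``extra branch escaping to $-\infty$'' is a phantom, since the reversed backbone is not a maximal branch and every maximal branch merges with the backbone backwards and then descends into some $W_{-n}$, so $\partial T=\bigsqcup_{n\geq 0}\partial(W_{-n})$ exactly; and in the correct reassembly (with the root value of the piece flow taken as $f(v_{-n})-f(v_{-n+1})$, not $f(v_{-n})$) nothing is ever ``discarded'' --- telescoping gives $\mu(\partial T_{v_{-m}})=f(v_{-m})$ automatically, the total mass being $\lim_m f(v_{-m})$.
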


\begin{proof}
The necessity is obvious: Let the flow $f$ be induced by a Borel measure $\mu$ on $\partial T$. Since both the sets $\partial T_v$, $v\in L(V)$, and the sets $\partial T_v$, $v\in\gen_n$, form disjoint subsets of $\partial T$, we have that
\[
\sum_{v\in L(V)} |f(v)| = \sum_{v\in L(V)} |\mu(\partial T_v)| \leq \|\mu\|
\]
and
\[
\sum_{v\in\gen_n} |f(v)| = \sum_{v\in\gen_n} |\mu(\partial T_v)| \leq \|\mu\|,\,\, n\geq 0,
\]
where $\|\mu\|$ is the total variation of the measure, which is finite. 

As for sufficiency, we note that, by treating separately the real and imaginary parts of complex flows, we need only consider real flows. In addition, we may assume that the tree is leafless. Indeed, if $v$ is a leaf, we may simply extend the tree beyond $v$ by an infinite branch along which $f$ carries the (constant) value $f(v)$. This essentially leaves $\partial T$ unchanged, the tree becomes leafless, and the hypothesis on the initial tree implies the hypothesis on the modified tree.

Thus suppose that $f$ is a real flow on a leafless tree for which $\sup_{n\geq 0} \sum_{v\in \gen_n} |f(v)|<\infty$. We need to construct a real Borel measure $\mu$ on $\partial T$ that induces $f$. We begin as in \cite{ChLe19} by defining, for any $n\geq 0$, a measure $\mu_n$ on $\partial T$ by 
\[
\mu_n = \sum_{v\in \gen_n} f(v) \delta_{\zeta_v},
\]
where $\zeta_v$ is an arbitrarily chosen but fixed boundary point in $\partial T_v$ and $\delta$ denotes Dirac measure; recall that $\zeta_v$ is an (infinite) branch passing through $v$. The hypothesis implies that $\mu_n$ is well-defined.

A trivial but crucial property for the sequel is that, for any $v\in V$,
\begin{equation}\label{eq-flow1}
|f(v)|\leq \sum_{u\in \Chi(v)} |f(u)|,
\end{equation}
note that the tree is leafless; in particular, $\sum_{v\in \gen_n} |f(v)|$ is increasing in $n$. Let us write
\begin{equation}\label{eq-flow}
M=\sup_{n\geq 0} \sum_{v\in \gen_n} |f(v)|= \lim_{n\to\infty} \sum_{v\in\gen_n} |f(v)|.
\end{equation}

We will show that the sequence $(\mu_n)_{n\geq 0}$ of measures is uniformly tight. Thus, let $\varepsilon>0$. Then there is some $N\geq 0$ such that
\[
\sum_{v\in \gen_{N}} |f(v)| > M-\varepsilon,
\]
and hence there is a finite subset $V_0\subset \gen_{N}$ such that
\[
\sum_{v\in V_0} |f(v)| > M-\varepsilon.
\]
We write $\Chi(V_0)=\bigcup_{v\in V_0}\Chi(v)$. Since $\sum_{u\in \Chi(V_0)} |f(u)| \geq \sum_{v\in V_0} |f(v)|$ by \eqref{eq-flow1}, there is a finite subset $V_1\subset \Chi(V_0)$ such that 
\[
\sum_{v\in V_1} |f(v)| > M-\varepsilon.
\]
By making $V_1$ larger, if necessary, we can assume that, for any $v\in V_0$, the branch $\zeta_v$ starting from $v$ passes through a vertex in $V_1$. 

Continuing in this way, we find finite subsets $V_k\subset \gen_{N+k}$ such that, for any $k\geq 0$,
\begin{equation}\label{eq-tree2}
V_{k+1}\subset \Chi(V_k),\quad \sum_{v\in V_k} |f(v)| > M-\varepsilon
\end{equation}
and, for every $v\in V_k$, the branch $\zeta_v$ starting from $v$ passes through any $V_{k+j}$, $j\geq 0$.

Now let $W_v$, $v\in V_0$, be the rooted leafless subtree of $V$ with root $v$ whose vertices of generation $k\geq 0$ are given by $V_k\cap \Chi^k(v)$; thus $W_v$ is locally finite, so that its boundary
\[
K_v= \partial W_v, v\in V_0
\]
is compact. We can identify $K_v$ with a subset of $\partial T$, which is then also compact in $\partial T$. Finally consider the compact set $K\subset \partial T$ given by
\[
K=\bigcup_{v\in V_0} K_v.
\]

Let $n=N+k$, $k\geq 0$. By construction we have that $\zeta_v\in K$ for every $v\in V_k$. Thus
\begin{align*}
|\mu_n|(\partial T \setminus K) &= \sum_{v\in \gen_{N+k}\setminus V_k} |f(v)| \delta_{\zeta_v}(\partial T \setminus K) + \sum_{v\in V_k} |f(v)| \delta_{\zeta_v}(\partial T \setminus K)\\
& \leq \sum_{v\in \gen_{N+k}\setminus V_k} |f(v)| <\varepsilon,
\end{align*}
where the last inequality comes from \eqref{eq-tree2} with \eqref{eq-flow}. 

Finally, it follows from the definition of the measures $\mu_n$ and the hypothesis that there is a finite set $F\subset \partial T$ such that $|\mu_n|(\partial T\setminus F)<\varepsilon$ for $0\leq n <N$. 

Altogether we have shown that the sequence $(\mu_n)_{n\geq 0}$ of measures on $\partial T$ is uniformly tight. Also, by hypothesis, their total variations are uniformly bounded. Since $\partial T$ is a separable complete metric space, Prokhorov's theorem implies that there is a real Borel measure $\mu$ on $\partial T$ so that some subsequence $(\mu_{n_k})_k$ converges weakly to $\mu$, see \cite[Theorem 8.6.2]{Bog07}. 

We now show that $\mu$ induces the flow $f$. Let $v\in V$, and let $n\in\NN$ or $n\in\ZZ$ be such that $v\in\gen_n$. Since $f$ is a flow, we have whenever $n_k\geq n$
\begin{align*}
f(v)= \sum_{u\in \Chi^{n_k-n}(v)} f(u) &= \sum_{u\in \Chi^{n_k-n}(v)} \mu_{n_k}(\partial T_u)= \mu_{n_k}\Big(\bigcup_{u\in \Chi^{n_k-n}(v)}\partial T_u\Big)\\& = \mu_{n_k}(\partial T_v)= \int_{\partial T} \chi_{\partial T_v} \mathrm{d}\mu_{n_k}\to \int_{\partial T} \chi_{\partial T_v} \mathrm{d}\mu = \mu(\partial T_v)
\end{align*}
as $k\to\infty$, where we have used that $\partial T_v$ is open and closed in $\partial T$, so that $\chi_{\partial T_v}$ is continuous.

Thus $f$ is indeed induced by the Borel measure $\mu$. For the uniqueness, suppose that $f$ is induced by real measures $\mu_1$ and $\mu_2$. Using their Jordan decompositions we find that, for all $v\in V$, $\mu_1^+(\partial T_v)-\mu_1^-(\partial T_v)=f(v)=\mu_2^+(\partial T_v)-\mu_2^-(\partial T_v)$ and hence $\mu_1^+(\partial T_v)+\mu_2^-(\partial T_v)=\mu_2^+(\partial T_v)+\mu_1^-(\partial T_v)$. By the uniqueness theorem for (positive finite) measures we have that $\mu_1^++\mu_2^-=\mu_2^++\mu_1^-$ on $\mathcal{B}(\partial T)$ and hence that $\mu_1=\mu_2$.
\end{proof}

\begin{remark} Instead of using Prokhorov's theorem, one may also prove the proposition by applying the Riesz representation theorem as in \cite{ChLe19} to a metrizable compactification of $\partial T$, more precisely the one we get by embedding the boundary in $[0,1]^V$ using the functions $\chi_{\partial T_v}$, $v \in V$.
\end{remark}

We will call flows that arise from Borel measures on $\partial T$ \textit{measure-induced flows}, briefly \textit{m-i.\ flows}. It follows, in particular, that any positive and any negative flow on a rooted tree is measure-induced (by a positive or negative Borel measure); but this was already obtained by the construction in Appendix 2.

Of course, the real and imaginary parts of a complex flow are real flows. But it is an intriguing consequence of the Jordan decomposition theorem that any real m-i.\ flow is the difference of two positive flows:

\begin{corollary}
Let $T=(V,E)$ be a directed (rooted or unrooted) tree. Let $f:V\to \RR$ be a real flow. Then $f$ is an m-i.\ flow if and only if it can be written as the difference of two positive flows.
\end{corollary}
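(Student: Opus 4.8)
The plan is to push the whole problem down to the level of the inducing measures on $\partial T$ and let the Jordan decomposition of a signed measure do the work; the partition identity $\partial T_v=\bigsqcup_{u\in\Chi(v)}\partial T_u$, valid for every non-leaf $v$, is what lets us pass freely between measures on $\partial T$ and flows on $V$. The substantive implication is that an m-i.\ flow is a difference of two positive flows; the converse will be routine once one knows that positive flows correspond to positive measures.

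For the forward implication I would first fix a real flow $f$ that is m-i., so that by definition there is a finite real (signed) Borel measure $\mu$ on $\partial T$ with $f(v)=\mu(\partial T_v)$ for all $v\in V$. Applying the Jordan decomposition theorem I would write $\mu=\mu^+-\mu^-$ with $\mu^+,\mu^-$ finite positive Borel measures, and set $f_1(v)=\mu^+(\partial T_v)$ and $f_2(v)=\mu^-(\partial T_v)$. Each $f_i$ is nonnegative, and each is a flow in the sense of Definition \ref{d-backwinv}: for a non-leaf $v$ the family $(\partial T_u)_{u\in\Chi(v)}$ is a countable partition of $\partial T_v$, so countable additivity of the positive measure $\mu^{\pm}$ gives $\mu^{\pm}(\partial T_v)=\sum_{u\in\Chi(v)}\mu^{\pm}(\partial T_u)$, the series converging unconditionally because its terms are nonnegative. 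Thus $f_1,f_2$ are positive, measure-induced flows with $f=f_1-f_2$, as required.

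For the converse I would start from $f=f_1-f_2$ with $f_1,f_2$ positive flows. Here one uses that a positive flow is induced by a finite positive Borel measure; writing $f_i(v)=\mu_i(\partial T_v)$ with $\mu_1,\mu_2$ finite and positive, the difference $\mu:=\mu_1-\mu_2$ is a finite signed Borel measure satisfying $\mu(\partial T_v)=f_1(v)-f_2(v)=f(v)$ for all $v$, so $f$ is m-i. The step I expect to need the most care is exactly this traffic between flows and measures. In the forward direction the delicate point is conceptual rather than computational: although $f$ may change sign across $V$ in a complicated, non-local way, the positivity required for the splitting is manufactured in one stroke by the Jordan decomposition on the boundary and then transported back by the partition identity. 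In the backward direction the point to keep in mind is that on an unrooted tree a positive flow need not be m-i.\ unless it satisfies the two boundedness conditions of Proposition \ref{p-miflows}; accordingly the $f_i$ are to be read as the positive measure-induced flows supplied by the correspondence, equivalently positive flows of finite total mass, which are precisely those produced by the forward implication.
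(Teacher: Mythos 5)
Your proof is correct and coincides with the paper's own argument: the corollary is presented there as an immediate consequence of Proposition \ref{p-miflows} together with the Jordan decomposition theorem, which are exactly the two ingredients you use, with the partition $\partial T_v=\bigcup_{u\in\Chi(v)}\partial T_u$ (for $v$ not a leaf) transporting positivity between measures and flows in both directions.

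One point deserves emphasis: your closing caveat about the backward direction is not mere caution but is needed for the statement to be true. Read with arbitrary positive flows, the ``if'' direction actually fails on unrooted trees: on the unrooted binary tree, $g(v)=2^{-n}$ for $v\in\gen_n$ defines a positive flow, and since every generation of that tree is infinite, all generation sums are infinite, so $g$ is not m-i.\ by Proposition \ref{p-miflows}; yet $g=2g-g$ exhibits $g$ as a difference of two positive flows. This is consistent with the paper itself, which records that positive (or negative) flows are automatically measure-induced only on \emph{rooted} trees, where the generation and leaf sums are controlled by the value at the root. So on unrooted trees the corollary must be read, exactly as you stipulate, with positive \emph{m-i.}\ flows (equivalently, positive flows satisfying the two finiteness conditions of Proposition \ref{p-miflows}), which is also precisely what your forward implication produces; on rooted trees your argument needs no such adjustment.
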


\begin{example}
To give an example of a real flow $f$ that is not measure-induced, consider once more the comb tree of Example \ref{ex-comb}. Define, for $n\in\ZZ$, the value of $f$ as 1 at the vertex $(2n,0)$, as $0$ at $(2n+1,0)$, as 1 at the tooth emanating from $(2n,0)$, and as $-1$ at the tooth emanating from $(2n+1,0)$. Then $f$ does not satisfy the condition of Proposition \ref{p-miflows}. Note that it can clearly not be decomposed as the difference of two positive flows.
\end{example}

\subsection{Potential theory on unrooted trees}\label{subs-unrooted}

After the preliminaries of the previous subsection we can now define capacity of the boundary for an unrooted tree. Since there is no root, we need to single out, arbitrarily, a vertex $v_0$. Also, as we will see, we have to accept real Borel measures on $\partial T$. Thus, in the sequel, measures and flows are supposed to be real (there is no need to allow them to be complex).

\begin{definition}\label{d-caproot}
Let $T=(V,E)$ be an unrooted tree, $v_0\in V$, $w=(w_v)_{v\in V}$ a positive weight on $V$, and $1<p<\infty$. Then the \textit{$p$-capacity} of the boundary $\partial T$ with respect to $v_0$ and $w$ is given by 
\[
\capac_p(\partial T,v_0,w)= \sup\Big\{ |\mu(\partial T_{v_0})|^p : \text{ $\mu$ a Borel measure on $\partial T$ with $\sum_{v\in V}|\mu(\partial T_v)w_v|^{p^\ast}\leq 1$}\Big\}.
\]
\end{definition}

The definition can again be reformulated in terms of measure-induced flows. We have that
\[
\capac_p(\partial T,v_0,w)= \sup\Big\{ |f(v_0)|^p : \text{ $f$ an m-i.\ flow on $V$ with $\sum_{v\in V}|f(v)w_v|^{p^\ast}\leq 1$}\Big\}
\]
and then
\[
\capac_p(\partial T,v_0,w)= \Big(\inf\Big\{ \sum_{v\in V}|f(v)w_v|^{p^\ast} : \text{ $f$ an m-i.\ flow on $V$ with $f(v_0)=1$}\Big\}\Big)^{-p/p^\ast},
\]
where we can call
\[
\mathcal{E}_p(f,w)= \sum_{v\in V}|f(v)w_v|^{p^\ast}
\]
the \textit{$p$-energy} of a flow.

We may ask again whether, given a fixed vertex $v_0\in V$, an m-i.\ flow  $f$ of finite energy with $f(v_0)=1$ exists, that is, if $\capac_p(\partial T,v_0,w)>0$. And in that case we are interested in a flow of minimal energy. The answer is provided by our discussion in Section \ref{s-unrooted}: indeed, an m-i.\ flow  $f$ of finite energy with $f(v_0)=1$ exists if and only if the restriction $f_+$ of $f$ to the rooted subtree $V_+(v_0)=V(v_0)$ and the sequence $f_-$ defined by \eqref{f-} on the rooted tree $V_-(v_0)$ (under the modified parent-child relationship) are m-i.\ flows of finite energy with $f_+(v_0)=f_-(v_0)=1$. The latter is characterized by Theorem \ref{t-charpinv}(a). Moreover, since m-i.\ unit flows of minimal energy on rooted trees with positive weights are positive, which correspond to positive Borel measures, we obtain with \eqref{f-} the following result, where for $\zeta=(v_n)_{n<m}\in \partial T$, $m\in\ZZ\cup\{\infty\}$, we write
\[
\lim_{v\to\zeta} f(v)= \lim_{n\to m} f(v_n)
\]
and
\[
\lim_{v\to-\infty} f(v)= \lim_{n\to -\infty} f(v_n),
\]
the latter being independent of the sequence $(v_n)_n$.

\begin{theorem}\label{t-charpinvunrootedbis}
Let $T=(V,E)$ be an unrooted tree, $v_0\in V$, $w=(w_v)_{v\in V}$ a positive weight on $V$, and $1< p<\infty$. Suppose that $B$ is defined on $\ell^p(V,w)$. 

\emph{(a)} The following assertions are equivalent:
\begin{enumerate}
	\item[\rm (i)] there exists an m-i.\ flow $f$ on $V$ of finite energy with $f(v_0)=1$;
	\item[\rm (ii)] $\capac_{p^\ast}(\partial T,v_0,w)>0$;
	\item[\rm (iii)] $r_p(V(v_0),w)<\infty$ and $r_p(V_-(v_0),w)<\infty$.
\end{enumerate}
In that case, 
\[
\capac_{p^\ast}(\partial T,v_0,w)= \big(r_p(V(v_0),w)^p+ r_p(V_-(v_0),w)^p - w_{v_0}^p\big)^{-p^\ast/p}.
\]

\emph{(b)} If $\capac_{p^\ast}(\partial T,v_0,w)>0$, there exists a unique m-i.\ flow $f$ on $V$ with $f(v_0)=1$ of minimal energy. Its corresponding measure $\mu$ satisfies $\mu\geq 0$ on $\partial T_{v_0}$ with $\mu(\partial T_{v_0})=1$, and $\mu\leq 0$ on $(\partial T_{v_0})^c$ with $\mu((\partial T_{v_0})^c)\in [-1,0]$. Moreover,
\[
\lim_{v\to -\infty} f(v)= 1+ \mu((\partial T_{v_0})^c) \in [0,1],
\]
and for every boundary point $\zeta\in\partial T$ we have that
\[
\lim_{v\to\zeta} f(v)= \mu(\{\zeta\})\in [-1,1].
\]
\end{theorem}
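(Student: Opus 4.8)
The plan is to reduce everything to the two rooted trees $V(v_0)$ and $V_-(v_0)$ by means of the flow decomposition of Section \ref{s-unrooted}, and then to invoke Theorem \ref{t-charpinv} together with its potential-theoretic reformulation Theorem \ref{t-charpinvbis}. Recall from the preceding subsection that $\capac_{p^\ast}(\partial T,v_0,w)=\big(\inf\{\mathcal{E}_{p^\ast}(f,w): f \text{ an m-i.\ flow},\ f(v_0)=1\}\big)^{-p^\ast/p}$, with $\mathcal{E}_{p^\ast}(f,w)=\sum_{v\in V}|f(v)w_v|^{p}$. The first step is an energy identity. For any flow $f$ on $V$ with $f(v_0)=1$, write $f_+$ for its restriction to the rooted tree $V(v_0)$ and $f_-$ for the sequence on the rooted tree $V_-(v_0)$ defined in \eqref{f-}; both are flows with value $1$ at the root $v_0$, and conversely such a pair reassembles to a flow on $V$. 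Since $|f_-|=|f|$ and $V=V(v_0)\cup V_-(v_0)$ with $V(v_0)\cap V_-(v_0)=\{v_0\}$, I would split
\[
\sum_{v\in V}|f(v)w_v|^{p} = \sum_{v\in V(v_0)}|f_+(v)w_v|^{p} + \Big(\sum_{v\in V_-(v_0)}|f_-(v)w_v|^{p}-w_{v_0}^{p}\Big),
\]
so that $f_+$ and $f_-$ may be minimized independently.

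Next I would apply Theorem \ref{t-charpinv}(a) to $V(v_0)$ and to $V_-(v_0)$ (that $B$ is defined on $\ell^p$ over each of these follows from the hypothesis on $V$, since by \eqref{eq-childN} the child sets in $V_-(v_0)$ enlarge the original ones only by finitely many summable terms). This yields the two infima as $r_p(V(v_0),w)^p$ and $c_p(V_-(v_0),w)^p=r_p(V_-(v_0),w)^p-w_{v_0}^p$, whence the infimum over \emph{all} unit flows equals $r_p(V(v_0),w)^p+r_p(V_-(v_0),w)^p-w_{v_0}^p$. The point to be checked is that this coincides with the infimum over m-i.\ flows; since fewer competitors only raise an infimum, it suffices to exhibit the extremal flow as measure-induced. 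For this I would take the minimal positive flows $f_\pm$ of Theorem \ref{t-charpinv}(b), which correspond to positive boundary measures $\mu_+$ on $\partial T_{v_0}$ and $\mu_-$ on $\partial(V_-(v_0))$ of total mass $1$. Finiteness of the right-hand side is equivalent to both $r_p$ being finite (each is $\geq w_{v_0}>0$), which settles (ii)$\Leftrightarrow$(iii) and the capacity formula, and (i)$\Leftrightarrow$(ii) follows because the extremal flow is m-i. This proves (a).

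For (b), uniqueness is immediate: the all-flows minimizer is unique by strict convexity of the $\ell^p(V,w)$-norm, it is m-i.\ by the above, and any m-i.\ minimizer attains the same value, hence equals it. The substantial work is the reassembly of the boundary measure. I would set $\mu$ to agree with $\mu_+$ on $\partial T_{v_0}$ and with $-\mu_-$ on $(\partial T_{v_0})^c$, where I identify $(\partial T_{v_0})^c$ with the disjoint union of the tooth-boundaries $\partial W_{-k}$ inside $\partial(V_-(v_0))$ and \emph{discard} the single atom of $\mu_-$ carried by the spine ray $\zeta_\infty=(v_0,v_{-1},v_{-2},\dots)$, which has no counterpart in $\partial T$ because all backward rays of $T$ share a common tail. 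Tracking the signs of \eqref{f-} (negative off the spine, positive on it) one checks $\mu(\partial T_v)=f(v)$ for every $v$, so $f$ is induced by $\mu$; this gives at once $\mu\geq 0$ on $\partial T_{v_0}$ with mass $1$, $\mu\leq 0$ on the complement, and
\[
\mu((\partial T_{v_0})^c)=-\big(1-\mu_-(\{\zeta_\infty\})\big)\in[-1,0].
\]

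Finally, the limit assertions of Theorem \ref{t-charpinvbis}(b), applied on $V(v_0)$ and $V_-(v_0)$, give $\lim_{v\to\zeta}f(v)=\mu(\{\zeta\})\in[-1,1]$ for every $\zeta\in\partial T$ (with the sign dictated by \eqref{f-}), while the backward limit is $\lim_{k}f_-(v_{-k})=\mu_-(\{\zeta_\infty\})=1+\mu((\partial T_{v_0})^c)$ by the same theorem applied to $\zeta_\infty$ in $V_-(v_0)$, the value lying in $[0,1]$ by monotonicity along the spine (Remark \ref{r-decr2}). I expect the main obstacle to be precisely this last bookkeeping: the clean identification of $\partial T$, stripped of its $-\infty$ direction, with $\partial(V_-(v_0))$ stripped of the spine ray, and the accounting of the mass $\mu_-(\{\zeta_\infty\})$ that ``escapes to $-\infty$'' and reappears as the backward limit of $f$.
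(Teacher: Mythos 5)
Your proposal is correct and follows essentially the same route as the paper: decompose a unit flow via \eqref{f-} into flows on the rooted trees $V(v_0)$ and $V_-(v_0)$, apply Theorem \ref{t-charpinv} to identify the minimal energies as $r_p(V(v_0),w)^p$ and $r_p(V_-(v_0),w)^p-w_{v_0}^p$, and use positivity of the minimal flows (hence measure-inducedness) together with the sign pattern of \eqref{f-} to reassemble the boundary measure and read off the limits. In fact your write-up supplies more detail than the paper's own sketch, notably the explicit identification of $(\partial T_{v_0})^c$ with $\partial(V_-(v_0))$ minus the spine ray and the bookkeeping of the atom $\mu_-(\{\zeta_\infty\})$ as the backward limit.
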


More precisely, by Remark \ref{r-decr2} and the discussion in Section \ref{s-unrooted} we have the following: For any branch $(v_n)_{n<m}$ that passes through $v_0$, the values $f(v_n)$ are positive and increasing for $n\leq 0$, then positive and decreasing for $n\geq 0$. For any branch $(v'_n)_{n<m}$ with $v'_{k}=\prt^{-k}(v_0)$ and $v'_{k+1}\neq \prt^{-k-1}(v_0)$ for some $k\leq -1$, the values $f(v'_n)$ are positive and increasing for $n\leq k$, then negative and increasing for $n\geq k+1$.

\begin{example}
Let $V$ be the unrooted binary tree, that is, the unrooted tree in which every vertex has exactly two children. We fix arbitrarily a vertex $v_0$, and choose all weights to be 1. The unique m-i.\ flow $f$ of minimal norm with $f(v_0)=1$ is shown in Figure \ref{fig-flow}. 
\end{example}

\begin{figure}
\begin{tikzpicture}[scale=1.7]
\draw[fill] (0,0) circle (.5pt);

\draw[->,>=latex] (-2.5,0.75) -- (-2,.6);\draw[fill] (-2,.6) circle (.5pt);

\draw[->,>=latex] (-2,0.6) -- (-1,.3);\draw[fill] (-1,.3) circle (.5pt);
\draw[->,>=latex] (-2,0.6) -- (-1,.9);\draw[fill] (-1,.9) circle (.5pt);
\draw[->,>=latex] (-1,0.9) -- (0,1.2);\draw[fill] (0,1.2) circle (.5pt);
\draw[->,>=latex] (-1,0.9) -- (0,1);\draw[fill] (0,1) circle (.5pt);
\draw[->,>=latex] (-1,0.3) -- (0,0);\draw[fill] (0,0) circle (.5pt);
\draw[->,>=latex] (-1,0.3) -- (0,0.5);\draw[fill] (0,0.5) circle (.5pt);

\draw[-] (0,1.2) -- (.3,1.3);
\draw[-] (0,1.2) -- (.3,1.2);
\draw[-] (0,1) -- (.3,1.05);
\draw[-] (0,1) -- (.3,.95);

\draw[->,>=latex] (0,0) -- (1,-.3);\draw[fill] (1,-.3) circle (.5pt);
\draw[->,>=latex] (0,0) -- (1,.2);\draw[fill] (1,.2) circle (.5pt);
\draw[->,>=latex] (0,0.5) -- (1,.8);\draw[fill] (1,.8) circle (.5pt);
\draw[->,>=latex] (0,0.5) -- (1,.6);\draw[fill] (1,.6) circle (.5pt);

\draw[-] (1,.8) -- (1.3,.9);
\draw[-] (1,.8) -- (1.3,.8);
\draw[-] (1,.6) -- (1.3,.65);
\draw[-] (1,.6) -- (1.3,.55);

\draw[->,>=latex] (1,-.3) -- (2,-.6);\draw[fill] (2,-.6) circle (.5pt);
\draw[->,>=latex] (1,-.3) -- (2,-.2);\draw[fill] (2,-.2) circle (.5pt);
\draw[->,>=latex] (1,.2) -- (2,.2);\draw[fill] (2,.2) circle (.5pt);
\draw[->,>=latex] (1,.2) -- (2,.4);\draw[fill] (2,.4) circle (.5pt);

\draw[-] (2,-.2) -- (2.3,-.15);
\draw[-] (2,-.2) -- (2.3,-.25);
\draw[-] (2,.2) -- (2.3,.15);
\draw[-] (2,.2) -- (2.3,.25);
\draw[-] (2,.4) -- (2.3,.5);
\draw[-] (2,.4) -- (2.3,.4);

\draw[-] (2,-.6) -- (2.3,-.7);

\node at (-2,0.4) {\footnotesize{$\tfrac{1}{4}$}};
\node at (-1,0.1) {\footnotesize{$\tfrac{1}{2}$}};
\node at (-.2,-.2) {\footnotesize{$f(v_0)=1$}};
\node at (1,-.5) {\footnotesize{$\tfrac{1}{2}$}};
\node at (2,-.8) {\footnotesize{$\tfrac{1}{4}$}};

\node at (-1,0.7) {\footnotesize{$-\tfrac{1}{4}$}};
\node at (-.1,1.4) {\footnotesize{$-\tfrac{1}{8}$}};
\node at (-.1,.8) {\footnotesize{$-\tfrac{1}{8}$}};
\node at (-.1,.3) {\footnotesize{$-\tfrac{1}{2}$}};
\node at (1,0) {\footnotesize{$\tfrac{1}{2}$}};
\node at (.9,0.4) {\footnotesize{$-\tfrac{1}{4}$}};
\node at (.9,1) {\footnotesize{$-\tfrac{1}{4}$}};
\node at (2,-.4) {\footnotesize{$\tfrac{1}{4}$}};
\node at (2,.0) {\footnotesize{$\tfrac{1}{4}$}};
\node at (2,.65) {\footnotesize{$\tfrac{1}{4}$}};

\end{tikzpicture}
\caption{A binary tree with a flow of minimal energy}%
\label{fig-flow}
\end{figure}
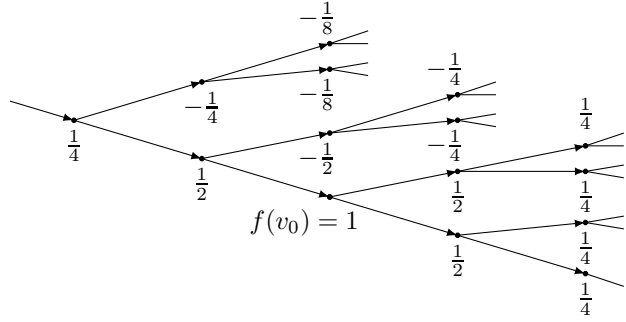

One might again call the unique Borel measure $\mu$ on $\partial T$ corresponding to the flow $f$ of minimal energy with $f(v_0)=1$ a \textit{harmonic measure} or an \textit{equilibrium measure}.

\subsection{Potential theory on rooted trees II}\label{subs-rootII}

The fact that, in an unrooted tree, one has both the necessity and the freedom to choose a distinguished point suggests to do likewise in a rooted tree. 

Thus, in this subsection, let $T=(V,E)$ be a rooted tree and $v_0\in V$ a vertex that is not necessarily the root; the latter will here be denoted by $\rt$. We are now interested in finding flows of minimal energy with value 1 at $v_0$. For this we define capacity with respect to $v_0$, where we now have to admit real Borel measures and real flows.

\begin{definition} 
Let $T=(V,E)$ be a rooted tree, $v_0\in V$, $w=(w_v)_{v\in V}$ a positive weight on $V$, and $1<p<\infty$. Then the \textit{$p$-capacity} of the boundary $\partial T$ with respect to $v_0$ and $w$ is given by 
\[
\capac_p(\partial T,v_0,w)= \sup\Big\{ |\mu(\partial T_{v_0})|^p : \text{ $\mu$ a Borel measure on $\partial T$ with $\sum_{v\in V}|\mu(\partial T_v)w_v|^{p^\ast}\leq 1$}\Big\}.
\]
\end{definition}

Since the extremal measure in the case of $v_0=\rt$ is necessarily a positive measure or a negative measure, we have that $\capac_p(\partial T,\rt,w)$ coincides with the capacity of Definition \ref{def-cap}. 

We find again that
\begin{align*}
\capac_p(\partial T,v_0,w)&= \sup\Big\{ |f(v_0)|^p : \text{ $f$ an m-i.\ flow on $V$ with $\sum_{v\in V}|f(v)w_v|^{p^\ast}\leq 1$}\Big\}\\
&= \Big(\inf\Big\{ \sum_{v\in V}|f(v)w_v|^{p^\ast} : \text{ $f$ an m-i.\ flow on $V$ with $f(v_0)=1$}\Big\}\Big)^{-p/p^\ast}.
\end{align*}
We repeat the procedure of Section \ref{s-unrooted} of decomposing a flow $f$ on $V$ with $f(v_0)=1$ as a flow $f_+$ on the subtree $V_+(v_0)=V(v_0)$ and a flow $f_-$ defined by \eqref{f-} on the modified tree $V_-(v_0)$. The only difference is that the lower branch in Figure \ref{fig-v_1b}, coming from the ancestors of $v_0$, is finite and ends with $\rt$. This also implies that $r_p(V_-(v_0),w)<\infty$, see Corollary \ref{c-subtrees}(a). 

\begin{theorem}\label{t-charpinvunrootedter}
Let $T=(V,E)$ be a rooted tree, $v_0\in V$, $w=(w_v)_{v\in V}$ a positive weight on $V$, and $1< p<\infty$. Suppose that $B$ is defined on $\ell^p(V,w)$. 

\emph{(a)} The following assertions are equivalent:
\begin{enumerate}
	\item[\rm (i)] there exists an m-i.\ flow $f$ on $V$ of finite energy with $f(v_0)=1$;
	\item[\rm (ii)] $\capac_{p^\ast}(\partial T,v_0,w)>0$;
	\item[\rm (iii)] $r_p(V(v_0),w)<\infty$.
\end{enumerate}
In that case, 
\[
\capac_{p^\ast}(\partial T,v_0,w)= \big(r_p(V(v_0),w)^p+ r_p(V_-(v_0),w)^p - w_{v_0}^p\big)^{-p^\ast/p}.
\]

\emph{(b)} If $\capac_{p^\ast}(\partial T,v_0,w)>0$, there exists a unique m-i.\ flow $f$ on $V$ with $f(v_0)=1$ of minimal energy. Its corresponding measure $\mu$ satisfies $\mu\geq 0$ on $\partial T_{v_0}$ with $\mu(\partial T_{v_0})=1$, and $\mu\leq 0$ on $(\partial T_{v_0})^c$ with $\mu((\partial T_{v_0})^c)\in [-1,0]$. Moreover,
\[
f(\rt)= 1+ \mu((\partial T_{v_0})^c) \in [0,1],
\]
and for every boundary point $\zeta\in\partial T$ we have that
\[
\lim_{v\to\zeta} f(v)= \mu(\{\zeta\})\in [-1,1].
\]
\end{theorem}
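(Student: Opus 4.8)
The plan is to follow the proof of Theorem \ref{t-charpinvunrootedbis} almost verbatim, the one genuinely new feature being that the distinguished vertex $v_0$ now has only finitely many ancestors, so that the ``left-hand'' spine is a finite branch terminating at the root $\rt$. First I would set up the decomposition from Section \ref{s-unrooted}: any flow $f$ on $V$ with $f(v_0)=1$ splits into its restriction $f_+=f|_{V(v_0)}$, a unit flow on the rooted tree $V(v_0)$, and the sequence $f_-$ on the modified rooted tree $V_-(v_0)$ defined by \eqref{f-}, which is a backward invariant unit flow there. Since $|f|=|f_+|$ on $V(v_0)$, $|f|=|f_-|$ on $V_-(v_0)$, and $V=V(v_0)\cup V_-(v_0)$ overlap only in $v_0$, the $p$-energy splits as
\[
\mathcal{E}_{p^\ast}(f,w)=\sum_{v\in V(v_0)}|f_+(v)w_v|^p+\sum_{\substack{v\in V_-(v_0)\\ v\neq v_0}}|f_-(v)w_v|^p .
\]
Conversely, any pair of unit flows reassembles into such an $f$, and $B$ is defined on $\ell^p(V_-(v_0),w)$ by the child-count argument used in the proof of Theorem \ref{t-charpmubil}.

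The first key step is to record that, unlike in the unrooted case, one always has $r_p(V_-(v_0),w)<\infty$. Writing the ancestor chain as $v_0,v_{-1},\dots,v_{-D}=\rt$, under the modified parent--child relation the vertex $\rt$ acquires no children in $V_-(v_0)$, so it is a leaf and $c_p(\{\rt\},w)=0$. Walking up the finite spine and applying Corollary \ref{c-subtrees}(a) successively at each $v_{-n}$ — whose spine-child $v_{-n-1}$ already roots a subtree of finite continued fraction — yields $c_p(V_-(v_0),w)<\infty$, hence $r_p(V_-(v_0),w)<\infty$.

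For part (a), the equivalence (i)$\Leftrightarrow$(ii) is the reformulation of capacity as $(\inf\mathcal{E}_{p^\ast})^{-p^\ast/p}$ over measure-induced unit flows, positivity of the capacity being exactly finiteness of that infimum. For (i)$\Rightarrow$(iii) I would extract from a finite-energy m-i.\ unit flow its restriction $f_+$, a finite-energy unit flow on $V(v_0)$, so that $r_p(V(v_0),w)<\infty$ by Theorem \ref{t-charpinv}(a). For (iii)$\Rightarrow$(i) I would combine the finiteness of both $r_p(V(v_0),w)$ and $r_p(V_-(v_0),w)$: Theorem \ref{t-charpinv}(b) supplies the positive minimal unit flows $f_+$ and $f_-$, reassembled into $f$ via \eqref{f-}. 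The energy identity then gives
\[
\inf_{f}\mathcal{E}_{p^\ast}(f,w)=r_p(V(v_0),w)^p+c_p(V_-(v_0),w)^p=r_p(V(v_0),w)^p+r_p(V_-(v_0),w)^p-w_{v_0}^p ,
\]
the term $w_{v_0}^p$ being counted in $r_p(V(v_0),w)^p$ but absent from the minimal energy $c_p(V_-(v_0),w)^p$ on $V_-(v_0)$; this yields the stated value of $\capac_{p^\ast}(\partial T,v_0,w)$. For part (b), uniqueness is inherited from that of $f_+$ and $f_-$ (strict convexity of the $\ell^p$-norm) through the bijective decomposition. Since $f_+>0$ and $f_->0$, the sign rule \eqref{f-} gives $f>0$ on $V(v_0)$ and on the spine but $f\le 0$ off $V(v_0)$; passing to the associated measure $\mu$ (with $f(v)=\mu(\partial T_v)$) gives $\mu\ge 0$ on $\partial T_{v_0}$ with $\mu(\partial T_{v_0})=1$ and $\mu\le 0$ on $(\partial T_{v_0})^c$. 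As every branch meets $\rt$, we get $f(\rt)=\mu(\partial T)=1+\mu((\partial T_{v_0})^c)$, the monotonicity of $f_-$ along the spine (Remark \ref{r-decr2}) placing this in $[0,1]$, while the boundary limits $\lim_{v\to\zeta}f(v)=\mu(\{\zeta\})$ follow from continuity of $\mu$ along the decreasing sets $\partial T_{v_n}$.

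The step I expect to be the main obstacle is checking that the reassembled $f$ is genuinely measure-induced, i.e.\ that it satisfies the two summability conditions of Proposition \ref{p-miflows}. The cleanest route I see is to note that $f_+$, and each $-f|_{V(u)}$ on a cousin subtree rooted at a sibling $u$ of a spine vertex, are \emph{positive} flows, so their generation sums are bounded by their root values; telescoping backward invariance along the spine gives $\sum_u f_-(u)=1-f(\rt)\le 1$ for the cousin roots, whence $\sup_m\sum_{v\in\gen_m}|f(v)|$ and $\sum_{v\in L(V)}|f(v)|$ are finite. Once this is secured, the remainder is a transcription of the unrooted argument, with the limit $\lim_{v\to-\infty}f(v)$ there replaced by the value $f(\rt)$ at the now-present root.
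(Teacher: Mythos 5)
Your proposal is correct and follows exactly the paper's route: the paper proves this theorem by repeating the decomposition of Theorem \ref{t-charpinvunrootedbis} (splitting a unit flow into $f_+$ on $V(v_0)$ and $f_-$ on $V_-(v_0)$ via \eqref{f-}, then invoking Theorem \ref{t-charpinv}), the only new feature being that the spine of ancestors is finite and ends at $\rt$, which forces $r_p(V_-(v_0),w)<\infty$ by Corollary \ref{c-subtrees}(a). Your extra verification that the reassembled flow satisfies the summability conditions of Proposition \ref{p-miflows} — via positivity of the flows on $V(v_0)$ and on the cousin subtrees together with the telescoped spine identity — is a detail the paper leaves implicit, and you carry it out correctly.
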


One has again the monotonicity structure described after Theorem \ref{t-charpinvunrootedbis}. 

The similarity of the results in the last two subsections suggests to view unrooted trees as trees with root at $-\infty$.

\end{document}